\documentclass[11pt]{amsart}
\usepackage{amssymb}
\usepackage{amsmath}
\usepackage{amsthm}
\usepackage{mathrsfs}
\usepackage[all]{xy}
\usepackage{hyperref}
\usepackage{amsrefs}
\usepackage{graphicx}
\usepackage{tikz}
\usetikzlibrary{intersections, calc, arrows.meta}

\textwidth=450pt 
\oddsidemargin=0.1cm
\evensidemargin=0.1cm
\sloppy

\newtheorem{theorem}{Theorem}[section]
\newtheorem{proposition}[theorem]{Proposition}
\newtheorem{lemma}[theorem]{Lemma}
\newtheorem{corollary}[theorem]{Corollary}
\newtheorem{assumption}[theorem]{Assumption}

\theoremstyle{definition}
\newtheorem{definition}[theorem]{Definition} 
 
\newtheorem{remark}[theorem]{Remark} 
\newtheorem{example}[theorem]{Example}

\newcommand{\AI}{\bfA^I}
\newcommand{\LI}{{\bfL_I}}
\newcommand{\nablaI}{\nabla^I}

\newcommand{\partialaj}{\partial_{\alpha,j}}
\newcommand{\tts}{t,\tau,\sigma} 
\newcommand{\Ral}{R_\lambda}

\newcommand{\ringA}{\cO^{x,\sigma,z,z^{-1}}_{qe^t,\lambda}}
\newcommand{\ringB}{\cO^{x,\sigma,z}_{qe^t,\lambda}}

\newcommand{\C}{{\mathbb C}}
\newcommand{\N}{{\mathbb N}}
\newcommand{\PP}{{\mathbb P}}
\newcommand{\R}{{\mathbb R}}
\newcommand{\Z}{{\mathbb Z}}

\newcommand{\bc}{{\boldsymbol c}}
\newcommand{\bd}{{\boldsymbol d}}

\newcommand{\bm}{{\boldsymbol m}}
\newcommand{\bt}{{\boldsymbol t}}

\newcommand{\bH}{{\boldsymbol H}}
\newcommand{\bL}{{\boldsymbol L}}

\newcommand{\bfA}{\mathbf{A}}

\newcommand{\bfL}{\mathbf{L}}
\newcommand{\bfM}{\mathbf{M}}
\newcommand{\bfN}{\mathbf{N}}
\newcommand{\bfR}{\mathbf{R}}
\newcommand{\bfGamma}{\mathbf{\Gamma}}

\newcommand{\cB}{\mathcal{B}}
\newcommand{\cD}{\mathcal{D}} 
\newcommand{\cE}{\mathcal{E}}
\newcommand{\cF}{\mathcal{F}}
\newcommand{\cH}{\mathcal{H}} 
\newcommand{\cI}{\mathcal{I}} 
 
\newcommand{\cL}{\mathcal{L}} 
\newcommand{\cM}{\mathcal{M}} 
\newcommand{\cN}{\mathcal{N}} 
\newcommand{\cO}{\mathcal{O}}
\newcommand{\cU}{\mathcal{U}}
\newcommand{\cV}{\mathcal{V}}
\newcommand{\cW}{\mathcal{W}}

\newcommand{\chbfR}{\check{\bfR}}
\newcommand{\chcE}{\check{\cE}}
\newcommand{\chtau}{\check{\tau}}
\newcommand{\chx}{\check{x}}

\newcommand{\hcE}{\widehat{\cE}}
\newcommand{\hcI}{\widehat{\cI}}
\newcommand{\hcM}{\widehat{\cM}}

\newcommand{\hsigma}{\hat{\sigma}}

\newcommand{\htau}{\hat{\tau}}

\newcommand{\hy}{\hat{y}}

\newcommand{\hGamma}{\widehat{\Gamma}}

\newcommand{\hR}{\widehat{\bfR}}
\newcommand{\hS}{\widehat{S}}
\newcommand{\hU}{\widehat{U}}
\newcommand{\hV}{\widehat{V}}

\newcommand{\sfa}{\mathsf{a}}
\newcommand{\sfb}{\mathsf{b}}
\newcommand{\sfe}{\mathsf{e}}
\newcommand{\sfp}{\mathsf{p}}
\newcommand{\sfs}{\mathsf{s}}
\newcommand{\sfK}{\mathsf{K}}
\newcommand{\sfS}{\mathsf{S}}

\newcommand{\ta}{\tilde{a}}
\newcommand{\tbfA}{\widetilde{\bfA}}
\newcommand{\tbfL}{\widetilde{\bfL}}
\newcommand{\tbfM}{\widetilde{\bfM}}

\newcommand{\tsfa}{\widetilde{\sfa}}
\newcommand{\tsfb}{\widetilde{\sfb}}
\newcommand{\tI}{\widetilde{I}}

\newcommand{\oanQDM}{\overline{\anQDM}}
\newcommand{\oanQDMT}{\overline{\anQDMT}}

\newcommand{\olambda}{\overline{\lambda}}

\newcommand{\anQDM}{\operatorname{QDM}^{\mathrm{an}}}
\newcommand{\anQDMT}{\operatorname{QDM}^{\mathrm{an}}_T}
\newcommand{\ev}{\operatorname{ev}}
\newcommand{\exc}{{\operatorname{exc}}}
\newcommand{\loc}{\operatorname{loc}}
\newcommand{\pr}{\operatorname{pr}}
\newcommand{\pt}{\operatorname{pt}}
\newcommand{\virt}{\operatorname{virt}}

\newcommand{\Crit}{\operatorname{Crit}}
\newcommand{\Eff}{\operatorname{Eff}}
\newcommand{\End}{\operatorname{End}}
\newcommand{\Exc}{\operatorname{Exc}}
\newcommand{\Frac}{\operatorname{Frac}}
\newcommand{\Gr}{\operatorname{Gr}}
\newcommand{\Hom}{\operatorname{Hom}}
\newcommand{\Id}{\operatorname{Id}}

\newcommand{\Lie}{\operatorname{Lie}}
\newcommand{\Mat}{\operatorname{Mat}}

\newcommand{\QDM}{\operatorname{QDM}}
\newcommand{\QDMT}{\operatorname{QDM}_T}
\newcommand{\Res}{\operatorname{Res}}
\newcommand{\Spec}{\operatorname{Spec}}
\newcommand{\Vol}{\operatorname{Vol}}

\newcommand{\auj}{\alpha^*u_j}
\newcommand{\aUj}{\alpha^*U_j}
\newcommand{\aUjo}{\alpha^*U^\circ_j}
\newcommand{\bigI}{I^{\operatorname{big}}_E}
\newcommand{\bigIE}{I^{\operatorname{big}}_E(\tts,z)}

\newcommand{\corr}[1]{\left\langle#1\right\rangle} 

\newcommand{\pair}[2]{\left\langle#1,#2\right\rangle}
\newcommand{\pairr}[2]{\langle\!\langle#1,#2\rangle\!\rangle}
\newcommand{\parfrac}[2]{\frac{\partial #1}{\partial #2}}

\begin{document}

\title[Convergence of Quantum Cohomology of Toric Bundles]{Convergence and Analytic Decomposition of Quantum Cohomology of Toric Bundles}
\author{Yuki Koto}
\address{Department of Mathematics, Graduate School of Science, Kyoto University, 606-8502, Japan}
\email{y.koutou@math.kyoto-u.ac.jp}

\begin{abstract}
We prove that the equivariant big quantum cohomology $QH^*_T(E)$ of the total space of a toric bundle $E \to B$ converges provided that the big quantum cohomology $QH^*(B)$ converges. 
The proof is based on Brown's mirror theorem for toric bundles \cite{Brown}. 
It has been observed by Coates, Givental and Tseng that the quantum connection of $E$ splits into copies of that of $B$ \cite{Coates-Givental-Tseng}.
Under the assumption that $QH^*(B)$ is convergent, we construct a decomposition of the quantum $D$-module of $E$ into a direct sum of that of $B$, which is analytic with respect to parameters of $QH^*_T(E)$.
In particular, we obtain an analytic decomposition for the equivariant/non-equivariant big quantum cohomology of $E$.
\end{abstract}

\maketitle

\tableofcontents 

\section{Introduction}

Quantum cohomology and mirror symmetry for toric varieties have been investigated in depth by many researchers.
Studies include Givental's mirror theorem \cite{Givental-mirror} (the equality between the $I$-function and the $J$-function), Landau-Ginzburg mirror symmetry \cite{Barannikov, Coates-Corti-Iritani-Tseng-hodge, Givental-homological, Givental-mirror}, and analyticity of quantum cohomology \cite{Iritani-convergence}.
Some of these studies have been extended to a relative setting, namely to toric bundles $E \to B$ which are given as fiberwise GIT quotients of direct sums of line bundles over $B$.
Brown \cite{Brown} introduced the $I$-function for such toric bundles as a hypergeometric modification of the $J$-function of the base $B$, and proved a generalization of Givental's mirror theorem.
Brown's work has been further generalized to higher genera by Coates, Givental and Tseng \cite{Coates-Givental-Tseng}.
Based on Brown's result, Lee, Lin and Wang \cite{Lee-Lin-Wang-invariance2} proved a quantum Leray-Hirsch theorem, which relates quantum differential equations of $E$ to those of $B$.
These results imply that Gromov-Witten theory of $E$ is closely related to that of $B$.

The analyticity of quantum cohomology is significant in Gromov-Witten theory, mirror symmetry and birational geometry.
For instance, Ruan \cite{Ruan} proposed the crepant transformation conjecture, which says that the quantum cohomology of two $K$-equivalent projective manifolds are related via analytic continuation in the quantum parameters.
As mentioned above, it is known that quantum cohomology of toric varieties is convergent.
Thus, it is natural to expect that quantum cohomology of $E$ is convergent if we assume the analyticity of that of $B$.

In this paper, we discuss the convergence and an analytic decomposition of the quantum cohomology (and the quantum connection) of toric bundles $E \to B$.
Our main results are stated as follows.

\begin{theorem}[Corollary \ref{cor:convergence_of_QH(E)}, Theorem \ref{thm:decompQDM}]
\label{thm:main1}
Assume that the big quantum cohomology $QH^*(B)$ of the base $B$ has convergent structure constants.

{\rm (1)}
The big equivariant quantum cohomology $QH^*_T(E)$ of the total space $E$ has convergent structure constants.

{\rm (2)}
The equivariant quantum connection of $E$ is gauge equivalent to a direct sum of the quantum connection of $B$ via an analytic gauge transformation and an analytic variable change.
\end{theorem}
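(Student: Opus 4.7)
The plan is to exploit Brown's theorem, which expresses the $J$-function of $E \to B$ as an explicit hypergeometric modification of Givental's $J$-function $J_B$ of the base. This produces an $I$-function $\bigIE$ obtained from $J_B$ by applying an operator built out of Gamma-type factors summed over fiber curve classes. The first step would be to show that, under the standing hypothesis that $QH^*(B)$ has convergent structure constants, $\bigIE$ is analytic in $\tts$ in the sense required by the paper. Convergence of $QH^*(B)$ translates to analyticity of $J_B$, while the Gamma-type coefficients of the hypergeometric modification are entire in the parameters $(t,\sigma)$ and holomorphic in the Novikov variables $qe^t$. The combinatorics of the indexing fiber classes, controlled by the chamber structure of the toric fiber, then yields bounds sufficient to place $\bigIE$ in the analytic ring $\ringA$.

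The second step would be to extract $J_E$ and an analytic mirror map from $\bigIE$ by Birkhoff factorization. The asymptotic behavior of $\bigIE$ as $z \to \infty$ encodes simultaneously the mirror map $\tau(\tts)$ and the fundamental solution of the quantum connection of $E$. Showing that both outputs of this factorization again lie in an analytic ring, and then reconstructing the quantum product from the resulting connection matrix, would give the convergence of $QH^*_T(E)$ claimed in part (1).

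For (2), I would follow the Coates--Givental--Tseng picture: the hypergeometric modification $\bigIE$ decomposes naturally into a sum of pieces indexed by the $T$-fixed points of the toric fiber, and each piece is essentially a twist of $J_B$ by an equivariant character. This gives a formal direct-sum splitting of the quantum $D$-module of $E$ into copies of the quantum $D$-module of $B$. The gauge transformation in the theorem is then built from the fundamental solution assembled out of these pieces, and the variable change is the mirror map produced in the second step; both are analytic thanks to the outputs of the first two steps.

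The main obstacle is upgrading this formal decomposition to an honest analytic one while maintaining uniform control on all the relevant estimates. Specifically, I expect the hard part to be proving that Birkhoff factorization preserves analyticity (rather than only formal power series structure) and that the character-indexed splitting survives the passage through the mirror map and the gauge transformation. Turning the formal statements already established by Brown and by Coates--Givental--Tseng into analytic ones, with estimates uniform in the base parameters $\tau$ and in the fiber parameters $(t,\sigma)$, is where the bulk of the technical work should lie.
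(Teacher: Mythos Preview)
Your outline captures the right architecture---Brown's mirror theorem, the big $I$-function, Birkhoff factorization, and the fixed-point decomposition---and you correctly identify the central obstacle as showing Birkhoff factorization preserves analyticity. But two concrete mechanisms are missing. First, the estimates on $\bigIE$ do not come from ``combinatorics of indexing fiber classes''; they come from Brown's formula $\alpha^* I_E \cdot \hGamma_\alpha = \hcI_\alpha(qe^t, \lambda + z\partial_\Lambda, z)\, J_B$, where $\hcI_\alpha$ is the stationary phase asymptotics of the equivariant mirror oscillatory integral. The $z^n$-coefficients of $\hcI_\alpha$ have factorial growth $C^{n+1} n!$, and this pairs against the $1/n!$ decay of the $z^{-n}$-coefficients of $J_B$ to give the required bounds. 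Second, the tool that converts such estimates into analyticity of the Birkhoff factors is the \emph{gauge fixing theorem}: a flat logarithmic connection with nilpotent residue and connection matrix in the ring $\cO^{y,z}_\lambda$ admits a unique gauge transformation, also in $\cO^{y,z}_\lambda$, to $z$-independent form. The paper does not estimate $\bigIE$ itself; it estimates the matrix $\bfM = \bfGamma \,\Xi\, \LI$ via the stationary phase formula, uses a pairing argument to transfer these estimates to the connection matrix $\AI$, and only then applies gauge fixing. Your proposal jumps from ``$\bigIE$ analytic'' to ``factors analytic'' without naming either of these ingredients.

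One further correction for part (2): the gauge transformation $\chbfR$ is \emph{not} analytic in $z$. It is in general a divergent formal power series in $z$ whose coefficients are multi-valued analytic functions of $(\lambda,q,Q,\hsigma)$; this is why the paper passes to the $z$-adic completion $\oanQDMT$. So the decomposition is analytic in the base parameters but only formal in $z$, and your plan should reflect this.
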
 

We explain a more precise meaning of Theorem \ref{thm:main1}.
Let $E \to B$ be a bundle with toric fiber $X$ constructed by a fiberwise GIT quotient of a direct sum of line bundles over $B$.
Note that $E$ and $X$ are naturally endowed with a torus action; we denote this torus by $T$ and the fixed point set of the $T$-action on $X$ by $F$.
\emph{Quantum cohomology} $( QH^*(B), \star_B )$ of $B$ is a deformation of the ring structure of $H^*(B)$ parametrized by $\tau \in H^*(B,\C)$ and the Novikov variables $Q \in H^2(B,\C^\times)$\footnote{This is a redundant parametrization. We will introduce a modified parameter in Section 2.}. 
For $\alpha \in F$, we also denote by $\alpha$ the section $B \to E$ corresponding to $\alpha$.
The structure constants of $\star_B$ are a priori defined as formal power series in $\tau$ and $Q$; we assume that they are convergent. 
Similarly, the $T$-equivariant quantum cohomology $( QH^*_T(E), \star_E )$ is a deformation of the ring structure of $H^*(E)$ parametrized by $\hsigma \in H^*(E,\C)$, $(q,Q) \in H^2(X,\C^\times) \times H^2(B,\C^\times) \cong H^2(E,\C^\times)$\footnote{This isomorphism is not canonical. We need to specify a splitting of the sequence $0 \to H^2(B) \to H^2(E) \to H^2(X) \to 0$.}
and the equivariant parameter $\lambda \in \Lie(T)$.
Here $q$ and $Q$ are the Novikov variables of the fiber $X$ and the base $B$ respectively.
In this notation, Theorem \ref{thm:main1} (1) states that the structure constants of $\star_E$ are convergent as power series of $(\lambda,q,Q,\hsigma)$.

The equivariant quantum product $\star_E$ defines the \emph{equivariant quantum connection} $\nabla^E$, which is, roughly speaking, a family of flat partial connections on the trivial $H^*(E)$-bundle over an open neighborhood of $0 \in H^*(E) \times \Spec \C[q,Q]$ parametrized by $\Lie (T) \times \C^\times_z$.
It is given as follows:
\[
\nabla^E = d + z^{-1} \sum_a (e_a \star_E) d\hsigma_a
\]
where $\{ e_a \}$ is a (homogeneous) basis of the vector space $H^*(E)$ and $\{ \hsigma_a \}$ are linear coordinates dual to $\{ e_a \}$.
Since $\nabla^E$ does not contain the derivatives in the Novikov variables, we call it a partial connection.
We define the \emph{analytic equivariant quantum $D$-module} $\anQDMT (E)$ as the pair of this bundle and $\nabla^E$. 
We also define the (formal) equivariant quantum $D$-module $\QDMT(E)$ by restricting $\anQDMT (E)$ to the formal neighborhood of the origin of $H^*(E) \times \Spec \C[q,Q]$.
See Section \ref{subsect:QDM} for details\footnote{In Section \ref{subsect:QDM}, we include equivariant parameters in the base space of $\QDMT(E)$. In addition, we endow the quantum $D$-module with a pairing and a grading structure.}.
In the same manner, we can define $\anQDMT(B)$ and $\QDMT(B)$. 
(Here we consider the trivial $T$-action on $B$.)
After localizing with respect to $\lambda$, we can construct the following formal decomposition.

\begin{theorem}[Theorem \ref{thm:formaldecomp}]
\label{thm:intro_formal}
There exist a formal change of variables $\chtau_\alpha ( \lambda, \hsigma )$ and a matrix-valued formal power series $\chbfR ( \lambda, \hsigma, z ) = \bigoplus_{\alpha \in F} \chbfR_\alpha$ of the form
\begin{align*}
&\chtau_\alpha - \chtau^\exc_\alpha \in H^*(B) \otimes \C( \lambda ) [\![ q, Q, \hsigma ]\!], & &\chtau_\alpha |_{(q, Q) = 0} = \alpha^* \hsigma + \chtau^\exc_\alpha, \\
&\chbfR_\alpha \in \frac{1}{ \sqrt{e_\alpha} } \cdot \Hom ( H^*(E), H^*(B) ) \otimes \C(\lambda) [\![ q, Q, \hsigma, z ]\!], & 
&\chbfR_\alpha |_{(q,Q,z)=0} (\omega) = \frac{ \alpha^* \omega }{ \sqrt{e_\alpha} }
\end{align*}
which give an isomorphism
\begin{equation}
\label{eq:intro_isom}
\chbfR \colon \QDMT(E) \cong \bigoplus_{\alpha \in F} \chtau_\alpha^* \QDMT(B).
\end{equation}
Here $\chtau^\exc_\alpha$ is a certain cohomology class of $H^*(B)$ with coefficients holomorphic functions in $\lambda$ and $e_\alpha \in H^*_T(B)$ is the equivariant Euler class of the normal bundle of $\alpha$ in $X$.
\end{theorem}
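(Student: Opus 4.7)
The plan is to take Brown's mirror theorem \cite{Brown} as the starting point and perform $T$-equivariant fixed-point localization along the fiber direction, then upgrade the resulting decomposition of the $I$-function to a full isomorphism of formal quantum $D$-modules.

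First, I would start from Brown's big $I$-function $\bigIE$, a hypergeometric modification of the base $J$-function $J_B$ built as an explicit product of $\Gamma$-ratios indexed by the line-bundle summands defining the fiberwise GIT quotient, summed over effective fiber classes. Applying $T$-equivariant localization in the fiber $X$ with respect to the $T$-action, the decomposition $H^*_T(X) \otimes \C(\lambda) = \bigoplus_{\alpha \in F} \C(\lambda) \cdot [\alpha]$ lifts to
\[
H^*_T(E) \otimes \C(\lambda) \;\cong\; \bigoplus_{\alpha \in F} \alpha^* H^*(B) \otimes \C(\lambda),
\]
and decomposes $\bigIE$ into components $\bigIE_\alpha$, one for each $\alpha \in F$, each of which is an explicit hypergeometric series in $q, Q$ whose leading $z$-behavior carries the factor $1/\sqrt{e_\alpha}$.

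Next, from each $\bigIE_\alpha$ I would extract the pair $(\chtau_\alpha, \chbfR_\alpha)$ by a Birkhoff-type factorization: write
\[
\bigIE_\alpha \;=\; \chbfR_\alpha(\tts, z) \cdot J_B\bigl(\chtau_\alpha(\tts), z\bigr),
\]
with $\chtau_\alpha$ determined by the ``small'' part of $\bigIE_\alpha$ (the $z^{-1}$-coefficient modulo higher-order terms in $z^{-1}$) and $\chbfR_\alpha$ by the remaining power-series-in-$z$ gauge factor. The initial conditions at $(q, Q) = 0$ are then transparent: in this limit the hypergeometric factors collapse to their $\hsigma$-independent constants, so $\chtau_\alpha$ reduces to $\alpha^*\hsigma$ plus the $\lambda$-dependent shift $\chtau^\exc_\alpha$ coming from those constants, and $\chbfR_\alpha$ reduces to the projection $\omega \mapsto \alpha^* \omega / \sqrt{e_\alpha}$.

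Finally, to upgrade the decomposition of the section $\bigIE$ to a full $D$-module isomorphism \eqref{eq:intro_isom}, I would invoke Givental's reconstruction principle: Brown's theorem places $\bigIE$ on the Lagrangian cone $\cL_E$ of $E$, so the $\nabla^E$-derivatives of $\bigIE$ generate the fundamental solution of the quantum connection of $E$. The decomposition $\bigIE = \sum_\alpha \bigIE_\alpha$ then exhibits the same fundamental solution as the direct sum of fundamental solutions of $\nabla^B$ pulled back along $\chtau_\alpha$ and twisted by $\chbfR_\alpha$, which is exactly the content of the required isomorphism. The main obstacle I expect is precisely this last step: showing rigorously that $\chbfR$ intertwines not only the ``Euler direction'' tangent to $\bigIE$ but the entire partial connection $\nabla^E$ in every $\hsigma$-direction. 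In Givental's language this reduces to showing that the tangent space to $\cL_E$ along $\bigIE$ decomposes fiberwise-over-$F$ into tangent spaces to $\cL_B$, which in turn hinges on the compatibility of the mirror map $\chtau_\alpha$ with the quantum product on $B$. Careful bookkeeping of the splitting $H^2(E) \cong H^2(X) \oplus H^2(B)$ (see the second footnote in the introduction) is also required to identify the Novikov variables $q, Q$ unambiguously in each summand.
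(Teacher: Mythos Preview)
Your strategy is broadly aligned with the paper's, but there is a genuine technical gap in the Birkhoff step. You write
\[
\bigIE_\alpha \;=\; \chbfR_\alpha(\tts,z)\cdot J_B(\chtau_\alpha(\tts),z)
\]
with $\chbfR_\alpha$ a power series in $z$. This factorization does not exist as stated: the restriction $\alpha^*\bigIE$ does \emph{not} lie on the untwisted cone $\cL_B$, but on the cone twisted by the normal bundle $N^\alpha$ in the sense of Coates--Givental. The missing ingredient is Brown's Corollary~10 (Theorem~\ref{thm:Brown} in the paper),
\[
\hcI_\alpha(qe^t,\lambda+z\partial_\Lambda,z)\,J_B(Q_\alpha,\tau_\alpha,z)=\alpha^*I_E(t,\tau,z)\cdot\hGamma_\alpha(\lambda,z),
\]
where $\hGamma_\alpha$ is a product of asymptotic Gamma functions. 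It is only after multiplying $\alpha^*\bigIE$ by $\hGamma_\alpha$ that a Birkhoff factorization against $\bfL_B$ exists; the factors $1/\sqrt{e_\alpha}$ and the shift $\chtau^\exc_\alpha=\sum_{j\notin\alpha}(\auj-\aUj\log\auj)$ come precisely from the leading asymptotics of $\hGamma_\alpha$ and of the stationary-phase operator $\hcI_\alpha$, not from ``hypergeometric factors collapsing to constants'' as you suggest. Without this correction your $\chbfR_\alpha$ will not be regular at $z=0$.

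A second difference concerns the architecture. The paper does not go from $E$ to $B$ in one step but through an intermediate connection $\nablaI$ determined by the matrix $\LI$ of derivatives of $\bigIE$: first $\LI=\bfL_E(\hsigma,z)\hR$ (Birkhoff on the $E$ side), then $\bfGamma\,\Xi\,\LI=(\bigoplus_\alpha\bfL_B(\tau^\star_\alpha,z))\bfR^\star$ (Birkhoff on the $B$ side after inserting the $\hGamma$-factors via the localization isomorphism $\Xi$), and finally $\chbfR=\bfR^\star\hR^{-1}$. Working at the level of the full matrix of derivatives, rather than the single section $\bigIE$, makes your ``main obstacle'' disappear: compatibility with every $\hsigma$-direction is automatic once both Birkhoff factors are fundamental solutions of the respective quantum connections. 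It also cleanly separates the two convergence problems treated later in Sections~\ref{sect:convergence1} and~\ref{sect:convergence2}.
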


\begin{remark}
\label{rem:CGT}
Coates, Givental and Tseng have shown a decomposition of the fundamental solution of $\nabla^E$ \cite[Proposition 3.1]{Coates-Givental-Tseng}. 
Theorem \ref{thm:intro_formal} can be said to be a reformulation of their decomposition in terms of quantum $D$-modules.
Note that they directly construct the decomposition by the virtual localization formula, while we construct the decomposition via an intermediate connection which is obtained by Brown's $I$-function.
We will also confirm that the decomposition $\chbfR$ is compatible with additional structures of the quantum $D$-modules, pairing and grading (see Section \ref{subsect:QDM} for definition). 
\end{remark}

Theorem \ref{thm:main1} (2) is an analytic refinement of the formal decomposition \eqref{eq:intro_isom}.
We remark that the convergence domains of $\chtau_\alpha$ and $\chbfR$ are not so simple as in Theorem \ref{thm:main1} (1).
Let $\cU \subset \Spec \C [ \lambda, q ]$ and $\cV \subset \Spec \C [ Q,\hsigma ]$ be analytic open neighborhoods of the origin such that $QH^*_T(E)$ converges on $\cU \times \cV$.
Let $\oanQDMT (-)$ be the completion of $\anQDMT (-)$ with respect to $z$.
We prove the following:

\begin{theorem}[Theorem \ref{thm:decompQDM}]
\label{thm:main2}
There exists an analytic hypersurface $(0 \in) \cD \subset \cU$ such that the isomorphism \eqref{eq:intro_isom} can be analytically continued to a multi-valued analytic isomorphism over an open neighborhood $\cW$ of $(\cU \setminus \cD) \times \{0\}$ in $(\cU \setminus \cD) \times \cV$ $:$
\[
\chbfR \colon \oanQDMT(E) \cong \bigoplus_{\alpha \in F} \chtau_\alpha^* \oanQDMT(B)
\]
Here the hypersurface $\cD$ does not entirely contain the space $( \lambda = 0)$, and hence we can take the non-equivariant limit of this isomorphism $:$
\[
\chbfR |_{\lambda = 0} \colon \oanQDM (E) \cong \bigoplus_{\alpha \in F} (\chtau_\alpha |_{\lambda = 0})^* \oanQDM (B)
\]
\end{theorem}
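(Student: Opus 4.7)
The plan is to promote the formal decomposition of Theorem \ref{thm:intro_formal} to an analytic one by solving an analytic Cauchy problem. Under the convergence hypothesis on $QH^*(B)$ together with Theorem \ref{thm:main1}(1), the connection forms of both $\nabla^E$ and $\nabla^B$ are holomorphic on $\cU \times \cV$. Since the pair $(\chbfR, \chtau_\alpha)$ is characterized formally by intertwining these two connections, it satisfies a coupled system of first-order PDEs in the directions $(\hsigma, Q)$ whose coefficient matrices are holomorphic on $\cU \times \cV$. Analytic continuation therefore reduces to (i) producing analytic initial data on some $(\cU \setminus \cD) \times \{0\}$, and (ii) propagating it into the $(Q, \hsigma)$-directions using this PDE system.

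For step (i), I revisit the construction of the formal decomposition used in the proof of Theorem \ref{thm:intro_formal}. That construction proceeds through Brown's $I$-function (cf.\ Remark \ref{rem:CGT}): the restrictions $\chbfR|_{Q = \hsigma = 0}$ and $\chtau_\alpha|_{\hsigma = 0}$ arise from explicit hypergeometric modifications of the $J$-function of $B$, with coefficients meromorphic in $\lambda$ and valued in completions of $H^*(B)$. I define $\cD \subset \cU$ as the analytic hypersurface cut out by the denominators appearing in these expressions, essentially the zero locus of (appropriate factors of) the $e_\alpha$ together with any resonance poles in $\lambda$. Although $e_\alpha|_{\lambda=0}$ may vanish as a cohomology class, the way the factors $e_\alpha^{-1/2}$ combine with the numerators produced by the $I$-function is compatible with the non-equivariant limit, so $\cD$ does not entirely contain the stratum $(\lambda = 0)$.

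For step (ii), I expand $\chbfR = \sum_{n \geq 0} \chbfR^{(n)} z^n$. The intertwining equation becomes a recursive system in which each $\chbfR^{(n)}$ satisfies an inhomogeneous linear PDE with holomorphic coefficients on $(\cU \setminus \cD) \times \cV$, the inhomogeneous part being built from lower-order $\chbfR^{(k)}$ and from $\chtau_\alpha$. Starting from the analytic initial data on $(\cU \setminus \cD) \times \{0\}$ produced in step (i), standard analytic existence theory for flat connections extends each $\chbfR^{(n)}$ to a multi-valued analytic function on an open neighborhood $\cW$ of $(\cU \setminus \cD) \times \{0\}$. The multi-valuedness arises exactly from the $\sqrt{e_\alpha}$ prefactors; formal uniqueness of the decomposition at any base point in $\cU \setminus \cD$ ensures that this analytic extension agrees with $\chbfR$ as formal power series in $(Q, \hsigma)$, and hence yields an isomorphism of $z$-completed analytic quantum $D$-modules. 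Restricting to $(\lambda = 0) \cap (\cU \setminus \cD) \neq \emptyset$ then gives the non-equivariant statement.

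The principal obstacle is the initial-data analysis in step (i): one must identify $\cD$ precisely and show that $\chbfR|_{Q = \hsigma = 0}$ is genuinely analytic on $\cU \setminus \cD$ rather than merely formally convergent. This requires explicit growth estimates on the coefficients of Brown's hypergeometric modification in $q$, together with a careful analysis of the asymptotic factorization that produces the $e_\alpha^{-1/2}$ prefactor and, in particular, shows that no factor of $\lambda$ is forced into the denominators of $\cD$. Once this analytic input is in hand, the PDE propagation in step (ii) is a routine application of the analytic existence theorem for flat meromorphic connections.
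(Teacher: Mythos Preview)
Your outline has the right shape but both steps contain genuine gaps, and what you flag as ``the principal obstacle'' is in fact essentially the entire proof.

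For step (ii): the intertwining equation is not a linear flat system for $\chbfR$ alone, since the target connection is $\nabla^B$ evaluated at the unknown $\chtau_\alpha$. More seriously, even granting that each $\chbfR^{(n)}$ can be produced recursively by analytic existence, nothing in ``standard existence theory'' gives you a domain \emph{uniform in $n$}, and without that you cannot conclude $\chbfR \in \End(H^*(E)) \otimes \cO^{\mathrm{an}}[\![z]\!]$. The paper handles this not by propagating from a slice but by a \emph{gauge fixing theorem} (Theorem \ref{thm:gaugefixing}): if a logarithmic flat connection has its matrix in a ring $\cO^{y,z}_\lambda$ encoding specific factorial-type growth in $z$, then the unique gauge transformation to $z$-independent form lies in the same ring, which forces all $z$-coefficients to be analytic on a common neighborhood. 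The work is then to verify these growth hypotheses.

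For step (i): the paper does not restrict to $(Q,\hsigma)=0$ and read off initial data from hypergeometric coefficients. Instead it factors $\chbfR = \bfR^\star \circ \hR^{-1}$ through the intermediate connection $\nabla^I$ built from $\bigI$, and proves the required growth estimates for $\hR$ and $\bfR^\star$ separately (Sections \ref{sect:convergence1} and \ref{sect:convergence2}). The estimates come from the stationary phase asymptotics $\hcI_\alpha$ of the Landau--Ginzburg mirror of the fibre $X$, and for $\bfR^\star$ one must analytically continue the critical branches $\rho^\alpha$ of the phase function $W$ over a semisimple locus $\cM^{\operatorname{ss}}$. The hypersurface $\cD$ is determined by where these critical points degenerate, and the fact that $\cD$ does not contain $(\lambda=0)$ is a nontrivial statement about the relative critical scheme of $W$ (Kouchnirenko's theorem plus generic smoothness), not a matter of tracking denominators of $e_\alpha$.
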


\begin{remark}
As in the formal case, the decomposition $\chbfR$ preserves the additional structures of analytic quantum $D$-modules.
In particular, in the non-equivariant setting, $\chbfR |_{\lambda = 0}$ also preserve the connection in $z$-direction.
\end{remark}

The entries of the matrix $\chbfR$ in this theorem are in general divergent formal power series in $z$ with coefficients multi-valued analytic functions in $(\lambda,q,Q,\hsigma)$. 
This is why we take the $z$-adic completion of the quantum $D$-modules. 
As a corollary, we also obtain a decomposition for equivariant/non-equivariant quantum cohomology, which is given by the Jacobian matrices $J_{\chtau_\alpha}$ of $\chtau_\alpha$.
Let $J_{\chtau} := \bigoplus_{\alpha \in F} J_{\chtau_\alpha}$.

\begin{corollary}[Corollary \ref{cor:decompQH}]
{\rm (1)}
The map $J_{\chtau}$ gives an analytic decomposition of quantum cohomology algebra $QH^*_T(E)_{\lambda,\hsigma}$ into a direct sum $\bigoplus_{\alpha \in F} QH^*_T(B)_{\lambda,\chtau_\alpha}$.

{\rm (2)}
The map $\chtau|_{\lambda=0} \colon H^*(E) \to \bigoplus_{\alpha \in F} H^*(B)$ gives a local isomorphism of quantum cohomology $F$-manifolds with Euler vector fields when we specialize all Novikov variables to $1$.
\end{corollary}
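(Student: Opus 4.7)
The plan is to deduce the corollary from Theorem \ref{thm:decompQDM} by extracting the algebra structure from the $z \to 0$ limit of the $D$-module isomorphism $\chbfR$. Recall that the quantum product $\star_E$ on $H^*(E)$ at a parameter $\hsigma$ is encoded in the Higgs field $\Omega^E = \sum_a (e_a \star_E)\,d\hsigma_a$, which is the $z^{-1}$-coefficient of $\nabla^E$. Since $\chbfR$ intertwines $\nabla^E$ with the pullback connection $\chtau^*\nabla^B$ on $\bigoplus_{\alpha \in F} \chtau_\alpha^* \oanQDMT(B)$, its restriction to $z = 0$ intertwines $\Omega^E$ with the corresponding pullback Higgs field. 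The $d\hsigma_a$-component of the latter on the $\alpha$-summand is precisely $\partial_{\hsigma_a}\chtau_\alpha \star_B$, i.e.\ quantum multiplication on $H^*(B)$ by the tangent vector $J_{\chtau_\alpha}(e_a)$ at $\chtau_\alpha(\hsigma)$.

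This intertwining exhibits $\chtau$ as a local morphism of $F$-manifolds, in the sense that the multiplicative structure on tangent spaces pulled back from $\bigoplus_\alpha QH^*_T(B)$ agrees with $\star_E$ under the tangent map $J_{\chtau}$. Since the pointwise $F$-manifold structure depends only on the base point and not on the gauge $\chbfR|_{z = 0}$, we conclude that $J_{\chtau} = \bigoplus_\alpha J_{\chtau_\alpha}$ is a pointwise algebra isomorphism $QH^*_T(E)_{\lambda,\hsigma} \cong \bigoplus_{\alpha\in F} QH^*_T(B)_{\lambda,\chtau_\alpha}$, which proves part (1). Analyticity in $(\lambda,q,Q,\hsigma)$ follows from the analyticity of $\chtau$ supplied by Theorem \ref{thm:decompQDM} together with the convergence of quantum products guaranteed by Theorem \ref{thm:main1}(1).

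For part (2), take the non-equivariant limit $\lambda \to 0$, which is permitted by Theorem \ref{thm:decompQDM} since the hypersurface $\cD$ does not entirely contain $\{\lambda=0\}$, and specialize the Novikov variables $q$, $Q$ to $1$, which is possible in the analytic domain by convergence. The Euler vector field is recovered from the grading on the quantum $D$-module, i.e.\ the weighted scaling action on the coordinates $\hsigma$ and $z$. As pointed out in Remark \ref{rem:CGT}, $\chbfR$ is compatible with the grading, so the underlying change of variables $\chtau|_{\lambda=0}$ sends the Euler field of $E$ to the direct sum of Euler fields of $B$. Combined with part (1), this yields a local isomorphism of $F$-manifolds equipped with Euler vector fields.

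The main technical point is verifying that the $z = 0$ reduction of $\chbfR$ recovers precisely $J_{\chtau}$ rather than a gauge-twisted variant. This is where the normalizations $\chbfR_\alpha|_{(q,Q,z)=0}(\omega) = \alpha^*\omega/\sqrt{e_\alpha}$ and the unit-axiom for the quantum connection come in: by evaluating the Higgs-field intertwining on the unit section $1\in H^*(E)$ and comparing with the constant term of $\chbfR_\alpha$, one recovers $\partial_{\hsigma_a}\chtau_\alpha$ on the $B$-side up to a factor of $1/\sqrt{e_\alpha}$, which cancels cleanly with the normalization and identifies the induced map as $J_{\chtau_\alpha}$.
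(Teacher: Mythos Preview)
Your approach is correct and is precisely the argument the paper leaves implicit (the paper states Corollary~\ref{cor:decompQH} immediately after Theorem~\ref{thm:decompQDM} without proof). Reducing the $D$-module isomorphism to $z=0$ to obtain a Higgs-field intertwiner, and then evaluating on the unit to pin down the relation between $\chbfR|_{z=0}$ and $J_{\chtau}$, is exactly the right mechanism.

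Two small points of imprecision are worth tightening. First, your second paragraph asserts that ``the pointwise $F$-manifold structure depends only on the base point and not on the gauge $\chbfR|_{z=0}$'' and concludes from this alone that $J_{\chtau}$ is an algebra map. That step is not self-evident: the Higgs-field intertwining tells you that $\chbfR|_{z=0}$ conjugates $(e_a\star_E)$ to $(J_{\chtau}(e_a)\star_B)$, but this by itself makes neither $\chbfR|_{z=0}$ nor $J_{\chtau}$ an algebra homomorphism. The unit argument in your last paragraph is what actually closes the loop, so it should be promoted rather than left as an afterthought. Second, in that unit argument you write that $\chbfR_\alpha|_{z=0}(1)$ equals $1/\sqrt{e_\alpha}$; in fact this holds only at $(q,Q)=0$, and in general $u_\alpha:=\chbfR_\alpha|_{z=0}(1)$ is a parameter-dependent element of $H^*(B)$. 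What you need is merely that $u_\alpha$ is $\star_B$-invertible (true near the initial point since its leading term $1/\sqrt{e_\alpha}$ is), so that from
\[
\chbfR_\alpha|_{z=0}(\omega)=J_{\chtau_\alpha}(\omega)\star_B u_\alpha
\]
and the conjugation identity one gets
\[
J_{\chtau_\alpha}(\omega_1\star_E\omega_2)\star_B u_\alpha
=J_{\chtau_\alpha}(\omega_1)\star_B J_{\chtau_\alpha}(\omega_2)\star_B u_\alpha,
\]
whence $J_{\chtau_\alpha}$ is multiplicative by commutativity of $\star_B$. With these two clarifications your proof is complete; the Euler-field compatibility in part~(2) is indeed the content of the homogeneity statement $(\chtau_\alpha)_*\hcE^E=\chcE^B$ established in Section~\ref{subsect:formaldecomposition}.
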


When we take the classical limit $(q,Q)=0$, the decomposition of $QH^*(E)$ coincides with the localization isomorphism for (classical) equivariant cohomology.
Therefore, the map $J_{\chtau}$ can be viewed as a generalization of the localization isomorphism. 

This paper is organized as follows.
In Section 2, we briefly review the genus-zero Gromov-Witten theory and related concepts including (equivariant) quantum cohomology, (equivariant) quantum $D$-module and Givental's Lagrangian cone.
In Section 3, we review Brown's work \cite{Brown}, which introduce the $I$-function for $E$ and relate it to the $J$-function for $B$ via the Landau-Ginzburg mirror of the fiber $X$.
In Section 4, we prove the formal decomposition theorem of $\QDMT(E)$ discussed above.
We also introduce an intermediate $D$-module between $\QDMT(E)$ and $\bigoplus_{\alpha \in F} \QDMT(B)$ by using Brown's $I$-function, and split the problem into two steps.
We deal with these two steps in Section 5 and Section 6 respectively.
The proof of Theorem \ref{thm:main1} (1) is in Section 5, and that of Theorem \ref{thm:main1} (2) (Theorem \ref{thm:main2}) is in Section 6.

\bigskip

\noindent
{\bf Acknowledgments.}
The author gratefully acknowledges many helpful suggestions of Professor Hiroshi Iritani during the preparation of the paper.
This work was supported by JSPS KAKENHI Grant Number JP21J23023.

\section{Genus-Zero Gromov-Witten Theory}

In this section, we review non-equivariant/equivariant quantum cohomology, quantum $D$-module and related objects without proofs. 

\subsection{Quantum Cohomology}

Let $M$ be a smooth projective variety, and $\overline{\cM}_{0,n}(M,d)$ be the moduli space of stable maps from $n$-pointed genus-zero nodal curves to $M$ representing the class $d\in H_2(M,\Z)$.
The moduli space $\overline{\cM}_{0,n}(M,d)$ is constructed as a Deligne-Mumford stack, and is endowed with the evaluation maps $\ev_i \colon \overline{\cM}_{0,n}(M,d) \to M$ $(1 \leq i \leq n)$. 
There exist natural tautological line bundles $\cL_i$ $(1 \leq i \leq n)$ on $\overline{\cM}_{0,n}(M,d)$, whose fiber at a point $[ f \colon (\Sigma,x_1, \dots, x_n) \to M ]$ is given by the cotangent line to $\Sigma$ at $x_i$.
Let $\psi_i = c_1 (\cL_i)$.
For $\alpha_1, \dots, \alpha_n \in H^*(M)$ and $k_1, \dots, k_n \in \Z_{\geq 0}$, we define \emph{genus-zero descendant Gromov-Witten invariants} of $M$ as follows:
\[
\corr{\alpha_1\psi^{k_1},\dots,\alpha_n\psi^{k_n}}^M_{0,n,d}:=\int_{[\overline{\cM}_{0,n}(M,d)]^{\virt}}\prod^n_{i=1}\ev^*_i(\alpha_i)\psi^{k_i}_i
\]
where $[\overline{\cM}_{0,n}(M,d)]^{\virt}$ is the virtual fundamental class.

Choose a homogeneous basis $\phi_0 = 1, \phi_1, \dots, \phi_r, \phi_{r+1}, \dots, \phi_s$ of $H^*(M)$ such that $\phi_1, \dots, \phi_r$ are nef classes in $H^2(M)$ and generate $H^2(M)$.
Let $\tau_0, \dots, \tau_s$ be linear coordinates of $H^*(M)$ dual to the basis $\{ \phi_j \}^s_{j=0}$, and let $Q_1, \dots, Q_r$ be the Novikov variables dual to $\{ \phi_j \}^r_{j=1}$.

Let $\corr{\cdot,\cdot}^M$ be the Poincar\'{e} pairing on $H^*(M)$:
\[
\corr{\alpha,\beta}^M := \int_M\alpha\cup\beta.
\]
We write $Q^D := \prod^r_{j=1} Q^{ \corr{\phi_j,D} }_j$ for $D \in H_2(M,\Z)$.
Then Gromov-Witten invariants define the \emph{quantum product}:
\[
\corr{\alpha\star_M\beta,\gamma}^M := \sum_{n\geq0} \sum_{D \in \Eff(M)} \frac{Q^D}{n!} \corr{\alpha,\beta,\gamma,\tau,\dots,\tau}^M_{0,n+3,D} \in \C[\![Q ]\!] [\![ \tau]\!]
\]
Using the string equation and the divisor equation, we can rewrite this formula as
\[
\corr{\alpha\star_M\beta,\gamma}^M = \sum_{n\geq0} \sum_{D \in \Eff(M)} \frac{ Q^D e^{ \corr{\tau,D} } }{n!} \corr{\alpha,\beta,\gamma,\tau',\dots,\tau'}^M_{0,n+3,D}
\]
where $\tau' = \sum^s_{j = r+1} \tau_j \phi_j$.
This is a redundant parametrization in the sense that $Q_j$ always appears with $e^{\tau_j}$. 
To avoid the redundancy, we also use the following modified variables:
\[
x = (x_0, x_1, \dots, x_r, x_{r+1}, \dots, x_s) := (\tau_0, Q_1 e^{\tau_1}, \dots, Q_r e^{\tau_r}, \tau_{r+1}, \dots, \tau_s).
\]
Writing in these variables, we can see that $\corr{\alpha\star_M\beta,\gamma}^M \in \C [\![x]\!]$.
We define the \emph{quantum cohomology} of $M$ as follows:
\[
QH^*(M) := ( H^*(M)\otimes\C[\![x]\!], \star_M ).
\]
This is an associative and commutative graded ring with extra grading
\[
\deg x_j =
\begin{cases}
\corr{ c_1(M), \phi^j }^M		&1 \leq j \leq r	\\
1 - \frac{1}{2} \deg \phi_j	& \text{otherwise}
\end{cases}
\]
where $\{ \phi^j \}^s_{j=0}$ is the basis of $H^*(M)$ dual to $\{ \phi_j \}^s_{j=0}$ with respect to the Poincar\'{e} pairing of $M$.

We can generalize these constructions to the equivariant setting \cite{Givental-equivariant}.
Let $T_\C = (\C^\times)^N$ be an algebraic torus and $T = (S^1)^N$ be the maximal compact subgroup of $T_\C$.
Suppose that $M$ is endowed with an algebraic $T_\C$-action.
Then the equivariant cohomology $H^*_T(M)$ is a free $H^*_T(\pt)$-module \cite{Goresky} and there exists a non-canonical isomorphism $H^*_T(M) \cong H^*(M) \otimes H^*_T(\pt)$.
The torus $T$ naturally acts on the moduli space $\overline{\cM}_{0,n}(M,d)$, and we can define the \emph{equivariant genus-zero Gromov-Witten invariants} by using the equivariant integrals; for $\alpha_1, \dots, \alpha_n \in H^*(M)$ and $k_1, \dots, k_n \in \Z_{\geq 0}$, define
\[
\corr{\alpha_1\psi^{k_1},\dots,\alpha_n\psi^{k_n}}^{T,M}_{0,n,d}:=\int^T_{[\overline{\cM}_{0,n}(M,d)]^{\virt}}\prod^n_{i=1}\ev^*_i(\alpha_i)\psi^{k_i}_i \in H^*_T(\pt)
\]
Note that these invariants can be computed by the virtual localization formula.

Let $\corr{\cdot,\cdot}^M_T$ be the equivariant Poincar\'{e} pairing on $H^*_T(M)$.
(We will omit the subscript $T$ when no confusion is likely.)
We take a $H^*_T(\pt)$-basis of $H^*_T(M)$ by choosing a homogeneous lift of $\{ \phi_j \}^s_{j=0}$, and denote it by the same symbol.
As in the non-equivariant case, we define $H^*_T(\pt)$-linear coordinates $\tau_0, \dots, \tau_s$ on $H^*_T(M)$ and introduce the modified variables $x$ by the same formula.
We define the \emph{equivariant quantum product} as
\[
\corr{\alpha\star_M\beta,\gamma}^M_T := \sum_{n\geq0} \sum_{D \in \Eff(M)} \frac{Q^D}{n!} \corr{\alpha,\beta,\gamma,\tau,\dots,\tau}^{T,M}_{0,n+3,D} \in H^*_T(\pt) [\![x]\!],
\]
and define the \emph{equivariant quantum cohomology} of $M$ as
\[
QH^*_T(M) := (H^*_T(M) [\![ x]\!], \star_M).
\]
Note that $QH^*_T(M)$ again becomes an associative and commutative graded ring with the same grading.

\subsection{Quantum D-modules}
\label{subsect:QDM}

Let us consider the equivariant situation.
Since $H^*_T(M)$ is a free $H^*_T(\pt)$-module, we can regard it as a trivial vector bundle $\bH_T(M)$ over $\Spec H^*_T(\pt)$ of rank $\dim H^*(M)$.
We consider the vector bundle:
\begin{equation}
\label{eq:bundle}
\bH_T(M) \times_{\Spec H^*_T(\pt)} \Spec ( H^*_T(\pt) [x,z] ) \to \Spec ( H^*_T(\pt) [x,z] )
\end{equation}
The equivariant quantum product defines a meromorphic flat partial connection on this bundle restricted to $\Spec ( H^*_T(\pt) [z] [\![x]\!] )$, which is called \emph{equivariant quantum connection}, as follows: 
\begin{align*}
\nabla^M :=& d + z^{-1} \sum^s_{j=0} (\phi_j \star_M) d \tau_j	\\
=& d + z^{-1} d x_0 + z^{-1} \sum^r_{j = 1} (\phi_j \star_M) \frac{d x_j}{x_j} + z^{-1} \sum^s_{j = r+1} (\phi_j \star_M) d x_j
\end{align*}
where the first term $d$ is the relative differential over $\Spec H^*_T(\pt) [z]$.
The term "partial" implies that $\nabla^M$ does not contain the derivatives with respect to $z$ or the equivariant parameters. 
Let us translate the connection $\nabla^M$ as the operators acting on the module $H^*_T(M) [z] [\![ x ]\!]$, which is the module of the sections of the bundle \eqref{eq:bundle} restricted to $\Spec ( H^*_T(\pt) [z] [\![ x ]\!] )$.
We can regard $\nabla^M_{ z \parfrac{}{\tau_j} }$ ($0 \leq j \leq s$) as the operator on this module:
\[
\nabla^M_{ z \parfrac{}{\tau_j} } \colon H^*_T(M) [z] [\![ x ]\!] \to H^*_T(M) [z] [\![ x ]\!] 
\]

We introduce the \emph{Euler vector field} $\cE^M$: 
\begin{equation}
\label{eq:Eulervf}
\cE^M := C_0(\lambda) \parfrac{}{\tau_0} + \sum^r_{j=1} C_j \parfrac{}{\tau_j} + \sum^s_{j=0} \left( 1 - \frac{1}{2} \deg \phi_j \right) \tau_j \parfrac{}{\tau_j} + \sum^N_{i=1} \lambda_i \parfrac{}{\lambda_i}
\end{equation}
where $\lambda_1,\dots,\lambda_N$ are $T$-equivariant parameters and $C_0(\lambda), C_1, \dots, C_r$ are the coefficients of the $T$-equivariant first Chern class of $M$:
\[
c^T_1(M) = C_0(\lambda) \phi_0 + \sum^r_{j=1} C_j  \phi_j.
\]
This is a vector field on $\bH_T(M)$.
The \emph{grading operator} $\Gr^M \in \End_\C ( H^*_T(M) [z] [\![ x ]\!] )$ is defined by
\begin{equation}
\label{eq:Grading}
\Gr^M := \cE^M + z \parfrac{}{z} + \Gr^M_0 - \frac{1}{2} \dim M
\end{equation}
where $\Gr^M_0$ is the $H^*_T(\pt) [z] [\![x]\!]$-linear operator given by
\[
\Gr^M_0 (\phi_j) = \left( \frac{1}{2} \deg \phi_j \right) \phi_j.
\]

\begin{remark}
{\rm (1)}
Some literature uses a different Euler vector field from $\cE^M$.
In \cite{Coates-Corti-Iritani-Tseng-hodge}, for example, Euler vector field is defined as \eqref{eq:Eulervf} without the first term, and this makes a subtle difference in the grading structures.
For instance, $\tau_0$ is homogeneous with respect to the grading in \cite{Coates-Corti-Iritani-Tseng-hodge}, whereas $\tau_0 - C_0(\lambda) \log C_0(\lambda)$ is homogeneous in our situation.

{\rm (2)}
The grading operator here is also different from an ordinary one.
In many papers, it is defined as \eqref{eq:Grading} without the last term.
Since we need to compare grading operators for manifolds of different dimensions, it turns out that it is natural to subtract the term $\frac{1}{2} \dim M$.
\end{remark}

Finally, we define the $H^*_T(\pt) [\![ x ]\!]$-bilinear pairing on $H^*_T(M) [z] [\![ x ]\!]$, which is denoted by $P^M$, induced by the Poincar\'{e} paring:
\[
P^M( \alpha(z), \beta(z) ) := \corr{\alpha(-z),\beta(z)}^M \in H^*_T(\pt) [z] [\![ x ]\!].
\]
The pairing $P^M$ satisfies the following equations:
\begin{align*}
\parfrac{}{\tau_j} P^M( \alpha(z), \beta(z) ) &= P^M\left( \left(\nabla^M |_{z \to -z}\right)_{\parfrac{}{\tau_j}} \alpha(z), \beta(z) \right) + P^M\left( \alpha(z), \nabla^M_{\parfrac{}{\tau_j}} \beta(z) \right)	\\
\left( \cE^M + z \parfrac{}{z} \right) P^M( \alpha(z), \beta(z) ) &= P^M\left( \Gr^M \alpha(z), \beta(z) \right) + P^M\left( \alpha(z), \Gr^M \beta(z) \right) 
\end{align*}

We define the \emph{equivariant quantum $D$-module} of $M$ as the quadruple
\[
\QDMT(M) := ( H^*_T(M) [z] [\![ x ]\!], \nabla^M, \Gr^M, P^M ).
\]
By taking the non-equivariant limit, we also define the (non-equivariant) \emph{quantum $D$-module} of $M$ as
\[
\QDM(M) := ( H^*(M) [z] [\![ x ]\!], \nabla^M, \Gr^M, P^M ).
\]

In the non-equivariant case, we can extend the connection $\nabla^M$ to the $z$-direction.
By abuse of notation, we denote by $\cE^M$ the non-equivariant limit of \eqref{eq:Eulervf}, and let $E^M$ be the element of $\QDM(M)$ corresponding to $\cE^M$:
\[
E^M = c_1(M) + \sum^s_{j=0} \left( 1 - \frac{1}{2} \deg \phi_j \right) \tau_j \phi_j.
\]
We define the connection in the $z$-direction as
\[
\nabla_{z \parfrac{}{z}} := z \parfrac{}{z} - \frac{1}{z} ( E^M \star_M ) + \Gr^M_0 - \frac{1}{2} \dim M = \Gr^M - \nabla_{\cE^M}.
\]

We consider a fundamental solution $\bfL_M \in \End_{H^*_T(\pt)} (H^*_T(M)) [\![ Q ]\!] [\![ \tau, z^{-1} ]\!]$ of the connection $\nabla^M$, which is uniquely determined by the following conditions \cite{Givental-elliptic}:
\begin{equation}
\label{eq:fund_sol}
\bfL_M (\tau,z) \circ \nabla^M_{\parfrac{}{\tau_j}} = \parfrac{}{\tau_j} \circ \bfL_M(\tau,z) \qquad 0 \leq j \leq s, 
\end{equation}
\[
e^{-\tau/z} \bfL_M \in \End_{H^*_T(\pt)} (H^*_T(M)) [\![ x, z^{-1} ]\!], \qquad (e^{-\tau/z} \bfL_M) |_{Q = 0} = \Id.
\]
This fundamental solution can be given explicitly by Gromov-Witten invariants: 
\begin{align*}
\bfL_M(\tau,z) \phi_i 
:=& \, \phi_i + \sum_{n,D,j} \frac{Q^D}{n!} \corr{\phi_i, \frac{\phi^j}{z - \psi}, \tau, \dots, \tau}^{T,M}_{0,n+2,D} \phi_j \\
=& \, e^{\tau/z} \phi_i + \sum_{D \neq 0} \sum_{n,j} \frac{Q^D e^{ \corr{\tau,D} } }{n!} \corr{\phi_i, \frac{\phi^j}{z - \psi}, \tau', \dots, \tau'}^{T,M}_{0,n+2,D} e^{ (\tau - \tau')/z } \phi_j
\end{align*}
The second equality follows from the string  equation and the divisor equation.
The matrix $\bfL_M$ satisfies the following unitarity condition \cite{Givental-elliptic}.
\begin{proposition}
\label{prop:unitarity}
\[
\pair{\bfL_M(\tau,-z)\alpha}{\bfL_M(\tau,z)\beta}^M = \pair{\alpha}{\beta}^M	\qquad	\alpha,\beta \in H^*_T(M)
\]
\end{proposition}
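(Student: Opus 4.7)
The plan is to prove the unitarity identity by verifying it at the basepoint $(\tau, Q) = (0,0)$ and then showing that the $\tau$- and $Q$-derivatives of the left-hand side vanish. At the basepoint, the normalization $(e^{-\tau/z}\bfL_M)|_{Q=0} = \Id$ forces $\bfL_M|_{\tau = 0, Q = 0} = \Id$, so the left-hand side reduces to $\pair{\alpha}{\beta}^M$ and matches the right-hand side.

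To establish $\tau$-constancy, I would differentiate directly. The flatness relation $\bfL_M(\tau, z) \circ \nabla^M_{\partial/\partial\tau_j} = \partial/\partial\tau_j \circ \bfL_M(\tau, z)$ is equivalent to $\partial_{\tau_j}\bfL_M(\tau, z) = z^{-1}\bfL_M(\tau, z)(\phi_j \star_M)$, which gives
\[
\partial_{\tau_j}\pair{\bfL_M(-z)\alpha}{\bfL_M(z)\beta}^M = z^{-1}\Bigl[\pair{\bfL_M(-z)\alpha}{\bfL_M(z)(\phi_j \star_M \beta)}^M - \pair{\bfL_M(-z)(\phi_j \star_M \alpha)}{\bfL_M(z)\beta}^M\Bigr].
\]
The vanishing of this bracket is \emph{not} a formal consequence of the Frobenius property of $\star_M$ alone, because $\phi_j \star_M$ does not commute with $\bfL_M(\pm z)$. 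Its proof genuinely uses descendant Gromov--Witten theory: substituting the descendant expansion of $\bfL_M$, expanding the geometric series $(\pm z - \psi)^{-1}$, and applying the splitting axiom on $\overline{\cM}_{0, n+2}(M, D)$ together with the topological recursion relation identifies the two terms with boundary contributions that cancel pairwise. Once $\tau$-independence is obtained, $Q$-independence follows analogously by passing to the modified coordinates $x_j$ and invoking the divisor equation.

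The main obstacle is the combinatorial bookkeeping in the splitting-axiom / TRR step. A more conceptual route, due to Givental, interprets $\bfL_M$ as a symplectic transformation of the symplectic vector space $\mathcal{H} := H^*_T(M)(\!(z^{-1})\!)$ equipped with the pairing $\Omega(f, g) := \Res_{z = \infty}\pair{f(-z)}{g(z)}^M\,dz$; the claim then amounts to $\bfL_M^*\Omega = \Omega$, which is a structural feature of the semi-infinite Lagrangian cone carrying the $J$-function of $M$. I would follow whichever formulation meshes best with the paper's later use of Givental's formalism.
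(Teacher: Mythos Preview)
The paper does not supply its own proof of this proposition; it is stated with a citation to \cite{Givental-elliptic}, so there is nothing on the paper's side to compare against.

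Your strategy---verify at $(\tau,Q)=(0,0)$, then show $\tau$-constancy---is the standard one and is correct, but you misidentify the crux of the constancy step. Writing $G(\alpha,\beta) := \pair{\bfL_M(-z)\alpha}{\bfL_M(z)\beta}^M$, your differentiation gives the first-order \emph{linear} system
\[
\partial_{\tau_j} G(\alpha,\beta) \;=\; z^{-1}\bigl[\,G(\alpha,\phi_j\star_M\beta) - G(\phi_j\star_M\alpha,\beta)\,\bigr]
\]
in the finitely many unknowns $G(e_a,e_b)$. The constant pairing $G \equiv \pair{\cdot}{\cdot}^M$ solves this system precisely because $\star_M$ is Frobenius, and it matches the actual $G$ at the initial point; uniqueness of solutions to a linear ODE over $H^*_T(\pt)[\![x]\!]$ then forces equality everywhere. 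No splitting axiom or TRR enters: once $\bfL_M$ is taken to satisfy \eqref{eq:fund_sol}---which is how the paper \emph{characterizes} $\bfL_M$, the explicit descendant formula being stated only afterwards---unitarity is a formal consequence of the Frobenius property of $\star_M$. Your observation that ``$\phi_j\star_M$ does not commute with $\bfL_M(\pm z)$'' is true but beside the point: one never needs the bracket to vanish by direct inspection, only that $\pair{\cdot}{\cdot}^M$ solves the same ODE as $G$ with the same initial value. The TRR-and-splitting route you sketch would also work, but it is substantial overkill and suggests a misconception about where the content of the statement lies.
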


We define the \emph{$J$-function} of $M$ as:
\begin{align*}
J_M (\tau,z) &= z \bfL_M(\tau,z) \phi_0	\\
&=z+\tau+\sum_{n,D,j} \frac{Q^D}{n!} \corr{\frac{\phi^j}{z-\psi},\tau,\dots,\tau}^{T,M}_{0,n+1,D} \phi_j
\end{align*}
From \eqref{eq:fund_sol}, we have
\[
\bfL_M =
\begin{pmatrix}
|				&		&|				\\
\parfrac{}{\tau_0}J_M	&\cdots	&\parfrac{}{\tau_s}J_M	\\
|				&		&|				\\
\end{pmatrix}
\]

We end this section by introducing an analytic version of $\QDMT(M)$.
Recall that $\lambda = (\lambda_1,\dots,\lambda_N)$ are the $T$-equivariant parameters.
Suppose that the structure constants of $\star_M$ are convergent and analytic over an (analytic) open neighborhood $U$ of the origin in $\Spec \C[\lambda,x]$.
Then $\QDMT(M)$ can be extended to $U \times \Spec \C[z]$.
We define the \emph{analytic equivariant quantum $D$-module} as
\[
\anQDMT(M) := ( H^*_T(M) \otimes \cO^{\mathrm{an}}_U[z], \nabla^M, \Gr^M, P^M ).
\]
Here we regard $H^*_T(M) \otimes \cO^{\mathrm{an}}_U[z]$ as a sheaf of modules on $U$.
We denote by $\oanQDMT(M)$ the $z$-adic completion of $\anQDMT(M)$:
\[
\oanQDMT(M) := ( H^*_T(M) \otimes \cO^{\mathrm{an}}_U[\![z]\!], \nabla^M, \Gr^M, P^M ).
\]
The non-equivariant version of these objects, denoted by $\anQDM(M)$ and $\oanQDM(M)$ respectively, are defined as their non-equivariant limits.

Although we do not give a strict definition of the categories in which each quantum $D$-module lives, we will consider direct sums and pullbacks of them in Section 4 and later.
If there is a morphism $f \colon \cB \to \Spec( H^*_T(\pt)[\![x]\!] )$, we can naturally pullback $\QDM_T(M)$ along this morphism except for the grading structure.
In order to define a pullback of $\Gr^M$, let $\cB$ be endowed with a vector field $\cE^\cB$ and assume that $f_* \cE^\cB = \cE^M$ as global sections of $f^*\Theta$, where $\Theta$ is the tangent sheaf of $\Spec( H^*_T(\pt)[\![x]\!] )$.
Then we can define the pullback of $\Gr^M$ by
\[
f^*\Gr^M = \cE^\cB + z\parfrac{}{z} + \Gr^M_0.
\]
It is easy to see that $f^* \circ \Gr^M = f^*\Gr^M \circ f^*$.
We can also define a direct sum of $D$-modules over a fixed base space with a fixed Euler vector field.

\subsection{Givental Lagrangian Cone}

We describe a symplecto-geometric interpretation of genus-zero Gromov-Witten theory introduced by Givental \cite{Givental-quantization, Givental-symplectic}.
We will follow the formalism introdued in \cite{Coates-Corti-Iritani-Tseng-hodge}.
Let $\cH$ be the vector space
\[
\cH := H^*_T(M) \otimes_{H^*_T(\pt)} \Frac (H^*_T(\pt)) (\!( z^{-1} )\!) [\![ Q ]\!].
\]
Define the symplectic form $\Omega$ on $\cH$ as
\[
\Omega (f,g) = - \Res_{z = \infty} \pair{f(-z)}{g(z)}^M dz.
\]
We consider the following decomposition $\cH = \cH_+ \oplus \cH_-$:
\[
\cH_+ = H^*_T(M) \otimes_{H^*_T(\pt)} \Frac (H^*_T(\pt)) [z] [\![ Q ]\!], \quad \cH_- = z^{-1} \cdot H^*_T(M) \otimes_{H^*_T(\pt)} \Frac (H^*_T(\pt)) [\![ z^{-1} ]\!] [\![ Q ]\!]
\]
Since these are isotropic subspaces, we can identify $\cH$ with the cotangent bundle $T^* \cH_+$.
We define the equivariant Givental cone $\cL_M \subset \cH$ as the graph of the differential of the genus-zero descendant potential $\cF$ defined over the formal neighborhood of $-z \in \cH_+$.
Let $t_1, \dots, t_N$ be formal variables.
A $\Frac(H^*_T(\pt)) [\![ Q ]\!] [\![ t_1,\dots,t_N ]\!]$-valued point on $\cL_M$ is a point of the form:
\begin{equation}
\label{eq:point_cL}
- z + \bt(z) + \sum_{n,D,j} \frac{Q^D}{n!} \corr{ \frac{\phi^j}{-z-\psi}, \bt(\psi), \dots, \bt(\psi) }^{T,M}_{0,n+1,D} \phi_j
\end{equation}
where $\bt(z) \in \cH_+ [\![ t_1, \dots, t_N ]\!]$ with $\bt(z) |_{(Q,t)=(0,0)} = 0$.
For instance, the $J$-function $J_M(\tau,-z)$ (with sign of $z$ flipped) is a $\Frac(H^*_T(\pt)) [\![ Q ]\!] [\![ \tau ]\!]$-valued point on $\cL_M$.
We will refer to $\cL_M$ as Lagrangian cone, and will write, for example, $J_M (\tau,-z)$ lies in $\cL_M$.

The potential $\cF_M$ satisfies certain partial differential equations called the topological recursion relations, the string equation and dilaton equation.
These equations lead to special geometric properties of $\cL_M$ \cite{Givental-symplectic}.
In fact, $\cL_M$ becomes a over-ruled Lagrangian cone, that is, each tangent space $L$ of $\cL_M$ is tangent to $\cL_M$ along $\cL_M \cap L = zL$.
In particular, $\cL_M$ satisfies the following properties.

\begin{lemma}[{\cite{Givental-symplectic}}]
\label{lem:cL}
The tangent space $L$ of $\cL_M$ at an $\Frac(H^*_T(\pt)) [\![ Q ]\!] [\![ t ]\!]$-valued point is freely generated over $\Frac(H^*_T(\pt)) [z] [\![ Q ]\!] [\![ t ]\!]$ by the derivatives of the $J$-function $J_M(\tau,-z)$ for some $\tau \in H^*_T(M) \otimes_{H^*_T(\pt)} \Frac(H^*_T(\pt)) [\![ Q ]\!] [\![ t ]\!]$ with $\tau |_{(Q,t) = 0}$.
\end{lemma}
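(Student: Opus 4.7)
The plan is to leverage the overruled Lagrangian cone structure of $\cL_M$, which is a consequence of the topological recursion relations, string equation, and dilaton equation satisfied by the genus-zero descendant potential $\cF_M$. Concretely, every tangent space $L$ to $\cL_M$ satisfies $\cL_M \cap L = zL$, so $L$ is stable under multiplication by $z$ and $\cL_M$ is swept out by the rulings $zL$.

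First, I would verify that $J_M(\tau,-z)$ is a $\Frac(H^*_T(\pt))[\![Q]\!][\![\tau]\!]$-valued point of $\cL_M$ by matching it with the general form \eqref{eq:point_cL} upon setting $\bt(z)=\tau$ (constant in $z$). Next, using the defining property $\bfL_M \circ \nabla^M_{\partial/\partial\tau_j} = \partial/\partial\tau_j \circ \bfL_M$ together with $J_M(\tau,z)=z\bfL_M(\tau,z)\phi_0$ and the identity $\phi_j\star_M\phi_0=\phi_j$, a short manipulation yields
\[
\parfrac{J_M(\tau,-z)}{\tau_j} = \bfL_M(\tau,-z)\phi_j \qquad (0\leq j\leq s).
\]
Let $L_\tau$ denote the $\Frac(H^*_T(\pt))[z][\![Q]\!][\![\tau]\!]$-submodule of $\cH$ generated by these $s+1$ elements. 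Since $e^{-\tau/z}\bfL_M$ is $z^{-1}$-triangular with identity leading term, these generators are linearly independent over $\Frac(H^*_T(\pt))(\!(z^{-1})\!)[\![Q]\!]$, so $L_\tau$ is free of rank $\dim H^*_T(M)$; Proposition \ref{prop:unitarity} then forces $L_\tau$ to be Lagrangian in $\cH$.

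I would then identify $L_\tau$ with the tangent space $T_{J_M(\tau,-z)}\cL_M$. The derivatives $\partial_{\tau_j}J_M(\tau,-z)$ are tangent by construction; multiplying them by $z^k$ and using the overruled property produces tangent vectors that match precisely the first-order variations of $\bt(z)$ by $z^k\phi_j$ in \eqref{eq:point_cL}. This gives an inclusion $L_\tau\subseteq T_{J_M(\tau,-z)}\cL_M$, and Lagrangianicity of both sides forces equality. Finally, for a general point $f\in\cL_M$, the overruled property reduces the problem to computing the tangent space at the apex of the ruling through $f$, and the dilaton-shift structure $f=-z+\bt(z)+O(z^{-1})$ allows one to identify this apex with $J_M(\tau,-z)$ for a suitable $\tau$.

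The main obstacle lies in this last step: verifying that every ruling subspace of $\cL_M$ admits a unique apex of the form $J_M(\tau,-z)$. This amounts to showing that the non-constant-in-$z$ part of the descendant insertion $\bt(z)$ can always be absorbed by the $\C[z]$-action on the ruling, reducing an arbitrary descendant parameter to a purely primary class $\tau$. This absorption is precisely where the dilaton equation (controlling the coefficient of $-z$) and the string equation (controlling the constant term) play their roles, and constitutes the heart of Givental's argument in \cite{Givental-symplectic}.
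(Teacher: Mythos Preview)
The paper does not supply its own proof of this lemma; it is quoted as a known structural fact about the Givental cone, with a bare citation to \cite{Givental-symplectic}. There is therefore nothing in the paper to compare your argument against.

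Your sketch is a faithful outline of the standard argument and is essentially correct. One refinement: in the final paragraph you attribute the reduction of an arbitrary descendant input $\bt(z)$ to a primary class $\tau$ to the string and dilaton equations. In fact the topological recursion relations (which you correctly invoke at the outset) are the main engine here: TRR is what forces the higher descendant tangent vectors $\partial f/\partial t^j_k$ for $k\ge 1$ to lie in the $\Frac(H^*_T(\pt))[z][\![Q]\!][\![t]\!]$-span of the primary ones $\partial f/\partial t^j_0$, i.e.\ what makes $L$ a $\C[z]$-module in the first place. The string equation then shows that translation along the ruling $zL$ stays in $\cL_M$, and the dilaton equation yields the cone property. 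With that adjustment, your identification of $\tau$ as the primary part of $\bt$ and the resulting equality $T_f\cL_M = L_\tau$ is exactly how the proof goes in the cited reference.
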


\section{Toric Bundles}
\label{sect:toricbundles}

In this section we introduce toric bundles and summarize the result of Brown \cite{Brown}.
We refer to \cite{Audin} for toric geometry. 

\subsection{Construction}
\label{subsect:construction}
We first review the basic material on projective toric manifolds as found in \cite{Audin}. 
Then we explain the construction of toric bundles and introduce some important equivariant chomology classes on them introduced by Brown \cite{Brown}.

Let $X$ be a smooth projective toric manifold. 
It is known that $X$ can be obtained as the Geometric Invariant Theory (GIT) quotient of a vector space $\C^N$ by a torus $K_\C=(\C^\times)^k$, where $K_\C$ acts on $\C^N$ via an injective homomorphism $K_\C \to T_\C=(\C^\times)^N$.
That is to say, there exists an open dense subset $\cU_X \subset \C^N$, defined as the complement of a union of certain coodinate subspaces, on which $K_\C$ acts freely such that $X=\cU_X/K_\C$. 
We may assume that $\cU_X$ is 2-connected (i.e. the complement of $\cU_X$ does not contain a hyperplane) by changing the GIT data if necessary.
We denote by $K=(S^1)^k$ and $T=(S^1)^N$ the maximal compact subgroups of $K_\C$ and $T_\C$ respectively.
The tori $T_\C$ and $T$ naturally act on $X$.

Under the assumption that $\cU_X$ is 2-connected, we have an identification of $H^2(X,\R)$ with $\Lie^*(K)$ which induces an isomorphism between 
$H^2(X,\Z)$ and the integral character lattice $\Lie^*_\Z(K)$ of $\Lie^*(K)$. 
This identification is given as follows: 
\begin{equation}
\label{eq:identification}
\Lie^*_\Z(K) \xrightarrow{\cong} H^2(X,\Z) \qquad m \mapsto c_1(\bL_m)
\end{equation}
where $\bL_m$ is the line bundle constructed as the quotient $(\cU_X\times\C)/K_\C\to\cU_X/K_\C=X$ of the space $\cU_X\times\C$ on which $K_\C$ acts by $t\cdot(z,u)=(t\cdot z,m(t)^{-1}\cdot u)$. 
We sometimes regard a class in $H^2(X)$ as an element of $\Lie^*(K)$, and vice versa.

We endow $\bL_m$ with the $T$-action $s \cdot [z,u] = [s \cdot z, u]$.
Let $-p_1,\dots,-p_k \in H^2_T (X)$ denote the $T$-equivariant first Chern classes of the $T$-line bundles $\bL_{e_1},\dots,\bL_{e_k}$ associated with the standard basis of $\Z^k = \Lie^*_\Z (K)$.
We define the integer $k\times N$ matrix $\bc=(c_{ij})$ to be the matrix representing the projection $\R^N=\Lie^*(T)\to\R^k=\Lie^*(K)$.
Then the $T$-equivariant Poincar\'e dual $u_j$ of the toric divisor $\{ [z_1,\dots,z_N] \mid z_j=0 \} \subset X$ is given by
\[
u_j=\sum^k_{i=1}c_{ij}p_i-\lambda_j\qquad 1\leq j\leq N
\]
where $\lambda_j$ represents the $j$-th equivariant parameter of $H^*_T(\operatorname{pt})=\C[\lambda_1,\dots,\lambda_N]$.
The equivariant first Chern class of $X$ is given by $c^T_1(X) = u_1 + \dots + u_N$. 

Let $F$ be the set of fixed points for the torus action on $X$.
Each fixed point $[z_1,\dots,z_N] \in F$ determines a $k$-element subset $\alpha = \{ i \mid z_i \neq 0 \}$ of $\{ 1,\dots,N \}$.
By this correspondence, we identify an element of $F$ with the corresponding $k$-element subset $\alpha$. 

Let $\cN$ be the cone in $\Lie^*(K)$ corresponding to the nef cone of $X$ via the identification \eqref{eq:identification}. 
We can describe $\cN$ in terms of $\bc$ and $F$ as 
\begin{align*}
\cN&=\bigcap_{\alpha\in F}\bc_\alpha(\R^k_+)
\end{align*}
where $\bc_\alpha$ is the $k\times k$ submatrix $(c_{ij}\,|\,1\leq i\leq k, j\in\alpha)$ of $\bc$. 
Changing the identification of $K$ with $(S^1)^k$ if necessary, we may assume that the classes $p_i$ are nef. 
This condition can be rephrased in the following way.

\begin{lemma}
\label{lem:nefness}
The classes $p_1,\dots,p_k$ are nef if and only if all entries of $\bc^{-1}_\alpha$ are non-negative for all $\alpha\in F$. 
\end{lemma}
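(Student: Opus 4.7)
The plan is to unwind the definitions and observe that the statement is essentially a linear-algebraic rewriting of $p_i \in \cN$ for every $i$.

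First I would fix conventions: under the identification $\Lie^*(K) \cong \R^k$ with the standard basis, the class $p_i \in H^2(X)$ corresponds to the standard basis vector $e_i$. The nef cone is given as
\[
\cN = \bigcap_{\alpha \in F} \bc_\alpha(\R^k_+),
\]
and the classes $p_1, \dots, p_k$ are nef if and only if $e_i \in \cN$ for every $i = 1, \dots, k$. So the task reduces to characterizing when $e_i \in \bc_\alpha(\R^k_+)$ for all $\alpha \in F$ and all $i$.

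Next I would note that $\bc_\alpha$ is invertible for every $\alpha \in F$. This is the standard fact that a point of $X$ fixed under the torus action corresponds to a $k$-element subset $\alpha \subset \{1,\dots,N\}$ such that the columns $\{c_{\cdot,j}\}_{j \in \alpha}$ of $\bc$ form a basis of $\R^k$; equivalently, the $k \times k$ submatrix $\bc_\alpha$ has nonzero determinant. Given this invertibility, the condition $e_i \in \bc_\alpha(\R^k_+)$ is equivalent to $\bc_\alpha^{-1} e_i \in \R^k_+$, i.e.\ the $i$-th column of $\bc_\alpha^{-1}$ has all non-negative entries.

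Finally I would assemble the two directions. If $p_1, \dots, p_k$ are all nef, then for every $\alpha$ and every $i$ the $i$-th column of $\bc_\alpha^{-1}$ is non-negative, so the whole matrix $\bc_\alpha^{-1}$ is non-negative. Conversely, if $\bc_\alpha^{-1}$ has only non-negative entries for every $\alpha$, then $\bc_\alpha^{-1} e_i \in \R^k_+$ for every $i$ and every $\alpha$, whence $e_i = \bc_\alpha(\bc_\alpha^{-1} e_i) \in \bc_\alpha(\R^k_+)$ for every $\alpha$, i.e.\ $p_i \in \cN$. This gives both implications. I do not anticipate any serious obstacle; the only subtlety worth being explicit about is the invertibility of $\bc_\alpha$, which is a direct consequence of $\alpha$ being a torus-fixed point under the GIT construction.
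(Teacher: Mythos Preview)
Your proposal is correct and follows essentially the same argument as the paper: both reduce $p_i \in \cN$ to $\bc_\alpha^{-1}(p_i) \in \R^k_+$ for all $\alpha$, and identify this vector with the $i$-th column of $\bc_\alpha^{-1}$. You are slightly more explicit about the invertibility of $\bc_\alpha$, which the paper takes for granted.
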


\begin{proof}
The cone $\cN$ contains the class $p_i$ if and only if $\bc_\alpha(\R^k_+)$ does for all $\alpha\in F$.
The latter conditon is equivalent to $\bc^{-1}_\alpha(p_i)$ being contained in $\R^k_+$ for all $\alpha\in F$.
Since $\bc^{-1}_\alpha(p_i)$ is the $i$-th column of the matrix $\bc^{-1}_\alpha$, this proves the equivalence. 
\end{proof}

Let $B$ be a smooth projective manifold and $L_1,\dots,L_N$ be line bundles over $B$. 
The torus $T_\C$ naturally acts fiberwise on the vector bundle $\bigoplus^N_{i=1} L_i$. 
Performing a fiberwise GIT quotient by the torus $K_\C$, we obtain a \emph{toric bundle} $E\rightarrow B$ with fiber $X$. 
More concretely, there exists an open dense subset $\cU_E$ of $\bigoplus^N_{i=1} L_i$, whose fibers can be identified with $\cU_X$, such that $E=\cU_E/K_\C$.
Note that $E$ is endowed with a fiberwise action of $T$. 

We can construct $T$-line bundles over $E$ from elements of $\Lie^*_\Z(K)$ as in the case of the toric manifold $X$.
For $m \in \Lie^*_\Z (K)$, we define the line bundle $\bL^E_m$ to be the quotient $(\cU_E \times \C) / K_\C \to \cU_E / K_\C = E$, with $K_\C$-action on $\cU_E \times \C$ given by $t \cdot (z,u) = (t \cdot z, m(t)^{-1} \cdot u)$.
We endow this line bundle with the $T$-action $s \cdot [z,u] = [s \cdot z, u]$. 

We denote by $-P_1,\dots ,-P_k\in H^2_T(E)$ the $T$-equivariant first Chern classes of the $T$-line bundles $\bL^E_{e_1},\dots,\bL^E_{e_k}$ corresponding to the standard basis of $\Z^k = \Lie^*_\Z (K)$.
They restrict to $-p_1,\dots ,-p_k$ on the fibers. 
Define
\[
\Lambda_j := c_1 (L^\vee_j) \in H^2(B), \quad U_j := \sum^k_{i=1} c_{ij} P_i - \Lambda_j - \lambda_j \in H^2_T(E) \qquad 1 \leq j \leq N.
\]
We remark that the $T$-equivariant first Chern class of $L^\vee_j$ is $\Lambda_j+\lambda_j$, and that $U_j$ is the $T$-equivariant first Chern class of the $T$-line bundle $( \cU_E \times_B L_j ) / K_\C \to \cU_E / K_\C$.
In other words, $U_j$ is the $T$-equivariant Poincar\'e dual of the divisor $( \cU_E \cap \bigoplus_{i\neq j} L_i ) / K_\C$ of $E$.
This divisor restricts to a toric divisor of the fibers, and hence we call it a \emph{toric divisor} of $E$.

From the construction each fixed locus is isomorphic to the base $B$ and is labeled by $\alpha\in F$, a fixed point of the fiber $X$. 
Let $\alpha\colon B\to E$ denote the embeddings of the fixed component labeled by $\alpha$ and $B^\alpha$ be the image.
We will describe the restrictions $P^\alpha_i:=\alpha^*P_i$ and $\aUj$ of the classes $P_i$ and $U_j$ to the fixed locus $B^\alpha$. 
Since $B^\alpha$ is disjoint from the $j$-th toric divisor for $j\in\alpha$, it follows that:
\[
\aUj=\sum^k_{i=1}c_{ij}P^\alpha_i-\Lambda_j-\lambda_j=0\qquad j\in\alpha.
\]
These equations determine $P^\alpha_i$ for $1\leq i\leq k$ in terms of $\Lambda_j$ and $\lambda_j$, and thus $\aUj$ for $j \notin \alpha$. 
A straightforward calculation shows the following:

\begin{lemma}
\label{lem:computePU}
\begin{alignat*}{2}
P^\alpha_i&=\sum_{n\in\alpha}(\bc^{-1}_\alpha)_{ni}(\Lambda_n+\lambda_n)&&1\leq i\leq k,\\
\aUj&=\sum_{n\in\alpha}(\bc^{-1}_\alpha\bc)_{nj}(\Lambda_n+\lambda_n)-\Lambda_j-\lambda_j&\qquad&j\notin\alpha.
\intertext{In particular, when $B$ is a point, }
p^\alpha_i:=\alpha^*p_i&=\sum_{n\in\alpha}(\bc^{-1}_\alpha)_{ni}\lambda_n&&1\leq i\leq k,\\
\auj&=\sum_{n\in\alpha}(\bc^{-1}_\alpha\bc)_{nj}\lambda_n-\lambda_j&&j\notin\alpha. 
\end{alignat*}
\end{lemma}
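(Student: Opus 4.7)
The lemma is essentially a linear-algebra exercise unpacking the vanishing relations $\aUj = 0$ for $j \in \alpha$, so my plan is to set up the right linear system, invert it, and then substitute.

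First, I would recall the defining relations. By the very definition of $U_j$, we have $U_j = \sum_{i=1}^k c_{ij} P_i - \Lambda_j - \lambda_j$, and hence restricting to $B^\alpha$ gives $\aUj = \sum_{i=1}^k c_{ij} P^\alpha_i - \Lambda_j - \lambda_j$. As noted in the text, $\aUj = 0$ for $j \in \alpha$ because $B^\alpha$ is disjoint from the $j$-th toric divisor. Reading the indices $j \in \alpha$ as a $k$-tuple, this is exactly the square linear system
\[
\sum_{i=1}^k c_{ij} P^\alpha_i = \Lambda_j + \lambda_j \qquad (j \in \alpha),
\]
whose coefficient matrix is $\bc_\alpha^{T}$, i.e.\ the transpose of the $k \times k$ minor $(c_{ij})_{1 \leq i \leq k,\,j\in\alpha}$ of $\bc$.

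Second, I would note that $\bc_\alpha$ is invertible. This is a standard consequence of $\alpha$ being a $T$-fixed point of the smooth toric manifold $X$: the characters $\{ e_j^* \circ \bc^T \}_{j \in \alpha}$ of $K_\C$ coming from the coordinates $\{z_j\}_{j\in\alpha}$ must form a basis of $\Lie^*(K)$ in order for $K_\C$ to act freely on the $k$-dimensional orbit through the fixed point. Inverting $\bc_\alpha^{T}$ in the system above yields
\[
P^\alpha_i \;=\; \sum_{n \in \alpha} (\bc_\alpha^{-T})_{in}\,(\Lambda_n + \lambda_n) \;=\; \sum_{n \in \alpha} (\bc_\alpha^{-1})_{ni}\,(\Lambda_n + \lambda_n),
\]
which is precisely the first displayed formula.

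Third, I would substitute this expression for $P^\alpha_i$ back into $\aUj = \sum_{i=1}^k c_{ij} P^\alpha_i - \Lambda_j - \lambda_j$ for $j \notin \alpha$, swap the order of summation, and recognize the inner sum $\sum_i (\bc_\alpha^{-1})_{ni} c_{ij}$ as the $(n,j)$-entry of the product $\bc_\alpha^{-1} \bc$. This gives the second formula. The two point-base specializations follow by setting $\Lambda_n = 0$ everywhere, since then $P_i$ restricts to $p_i$ and $U_j$ restricts to $u_j$. There is no real obstacle; one only needs to be careful to distinguish $\bc_\alpha$ from $\bc_\alpha^{T}$ when inverting, and to cite the invertibility of $\bc_\alpha$ as coming from smoothness of $X$ at the fixed point labeled by $\alpha$.
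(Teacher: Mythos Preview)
Your proof is correct and is precisely the ``straightforward calculation'' the paper alludes to without writing out; the paper gives no further argument beyond that phrase. Your extra care about the invertibility of $\bc_\alpha$ and the transpose bookkeeping is fine and matches the conventions used elsewhere in the paper (e.g.\ Lemma~\ref{lem:nefness}).
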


We shall use the classes $P_i$ to measure the degree of curves.
For this purpose, it is convenient to choose a GIT presentation for $E$ such that $P_i$'s are nef.
The set of classes of effective curves on $E$ is given by
\[
\Eff (E) = \sum_{\alpha \in F} \alpha_* \Eff (B) + \Eff (X).
\] 
We have already assumed that $P_i |_X = p_i$ is nef, and therefore $P_i$ is nef if and only if $P_i$ pairs non-negatively with curves in $B^\alpha$, or equivalently, $P^\alpha_i$ is nef for all $\alpha \in F$.

It is easy to see that, for any line bundle $A$ over the base $B$ and for any $x \in \Z^k = \Lie_\Z (K)$, the different set of line bundles over $B$
\[
L'_i = L_i \otimes A^{\otimes \left( \bc^* (x)_i \right)} \qquad 1 \leq i \leq N
\]
gives rise to the same toric bundle $E$, where $\bc^* \colon \Z^k = \Lie_\Z (K) \to \Z^N = \Lie_\Z (T)$ is the dual map of $\bc$.
The classes $P_i$ are then transformed into
\[
P'_i = P_i - x_i \pi^* c_1 (A), \qquad 1\leq i\leq k
\]
where $\pi \colon E \to B$ is the projection.
By choosing $A$ to be ample and $x_i \in \Z$ to be sufficiently negative, we can ensure that $\alpha^* P'_i$ is nef for all $\alpha \in F$.
Hence we may assume the following:

\begin{assumption}
\label{assump:ampleP}
The classes $P_1,\dots,P_k$ are all nef on $E$.
\end{assumption}

Applying the localization theorem \cite{Atiyah-Bott,Berline-Vergne} (see also \cite[Theorem VI.3.16]{Audin}) for equivariant cohomology to the toric bundle $E$, we obtain the following:

\begin{proposition}
\label{prop:localization}
The $\Frac(H^*_T(\pt))$-module homomorphism 
\[
\Xi \colon H^*_T (E) \otimes \Frac (H^*_T (\pt)) \to \bigoplus_{\alpha \in F} H^* (B) \otimes \Frac (H^*_T (\pt)) \qquad x \mapsto (\alpha^* x)_{\alpha \in F} 
\]
is an isomorphism.
The inverse map is given by
\[
(x_\alpha)\mapsto\sum_{\alpha\in F}\alpha_*\left(\frac{x_\alpha}{e_T\left(N^\alpha\right)}\right)
\]
where $e_T\left(N^\alpha\right)=\prod_{j\notin\alpha}\aUj$ is the $T$-equivariant Euler calss of the normal bundle $N^\alpha$ 
to $B^\alpha$ in $E$. 
\end{proposition}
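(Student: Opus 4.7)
The statement is the standard Atiyah–Bott–Berline–Vergne localization theorem applied to the toric bundle $E \to B$, so the plan is to verify its hypotheses in this setting and then deduce the explicit formula for the inverse of $\Xi$.

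First, I would identify the $T$-fixed locus of $E$. Since $T$ acts fiberwise with fixed points in each fiber $X$ indexed by $F$, the fixed locus of $E$ is the disjoint union $\bigsqcup_{\alpha \in F} B^\alpha$ with each $B^\alpha$ identified with $B$ via the section $\alpha\colon B \to E$. The normal bundle $N^\alpha$ of $B^\alpha$ in $E$ restricts to the fiber as the tangent space $T_\alpha X$, whose $T$-weights are the classes $\alpha^* u_j$ for $j \notin \alpha$; this is consistent with the description $e_T(N^\alpha) = \prod_{j\notin \alpha} \alpha^* U_j$ obtained from the splitting principle together with Lemma \ref{lem:computePU}.

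Second, I would establish that $e_T(N^\alpha)$ is invertible in $H^*(B) \otimes \Frac(H^*_T(\pt))$. Using Lemma \ref{lem:computePU}, each factor $\alpha^* U_j$ has the form $\sum_{n\in\alpha}(\bc_\alpha^{-1}\bc)_{nj}(\Lambda_n + \lambda_n) - \Lambda_j - \lambda_j$. Setting $\Lambda = 0$ recovers the class $\alpha^* u_j$, which is a nonzero linear polynomial in $\lambda$ because the tangent weights at a fixed point of a smooth toric variety are nonzero. Thus each $\alpha^* U_j$ is a unit plus a nilpotent in $H^*(B) \otimes \Frac(H^*_T(\pt))$ (the augmentation ideal of $H^*(B)$ is nilpotent), hence invertible; so is the product $e_T(N^\alpha)$.

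Third, I would apply the ABBV localization theorem: it gives that the restriction map to the fixed locus becomes an isomorphism after inverting the Euler classes of the normal bundles, which here amounts to tensoring with $\Frac(H^*_T(\pt))$. The standard projection formula yields the claimed inverse: for the proposed map $\Phi(x_\alpha) = \sum_\alpha \alpha_*(x_\alpha/e_T(N^\alpha))$, the self-intersection formula $\alpha^*\alpha_* = e_T(N^\alpha) \cup (-)$, together with $\alpha^* \beta_* = 0$ for $\alpha \neq \beta$ (since $B^\alpha \cap B^\beta = \emptyset$), immediately gives $\Xi \circ \Phi = \Id$, while injectivity of $\Xi$ (which follows from localization since no torsion survives over $\Frac(H^*_T(\pt))$, as $H^*_T(E)$ is free of rank $|F|\cdot \dim H^*(B)$ when pulled back to the fraction field) gives $\Phi \circ \Xi = \Id$.

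The only nontrivial point is the invertibility of $e_T(N^\alpha)$, and even that reduces to the toric-fiber statement combined with the nilpotence of the augmentation ideal of $H^*(B)$; the rest is a direct quotation of ABBV. I do not anticipate any real obstacle.
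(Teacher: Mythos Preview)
Your proposal is correct and follows exactly the approach the paper takes: the paper simply states this proposition as a direct application of the Atiyah--Bott/Berline--Vergne localization theorem (citing \cite{Atiyah-Bott,Berline-Vergne} and \cite[Theorem VI.3.16]{Audin}) without further proof. You have supplied the routine verifications (identification of the fixed locus, invertibility of $e_T(N^\alpha)$ via Lemma~\ref{lem:computePU} and nilpotence of the augmentation ideal of $H^*(B)$, and the self-intersection/projection formula computation) that the paper leaves implicit.
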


\subsection{Gromov-Witten Theory for Toric Bundles}
We choose a homogeneous $\C$-basis $\phi_0 = 1, \phi_1, \dots, \phi_r, \phi_{r+1}, \dots, \phi_s$ of $H^*(B)$ such that $\phi_1, \dots, \phi_r$ are nef classes in $H^2 (B)$, and $\phi_{r+1}, \dots, \phi_s$ are basis of $H^{>2} (B)$.
Let $\tau_0, \dots, \tau_s$ be linear coordinates of $H^*(B)$ dual to the basis $\{ \phi_j \}^s_{j=0}$, and let $Q_1, \dots, Q_r$ be the Novikov variables dual to $\{ \phi_j \}^r_{j=1}$.
We decompose the $J$-function of $B$ as
\[
J_B(\tau,z)=\sum_{D\in\Eff(B)}J_D(\tau,z)Q^D.
\] 
The \emph{$I$-function} of $E$ is defined to be:
\[
I_E(t,\tau,z) := e^{Pt/z} \sum_{d \in \Z^k, D \in \Eff(B)} \frac{ \prod^N_{j=1} \prod^{0}_{m=-\infty} (U_j+mz) }{ \prod^N_{j=1} \prod^{U_j(\cD)}_{m=-\infty} (U_j+mz) } J_D(z,\tau) Q^D q^d e^{dt}
\]
where:
\begin{align*}
&q:=(q_1,\dots,q_k),			&&t:=(t_1,\dots,t_k),	&&U_j(\cD):=\sum^k_{i=1}c_{ij}d_i-\Lambda_j(D), 	\\
&q^d:=\prod^k_{i=1}q^{d_i}_i, 	&&Pt:=\sum^k_{i=1}P_it_i, 	&&dt:=\sum^k_{i=1}d_it_i.
\end{align*}

\begin{remark}
We consider that $(q,Q)$ represent Novikov variables for $E$ by associating $q_1^{P_1(\cD)} \cdots q_k^{P_k(\cD)} \cdot Q_1^{\pi^*\phi_1(\cD)} \cdots Q_r^{\pi^*\phi_r(\cD)}$ to $\cD \in \Eff(E)$.
We also remark that this representation depends on the choice of $P_1, \dots, P_k$.
From the nef assumption on $P$, it follows that $P(\cD) \in \Z^k_{\geq 0}$ and the summation range of $I_E$ reduces to $d \in \Z^k_{\geq 0}$. 
\end{remark}

\begin{remark}
\label{rem:IE}

We will regard $I_E$ as an element of several different spaces depending on the context.
By the localization isomorphism in Proposition \ref{prop:localization}, we see that
\[
I_E \in H^*_{T \times \C^\times_z} (E)_{\loc} [\![q,Q,t,\tau]\!] := \left( H^*_{T \times \C^\times_z} (E) \otimes_{H^*_{T \times \C^\times_z} (\pt)} \Frac ( H^*_{T \times \C^\times_z} (\pt) ) \right) [\![q,Q,t,\tau]\!]
\]
since $\alpha^* I_E$ belongs to $H^*(B) \otimes \Frac ( H^*_{T \times \C^\times_z} (\pt) ) [\![q,Q,t,\tau]\!]$ for all $\alpha \in F$.
Expanding the term $1/(U_j+mz)$ as $\sum^\infty_{n=0}(-U_j)^n(mz)^{-n-1}$, or equivalently, taking the Laurent expansion at $z = \infty$, we can regard $I_E$ as an element of the space $H^*_T(E) (\!(z^{-1})\!) [\![q,Q,t,\tau]\!]$.
On the other hand, by taking the Laurent expansion at $z=0$, we can regard $I_E$ as an element of the space $H^*_T(E)_{\loc} (\!(z)\!) [\![q,Q,t,\tau]\!]$.
In the statement of the following theorem, we take the first viewpoint.

\end{remark}

\begin{theorem}[{\cite[Theorem 1]{Brown}}]
The family $(t,\tau) \mapsto I_E(t,\tau,-z) \in H^*_T(E) (\!(z^{-1})\!) [\![q,Q,t,\tau]\!]$ lies in the Lagrangian cone $\cL_E$.
\end{theorem}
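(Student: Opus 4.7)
The plan is to characterize points on the Lagrangian cone $\cL_E$ by their behavior under $T$-equivariant localization and then verify that $I_E(t,\tau,-z)$ meets this characterization. Since $T$ acts on $E$ with fixed loci $B^\alpha$ indexed by $\alpha \in F$, Proposition \ref{prop:localization} tells us that any element $\mathbf{f}(z)$ with $H^*_T(E)_\loc$-coefficients is determined by the tuple $(\alpha^*\mathbf{f}(z))_{\alpha \in F}$. Givental's localization criterion for $\cL_E$ (the one that underpins his own and Brown's proofs of the toric mirror theorem) states that $\mathbf{f}(z) \in \cL_E$ precisely when each restriction decomposes as $\alpha^*\mathbf{f}(z) = e_T(N^\alpha)\cdot T_\alpha(z) + R_\alpha(z)$, with $T_\alpha(z)$ representing a tangent vector to $\cL_B$ at some point $\chtau_\alpha$, and with polar part $R_\alpha(z)$ whose poles lie only at $z = -\aUj/m$ (for $j \notin \alpha$, $m \in \Z_{>0}$) and whose residues are dictated by a recursion linking $\alpha$ to each neighboring fixed locus $\alpha' = (\alpha \setminus \{i\}) \cup \{j\}$ via the $T$-invariant $\PP^1$ joining them.

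First I would compute $\alpha^* I_E$ explicitly using Lemma \ref{lem:computePU} to replace each $P_i$ and $U_j$ by its restriction. The hypergeometric factor then splits according to whether $j \in \alpha$ (in which case $\aUj = 0$ and the ratio collapses to a product of monomials $\prod_m(mz)$ or its reciprocal) or $j \notin \alpha$ (in which case the ratio is a rational function in $z$ whose poles match exactly those demanded by the criterion). This decomposition isolates the regular and polar parts of $\alpha^* I_E$ in a form amenable to comparison.

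Next I would verify the residue recursion. Fixing $\alpha \in F$, $j \notin \alpha$, and $m \in \Z_{>0}$, the residue of $\alpha^* I_E$ at $z = -\aUj/m$ comes from a single vanishing factor in the denominator; one must recognize it as proportional to a term in $(\alpha')^* I_E$ obtained by the lattice shift $d \mapsto d + m\bd_{\alpha \to \alpha'}$, where $\bd_{\alpha \to \alpha'} \in \Z^k$ encodes the edge between $\alpha$ and $\alpha'$. The required identity among Pochhammer-type products is the $H^*(B)$-valued generalization of the combinatorial identity at the heart of Givental's mirror theorem for the fiber $X$, which one should be able to prove by the same rearrangement of products but keeping $\Lambda_j$ alongside $\lambda_j$ throughout.

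Finally I would show that the regular part, once divided by $e_T(N^\alpha) = \prod_{j \notin \alpha} \aUj$, represents a tangent vector to $\cL_B$. Expanding the hypergeometric factor at $z = \infty$ and absorbing $e^{Pt/z}$ via the divisor equation for $J_B$, one should recover an expression of the form $J_B(\chtau_\alpha, -z)$ modified by a scalar, with $\chtau_\alpha$ an explicit analytic change of variable in $(t,\tau,\lambda)$; invoking Lemma \ref{lem:cL} then places this regular part into $T_{J_B(\chtau_\alpha,-z)}\cL_B$. The main obstacle I expect is the residue recursion: while the scalar identity is classical for toric manifolds, promoting it to $H^*(B)$-coefficients requires tracking how $J_B(\tau,z)$ transforms under the lattice shift in the Novikov variables $Q$ induced by passing from $\alpha$ to $\alpha'$, which forces a simultaneous use of the divisor equation, the string equation, and the precise combinatorics of the toric edge.
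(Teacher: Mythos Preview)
Your outline follows Brown's strategy as the paper describes it: characterize points of $\cL_E$ by their restrictions to the fixed loci $B^\alpha$ and then verify two conditions (a tangency condition and a residue recursion) on each $\alpha^* I_E$. The residue-recursion part of your sketch is accurate in spirit. The tangency condition, however, is misstated. Brown's characterization does not ask that the regular part of $\alpha^*\mathbf{f}$, divided by $e_T(N^\alpha)$, be tangent to $\cL_B$; it requires that $\alpha^*\mathbf{f}$, expanded as a Laurent series at $z=\infty$, lie in the Lagrangian cone of $B$ \emph{twisted} by the equivariant inverse Euler class of $N^\alpha$ in the sense of Coates--Givental~\cite{Coates-Givental}. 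The twisted and untwisted cones differ by the full quantum Riemann--Roch operator (a symplectomorphism of Givental space involving all the higher Chern characters of $N^\alpha$), not merely by the factor $e_T(N^\alpha)$. So your proposed verification---divide by $e_T(N^\alpha)$ and recognize $J_B(\chtau_\alpha,-z)$---would not go through as written; the leading Euler-class factor is only the degree-zero piece of what must be undone.

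The paper points out that Brown establishes this twisted-cone condition precisely via Theorem~\ref{thm:Brown}: the identity $\alpha^* I_E \cdot \hGamma_\alpha(\lambda,z) = \hcI_\alpha(qe^t,\lambda+z\partial_\Lambda,z)\, J_B(Q_\alpha,\tau_\alpha,z)$ expresses $\alpha^* I_E$ as $\hGamma_\alpha^{-1}$ applied to something obtained from $J_B$ by differential operators. Since $\hGamma_\alpha$ is exactly the Coates--Givental operator implementing the twist by $N^\alpha$, and since such differential operators keep one on $\cL_B$ (cf.\ Lemma~\ref{lem:CG}), this identity is what places $\alpha^* I_E$ on the twisted cone. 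Your last paragraph gestures toward an expansion at $z=\infty$ but omits this operator; supplying it is the missing step.
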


This theorem was established by showing that $I_E$ satisfies certain conditions that characterize elements lying on the cone $\cL_E$. 
The conditions require that the localizations $\alpha^*I_E$ must lie in the Lagrangian cone of $B$ twisted by the normal bundle of $B^\alpha$ (in the sense of Coates-Givental \cite{Coates-Givental}) and that their residues at simple poles in $z$ satisfy certain recursion relations. 
Brown checked the former condition by using Theorem \ref{thm:Brown} below, which gives a relationship between $\alpha^*I_E$ and $J_B$.
This theorem and its variants play an important role in our proof of convergence.

\subsection{Equivariant Mirror of Toric Manifold}
\label{subsect:equivmirror}

We introduce a Landau-Ginzburg model associated to the toric manifold $X$.
We set $\cM = (\C^\times)^k_q \times \C^N_\lambda$ and $\hcM = \C^k_q \times \C^N_\lambda$.
Let $Y$ be the subvariety of $\cM \times (\C^\times)^N_\xi$ defined by the equations
\begin{equation}
\label{eq:subvar}
\prod^N_{j=1} \xi^{c_{ij}}_j = q_i \qquad 1 \leq i \leq k	
\end{equation}
where $\bc = (c_{ij})$ is the matrix appearing in the subsection \ref{subsect:construction}.
We denote by $Y_{q,\lambda}$ the fibers of the projection $Y \hookrightarrow \cM \times (\C^\times)^N_\xi \to \cM$.
Define the equivariant phase function $W(q, \lambda, \xi)$ on $Y$ to be
\[
W(q, \lambda, \xi):=\sum^N_{j=1} ( \xi_j + \lambda_j \log \xi_j ).
\]
This is a multi-valued analytic function on $Y$.
Let $W_{q,\lambda}$ denote the restriction of $W$ to the fiber $Y_{q,\lambda}$.

We introduce a critical branch $\rho^\alpha(q,\lambda)$ of the phase function $W_{q,\lambda}$ corresponding to a fixed point $\alpha \in F$. 
Let $\cD_\alpha \subset \C^N_\lambda$ be the subset defined by the equation $\prod_{j \notin \alpha} \alpha^* u_j = 0$, and $\cD$ be the union of $\{ \cD_\alpha \}_{\alpha \in F}$.
We take an open neighborhood $\cU$ of $\{ 0 \} \times (\C^N_\lambda \setminus \cD)$ in $\hcM$, and set $\cU^* = \cU \cap \cM$.

\begin{proposition}
\label{prop:rhoalpha}
Shrinking $\cU$ if necessary, we have a unique analytic family of critical points $\rho^\alpha (q,\lambda)$ of $W_{q,\lambda}$ parametrized by $(q,\lambda) \in \cU^*$ such that 
\[
\lim_{q \to 0} \rho^\alpha_j (q,\lambda) = \auj \qquad 1 \leq j \leq N.
\]
Moreover, $\rho^\alpha \colon \cU^* \to \C^N$ can be extended to an analytic function on $\cU$. 
\end{proposition}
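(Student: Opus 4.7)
The plan is to treat the critical-point equation as a system to which the implicit function theorem applies, after an auxiliary change of coordinates that regularizes the behaviour at $q=0$. Differentiating $W_{q,\lambda}$ along the tangent directions to $Y_{q,\lambda}$ with the aid of Lagrange multipliers for the constraints $\prod_j \xi_j^{c_{ij}} = q_i$ shows that critical points are precisely those $\xi$ of the form $\xi_j = \sum_i c_{ij} p_i - \lambda_j$ for some $p \in \C^k$ satisfying the same constraints. In view of Lemma \ref{lem:computePU}, the $\alpha$-branch should emanate from $p = (p^\alpha_i(\lambda))_i$ at $q=0$, so I would introduce $w \in \C^k$ via $p_i = p^\alpha_i(\lambda) + ((\bc_\alpha^T)^{-1} w)_i$, under which $\xi_j = w_j$ for $j \in \alpha$ and $\xi_j = \alpha^* u_j + h_j(w)$ for $j \notin \alpha$, where $h_j$ is a fixed linear form in $w$ vanishing at $w=0$.

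After taking logarithms of the constraints and applying $\bc_\alpha^{-1}$, the system can be rewritten, for $j \in \alpha$, as
\[
w_j \,=\, \Bigl(\prod_i q_i^{(\bc_\alpha^{-1})_{ji}}\Bigr) \cdot \prod_{j'\notin\alpha} \bigl(\alpha^* u_{j'} + h_{j'}(w)\bigr)^{-(\bc_\alpha^{-1}\bc)_{jj'}} .
\]
The structural fact that makes this useful is that $\bc_\alpha \in GL_k(\Z)$ by smoothness of $X$, while Assumption \ref{assump:ampleP} together with Lemma \ref{lem:nefness} gives $(\bc_\alpha^{-1})_{ji} \in \Z_{\geq 0}$; consequently $\prod_i q_i^{(\bc_\alpha^{-1})_{ji}}$ is an honest monomial in $q$ rather than a multi-valued fractional power. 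The substitution $w_j = \prod_i q_i^{(\bc_\alpha^{-1})_{ji}} \tilde w_j$ then converts the equations into a fixed-point problem $\tilde w = G(\tilde w, q, \lambda)$ in which $G$ is analytic on a neighborhood of $(\tilde w^*(\lambda),0)$, where $\tilde w^*_j(\lambda) := \prod_{j'\notin\alpha}(\alpha^* u_{j'})^{-(\bc_\alpha^{-1}\bc)_{jj'}}$ is well defined and non-vanishing precisely on $\C^N_\lambda \setminus \cD_\alpha$.

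Because the $\tilde w$-dependence of $G$ enters only through $h_{j'}\bigl(\prod_i q_i^{(\bc_\alpha^{-1})_{ji}} \tilde w_j\bigr)$, which vanishes at $q=0$, the derivative $\partial G/\partial \tilde w$ vanishes at $(\tilde w^*(\lambda),0)$, so the implicit function theorem produces a unique analytic $\tilde w(q,\lambda)$ on a neighborhood of $\{0\} \times (\C^N_\lambda \setminus \cD_\alpha)$ in $\hcM$. Substituting back gives $\rho^\alpha$ analytic on that neighborhood with the prescribed $q \to 0$ limit, and shrinking $\cU$ if necessary so that it sits inside the intersection of these neighborhoods over all $\alpha \in F$ yields the statement. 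Uniqueness of the critical branch with the given limit on $\cU^* = \cU \cap \cM$ is inherited from the uniqueness clause of the implicit function theorem applied to $\tilde w$.

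The main obstacle I anticipate is that, at $q=0$, the naive critical-point system escapes to the boundary of $(\C^\times)^N_\xi$ along the $j \in \alpha$ directions, so the implicit function theorem cannot be applied directly to the original equations. The regularizing substitution $w_j = \prod_i q_i^{(\bc_\alpha^{-1})_{ji}} \tilde w_j$ is both the key idea and the step whose single-valuedness relies crucially on the integrality (hence smoothness of $X$) and non-negativity (hence the nef choice of Assumption \ref{assump:ampleP}) of the entries of $\bc_\alpha^{-1}$.
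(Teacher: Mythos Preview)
Your proposal is correct and relies on the same structural fact as the paper---that the entries of $\bc_\alpha^{-1}$ are non-negative integers---but the parametrization is dual to the one the paper uses. The paper takes the $N-k$ coordinates $\{\xi_j\}_{j\notin\alpha}$ as primary (these have nonzero limit $\auj$ as $q\to 0$), expresses the remaining $\xi_n$ with $n\in\alpha$ by \eqref{eq:x_n}, observes that the critical equations $\xi_j\,\partial W^\alpha/\partial\xi_j=0$ are already regular at $q=0$, and invokes the implicit function theorem after checking the Hessian is non-degenerate. You instead pass through the Lagrange-multiplier form $\xi_j=\sum_i c_{ij}p_i-\lambda_j$ and take as primary variables the $k$ quantities $w_j=\xi_j$ for $j\in\alpha$; since these vanish at $q=0$, you need the additional rescaling $w_j=\bigl(\prod_i q_i^{(\bc_\alpha^{-1})_{ji}}\bigr)\tilde w_j$ before the implicit function theorem applies. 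The paper's coordinate choice is more economical (no rescaling step), while yours has the small bonus that the Jacobian of $\tilde w-G$ at $q=0$ is exactly the identity rather than merely nonsingular. One minor citation issue: the non-negativity you invoke is the nefness of $p_1,\dots,p_k$ on the fiber $X$ (assumed just before Lemma~\ref{lem:nefness}), not Assumption~\ref{assump:ampleP}, which concerns the $P_i$ on $E$; the latter implies the former via restriction, but is not the most direct reference.
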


Before proceeding to the proof, we introduce a coordinate chart of $Y$ associated with the fixed point $\alpha \in F$. 
Let $T_\alpha \cong (\C^\times)^{N-k}$ denote the algebraic torus with coordinates $\{\xi_j\}_{j \notin \alpha}$. 
Then the projection $Y \hookrightarrow \cM \times (\C^\times)^N_\xi \to \cM \times T_\alpha$ is an isomorphism and gives a coordinate chart of $Y$. 
In fact, we can express the coordinate $\xi_n$ with $n \in \alpha$ in terms of $\{ \xi_j \}_{j \notin \alpha}$ by solving the equations \eqref{eq:subvar} as follows. 
\begin{equation} 
\label{eq:x_n} 
\xi_n =  \prod^k_{i=1} \left( q_i \right)^{(\bc^{-1}_\alpha)_{ni}} \prod_{j \notin \alpha} \xi_j^{-(\bc^{-1}_\alpha\bc)_{nj}}
\end{equation} 
Here we note that the exponent $(\bc^{-1}_\alpha)_{ni}$ of $q_i$ in this expression is non-negative by Lemma \ref{lem:nefness} and that at least one of $(\bc^{-1}_\alpha)_{n1}, \dots, (\bc^{-1}_\alpha)_{nk}$ is positive.  
We also introduce a modified phase function $W^\alpha$ on $V$ adapted to this coordinate chart. 
\begin{align*}
W^\alpha(q,\lambda,\xi) 
:=& W(q,\lambda,\xi) - \sum_{i=1}^k p^\alpha_i \cdot \log q_i	\\
=& \sum_{j \notin \alpha} (\xi_j - \auj \cdot \log \xi_j) + \sum_{n\in\alpha} \xi_n
\end{align*}
When written in the coordinates $(q,\lambda, \{ \xi_j \}_{j \notin \alpha})$, $W^\alpha$ has a well-defined limit as $q_i \to 0$; this follows from the fact that $\xi_n$ with $n \in \alpha$ contains only non-negative powers of $q_i$ and the fact that the terms $\log q_i$ in $W^\alpha$ cancel by Lemma \ref{lem:computePU}. We also note that $W^\alpha|_{V_{q,\lambda}}$ and $W_{q,\lambda}$ have the same critical points in common. 

\begin{proof}[Proof of Proposition \ref{prop:rhoalpha}] 
We work with the coordinate chart $(q,\lambda,\{ \xi_j \}_{j \notin \alpha})$ of $V$. The critical points of $W_{q,\lambda}$ are given by the equations 
\[
0 = \xi_j \parfrac{W^\alpha}{\xi_j} = \xi_j - \auj - \sum_{n \in \alpha}  \left( \bc^{-1}_\alpha \bc \right)_{nj} \xi_n     \qquad j \notin \alpha 
\]
where $\xi_n$ with $n \in \alpha$ are expressed in terms of $\{\xi_j\}_{j \notin \alpha}$ as in \eqref{eq:x_n}. These equations make sense at the limit point $q=0$ and have a unique solution $(\rho^\alpha_j(0,\lambda))_{j \notin \alpha} \in T_\alpha$ when $\lambda \notin \cD_\alpha$: 
\[
\rho^\alpha_j(0,\lambda) = \auj. 
\]
The condition $\lambda \notin \cD_\alpha$ ensures that $\rho^\alpha_j (0,\lambda) = \auj$ is non-zero. 
We also have $\rho^\alpha_n (0,\lambda) = 0 = \alpha^*u_n$ for $n \in \alpha$. 
We can easily see that this critical point is non-degenerate by computing the Hessian matrix of $W^\alpha|_{q=0}$. 
Therefore, we can find a unique branch of (non-degenerate) critical points 
$\rho^\alpha(q,\lambda)$ which converges to $\rho^\alpha(0,\lambda)$ as $q \to 0$ and is defined over an open neighborhood $\cU$ of  $\{0\} \times ( \C^N_\lambda \setminus \cD )$ in $\hcM$.
\end{proof} 

Since $\{ \rho^\alpha (0,\lambda) \}_{\alpha \in F}$ differ from each other, the critical branches $\{ \rho^\alpha(q,\lambda) \}_{\alpha \in F}$ also do. 
For later use, we expand $\rho^\alpha(q,\lambda)$ with respect to $q$ and describe properties of its coefficients.

\begin{definition}
We define $\Ral$ to be the ring $\C [ \lambda, \, \{ (\auj)^\pm \}_{j \notin \alpha,\alpha \in F} \, ] \subset \C(\lambda)$.
\end{definition}

\begin{corollary}
\label{cor:expandrho}
The critical branch $\rho^\alpha_j (q,\lambda)$ admits a Taylor expansion of the form
\[
\rho^\alpha_j (q,\lambda) = \auj + \sum_{d\neq0} \rho^\alpha_{j,d} (\lambda) q^d
\]
with $\rho^\alpha_{j,d} (\lambda) \in \Ral$.
It is homogeneous of degree
1 with respect to the grading $\deg \lambda_j = 1$ and $\deg q_i = \int_{p_i} c_1(X)$. 
\end{corollary}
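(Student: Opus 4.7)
The plan is to combine the critical-point equations set up in the proof of Proposition \ref{prop:rhoalpha} with an iterative Taylor expansion in $q$, then finish by a direct degree count. Writing $\xi_j = \auj + \eta_j$ for $j \notin \alpha$, the critical-point equations become
\[
\eta_j = \sum_{n \in \alpha} (\bc^{-1}_\alpha \bc)_{nj}\, \xi_n, \qquad \xi_n = \prod^k_{i=1} q_i^{(\bc^{-1}_\alpha)_{ni}} \prod_{j' \notin \alpha} (\alpha^* u_{j'} + \eta_{j'})^{-(\bc^{-1}_\alpha \bc)_{nj'}}.
\]
By Lemma \ref{lem:nefness}, every exponent $(\bc^{-1}_\alpha)_{ni}$ is non-negative and at least one is strictly positive, so each $\xi_n$ with $n \in \alpha$ lies in the ideal $(q) \subset \C[\![q]\!]$. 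Since $\alpha^* u_{j'}$ is invertible in $\Ral$ for $j' \notin \alpha$, I expand each factor $(\alpha^* u_{j'} + \eta_{j'})^{-(\bc^{-1}_\alpha \bc)_{nj'}}$ as a binomial series in $\eta_{j'}/\alpha^* u_{j'}$, producing a fixed-point equation whose right-hand side is $q$-filtered. Iterating this equation determines the Taylor coefficients of $\eta_j$ order-by-order in $q$.

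By induction on $|d|$, each coefficient $\rho^\alpha_{j,d}(\lambda)$ obtained from the iteration is a polynomial in $\lambda$ and the inverses $\{(\alpha^* u_{j'})^{-1}\}_{j' \notin \alpha}$, hence lies in $\Ral$. To identify this formal series with the Taylor expansion of the analytic branch $\rho^\alpha$, I would invoke uniqueness: the non-degeneracy of the Hessian at $q=0$ established in the proof of Proposition \ref{prop:rhoalpha} implies that the implicit function theorem applies in the formal category as well, so both the iterative solution and the Taylor expansion of $\rho^\alpha$ solve the same critical-point equations with the same initial value $\rho^\alpha_j(0,\lambda) = \auj$, forcing them to coincide in $\Ral[\![q]\!]$.

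For the homogeneity I carry out a direct degree count. Assign $\deg \xi_j = 1$ for $j \notin \alpha$ (consistent with $\deg \auj = 1$) and $\deg q_i = \int_{p_i} c_1(X) = \sum_j c_{ij}$. Using the identity $(\bc^{-1}_\alpha \bc)_{nj} = \delta_{nj}$ for $j \in \alpha$ (immediate from the fact that $\bc_\alpha$ is the submatrix of $\bc$ with columns indexed by $\alpha$), one computes
\[
\deg \xi_n = \sum_i (\bc^{-1}_\alpha)_{ni} \sum_j c_{ij} - \sum_{j' \notin \alpha} (\bc^{-1}_\alpha \bc)_{nj'} = \sum_j (\bc^{-1}_\alpha \bc)_{nj} - \sum_{j' \notin \alpha} (\bc^{-1}_\alpha \bc)_{nj'} = 1,
\]
so every term in the critical-point equation is homogeneous of degree $1$. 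This forces each summand $\rho^\alpha_{j,d}(\lambda)\, q^d$ in the expansion of $\rho^\alpha_j - \auj$ to have total degree $1$.

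The main technical obstacle is the bookkeeping: one must verify that no factor outside the permitted set $\{\alpha^* u_{j'}\}_{j' \notin \alpha}$ ever arises in a denominator during the iteration. This is handled cleanly by the binomial expansion, since at each stage denominators come only from expanding $(\alpha^* u_{j'} + \eta_{j'})^{-\bullet}$ with $j' \notin \alpha$; crucially, no $\alpha^* u_n$ with $n \in \alpha$ (which vanishes at $q=0$ and is therefore not invertible in $\Ral$) ever appears in a denominator, because these enter only through the $\xi_n$ and there only in the numerator with non-negative $q$-exponents.
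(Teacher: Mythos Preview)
Your proposal is correct and follows essentially the same approach as the paper's own proof, which is a one-line remark: substitute the expansion into the critical-point equations $(\xi_j\,\partial W^\alpha/\partial\xi_j=0)_{j\notin\alpha}$ and solve inductively on $d$. You have simply fleshed out that sentence with the explicit binomial expansion, the verification that only $(\alpha^*u_{j'})^{-1}$ with $j'\notin\alpha$ appear as denominators, and the degree count showing $\deg\xi_n=1$ for $n\in\alpha$; all of this is exactly what the paper leaves implicit.
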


\begin{proof}
This follows immediately by substituting the above expansion into the equations $(\xi_j \partial W^\alpha / \partial \xi_j=0)_{j \notin \alpha}$ and solving it inductively on $d$.
\end{proof}

We introduce the following oscillating integrals:
\begin{align*}
\Gamma(\nu,z)
&:= \int_C e^{ (- \xi + \nu \log \xi)/z } d\log \xi \\
\cI_\alpha (q,\lambda,z)
&:= \int_{C^\alpha_{q,\lambda}\subset Y_{q,\lambda}} e^{W^\alpha(q,\lambda,\xi)/z} \bigwedge_{j \notin \alpha} d \log \xi_j
\end{align*}
where $C$ is a descending Morse cycle of $\Re ( ( -\xi + \nu \log \xi ) / z )$ from the unique critical point $\xi=\nu$, and $C^\alpha_{q,\lambda}$ is that of $\Re(W^\alpha/z)$ from the critical point $\rho^\alpha (q,\lambda)$. 
Note that $\Gamma(\nu,z)$ is related to the Euler's Gamma function, and that $\cI_\alpha (q,\lambda,z)$ is the equivariant mirror of the toric manifold $X$ introduced by Givental \cite{Givental-mirror}. 

We then consider their \emph{stationary phase asymptotics}. 
In general, oscillatory integrals admit asymptotic expansions of the form: 
\[
\int_C e^{f(x)/z}g(x)dx_1 \cdots dx_n \sim (-2\pi z)^{n/2} \frac{1}{ \sqrt{ \det ( f_{ij} (p) ) } } e^{f(p)/z} \sum_{n \geq 0} a_n z^n
\]
as $z \to 0$ along some angular sector, see \cite{Wasow}.
Here $C$ is a descending Morse cycle of $\Re(f)$ from a critical point $p$ and $( f_{ij}(p) ) = ( \partial_{x_i} \partial_{x_j} f(p) )$ is the Hessian matrix at $p$.
The coefficients $a_n$ can be determined algebraically from the Taylor coefficients of the functions $f(x), g(x)$ at $p$, see \cite[Section 6.2]{Coates-Corti-Iritani-Tseng-hodge} for details.
In general, the power series $\sum_{n \geq 0} a_n z^n$ does not converge and the product $e^{f(p)/z} \sum_{n \geq 0} a_n z^n$ does not make sense even as a Laurent series of $z$; it should be interpreted as $e^{f(p)/z} ( \sum^m_{n=0} a_n z^n + O(z^{m+1}) )$ for all $m \geq 0$.

Define $(-2 \pi z)^{(N-k)/2} \cdot \hcI_\alpha (q,\lambda,z)$ and $\sqrt{2 \pi z} \cdot \hGamma(\nu,z)$ to be the stationary phase asymptotics of the oscillating integrals $\cI_\alpha(q,\lambda,z)$ and $\Gamma(\nu,z)$ respectively. 
By explicit computation, one obtains that 
\[
\hGamma (\nu,z) = \frac{1}{ \sqrt{\nu} } e^{ ( \nu \log \nu - \nu ) / z } \exp \left[ \sum^\infty_{m=1} \frac{ B_{2m} }{ 2m(2m-1) } \left( \frac{z}{\nu} \right)^{2m-1} \right]
\]
where $B_{2m}$ are the Bernoulli numbers.
We set $\hGamma_\alpha(\lambda,z)$ to be $\prod_{j \notin \alpha} \hGamma(\aUj,-z)$.
Finally, we define $\partial_{\Lambda_j}$ to be the vector field on the $\tau$-space $H^*(B)$ satisfying $\partial_{\Lambda_j} \tau = \Lambda_j$. 
Brown showed that:

\begin{theorem}[{\cite[Corollary 10]{Brown}}]
\label{thm:Brown}
\[
\hcI_\alpha (qe^t, \lambda + z \partial_\Lambda, z) J_B (Q_\alpha, \tau_\alpha, z) = \alpha^*I_E (t,\tau,z) \hGamma_\alpha (\lambda,z)
\]
where the variables $Q_\alpha$ and $\tau_\alpha$ are related to $Q$ and $\tau$ by
\[
(Q_\alpha)^D = q^{P^\alpha(D)} Q^D \quad \text{for } D \in \Eff(B), \quad \tau_\alpha = \tau + \alpha^* t.
\]
\end{theorem}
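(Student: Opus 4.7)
The plan is a direct computation: expand both sides as formal power series in $(q,Q,t,\tau)$ and match them coefficient by coefficient.

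First I would compute the right-hand side $\alpha^*I_E$ by applying the restriction formulas of Lemma \ref{lem:computePU}. For $j\in\alpha$ the factor $\prod_{m=-\infty}^0(U_j+mz)/\prod_{m=-\infty}^{U_j(\cD)}(U_j+mz)$ becomes trivial, since $\alpha^*U_j=0$; for $j\notin\alpha$, it collapses to a finite product of linear forms in $(\Lambda_n+\lambda_n,z)$. This isolates the combinatorial factor that the LHS must reproduce.

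Next I would compute the stationary phase asymptotics $\hcI_\alpha(q,\lambda,z)$. In the chart $(q,\lambda,\{\xi_j\}_{j\notin\alpha})$ adapted to the critical branch $\rho^\alpha$, the modified phase $W^\alpha$ restricts at $q=0$ to the additive form $\sum_{j\notin\alpha}(\xi_j-\alpha^*u_j\log\xi_j)$. Hence $\cI_\alpha|_{q=0}$ factors as a product of one-dimensional Gamma-type integrals, and its stationary phase expansion yields $\hcI_\alpha|_{q=0}=\prod_{j\notin\alpha}\hGamma(\alpha^*u_j,z)$, which (up to the sign of $z$) agrees with $\hGamma_\alpha(\lambda,z)$. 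For $q\neq 0$ a perturbative expansion around $\rho^\alpha(q,\lambda)$ produces a formal power series in $q$ whose coefficients are rational in $\lambda$ and $z$.

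The third step is to apply $\hcI_\alpha(qe^t,\lambda+z\partial_\Lambda,z)$ to $J_B(Q_\alpha,\tau_\alpha,z)$. The specializations $\tau\mapsto\tau_\alpha$ and $Q\mapsto Q_\alpha$ insert the factors $q^{P^\alpha(D)}e^{\langle\alpha^*t,D\rangle}$ in each Novikov summand of $J_B$, accounting for the exponential $e^{Pt/z}$ and the $q^d$-weights in $I_E$. The substitution $\lambda_j\mapsto\lambda_j+z\partial_{\Lambda_j}$ interacts with $J_B$ via the divisor equation: acting on $Q^D e^{\langle\tau,D\rangle}$ it produces the shift $\lambda_j\mapsto\lambda_j+\Lambda_j+z\Lambda_j(D)$. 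Consequently every argument $\alpha^*u_j$ inside a $\hGamma$-factor of the asymptotic $\hcI_\alpha$ gets promoted to $\alpha^*U_j+z\cdot U_j(\cD)$. Using the formal functional relation $\hGamma(\nu+z,-z)=\nu^{-1}\hGamma(\nu,-z)$ (inherited from $\Gamma(s+1)=s\,\Gamma(s)$ via Stirling), the ratio $\hGamma(\alpha^*U_j+U_j(\cD)z,-z)/\hGamma(\alpha^*u_j,-z)$ telescopes into exactly the finite hypergeometric factor of $\alpha^*I_E$, while the denominator $\hGamma(\alpha^*u_j,-z)$ is what remains multiplied on the right as $\hGamma_\alpha(\lambda,z)$.

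The main obstacle is the bookkeeping for the noncommuting shift operator $\lambda+z\partial_\Lambda$ acting on all perturbative corrections of $\hcI_\alpha$ in $q$ and on the full Taylor expansion of $J_B$ in $\tau$; one must verify that the $\tau$-derivative terms produced by $\partial_\Lambda$ (beyond the divisor-equation contribution) assemble into the higher-$n$ Gromov--Witten correlators packaged in $J_D$ for $D\neq 0$. A cleaner alternative would be to show both sides satisfy the same first-order system of quantum differential equations in $(t,\tau)$ (using the GKZ system for $\hcI_\alpha$ and the quantum connection equations for $J_B$ on the LHS, and Brown's Lagrangian cone characterization on the RHS) with the same initial data at $q=0$, $t=0$; this reduces the claim to the base case handled in the second step.
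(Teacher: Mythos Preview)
The paper does not give its own proof of this statement; it is cited directly from \cite[Corollary 10]{Brown} and no argument is supplied. The closest the paper comes is Remark~\ref{rem:Brown}, which carries out in detail the manipulation you describe in your third step---decomposing $\Exc_\alpha(\lambda+z\partial_\Lambda,z)$ and showing how the divisor equation converts the shift $\lambda\to\lambda+z\partial_\Lambda$ acting on $J_B(Q_\alpha,\tau_\alpha,z)$ into the replacement $\alpha^*u_j\mapsto\alpha^*U_j$---but this is done only to identify the ambient ring in which the identity lives, not to prove the identity itself.

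Your primary plan (direct coefficient matching) is correct at $q=0$, where the factorization of $W^\alpha|_{q=0}$ into one-variable integrals reduces the identity to the Stirling functional equation for $\hGamma$. But the ``main obstacle'' you flag is genuine: for $q\neq0$ the critical point $\rho^\alpha(q,\lambda)$ moves, the perturbative coefficients $a_n^\alpha(q,\lambda)$ of $\hcI_\alpha$ have no closed form, and matching them term-by-term against the explicit hypergeometric coefficients of $\alpha^*I_E$ is not a realistic computation. Your ``cleaner alternative''---showing both sides solve the same first-order system in $(t,\tau)$ with matching initial data at $q=0$---is the workable route and is how Brown actually proceeds: both the oscillatory integral $\cI_\alpha$ and the hypergeometric series $I_E$ satisfy the same GKZ-type equations, and the substitution $\lambda\mapsto\lambda+z\partial_\Lambda$ intertwines these with the quantum differential equations for $J_B$. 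So your instinct to abandon the direct approach in favor of the differential-equation argument is the right one.
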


\begin{remark}
\label{rem:Brown}
{\rm (1)}
The above equation is slightly different from the one given by Brown.
This is due to the difference in the definition of $\hcI_\alpha (q,\lambda,z)$.
If we set $\hcI^{\text{Br}}_\alpha (q,\lambda,z)$ to be the stationary phase asymptotics appearing in \cite{Brown}, we have $\hcI^{\text{Br}}_\alpha (q,\lambda,z) = (-2 \pi z)^{(N-k)/2} e^{p^\alpha \log q / z} \hcI_\alpha (q,\lambda,z)$ and
\[
q^{-P^\alpha / z} \hcI^{\text{Br}}_\alpha (qe^t,\lambda + z \partial_\Lambda,z) J_B(Q,\tau,z) = (-2 \pi z)^{(N-k)/2} \hcI_\alpha (qe^t,\lambda + z \partial_\Lambda,z) J_B(Q_\alpha,\tau_\alpha,z).
\]

{\rm (2)}
Since both sides contain the terms $\log \lambda, \sqrt{\lambda}$ and $\sqrt{z}$, we should be careful where this equation is considered.
We should interpret them as elements of the space $\Exc_\alpha (\lambda + \Lambda,z) \cdot H^*(B) \otimes \Ral (\!(z)\!) [\![q,Q,t,\tau]\!]$, where
\[
\Exc_\alpha (\lambda,z) = \prod_{j \notin \alpha} \frac{1}{ \sqrt{\auj} } e^{ ( \auj - \auj \log \auj ) / z }. 
\]
Since $\auj |_{\lambda \to \lambda + \Lambda} = \aUj$ (Lemma \ref{lem:computePU}), we have
\[
\Exc_\alpha (\lambda + \Lambda,z) = \prod_{j \notin \alpha} \frac{1}{ \sqrt{\aUj} } e^{ ( \aUj - \aUj \log \aUj ) / z }.
\]
Although the product of $\Exc_\alpha (\lambda + \Lambda,z)$ and an element of $H^*(B) \otimes \Ral (\!(z)\!) [\![q,Q,t,\tau]\!]$ does not make sense as a formal power series, we can naturally factor out $\Exc_\alpha (\lambda + \Lambda,z)$ from both sides and the identity makes sense in the ring $H^*(B) \otimes \Ral (\!(z)\!) [\![q,Q,t,\tau]\!]$. 

We explain this interpretation for each side.
For the right-hand side, we regard $\alpha^*I_E$ as an element of $H^*(B) \otimes \Ral (\!(z)\!) [\![q,Q,t,\tau]\!]$ by taking the Laurent expansion at $z=0$ (see Remark \ref{rem:IE}), and $\Exc(\lambda + \Lambda,z)^{-1} \hGamma_\alpha(\lambda,z)$ as an element of $H^*(B) \otimes \Ral(\!(z)\!)$ naturally.
For the left-hand side, we give a detailed computation, which we also need in Section \ref{subsect:estimateM}. 
Write the stationary phase asymptotics $\hcI_\alpha(q,\lambda,z)$ as
\begin{align*}
\hcI_\alpha(q,\lambda,z) 
&= \frac{1}{\sqrt{\det(W_{ij}(\rho^\alpha))}} e^{W^\alpha(q,\lambda,\rho^\alpha)/z} \sum_{n\geq0} a^\alpha_n(q,\lambda) z^n	\\
&= \Exc_\alpha(\lambda,z) e^{F^\alpha(q,\lambda)/z} \sum_{n\geq0} A^\alpha_n(q,\lambda) z^n
\end{align*}
where we set
\[
F^\alpha(q,\lambda) = W^\alpha (q,\lambda,\rho^\alpha) - \sum_{j \notin \alpha} (\auj - \auj \log \auj), \quad A^\alpha_n(q,\lambda) = \sqrt{ \frac{ \prod_{j \notin \alpha} \auj }{ \det(W_{ij}(\rho^\alpha)) } } \cdot a^\alpha_n(q,\lambda).
\]
By a direct calculation along with Corollary \ref{cor:expandrho}, we can see that $F^\alpha(q,\lambda)$ and $A^\alpha_n(q,\lambda)$ are holomorphic functions on $U$, and the coefficients of thier Taylor expansion at $q=0$ belong to the ring $\Ral$.
We also note that they have the following asymptotics:
\begin{equation}
\label{eq:asmypFA}
F^\alpha(0,\lambda) = 0, \qquad A^\alpha_0 (0,\lambda) = 1.
\end{equation}
Consequently, the only thing we need to check is that $\Exc_\alpha(\lambda + z \partial_\Lambda,z) J_B(Q,\tau,z)$ belongs to the space $\Exc_\alpha (\lambda + \Lambda,z) \cdot H^*(B) \otimes \Ral (\!(z)\!) [\![q,Q,t,\tau]\!]$.

We decompose $\Exc_\alpha(\lambda + z \partial_\Lambda,z)$ into two parts:
\begin{multline*}
\Exc_\alpha(\lambda + z \partial_\Lambda,z) =
\biggl[ \Exc_\alpha(\lambda,z) \cdot \prod_{j \notin \alpha} e^{-\partialaj \log \auj} \biggr] 		\\
\cdot  \left[ \prod_{j\notin\alpha} \left( 1 + \frac{z \partialaj}{\auj} \right)^{-\frac{1}{2}} \cdot e^{\left.\left\{ z \partialaj - ( \auj + z \partialaj ) \log \left( 1 + \frac{z \partialaj}{\auj} \right) \right\} \right/ z} \right]
\end{multline*}
where $\partialaj$ is the vector field on $H^*(B)$ such that $\partialaj \tau = \aUj - \auj =: \aUjo$.
The former part transforms $J_B(\tau,z)$ as
\begin{align*}
&\Exc_\alpha(\lambda,z) e^{- \sum_{j\notin\alpha} \partialaj \log \auj} J_B(\tau,z) \\
=& \Exc_\alpha(\lambda,z) J_B (\tau - \sum_{j \notin \alpha} \aUjo \log \auj, z)	\\
=& \Exc_\alpha(\lambda,z) \biggl[ \prod_{j \notin \alpha} e^{-\aUjo \log \auj / z} \biggr] \cdot J_B (\tau,z) \left|_{Q^D \to (\auj)^{-\aUj(D)} Q^D} \right. \\
=& \biggl[ \prod_{j \notin \alpha} \frac{1}{ \sqrt{\auj} } e^{ ( \auj - \aUj \log \auj ) / z } \biggr] \cdot J_B (\tau,z) \left|_{Q^D \to (\auj)^{-\aUj(D)} Q^D} \right. .
\end{align*}
This belongs to the space
\[
\biggl[ \prod_{j \notin \alpha} \frac{1}{ \sqrt{\auj} } e^{ ( \auj - \aUj \log \auj ) / z } \biggr] \cdot H^*(B) \otimes \Ral (\!(z)\!) [\![q,Q,t,\tau]\!].
\]
We note that, since $\aUj - \auj \in H^2(B)$ is nilpotent, this is the same as the space $\Exc_\alpha (\lambda + \Lambda,z) \cdot H^*(B) \otimes \Ral (\!(z)\!) [\![q,Q,t,\tau]\!]$.
On the other hand, $J_B(\tau,z)$ transformed by the latter part belongs to the ring $H^*(B) \otimes \Ral (\!(z)\!) [\![q,Q,t,\tau]\!]$.

\end{remark}

\section{Formal Decomposition of $\QDM(E)$}
\label{sect:formal}

Let $E \rightarrow B$ be a toric bundle with toric fiber $X$ as in Section \ref{subsect:construction}.
We first introduce the big version of $I_E$ and use it to constract a \emph{formal decomposition} of $\QDM(E)$.
As we remarked in Remark \ref{rem:CGT}, this decomposition is implicitly observed in \cite[Proposition 3.1]{Coates-Givental-Tseng}.
In addition to their work, we will confirm that this decomposition is compatible with the pairings and the grading operators.

\subsection{Big $I$-function of $E$}
\label{subsect:bigI}

Note that the $I$-function $I_E$ has $\dim H^*(B) + \dim H^2(X)$ number of parameters $(t,\tau)$ whereas the rank of $H^*(E)$ is 
$\dim H^*(B)\cdot\dim H^*(X)$. 
We add parameters $\sigma_{i,j}$ to $I_E$ so that the number of parameters equals the rank of $H^*(E)$.
We note that similar big $I$-functions have been introduced in \cite[Example 4.14]{Iritani-quantum}, \cite{Ciocan-Kim}.

As $H^*(X)$ is generated by $H^2(X)$, one can take monomials $T_{k+1},\dots,T_{l}$ in $k$-variables such that
\[p_0=1,p_1,\dots,p_k,p_{k+1}=T_{k+1}(p_1,\dots,p_{k}),\dots,p_l=T_{l}(p_1,\dots,p_{k})\]
form a basis of $H^*(X)$. 
We also set $T_0=1$ and $T_i(p_1,\dots,p_k)=p_i$ for $1\le i\le k$ so that $T_0(p),T_1(p),\dots,T_l(p)$ give a basis of $H^*(X)$. 
We give a $H^*_T(\pt)$-basis of $H^*_T(E)$ as follows:
\[
\{ e_{i,j} = T_i(P_1, \dots, P_k) \phi_j \,|\,0\leq i\leq l,0\leq j\leq s \}.
\]
Define the \emph{big $I$-function} \cite[Example 4.14]{Iritani-quantum}, \cite{Ciocan-Kim} of $E$ as follows:
\[
\bigIE:=\Delta(\sigma) I_E(t,\tau,z),
\]	
where $\Delta(\sigma)$ is the differential operator
\[
\Delta(\sigma) = \left[ \prod_{(i,j)\in \sfS} \exp \left( \sigma_{i,j} \parfrac{}{\tau_j} T_i \left( z\parfrac{}{t_1}, \dots, z\parfrac{}{t_k} \right) \right) \right], 
\]
\[
\sfS = \{ (i,j) \mid1\leq i\leq l, 1\leq j\leq s \} \cup \{ (i,0)\mid k+1\leq i\leq l \}.
\]
We use the following coordinates:
\[
x = (x_0, x_1, \dots, x_r, x_{r+1}, \dots, x_s) := (\tau_0, Q_1 e^{\tau_1}, \dots, Q_r e^{\tau_r}, \tau_{r+1}, \dots, \tau_s)
\]
\begin{equation}
\label{eq:Definition_y}
y = ( y_{i,j} \mid 0\leq i\leq l, 0\leq j\leq s), \qquad y_{i,j}:=
\begin{cases}
q_ie^{t_i}			&1\leq i\leq k,j=0	\\
x_j				&i=0,0\leq j\leq s	\\
\sigma_{i,j}			&\text{otherwise}	
\end{cases}
\end{equation}

As in Remark \ref{rem:IE}, the function $\bigIE$ can be interpreted in two ways; an element of $H^*_T (E) (\!(z^{-1})\!) [\![q,Q,\tts]\!]$ or that of $H^*_T (E)_{\loc} (\!(z)\!) [\![q,Q,\tts]\!]$.
We note that $\bigI(\tts,-z)$ interpreted in the former manner lies in the Lagrangian cone $\cL_E$. 
This follows immediately from the next lemma due to Coates and Givental.

\begin{lemma}[{\cite[]{Coates-Givental}; see also \cite[in the proof of Theorem 4.6]{Coates-Corti-Iritani-Tseng-computing}}]
\label{lem:CG}
Let $Y$ be a smooth projective variety, $I(\tau_1,\dots,\tau_n,z)$ be a family of elements lying in the Lagrangian cone $\cL_Y$ of $Y$, and $\Phi(x,z)$ be any formal power series in $z$ with coefficients polynomials in $x_1,\dots,x_n$. 
Then the family
\[
\tI (\nu,\tau,z) = \exp \left( \left. \nu \Phi \left( z\partial_{\tau}, z \right) \right/ z \right) I(\tau,z)
\]
also lies in $\cL_Y$.
\end{lemma}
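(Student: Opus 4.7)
The plan is to prove the lemma by showing that the infinitesimal generator $\Phi(z\partial_\tau,z)/z$ of the flow is tangent to $\cL_Y$ at every point of $\cL_Y$, and then integrating the flow order by order. Writing $\tilde{I}(\nu,\tau,z) = \exp(\nu\Phi(z\partial_\tau,z)/z) I(\tau,z)$ we compute $\partial_\nu\tilde{I} = (1/z)\Phi(z\partial_\tau,z)\tilde{I}$. Assuming inductively (in $\nu$-order) that $\tilde{I}$ lies on $\cL_Y$, it suffices to show $\partial_\nu\tilde{I}\in L_{\tilde{I}}$, where $L_p := T_p\cL_Y$; together with $\tilde{I}|_{\nu=0}=I\in\cL_Y$, this forces $\tilde{I}(\nu,\tau,z)$ to remain on $\cL_Y$ at all orders in $\nu$.

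The key technical step is the following claim, proved by induction on the polynomial degree of $\phi$: for any polynomial $\phi(x_1,\ldots,x_n)$ without constant term and any family $p(\tau,z)\in\cL_Y$, we have $\phi(z\partial_\tau)p\in zL_p$. The base case $\phi=x_i$ follows from $\partial_{\tau_i}p\in L_p$. For the inductive step $\phi=x_i\psi$, the hypothesis gives $\psi(z\partial_\tau)p\in zL_p\subset\cL_Y$ by the overruled property $\cL_Y\cap L_p=zL_p$. Since the overruled property further asserts that the tangent space along $zL_p$ is constantly $L_p$, differentiating the family $\psi(z\partial_\tau)p$ with respect to $\tau_i$ produces a vector in $L_p$, so $(z\partial_{\tau_i})[\psi(z\partial_\tau)p]\in zL_p$.

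Expanding $\Phi(x,z)=\sum_{m\geq 0}\phi_m(x)z^m$ and splitting off constants $\phi_m(x)=\phi_m(0)+\phi_m^\circ(x)$, the claim gives $\phi_m^\circ(z\partial_\tau)\tilde{I}\in zL_{\tilde{I}}$. Combined with $\tilde{I}\in\cL_Y\cap L_{\tilde{I}}=zL_{\tilde{I}}$ (so $\tilde{I}/z\in L_{\tilde{I}}$) and the fact that $L_{\tilde{I}}$ is a $\C[z]$-module, we get
\[
\frac{1}{z}\Phi(z\partial_\tau,z)\tilde{I} = \sum_{m\geq 0}\phi_m(0)z^{m-1}\tilde{I} + \sum_{m\geq 0} z^{m-1}\phi_m^\circ(z\partial_\tau)\tilde{I}\in L_{\tilde{I}},
\]
as required. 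Finally, Lemma \ref{lem:cL} identifies $L_p$ with the span of derivatives of a $J$-function parametrizing $\cL_Y$, so the tangency condition $\partial_\nu\tilde{I}\in L_{\tilde{I}}$ can be integrated order by order in $\nu$: each $\nu$-increment can be absorbed into a shift of the parameter $\tau$ of the point on $\cL_Y$, keeping the formal orbit on the cone.

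The main obstacle is the inductive claim in the second paragraph, which exploits the overruled property in a double sense — first to ensure $\psi(z\partial_\tau)p$ genuinely lies on $\cL_Y$ (not merely in $L_p$), and second to identify the tangent space at this new point with $L_p$ itself. These two uses together are what allow $\tau$-derivatives to commute with the induction and produce the desired factor of $z$.
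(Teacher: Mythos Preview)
The paper does not give its own proof of this lemma; it simply cites \cite{Coates-Givental} and \cite{Coates-Corti-Iritani-Tseng-computing} and uses the result as a black box. Your argument is correct and is essentially the standard proof found in those references: one uses the overruled property $\cL_Y \cap L = zL$ together with the constancy of the tangent space along $zL$ to show inductively that $\phi(z\partial_\tau)p \in zL_p$ for any monomial $\phi$ of positive degree, and then concludes that the $\nu$-flow generated by $\Phi(z\partial_\tau,z)/z$ is tangent to the cone. Since there is no proof in the paper to compare against, your write-up would serve as a self-contained justification; the only place one might ask for slightly more care is the final ``integration order by order in $\nu$'' step, where the precise mechanism is that Lemma~\ref{lem:cL} identifies the tangent space at each stage with a $J$-function slice, allowing the ODE $\partial_\nu \tilde I \in L_{\tilde I}$ to be solved formally on the cone.
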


We have an equation for $\bigIE$ similar to Theorem \ref{thm:Brown}. 

\begin{proposition}
\label{prop:bigBrown}
\[
\alpha^*\bigIE \hGamma_\alpha (\lambda,z) = \Delta (\sigma) \hcI_\alpha (qe^t, \lambda + z \partial_\Lambda, z) J_B(Q_\alpha,\tau_\alpha,z)
\]
in the space $\Exc_\alpha(\lambda + \Lambda,z) \cdot H^*(B) \otimes \Ral (\!(z)\!) [\![q,Q,\tts]\!]$.
\end{proposition}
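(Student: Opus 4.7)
The plan is to derive the identity from Brown's theorem (Theorem 3.4) simply by applying the operator $\Delta(\sigma)$ to both sides and exploiting the fact that $\Delta(\sigma)$ is a constant-coefficient differential operator in $(t,\tau)$. More precisely, starting from
\[
\alpha^* I_E(t,\tau,z) \cdot \hGamma_\alpha(\lambda,z) = \hcI_\alpha(qe^t, \lambda + z\partial_\Lambda, z) J_B(Q_\alpha, \tau_\alpha, z),
\]
I would argue that the left-hand side of Proposition~4.1 equals
\[
\alpha^*\bigIE \cdot \hGamma_\alpha = \alpha^*[\Delta(\sigma) I_E] \cdot \hGamma_\alpha = \Delta(\sigma)\bigl[\alpha^* I_E \cdot \hGamma_\alpha\bigr],
\]
and then substitute Brown's identity in the bracket on the right to obtain the claimed formula.

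The two commutations used above are both routine. First, $\alpha^*$ commutes with $\Delta(\sigma)$ because the latter acts only through derivatives in the scalar parameters $t_i, \tau_j$ with coefficients polynomial in $\sigma_{i,j}$ and $z$, and $\alpha^*$ is $\C[\tts,z]$-linear. Second, multiplication by $\hGamma_\alpha(\lambda,z)$ commutes with $\Delta(\sigma)$ because $\hGamma_\alpha$ depends only on $\lambda$ and $z$. In checking that the final right-hand side is indeed $\Delta(\sigma)\,\hcI_\alpha(qe^t,\lambda + z\partial_\Lambda,z) J_B(Q_\alpha,\tau_\alpha,z)$, I would note that $\partial_{\Lambda_j}$ is itself a constant-coefficient first-order differential operator in $\tau$, hence commutes with every $\partial/\partial t_i$ and $\partial/\partial \tau_j$ appearing in $\Delta(\sigma)$; so the order in which $\Delta(\sigma)$ and the operator $\hcI_\alpha(qe^t,\lambda+z\partial_\Lambda,z)$ are applied to $J_B$ does not matter.

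The only point requiring a bit of care is to verify that the identity makes sense in the claimed ambient space $\Exc_\alpha(\lambda+\Lambda,z) \cdot H^*(B) \otimes \Ral(\!(z)\!)[\![q,Q,\tts]\!]$ once the $\sigma$-variables are adjoined. This is straightforward: the prefactor $\Exc_\alpha(\lambda+\Lambda,z)$ is inert under $\Delta(\sigma)$ (it depends only on $\lambda+\Lambda$ and $z$), so one may factor it out of Brown's identity as described in Remark~3.6(2) and then apply $\Delta(\sigma)$ entrywise to the element of $H^*(B) \otimes \Ral(\!(z)\!)[\![q,Q,t,\tau]\!]$ that remains; the result lies in $H^*(B) \otimes \Ral(\!(z)\!)[\![q,Q,\tts]\!]$ because $\Delta(\sigma)$ is a formal power series in $\sigma_{i,j}$ with coefficients that are polynomial differential operators. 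I do not anticipate any substantive obstacle; the entire content is that $\Delta(\sigma)$ is a constant-coefficient operator that commutes with everything in sight.
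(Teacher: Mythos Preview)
Your proposal is correct and matches the paper's own proof, which simply says: apply $\Delta(\sigma)$ to the identity of Theorem~\ref{thm:Brown} and note that $\hGamma_\alpha(\lambda,z)$ is independent of $t$ and $\tau$, hence commutes with $\Delta(\sigma)$. Your additional remarks about $\alpha^*$ commuting with $\Delta(\sigma)$ and about the ambient space are accurate elaborations of points the paper leaves implicit; the commutation of $\Delta(\sigma)$ with $\hcI_\alpha(qe^t,\lambda+z\partial_\Lambda,z)$ is in fact not needed, since the right-hand side of the proposition is already written with $\Delta(\sigma)$ outermost.
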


\begin{proof}
Apply the differential operator $\Delta(\sigma)$ to the equation in Theorem \ref{thm:Brown}. 
Note that $\hGamma_\alpha(\lambda,z)$ is independent of $t$ and $\tau$, hence commutes with $\Delta(\sigma)$. 
\end{proof}

\subsection{Formal Decomposition}
\label{subsect:formaldecomposition}

In this subsection, we construct a formal decomposition of the quantum connection $\nabla^E$ of the total space $E$.

In order to distinguish parameters appearing in each connection, we will use the parameters $\tts$ for the connection obtained by $\bigI$, and the parameter $\hsigma$ for the quantum connection $\nabla^E$.
These parameters are related by some mirror transformation.

Define the matrix $\LI(\tts,z)$ to be
\begin{equation*}
\LI(\tts,z) e_{i,j} :=
\begin{cases}
\parfrac{}{t_i}\bigIE		&1\leq i\leq k,j=0	\\
\parfrac{}{\tau_j}\bigIE		&i=0,0\leq j\leq s	\\
\parfrac{}{\sigma_{i,j}}\bigIE	&\text{otherwise}	
\end{cases}
\end{equation*}

As in the case of $\bigI$, the matrix $\LI$ can be interpreted as an element of the rings 
\[
\End_{\C[\lambda]} ( H^*_T(E) )(\!(z^{-1})\!) [\![q,Q,\tts]\!] 
\quad \text{or} \quad 
\End_{\C(\lambda)} ( H^*_T(E)_{\loc} ) (\!(z)\!) [\![q,Q,\tts]\!].
\]
Similarly, the matrix $e^{(-t-\tau)/z} \LI$ can be interpreted as an element of the rings
\[
\End_{\C[\lambda]} ( H^*_T(E) ) (\!(z^{-1})\!) [\![y]\!]
\quad \text{or} \quad
\End_{\C(\lambda)} ( H^*_T(E)_{\loc} ) (\!(z)\!) [\![y]\!].
\]

\begin{proposition}
\label{prop:BFforLI}
There exists a unique factorization of the form
\[
\LI (\tts,z) = \bfL_E (\hsigma,z) \hR (\lambda,y,z)
\]
where 
\[
\hsigma (\lambda,\tts) - \sigma \in H^*_T(E) [\![y]\!], \qquad \hsigma |_{(q,Q)=0} = \sigma,
\]
\[
\hR (\lambda,y,z) \in \End_{\C[\lambda]} ( H^*_T(E) ) [z][\![y]\!], \qquad \hR (\lambda,y,z)|_{(q,Q)=0} = \Id.
\]

\end{proposition}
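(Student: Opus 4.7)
The strategy is to interpret the claim as a Birkhoff-type factorization coming from the geometry of the Givental Lagrangian cone $\cL_E$ of $E$. First I would verify that $\bigI(\tts,-z)$ lies on $\cL_E$ as a family. Brown's theorem gives $I_E(t,\tau,-z) \in \cL_E$, and the operator $\Delta(\sigma)$ is a product of commuting exponentials of the form $\exp\bigl(\sigma_{i,j}\,(z\partial_{\tau_j})\,T_i(z\partial_{t_1},\dots,z\partial_{t_k})/z\bigr)$. Iterated application of Lemma \ref{lem:CG}, peeling off one variable $\sigma_{i,j}$ at a time, then shows that $\bigI(\tts,-z) = \Delta(\sigma) I_E(t,\tau,-z) \in \cL_E$.

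Next I would invoke Lemma \ref{lem:cL}. Since $\cL_E$ is over-ruled and $\bigI(\tts,-z)$ is a family of points on it, the tangent space at $\bigI(\tts,-z)$ is freely generated over $\Frac(H^*_T(\pt))[z][\![q,Q,\tts]\!]$ by the derivatives $\partial_{\hsigma_b} J_E(\hsigma,-z)$ at a uniquely determined point $\hsigma = \hsigma(\tts)$. Each column of $\LI(\tts,-z)$ is a first-order derivative $\partial_{y_a}\bigI(\tts,-z)$ and therefore lies in this tangent space, so I can write
\[
\partial_{y_a}\bigI(\tts,-z) \;=\; \sum_b \hR_{ba}(\lambda,y,z)\,\partial_{\hsigma_b} J_E(\hsigma,-z),
\]
with $\hR_{ba}\in\Frac(H^*_T(\pt))[z][\![q,Q,\tts]\!]$. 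Since the columns of $\bfL_E(\hsigma,z)$ are exactly $\partial_{\hsigma_b} J_E(\hsigma,z)$, this identity (after flipping the sign of $z$ in $\hR$) reads $\LI(\tts,z) = \bfL_E(\hsigma,z)\,\hR(\lambda,y,z)$.

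Finally I would pin down the boundary conditions and uniqueness. At $(q,Q)=0$ only the $d=D=0$ term in $I_E$ survives, giving $I_E|_{(q,Q)=0} = e^{Pt/z}(z+\tau+O(1/z))$, and hence $\bigI|_{(q,Q)=0} = \Delta(\sigma)\bigl[e^{Pt/z}(z+\tau+\cdots)\bigr]$. A direct computation of the leading $z$-behavior shows that this point coincides with the one on $\cL_E$ whose ruling vertex is $\sigma$ (in the linear parametrization by $H^*_T(E)$), forcing $\hsigma|_{(q,Q)=0} = \sigma$ and hence $\hR|_{(q,Q)=0} = \Id$. Uniqueness of $\hsigma$ and $\hR$ then follows from the uniqueness of the ruling through a given point of $\cL_E$ guaranteed by Lemma \ref{lem:cL}. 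The main obstacle I expect is a regularity check: a priori $\hsigma$ and $\hR$ have coefficients in $\Frac(H^*_T(\pt))=\C(\lambda)$, but the proposition requires them to be $\C[\lambda]$-polynomial. This should follow from the fact that $\LI$ has $\C[\lambda]$-polynomial entries together with the invertibility of $\bfL_E$ in an appropriate topology, but the argument must be carried out carefully.
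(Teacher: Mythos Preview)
Your approach is essentially the same as the paper's: both rely on the fact that $\bigI(\tts,-z)\in\cL_E$ and on the over-ruled structure of the cone. The difference is the order of operations, and this is exactly where your flagged ``main obstacle'' becomes trivial in the paper's version. The paper first performs a purely algebraic Birkhoff factorization $\LI=\bfL_-\hR$ in the $(q,Q)$-adic topology: since $e^{-(t+\tau)/z}\LI\in\End_{\C[\lambda]}(H^*_T(E))(\!(z^{-1})\!)[\![y]\!]$ and reduces to the identity at $(q,Q)=0$, the recursive Birkhoff procedure stays over $\C[\lambda]$ automatically, yielding $\hR\in\End_{\C[\lambda]}(H^*_T(E))[z][\![y]\!]$ with $\hR|_{(q,Q)=0}=\Id$ for free. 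Only afterwards does the paper invoke the cone to identify $\bfL_-$ with $\bfL_E(\hsigma,z)$ for some $\hsigma$ (read off from the $z^{-1}$-coefficient of $\bfL_-\,e_{0,0}$), and the boundary condition $\hsigma|_{(q,Q)=0}=\sigma$ then falls out of $\bfL_-|_{(q,Q)=0}=e^{\sigma/z}$.

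By contrast, you go straight to Lemma~\ref{lem:cL}, which is formulated over $\Frac(H^*_T(\pt))$, so your $\hsigma$ and $\hR$ are a priori only $\C(\lambda)$-valued. Your suggested fix (use that $\LI$ has $\C[\lambda]$-entries and that $\bfL_E$ is invertible) does work, but it amounts to redoing the Birkhoff factorization argument anyway: one shows that the recursively determined coefficients of $\hR$ and $\hsigma$ in the $(q,Q)$-adic filtration are polynomial because the inversion at each step only involves the leading term $\Id$. So there is no error in your plan, but reversing the two steps as the paper does makes the regularity issue disappear rather than leaving it as a loose end.
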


\begin{proof}
Performing the Birkhoff factorization of $\LI (\tts,z)$ in the $(q,Q)$-adic topology, we have a unique decomposition of the form
\[
\LI(\tts,z) = \bfL_- (\tts,z) \hR (\lambda,y,z)
\]
where $\bfL_- (\tts,z) \in \End_{\C[\lambda]} ( H^*_T(E) ) [\![q,Q,\tts,z^{-1}]\!]$ with $\bfL_- \mid_{z^{-1} = 0} = \Id$, and $\hR (\lambda,y,z)$ is a matrix satisfying the desired condition.
As $\bigIE$ lies in the Lagrangian cone $\cL_E$ of $E$, the columns of $\bfL_- (\tts,z)$ span the tangent space of $\cL_E$.
Hence $\bfL_- (\tts,z)$ is in fact a fundamental solution of the quantum connection $\nabla^E$.
Since $\hsigma$ is obtained as the coefficient of the $z^{-1}$-term of $\bfL_- (\tts,z) e_{0,0}$, the conditon imposed on $\hsigma$ also holds.
\end{proof}

We may interpret $\hsigma = \hsigma (\lambda,\tts)$ as the formal coordinate change of $\Spec ( H^*_T(\pt) [\![x]\!] )$, which is the base space of $\QDMT(E)$. 
Since $e^{-\sigma/z} \bigIE$ is homogeneous with respect to $\Gr^E$, this coordinate change is compatible with the Euler vector fields and the matrix $\hR (\lambda,y,z)$ is also homogeneous.

\begin{proposition}
\label{prop:homogeneity1}
{\rm (1)}
The map $\hsigma$ is homogeneous$:$
\[
\hsigma_* \cE^E = \hat{\partial}_{c_1^T(E)} + \sum_{i,j} \left( 1 - \frac{1}{2} \deg e_{i,j} \right) \hsigma_{i,j} \parfrac{}{\hsigma_{i,j}} + \sum^N_{j=1} \lambda_j \parfrac{}{\lambda_j}
\]
where $\hat{\partial}_{c_1^T(E)}$ is the vector field satisfying $\hat{\partial}_{c_1^T(E)} \hsigma = c_1^T(E)$ and $\hat{\partial}_{c_1^T(E)} \lambda_j = 0$.

{\rm (2)}
The matrix $\hR$ is homogeneous of degree zero with respect to $\Gr^E:$
\[
\Gr^E \circ \hR = \hR \circ \Gr^E.
\]
\end{proposition}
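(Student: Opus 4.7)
The plan is to deduce both (1) and (2) from a single homogeneity identity for $e^{-\sigma/z}\bigIE$ combined with the uniqueness of the Birkhoff factorization in Proposition \ref{prop:BFforLI}. Let $\cE^E$ denote the Euler vector field on the source $(\tts,\lambda)$-space defined by the natural grading that assigns to each of $z$, $\lambda_a$, $q_i$, $Q_a$ its geometric degree (so $\deg q_i$ and $\deg Q_a$ match the respective Chern-number contributions to $c_1^T(E)$), to $\tau_j$ the weight $1-\tfrac{1}{2}\deg\phi_j$, to $t_i$ the weight zero, and to $\sigma_{i,j}$ the weight $1-\tfrac{1}{2}\deg e_{i,j}$. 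A direct computation from the formula for $\bigIE$, using the known homogeneity of $J_B$ under the analogous grading for $B$ and the fact that the weight of $\sigma_{i,j}$ is chosen precisely so that each summand of $\Delta(\sigma)$ is homogeneous of degree zero, yields
\[
\bigl(\cE^E + z\partial_z + \Gr^E_0 - \tfrac{1}{2}\dim E\bigr)\bigl(e^{-\sigma/z}\bigIE\bigr) = 0,
\]
or equivalently, in matrix form, $(\cE^E + z\partial_z + \Gr^E_0)\circ\LI = \LI\circ\Gr^{\mathrm{src}}_0$, where $\Gr^{\mathrm{src}}_0$ acts on the source basis $\{e_{i,j}\}$ with weight $1-\tfrac{1}{2}\deg e_{i,j}$ together with $\sum_a\lambda_a\partial/\partial\lambda_a$ on parameters.

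Substituting the Birkhoff decomposition $\LI = \bfL_E(\hsigma,z)\hR(\lambda,y,z)$ and applying the chain rule, this identity rearranges to
\[
\bigl[(\hsigma_*\cE^E + z\partial_z + \Gr^E_0)\bfL_E\bigr]\cdot\hR + \bfL_E\cdot\bigl[(\cE^E + z\partial_z)\hR - \hR\circ\Gr^{\mathrm{src}}_0\bigr] = 0.
\]
Since $e^{-\hsigma/z}\bfL_E$ is a power series in $z^{-1}$ while $\hR$ is a power series in $z$ with constant term $\mathrm{Id}$ at $(q,Q)=0$, the uniqueness of Birkhoff factorization in the $(q,Q)$-adic topology forces each bracket to vanish separately. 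The vanishing of the first bracket is the grading compatibility of $\bfL_E$ with $\nabla^E$, and by uniqueness of such a graded fundamental solution it pins down $\hsigma_*\cE^E$ as the Euler vector field on the right-hand side of (1). The vanishing of the second bracket, combined with (1), then gives
\[
\Gr^E\circ\hR = (\hsigma_*\cE^E + z\partial_z + \Gr^E_0)\circ\hR = \hR\circ(\cE^E + z\partial_z + \Gr^{\mathrm{src}}_0) = \hR\circ\Gr^E,
\]
which is (2).

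The main obstacle is the direct calculation in the first step: verifying that the gradings of $e^{Pt/z}$, the infinite hypergeometric product (whose weight involves the shift $U_j(\cD)$), the embedded $J_B$-factor, and the operator $\Delta(\sigma)$ conspire to produce exactly the shift $\tfrac{1}{2}\dim E = \tfrac{1}{2}\dim B + \tfrac{1}{2}\dim X$. This bookkeeping is delicate because $\Delta(\sigma)$ inserts $z\partial/\partial t_i$-derivatives whose weight contribution must be exactly compensated by $\deg\sigma_{i,j}$, and the weight contributions from the hypergeometric factor must match the Chern-class contributions to $c_1^T(E)$ via Lemma \ref{lem:computePU}.
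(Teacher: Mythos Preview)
Your approach is essentially the same as the paper's: the paper does not give a detailed proof but simply remarks, just before the proposition, that ``since $e^{-\sigma/z}\bigIE$ is homogeneous with respect to $\Gr^E$, this coordinate change is compatible with the Euler vector fields and the matrix $\hR(\lambda,y,z)$ is also homogeneous,'' which is exactly the strategy you implement. One minor correction: in your matrix identity the operator $\Gr^{\mathrm{src}}_0$ on the right should assign $e_{i,j}$ the weight $\tfrac{1}{2}\deg e_{i,j}$ (matching $\Gr^E_0$), not $1-\tfrac{1}{2}\deg e_{i,j}$; the latter is the weight of the coordinate $\sigma_{i,j}$, and differentiating the degree-$1$ function $\bigIE$ by it produces a column of degree $\tfrac{1}{2}\deg e_{i,j}$.
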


The matrix $\LI$ defines the connection $\nablaI$ by
\[
\nablaI = d + z^{-1} \AI , \quad \AI (y,z) = z \LI (\tts,z)^{-1} d \LI (\tts,z).
\]
Then, the matrix $\hR$ gives a formal gauge transformation between $\nabla^E$ and $\nablaI$.
We shall write the connection matrix of $\AI$ as
\[
\AI = \sum^k_{i=1} \AI_{(i,0)} d t_i + \sum^s_{j=0} \AI_{(0,j)} d \tau_j + \sum_{(i,j) \in \sfS} \AI_{(i,j)} d \sigma_{i,j}.
\]
\begin{proposition}
\label{prop:AI}
All entries of the connection matrix $\AI_{(i,j)}$ belong to the ring $\C[\lambda,z] [\![y]\!]$.
\end{proposition}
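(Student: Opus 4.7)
The plan is to exploit the Birkhoff factorization $\LI(\tts,z)=\bfL_E(\hsigma(\lambda,\tts),z)\,\hR(\lambda,y,z)$ from Proposition \ref{prop:BFforLI} and express $\AI$ as a gauge transform of the pullback of $\nabla^E$. Substituting the factorization into $\AI=z\LI^{-1}d\LI$ and applying the Leibniz rule gives
\[
\AI \;=\; \hR^{-1}\!\left(z\bfL_E^{-1}\,d\bfL_E\right)\hR \;+\; z\,\hR^{-1}\,d\hR,
\]
where $d$ is exterior differentiation in the variables $(\tts)$. Both terms must then be shown to lie in $\End_{\C[\lambda]}(H^*_T(E))\otimes \C[\lambda,z][\![y]\!]\cdot d(\tts)$.

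The first step handles $z\bfL_E^{-1}d\bfL_E$. Because $\bfL_E$ is a fundamental solution of $\nabla^E$, the identity $\bfL_E\circ\nabla^E_{\partial/\partial\hsigma_a}=\partial_{\hsigma_a}\circ\bfL_E$ yields $z\bfL_E^{-1}\partial_{\hsigma_a}\bfL_E=e_a\star_E$. The chain rule applied to $\bfL_E(\hsigma(\lambda,\tts),z)$ then gives
\[
z\bfL_E^{-1}d\bfL_E \;=\; \sum_a (e_a\star_E)\big|_{\hsigma(\lambda,\tts)}\,d\hsigma_a.
\]
The structure constants $e_a\star_E$ lie in $\End(H^*_T(E))\otimes\C[\lambda][\![\hat x]\!]$, where $\hat x$ denotes the modified parameters of $\QDMT(E)$; evaluated at $\hsigma\in\sigma+H^*_T(E)[\![y]\!]$, they produce entries in $\C[\lambda][\![y]\!]$. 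The one-forms $d\hsigma_a$ re-expressed in $dt_i,d\tau_j,d\sigma_{i,j}$ have coefficients in $\C[\lambda][\![y]\!]$, since $\hsigma$ depends on the pairs $(q_i,t_i)$ and $(Q_j,\tau_j)$ only through the combinations $q_ie^{t_i}=y_{i,0}$ and $Q_je^{\tau_j}=y_{0,j}$. In particular, this term contains no negative powers of $z$.

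The second step controls $\hR^{-1}$ and $z\hR^{-1}d\hR$. Since $y=0$ forces $(q,Q)=0$, the normalization $\hR|_{(q,Q)=0}=\Id$ implies $\hR|_{y=0}=\Id$, so $\det\hR$ is a unit in $\C[\lambda,z][\![y]\!]$ with constant term $1$. By Cramer's rule (equivalently, by summing the geometric series $\sum_{n\ge 0}(-1)^n(\hR-\Id)^n$, which converges in the $y$-adic topology), the matrix $\hR^{-1}$ has entries in $\C[\lambda,z][\![y]\!]$; consequently $d\hR\in\End\otimes\C[\lambda,z][\![y]\!]\cdot d(\tts)$ and $z\hR^{-1}d\hR$ lies in the desired ring. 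Combining with the first step yields the proposition.

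The main obstacle is the cancellation of negative powers of $z$. A priori $\LI$ is a formal Laurent series in $z^{-1}$ (from the $\prod\prod(U_j+mz)^{-1}$ factors in $\bigIE$), so $\LI^{-1}d\LI$ should naturally contain $z^{-1}$. Polynomiality in $z$ is recovered because the Birkhoff factorization isolates all $z^{-1}$-content inside $\bfL_E$: multiplying $\bfL_E^{-1}d\bfL_E$ by $z$ produces the $z$-free matrix $e_a\star_E$, while the complementary factor $\hR$, being polynomial in $z$ with $\hR|_{y=0}=\Id$, retains polynomiality upon inversion. Polynomiality in $\lambda$ likewise follows from $\hR\in\End_{\C[\lambda]}(H^*_T(E))[z][\![y]\!]$ together with the unit constant term of $\det\hR$.
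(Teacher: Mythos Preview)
Your proof is correct and follows essentially the same route as the paper's: both use the Birkhoff factorization of Proposition~\ref{prop:BFforLI} to write $\AI = \hR^{-1}\bfA_E\hR + z\hR^{-1}d\hR$, and then observe that each summand has entries in $\C[\lambda,z][\![y]\!]$. The paper's argument is terser (it simply asserts $\bfA_{E,(i,j)}\in\End_{\C[\lambda]}(H^*_T(E))[\![y]\!]$ and $\hR,\hR^{-1}\in\End_{\C[\lambda]}(H^*_T(E))[z][\![y]\!]$), whereas you spell out the chain rule for $z\bfL_E^{-1}d\bfL_E$, the substitution of $\hsigma$ into the quantum product, and the invertibility of $\hR$ via the $y$-adically convergent geometric series.
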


\begin{proof}
By Proposition \ref{prop:BFforLI}, $\AI$ can be written as
\begin{equation}
\label{eq:AI}
\AI = \hR^{-1} \bfA_E \hR + z \hR^{-1} d \hR.
\end{equation}
Since $\bfA_{E,(i,j)} \in \End_{\C[\lambda]} ( H^*_T(E) ) [\![y]\!]$ and $\hR, \hR^{-1} \in \End_{\C[\lambda]} ( H^*_T(E) ) [z][\![y]\!]$, entries of $\AI_{(i,j)}$ are elements of $\C[\lambda,z][\![y]\!]$.
\end{proof}

We will relate $\LI$ and $\bfL_B^{\oplus | F |}$ as mentioned at the beginning of this section. 
For this purpose, we introduce a new matrix $\bfM$.
This also plays an important role in giving estimates of connection matrices in Section \ref{sect:convergence1} and Section \ref{sect:convergence2}. 
Fix a total order on the set $F$, and denote its elements by $\alpha_0,\dots,\alpha_l$.
Let $\bfM$ be the block matrix:
\[
\bfM=
\begin{pmatrix}
\bfM_{\alpha_0,0}		&\cdots	&\bfM_{\alpha_0,l}	\\
\vdots			&\ddots	&\vdots		\\
\bfM_{\alpha_l,0}		&\cdots	&\bfM_{\alpha_l,l}	\\
\end{pmatrix}
=
\begin{pmatrix}
-		&\bfM_{\alpha_0}	&-	\\
		&\vdots		&	\\
-		&\bfM_{\alpha_l}	&-	\\
\end{pmatrix}
\]
where $\bfM_{\alpha,i} \colon H^*(B) \to \Exc_\alpha(\lambda + \Lambda,z) \cdot H^*(B) \otimes \Ral (\!(z)\!) [\![q,Q,\tts]\!]$ is given by  
\begin{equation}
\label{eq:bfMai}
\bfM_{\alpha,i} = T_i \left( z \parfrac{}{t} \right) \Delta (\sigma) \hcI_\alpha ( qe^t, \lambda + z \partial_\Lambda, z ) \bfL_B ( Q_\alpha, \tau_\alpha, z ). 
\end{equation}
We regard $\bfM = (\bfM_\alpha)_{\alpha\in F}$ as the matrix representing the following $H^*_T(\pt)$-linear morphism:
\begin{align*}
\bfM \colon H^*_T(E)													&\longrightarrow
\bigoplus_{\alpha \in F} \Exc_\alpha(\lambda + \Lambda,z) \cdot H^*(B) \otimes \Ral (\!(z)\!) [\![q,Q,\tts]\!]	\\
e_{i,j} 				\phantom{A} 										&\longmapsto	
\phantom{AAAA}		(\bfM_\alpha e_{i,j})_{\alpha\in F}							\\
																\\
\bfM_\alpha \colon H^*_T(E)												&\longrightarrow 
\Exc_\alpha(\lambda + \Lambda,z) \cdot H^*(B) \otimes \Ral (\!(z)\!) [\![q,Q,\tts]\!]					\\
e_{i,j}				\phantom{A}											&\longmapsto	
\phantom{AAAA}		\bfM_{\alpha,i} \phi_j
\end{align*}
From Proposition \ref{prop:bigBrown}, we have
\[
\bfM = \bfGamma \Xi \LI
\]
where $\Xi$ is the isomorphism introduced in Proposition \ref{prop:localization}, and $\bfGamma$ is the block matrix: 
\[
\bfGamma=
\begin{pmatrix}
\hGamma_{\alpha_0}	&		&O				\\
				&\ddots	&				\\
O				&		&\hGamma_{\alpha_l}	\\
\end{pmatrix}
.\]
Note that $\bfM$ determines the same connection as $\LI$ does, that is,
\[
\AI = z \bfM^{-1} d \bfM. 
\]

The following proposition follows from the argument similar to Proposition \ref{prop:BFforLI} along with the computation in Remark \ref{rem:Brown}.

\begin{proposition}
\label{prop:BFformodifiedJB}
There exists a unique factorization of the form
\[
\Delta(\sigma) \hcI_\alpha (qe^t, \lambda + z\partial_\Lambda, z) \bfL_B(Q_\alpha,\tau_\alpha,z) = \bfL_B(Q_\alpha, \tau^\star_\alpha, z) \bfR^\star_\alpha (\lambda, y, z)
\]
where 
\begin{align*}
&\tau^\star_\alpha - \biggl[ \tau_\alpha + \sum_{j \notin \alpha} (\auj - \aUj \log \auj) \biggr] \in H^*(B) \otimes \Ral[\![ y ]\!], \\
&\tau^\star_\alpha |_{(q,Q)=0} = \alpha^* \sigma + \sum_{j \notin \alpha} (\auj - \aUj \log \auj), \\
&\bfR^\star_\alpha (\lambda, y, z) \in \biggl( \prod _{j \notin \alpha} \frac{1}{ \sqrt{\auj} } \biggr) \cdot \End (H^*(B)) \otimes \Ral[\![ y, z ]\!], \\
&\bfR^\star_\alpha (\lambda, y, z) |_{(q,Q) = 0} = \biggl( \prod _{j \notin \alpha} \frac{1}{ \sqrt{\auj} } \biggr) \cdot \Id.
\end{align*}
\end{proposition}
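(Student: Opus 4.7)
The plan is to adapt the Birkhoff factorization argument from Proposition \ref{prop:BFforLI}, combined with the explicit computation of $\Exc_\alpha(\lambda+z\partial_\Lambda,z)$ at the end of Remark \ref{rem:Brown}. The main task is to isolate the non-formal prefactor $\Exc_\alpha(\lambda+\Lambda,z)$ on the left-hand side, and then recognize the Birkhoff ``negative part'' of the residual operator as a fundamental solution $\bfL_B$ at a shifted point $\tau^\star_\alpha$.

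First, I will use the two-part decomposition of $\Exc_\alpha(\lambda+z\partial_\Lambda,z)$ from Remark \ref{rem:Brown}, together with the divisor equation for $\bfL_B$ and the stationary-phase expansion $\hcI_\alpha = \Exc_\alpha(\lambda,z)\,e^{F^\alpha/z}\sum_{n\geq0} A^\alpha_n z^n$, to rewrite
\[
\Delta(\sigma)\hcI_\alpha(qe^t,\lambda+z\partial_\Lambda,z)\,\bfL_B(Q_\alpha,\tau_\alpha,z) = \Exc_\alpha(\lambda+\Lambda,z)\cdot\tbfM_\alpha(\lambda,y,z),
\]
where $\tbfM_\alpha \in \End(H^*(B))\otimes\Ral(\!(z)\!)[\![y]\!]$ has its $z^{-1}$-singularities inherited only from the $\bfL_B$-factor (the exponential in $\Exc_\alpha$ is absorbed into shifts of the base point and Novikov variables). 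A $(q,Q)$-adic Birkhoff factorization, as in the proof of Proposition \ref{prop:BFforLI}, then yields a unique decomposition $\tbfM_\alpha = \tbfL_-(\lambda,y,z)\cdot\bar\bfR^\star_\alpha(\lambda,y,z)$ with $\bar\bfR^\star_\alpha \in \End(H^*(B))\otimes\Ral[z][\![y]\!]$ specializing to $\Id$ at $(q,Q)=0$.

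The crucial step is to identify $\Exc_\alpha(\lambda+\Lambda,z)\cdot\tbfL_-$ with $\bigl[\prod_{j\notin\alpha}(\auj)^{-1/2}\bigr]\,\bfL_B(Q_\alpha,\tau^\star_\alpha,z)$ for an appropriate $\tau^\star_\alpha$. The columns of the left-hand side are obtained as $y$-derivatives of the first column, which by Proposition \ref{prop:bigBrown} equals $\hGamma_\alpha\,\partial_{\tau_0}(\alpha^*\bigIE)$; since $\bigIE(\tts,-z)\in\cL_E$ (by Lemma \ref{lem:CG}) and the $\hGamma_\alpha$-normalization converts the $\alpha^*$-localized twisted cone of $B$ (in the sense of Coates--Givental) into the ordinary Lagrangian cone $\cL_B$, these columns all lie in the tangent space to $\cL_B$ at a common point $\tau^\star_\alpha$. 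Lemma \ref{lem:cL} says this tangent space is freely generated over $\C[z][\![q,Q,\tts]\!]$ by the columns of $\bfL_B(Q_\alpha,\tau^\star_\alpha,z)$, and the uniqueness of Birkhoff factorization forces the claimed identification. The exponential factor $e^{\sum_{j\notin\alpha}(\aUj-\aUj\log\aUj)/z}$ implicit in $\Exc_\alpha(\lambda+\Lambda,z)$ is absorbed into the shift $\tau^\star_\alpha-\tau_\alpha$ via the divisor equation, producing the logarithmic contribution $\sum_{j\notin\alpha}(\auj-\aUj\log\auj)$ prescribed in the statement.

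Setting $\bfR^\star_\alpha := \bigl[\prod_{j\notin\alpha}(\auj)^{-1/2}\bigr]\,\bar\bfR^\star_\alpha$ then produces the desired factorization, with uniqueness inherited from that of the Birkhoff factorization. The initial conditions at $(q,Q)=0$ follow by direct substitution, using $\hcI_\alpha(0,\lambda,z)=\Exc_\alpha(\lambda,z)$, $F^\alpha(0,\lambda)=0$, $A^\alpha_0(0,\lambda)=1$, $\Delta(\sigma)|_{\sigma=0}=\Id$, and $\bfL_B(0,\tau,z)=e^{\tau/z}$. The hardest step will be the identification in the previous paragraph: matching Brown's theorem with the Coates--Givental description of the twisted cone, and carefully tracking the logarithmic shifts produced by the $z\partial_\Lambda$-derivatives inside $\Exc_\alpha(\lambda+z\partial_\Lambda,z)$ to arrive at the precise $\sum_{j\notin\alpha}(\auj-\aUj\log\auj)$ shift in $\tau^\star_\alpha$.
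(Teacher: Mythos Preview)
Your proposal is correct and follows essentially the same strategy as the paper: strip the exceptional prefactor using the computation in Remark~\ref{rem:Brown}, recognize the residual as lying on the Lagrangian cone $\cL_B$, and then run the $(q,Q)$-adic Birkhoff factorization exactly as in Proposition~\ref{prop:BFforLI}. The paper's own proof is just two sentences: it observes that Remark~\ref{rem:Brown} together with Lemma~\ref{lem:CG} shows the stripped family lies on $\cL_B$, and then invokes the argument of Proposition~\ref{prop:BFforLI}.

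The only substantive difference is in how cone membership is justified. You route the argument through Proposition~\ref{prop:bigBrown} and the Coates--Givental twisted-cone normalization (the fact that $\hGamma_\alpha$ converts the $\alpha$-twisted cone into the ordinary $\cL_B$), which is Brown's original viewpoint. The paper instead uses the explicit operator decomposition at the end of Remark~\ref{rem:Brown} to write the stripped expression as a composition of operators of the form $\exp(\nu\,\Phi(z\partial,z)/z)$ acting on $J_B$ (with rescaled Novikov variables), and then applies Lemma~\ref{lem:CG} directly. Your route is more conceptual but imports an external statement about twisted cones; the paper's route is self-contained once Remark~\ref{rem:Brown} is granted. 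Both yield the same $\tau^\star_\alpha$ and $\bfR^\star_\alpha$, and the initial-condition computations you outline at $(q,Q)=0$ are exactly what is needed in either approach.
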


\begin{proof}
From Remark \ref{rem:Brown} and Lemma \ref{lem:CG}, we can see that the family
\[
\biggl[ \prod_{j \notin \alpha} \frac{1}{ \sqrt{\auj} } e^{ ( \auj - \aUj \log \auj ) / z } \biggr]^{-1} \Delta(\sigma) \hcI_\alpha (qe^t, \lambda + z\partial_\Lambda, z) J_B(Q,\tau,z)\left|_{Q^D \to (\auj)^{\aUj(D)} Q^D} \right.
\]
lies in the Lagrangian cone $\cL_B$. 
The argument similar to Proposition \ref{prop:BFforLI} gives the desired factorization. 
\end{proof}

As in the case of $\hsigma$ and $\hR$, we have homogeneity results for them.

\begin{proposition}
\label{prop:homogeneity2}
{\rm (1)}
The morphism $\tau^\star_\alpha$ is homogeneous$:$
\[
(\tau^\star_\alpha)_* \cE^E = \partial^\star_{c_1(B)} + \sum_j \left( 1 - \frac{1}{2} \deg \phi_j \right) \tau^\star_{\alpha,j} \parfrac{}{\tau^\star_{\alpha,j}} + \sum^N_{j=1} \lambda_j \parfrac{}{\lambda_j}
\]
where $\partial^\star_{c_1(B)}$ is the vector field satisfying $\partial^\star_{c_1(B)} \tau^\star_\alpha = c_1(B)$ and $\partial^\star_{c_1(B)} \lambda_j = 0$.

{\rm (2)}
The matrix $\bfR^\star_\alpha$ is homogeneous with respect to $(\tau^\star_\alpha)^* \Gr^B:$
\[
(\tau^\star_\alpha)^* \Gr^B \circ \bfR^\star_\alpha = \bfR^\star_\alpha \circ \left( (\tau^\star_\alpha)^* \Gr^B + \frac{1}{2} \dim B - \frac{1}{2} \dim E \right).
\]
\end{proposition}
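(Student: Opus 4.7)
The plan is to deduce both homogeneity statements from the uniqueness of the Birkhoff factorization in Proposition 4.4, with the input being the homogeneity of the matrix $\bfM_\alpha = \hGamma_\alpha \cdot \alpha^*\LI$. This in turn combines two ingredients: the homogeneity of $\LI$ (equivalently, of $e^{-\sigma/z}\bigIE$) established in Proposition 4.3, together with an explicit weight computation for the scalar prefactor $\hGamma_\alpha(\lambda,z)$.

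First I would compute the weight of $\hGamma_\alpha$. Since $\sqrt{\nu}\,\hGamma(\nu,z)$ is a function of $\nu/z$ alone, one checks directly that
\[
(\nu\partial_\nu + z\partial_z)\hGamma(\nu,z) = -\tfrac{1}{2}\hGamma(\nu,z).
\]
Pulling back along $\nu = \aUj$, and using that $\aUj$ is homogeneous of degree $1$ in $(\lambda,\Lambda)$, yields
\[
\Bigl(\sum_n \lambda_n\partial_{\lambda_n} + z\partial_z\Bigr)\hGamma_\alpha(\lambda,z) = -\tfrac{1}{2}(N-k)\,\hGamma_\alpha(\lambda,z),
\]
where $N - k = \dim E - \dim B$. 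Combined with Proposition 4.3 and the identification $\bfM_\alpha = \hGamma_\alpha \cdot \alpha^*\LI$, this yields a single clean homogeneity identity for $\bfM_\alpha$ with respect to $\cE^E$ (acting on the $\tts$ coordinates) and $z\partial_z$.

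Next I would substitute the factorization $\bfM_\alpha = \bfL_B(Q_\alpha,\tau^\star_\alpha,z)\,\bfR^\star_\alpha(\lambda,y,z)$ into this homogeneity identity and expand by the chain rule, using as input the known grading relation for $\bfL_B$ (i.e.\ the grading structure of $\QDMT(B)$). Comparing the two sides gives a pair of identities: one for the pushforward $(\tau^\star_\alpha)_* \cE^E$ and one for the commutator of $\bfR^\star_\alpha$ with the grading operator. Appealing to the uniqueness of the Birkhoff factorization — any factorization of the normalized form in Proposition 4.4 is determined by its leading data — forces these identities to take precisely the shapes asserted in (1) and (2).

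The main obstacle is bookkeeping the various grading contributions so that the scalar shift $\tfrac{1}{2}\dim B - \tfrac{1}{2}\dim E$ appears correctly in (2). It should arise by combining three inputs: the weight $-\tfrac{1}{2}(N-k)$ of $\hGamma_\alpha$, the analogous weight of the $\prod_{j\notin\alpha} 1/\sqrt{\auj}$ prefactor in $\bfR^\star_\alpha$ (coming in with the opposite sign through the factorization), and the difference $-\tfrac{1}{2}\dim E + \tfrac{1}{2}\dim B$ between the constants appearing in $\Gr^E$ and $\Gr^B$. A sanity check is that the $\lambda$-derivatives of the classical correction terms in $\tau^\star_\alpha|_{(q,Q)=0} = \alpha^*\sigma + \sum_{j\notin\alpha}(\auj - \aUj\log\auj)$ must reproduce exactly the boundary value of (1) at $(q,Q)=0$; verifying this directly is the cleanest way to pin down signs and normalizations.
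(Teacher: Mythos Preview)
Your overall strategy --- deduce homogeneity of the Birkhoff factors from homogeneity of the input matrix plus uniqueness of the factorization --- is exactly what the paper does, and your final sanity check on the exceptional terms at $(q,Q)=0$ is precisely the explicit computation the paper carries out at the end of its proof.

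However, there is a genuine error in your weight computation for $\hGamma$. The claim that $\sqrt{\nu}\,\hGamma(\nu,z)$ is a function of $\nu/z$ alone is false: the factor $e^{(\nu\log\nu-\nu)/z}$ is not invariant under the scaling $(\nu,z)\mapsto(c\nu,cz)$. A direct computation gives
\[
(\nu\partial_\nu + z\partial_z)\,\hGamma(\nu,z) = \Bigl(-\tfrac{1}{2} + \tfrac{\nu}{z}\Bigr)\hGamma(\nu,z),
\]
so $\hGamma_\alpha$ is not a simple eigenvector of the grading, and your route through $\bfM_\alpha = \hGamma_\alpha\cdot\alpha^*\LI$ does not yield a clean homogeneity identity as stated. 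The extra $\nu/z$ contributions, summed over $j\notin\alpha$, are exactly the source of the logarithmic terms $\auj - \aUj\log\auj$ in the exceptional part of $\tau^\star_\alpha$.

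The paper sidesteps this by working not with $\bfM_\alpha$ but with the \emph{normalized} matrix
\[
e^{-\tau_\alpha/z}\Bigl[\prod_{j\notin\alpha}\tfrac{1}{\sqrt{\auj}}\,e^{(\auj-\aUj\log\auj)/z}\Bigr]^{-1}\Delta(\sigma)\,\hcI_\alpha(qe^t,\lambda+z\partial_\Lambda,z)\,\bfL_B(Q_\alpha,\tau_\alpha,z),
\]
which is genuinely homogeneous of degree zero with respect to $\cE^E + z\partial_z + \Gr^B_0$ (the key input being the degree identity $\deg^E\bigl((\auj)^{-\aUj(D)}Q_\alpha^D e^{\langle\tau_\alpha,D\rangle}\bigr) = \deg^B\bigl(Q^D e^{\langle\tau,D\rangle}\bigr)$). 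Birkhoff factorization then preserves this degree-zero property, and the only remaining work is the explicit calculation of $\cE^E(\tau^{\mathrm{exc}}_\alpha)$ using $c_1^T(E) = c_1(B) + \sum_j U_j$ --- which is your proposed sanity check. Your approach can be repaired by absorbing the inhomogeneous $\nu/z$ terms into the exceptional prefactor before invoking Birkhoff uniqueness, but at that point you have essentially reproduced the paper's normalization.
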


\begin{proof}

Since $\deg^E ( (\auj)^{-\aUj(D)} Q_\alpha^D e^{ \corr{\tau_\alpha,D} } ) = \deg^B ( Q^D e^{\corr{\tau,D}} )$, the matrix
\[
e^{-\tau_\alpha/z} \biggl[ \prod_{j \notin \alpha} \frac{1}{ \sqrt{\auj} } e^{ ( \auj - \aUj \log \auj ) / z } \biggr]^{-1} \Delta(\sigma) \hcI_\alpha (qe^t, \lambda + z\partial_\Lambda, z) \bfL_B(Q_\alpha,\tau_\alpha,z)
\]
is homogeneous of degree zero with respect to the grading $\cE^E + z \parfrac{}{z} + \Gr^B_0$, and hence the matrices obtained by the Birkhoff decomposition are also homogeneous of degree zero.
Therefore, we only need to calculate $\cE^E(\tau^\exc_\alpha)$ where $\tau^\exc_\alpha$ is the exceptional part of $\tau^\star_\alpha$:
\[
\tau^\exc_\alpha := \sum^r_{j=0} \tau_j \phi_j + \alpha^*t + \sum_{j \notin \alpha} (\auj - \aUj \log \auj).
\]
Since $c_1^T(E) = c_1(B) + \sum^N_{j=1} U_j$, we have
\[
\cE^E(\tau^\exc_\alpha) = \tau_0 + c_1(B) + \sum^k_{i=1} p^\alpha_i t_i + \sum_{j \notin \alpha} (\auj - \auj \log \auj)
\]
and hence
\[
\cE^E(\tau^\exc_{\alpha,0}) = \tau^\exc_{\alpha,0}, \qquad \sum^r_{j=1} \cE^E(\tau^\exc_{\alpha,j}) \phi_j = c_1(B).
\]

\end{proof}

\begin{proposition}
\label{prop:BFforM}
The Birkhoff factorization of the matrix $\bfM$ is of the form$:$

\[
\bfM=
\begin{pmatrix}
\bfL_B(Q_{\alpha_0}, \tau^\star_{\alpha_0},z)	&		&O							\\
							&\ddots	&							\\
O							&		&\bfL_B(Q_{\alpha_l}, \tau^\star_{\alpha_l},z)	\\
\end{pmatrix}
\bfR^\star
\]

\end{proposition}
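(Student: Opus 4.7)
The plan is to lift the single-block factorization provided by Proposition \ref{prop:BFformodifiedJB} to every block $\bfM_{\alpha,i}$ by using the fact that $T_i(z\partial/\partial t)$ acts on $\bfL_B(Q_\alpha,\tau^\star_\alpha,z)$ only through its second argument. For $i=0$ (where $T_0=1$), the statement is exactly Proposition \ref{prop:BFformodifiedJB}: $\bfM_{\alpha,0} = \bfL_B(Q_\alpha,\tau^\star_\alpha,z)\,\bfR^\star_\alpha$ for every $\alpha\in F$, so the first block-column of $\bfM$ is block-diagonally factorized in the desired way.

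For $i\geq 1$, I would start from the identity
\[
\bfM_{\alpha,i} = T_i\!\left(z\parfrac{}{t}\right)\bigl[\bfL_B(Q_\alpha,\tau^\star_\alpha,z)\,\bfR^\star_\alpha\bigr],
\]
which follows by combining \eqref{eq:bfMai} with Proposition \ref{prop:BFformodifiedJB}. Since $Q_\alpha=q^{P^\alpha}Q$ is independent of $t$, the chain rule together with the fundamental solution identity $z\,\partial_{\tau_j}\bfL_B=\bfL_B\cdot(\phi_j\star_B)$ from \eqref{eq:fund_sol} gives
\[
z\parfrac{}{t_j}\bfL_B(Q_\alpha,\tau^\star_\alpha,z) = \bfL_B(Q_\alpha,\tau^\star_\alpha,z)\cdot\Bigl(\parfrac{\tau^\star_\alpha}{t_j}\star_B\Bigr).
\]
Consequently, for any $R(\lambda,y,z)$ polynomial in $z$,
\[
z\parfrac{}{t_j}\bigl[\bfL_B(Q_\alpha,\tau^\star_\alpha,z)\,R\bigr] = \bfL_B(Q_\alpha,\tau^\star_\alpha,z)\cdot\left[\parfrac{\tau^\star_\alpha}{t_j}\star_B R + z\parfrac{R}{t_j}\right],
\]
and the bracketed expression is again polynomial in $z$. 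Iterating this $\deg T_i$ times, I obtain matrices $\bfR^\star_{\alpha,i}(\lambda,y,z)$, polynomial in $z$, such that $\bfM_{\alpha,i}=\bfL_B(Q_\alpha,\tau^\star_\alpha,z)\cdot\bfR^\star_{\alpha,i}$. Assembling over all $\alpha$ and $i$ produces the factorization claimed in the proposition, with $\bfR^\star$ the matrix whose $(\alpha,i)$-block is $\bfR^\star_{\alpha,i}$.

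Finally, uniqueness (i.e. the statement that this \emph{is} the Birkhoff factorization) follows from the standard uniqueness of Birkhoff factorization in the $(q,Q)$-adic topology, because the left factor $\bigoplus_\alpha \bfL_B(Q_\alpha,\tau^\star_\alpha,z)$ has the correct normalization at $(q,Q)=0$ inherited term-by-term from the summands provided by Proposition \ref{prop:BFformodifiedJB}.

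The main technical point to watch is verifying that the recursive step preserves the prescribed coefficient ring, in particular the prefactor $\prod_{j\notin\alpha}1/\sqrt{\auj}$ and the field $\Ral$ of Proposition \ref{prop:BFformodifiedJB}. This is straightforward because $\partial\tau^\star_\alpha/\partial t_j \in H^*(B)\otimes\Ral[\![y]\!]$ and $\star_B$ is $\Ral$-linear, so each application of $z\partial/\partial t_j$ produces a new right factor in $\bigl(\prod_{j\notin\alpha}1/\sqrt{\auj}\bigr)\cdot\End(H^*(B))\otimes\Ral[\![y,z]\!]$, matching the coefficient ring asserted for $\bfR^\star_\alpha$.
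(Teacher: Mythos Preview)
Your proof is correct and follows essentially the same approach as the paper: both use the fundamental-solution identity \eqref{eq:fund_sol} to commute $z\partial/\partial t_m$ past $\bfL_B(Q_\alpha,\tau^\star_\alpha,z)$, replacing it by the operator $z\partial/\partial t_m + \sum_n (\partial\tau^\star_{\alpha,n}/\partial t_m)(\bfA_B)_n(\tau^\star_\alpha)$, and then iterate to handle $T_i(z\partial/\partial t)$. The paper packages this as a single operator identity and writes the resulting $\bfR^\star$ explicitly (see \eqref{eq:R*}), whereas you describe the right factor recursively; your additional remark on the coefficient ring is not needed for the proposition as stated but is consistent with the paper's later use of $\bfR^\star$.
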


\begin{proof}
From Proposition \ref{prop:BFformodifiedJB}, we have a factorization of $\bfM_{\alpha,0}$:
\[
\bfM_{\alpha,0} (\tts,z) = \bfL_B (Q_\alpha,\tau^\star_\alpha,z) \bfR^\star_{\alpha}.
\]
Since $\bfL_B$ is a fundamental solution for the connection $\nabla^B = d + z^{-1} \bfA_B$, we have
\[
z \parfrac{}{t_m} \circ \bfL_B (Q_\alpha, \tau^\star_\alpha,z) = \bfL_B (Q_\alpha, \tau^\star_\alpha,z) \circ \left( z \parfrac{}{t_m} + \sum_{0\leq n\leq s} \parfrac{\tau^\star_{\alpha,n}}{t_m} (\bfA_B)_n (\tau^\star_\alpha) \right)
\]
Hence $\bfM_{\alpha,j}$ is decomposed as
\begin{align*}
\bfM_{\alpha,j} (\tts,z)
&= T_j \left( z \parfrac{}{t} \right) \bfM_{\alpha,0} (\tts,z)		\\
&= T_j \left( z \parfrac{}{t} \right) \bfL_B (Q_\alpha,\tau^\star_\alpha,z) \bfR^\star_{\alpha}		\\
&= \bfL_B(Q_\alpha,\tau^\star_\alpha,z) T_j \left( z \parfrac{}{t} + \sum_{0\leq n\leq s} \parfrac{\tau^\star_{\alpha,n}}{t} (\bfA_B)_n (\tau^\star_{\alpha}) \right) \bfR^\star_{\alpha}.
\end{align*}
Let $\bfR^\star$ be
\begin{equation}
\label{eq:R*}
\bfR^\star =
\begin{pmatrix}
\bfR^\star_{0,0}	&\cdots	&\bfR^\star_{0,l}	\\
\vdots	&\ddots	&\vdots	\\
\bfR^\star_{l,0}	&\cdots	&\bfR^\star_{l,l}	\\
\end{pmatrix}
, \qquad \bfR^\star_{i,j} := T_j \left( z \parfrac{}{t} + \sum_{0\leq n\leq s} \parfrac{\tau^\star_{\alpha_i,n}}{t} (\bfA_B)_n (\tau^\star_{\alpha_i}) \right) \bfR^\star_{\alpha_i}.
\end{equation}
We then obtain the desired formula.
\end{proof}

In summary, the relations among the matrices and the connections appearing in this subsection are as shown in the following diagram:
\[
\xymatrix{
\bfL_E(\hsigma,z)\ar@{.>}[d]		&\LI(\tts,z)	\ar@{.>}[d]	&\bfM (\tts,z)		\ar@{.>}[d]		
&\bigoplus_{\alpha \in F} \bfL_B(Q_\alpha, \tau^\star_\alpha,z)		\ar@{.>}[d]\\
\nabla^E		&\nablaI	\ar[l]_{\hR}	\ar@{=}[r]	&\nablaI	\ar[r]^(.4){\bfR^\star}		&(\nabla^B)^{\oplus l}	
}
\]
We define the compositions of these transformations:
\begin{equation}
\label{eq:chR}
\chtau_\alpha := \tau^\star_\alpha \circ \hsigma^{-1}, \qquad \chbfR := \bfR^\star \circ \hR^{-1}
\end{equation}
Combining Proposition \ref{prop:BFforLI}, Proposition \ref{prop:BFformodifiedJB} and Proposition \ref{prop:BFforM}, we obtain their asymptotics:
\begin{align*}
&\chtau_\alpha - \biggl[ \alpha^* \htau + \sum_{j \notin \alpha} (\auj - \aUj \log \auj) \biggr] \in H^*(B) \otimes \Ral[\![ \hy ]\!], \\
&\chtau_\alpha |_{(q,Q)=0} = \alpha^* \hsigma + \sum_{j \notin \alpha} (\auj - \aUj \log \auj), \\
&\chbfR (\lambda, y, z) \in \bigoplus_{\alpha \in F} \biggl( \prod _{j \notin \alpha} \frac{1}{ \sqrt{\auj} } \biggr) \cdot \End_{\C [\lambda]} ( H^*_T(E), H^*_T(B) ) \otimes_{\C [\lambda]} \Ral[\![ \hy, z ]\!], \\
&\chbfR (\lambda, y, 0) |_{(q,Q) = 0} (e) = \biggl( \prod_{j \notin \alpha} \frac{1}{ \sqrt{ \auj } } \cdot \alpha^* e \biggr)_{\alpha \in F} \qquad e \in H^*_T(E).
\end{align*}

We write the Euler vector fields for $\Spec( H^*_T(\pt) [\![ \hy ]\!] )$ and $\Spec( H^*_T(\pt) [\![ \chx_\alpha ]\!] )$
\begin{align*}
\hcE^E &:= \hat{\partial}_{c_1^T(E)} + \sum_{i,j} \left( 1 - \frac{1}{2} \deg e_{i,j} \right) \hsigma_{i,j} \parfrac{}{\hsigma_{i,j}} + \sum^N_{j=1} \lambda_j \parfrac{}{\lambda_j}, \\
\chcE^B &:= \check{\partial}_{c_1(B)} + \sum_j \left( 1 - \frac{1}{2} \deg \phi_j \right) \chtau_{\alpha,j} \parfrac{}{\chtau_{\alpha,j}} + \sum^N_{j=1} \lambda_j \parfrac{}{\lambda_j}
\end{align*}
where $\check{\partial}_{c_1(B)}$ is the vector field satisfying $\check{\partial}_{c_1(B)} \chtau_\alpha = c_1(B)$ and $\check{\partial}_{c_1(B)} \lambda_j = 0$.
From Proposition \ref{prop:homogeneity1} and Proposition \ref{prop:homogeneity2}, we can see that $\chtau_\alpha$ and $\chbfR$ are homogeneous.

\begin{proposition}
{\rm (1)}
The morphism $\chtau_\alpha$ is homonegeneous$:$
\[
\chtau_{\alpha *} \hcE^E = \chcE^B.
\]

{\rm (2)}
The matrix $\chbfR$ is homogeneous$:$
\[
\chbfR \circ \Gr^E = \bigoplus_{\alpha \in F} \chtau_\alpha^* \Gr^B \circ \chbfR.
\]
\end{proposition}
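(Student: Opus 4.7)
Both statements follow formally from the chain rule applied to the definitions $\chtau_\alpha = \tau^\star_\alpha \circ \hsigma^{-1}$ and $\chbfR = \bfR^\star \circ \hR^{-1}$ together with the preceding homogeneity results, Proposition \ref{prop:homogeneity1} and Proposition \ref{prop:homogeneity2}.

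For part (1), I compute
\[
\chtau_{\alpha *} \hcE^E
= (\tau^\star_\alpha)_* (\hsigma^{-1})_* \hcE^E
= (\tau^\star_\alpha)_* \cE^E,
\]
where the second equality inverts Proposition \ref{prop:homogeneity1}(1). By Proposition \ref{prop:homogeneity2}(1) the right-hand side equals the Euler-type vector field in the $\tau^\star_\alpha$-coordinates; relabelling those coordinates as $\chtau_{\alpha,j}$ and identifying $\partial^\star_{c_1(B)}$ with $\check{\partial}_{c_1(B)}$, this is precisely $\chcE^B$.

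For part (2), combining $\chbfR = \bfR^\star \circ \hR^{-1}$ with Proposition \ref{prop:homogeneity1}(2) (whose inversion gives $\hR^{-1} \circ \Gr^E = \Gr^E \circ \hR^{-1}$) reduces the problem to
\[
\bfR^\star \circ \Gr^E = \bigoplus_{\alpha \in F} \chtau_\alpha^* \Gr^B \circ \bfR^\star.
\]
I would verify this blockwise against the presentation \eqref{eq:R*}. For the building blocks $\bfR^\star_{i,0} = \bfR^\star_{\alpha_i}$ the identity is Proposition \ref{prop:homogeneity2}(2). For the remaining blocks $\bfR^\star_{i,j}$, homogeneity propagates from $\bfR^\star_{\alpha_i}$ because the differential operator $T_j\bigl( z\partial_t + \sum_n \partial_t \tau^\star_{\alpha_i,n} \cdot (\bfA_B)_n(\tau^\star_{\alpha_i}) \bigr)$ is itself homogeneous of degree $j$ (matching $\deg T_j(P) = j$): each $z\partial_{t_m}$ and each $\partial_t \tau^\star_{\alpha,n} \cdot (\bfA_B)_n$ carries weight $1$ with respect to the combined grading $\cE^E + z \partial_z + \Gr^B_0$, and $T_j$ is a weighted monomial of degree $j$.

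The one genuine bookkeeping point is the constant shift $\frac{1}{2}(\dim B - \dim E) = -\frac{1}{2}\dim X$ appearing in Proposition \ref{prop:homogeneity2}(2). This shift equals the total degree of the scalar prefactor $\prod_{j \notin \alpha} 1/\sqrt{\auj}$ carried by $\bfR^\star_\alpha$, and it is absorbed into the $-\frac{1}{2}\dim M$ normalisations built into the definitions of $\Gr^E$ and $\Gr^B$. Consequently, when the identity is phrased in terms of $\Gr^E$ and $\Gr^B$ themselves rather than the unshifted $\Gr^M_0$, no constant remains, which is exactly why the statement as given contains no shift. I do not foresee any further difficulty beyond this routine degree accounting.
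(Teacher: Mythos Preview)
Your approach is correct and is exactly what the paper does: the paper's entire proof is the single sentence ``From Proposition \ref{prop:homogeneity1} and Proposition \ref{prop:homogeneity2}, we can see that $\chtau_\alpha$ and $\chbfR$ are homogeneous,'' and you have supplied the routine composition/chain-rule argument behind that sentence. Two small clean-ups: in your reduced identity for $\bfR^\star$ the pullback should be written $(\tau^\star_\alpha)^*\Gr^B$ rather than $\chtau_\alpha^*\Gr^B$ (since $\bfR^\star$ lives over the $y$-space, and $\chtau_\alpha\circ\hsigma=\tau^\star_\alpha$), and the differential operator $T_j(\dots)$ has weight $\deg T_j$ (the degree of the monomial), not the index~$j$.
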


We also have the following formal decomposition of $\QDM_T(E)$. 

\begin{theorem}
\label{thm:formaldecomp}
The matrix $\chbfR$ gives the following isomorphism of quantum $D$-modules which is compatible with pairing and grading$:$
\[
\chbfR \colon \QDMT(E) \cong \bigoplus_{\alpha \in F} \chtau^*_\alpha \QDM(B).
\]
\end{theorem}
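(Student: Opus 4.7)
The theorem amounts to three compatibilities for $\chbfR = \bfR^\star \circ \hR^{-1}$: with the connection, with the grading, and with the pairing. The grading compatibility is precisely the content of the proposition immediately preceding the theorem, so nothing further is required there. The connection compatibility is also essentially formal: Proposition \ref{prop:BFforLI} shows that $\hR^{-1}$ is a gauge transformation from $\nabla^E$ to $\nablaI$, and Proposition \ref{prop:BFforM}, together with the identity $\bfM = \bfGamma \Xi \LI$ and the fact that $\bfGamma \Xi$ is independent of $(\tts)$, shows that $\bfR^\star$ is a gauge transformation from $\nablaI$ to $\bigoplus_\alpha \chtau_\alpha^* \nabla^B$. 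Composing these gives the flatness of $\chbfR$.

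The substantive content is the pairing compatibility, and here the plan is as follows. Eliminating $\hR$ from the two factorizations $\bfM = \bfGamma \Xi \bfL_E \hR$ and $\bfM = \bigoplus_\alpha \bfL_B(Q_\alpha, \chtau_\alpha, z) \circ \bfR^\star$ yields the closed form
\[
\chbfR = \bigoplus_{\alpha \in F} \bfL_B(Q_\alpha, \chtau_\alpha, z)^{-1} \circ \bfGamma \circ \Xi \circ \bfL_E(\hsigma, z).
\]
By Proposition \ref{prop:unitarity}, both $\bfL_E$ and $\bfL_B^{-1}$ intertwine the quantum pairings $P^E$, $P^B$ with the underlying (trivialized) Poincar\'e pairings. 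The problem thus reduces to showing that $\bfGamma \circ \Xi$ carries the direct sum of Poincar\'e pairings on $\bigoplus_\alpha H^*(B)$ back to $\pair{\cdot}{\cdot}^E$ with one slot evaluated at $-z$. The two key ingredients are the Atiyah--Bott localization identity
\[
\pair{u}{v}^E = \sum_{\alpha \in F} \frac{\pair{\alpha^* u}{\alpha^* v}^B}{e_T(N^\alpha)}
\]
and the elementary identity
\[
\hGamma(\nu, z) \cdot \hGamma(\nu, -z) = \frac{1}{\nu},
\]
which follows directly from the Bernoulli expansion, because the exponential factor $e^{(\nu \log \nu - \nu)/z}$ cancels against its $z \mapsto -z$ counterpart and the asymptotic series in odd powers of $z/\nu$ does the same. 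Taking the product over $j \notin \alpha$ gives $\hGamma_\alpha(\lambda, z) \hGamma_\alpha(\lambda, -z) = 1/e_T(N^\alpha)$, precisely the weight appearing in the localization formula, and the pairing identity for $\bfGamma \circ \Xi$ follows.

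The main difficulty I anticipate is bookkeeping rather than any single algebraic step: the operator $\bfGamma$ carries multivalued factors $\sqrt{\aUj}$, $\log \aUj$, and a divergent asymptotic series in $z$, so the intermediate pairing identity for $\bfGamma \circ \Xi$ must be stated in a suitable extension of $\Frac(H^*_T(\pt))(\!(z^{-1})\!)[\![\hy]\!]$ where these objects make sense. One must then verify that every multivalued factor and every divergent tail cancels symmetrically between the $z$ and $-z$ arguments before the resulting identity can be pushed back to the formal ring in which $\chbfR$ actually lives. Once this cancellation is confirmed, all three compatibilities of $\chbfR$ fall out simultaneously from the factorization above.
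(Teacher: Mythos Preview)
Your proposal is correct and follows essentially the same route as the paper. The paper likewise reduces to the pairing compatibility, invokes unitarity of $\bfL_E$ and $\bfL_B$ (Proposition~\ref{prop:unitarity}), and uses the identity $\hGamma_\alpha(-z)\hGamma_\alpha(z) = e_T(N^\alpha)^{-1}$; the only cosmetic difference is that the paper writes the intermediate identity for $\Xi^{-1}\bfGamma^{-1}$ via the projection formula $\alpha_*\omega_1 \cup \alpha_*\omega_2 = \alpha_*(\omega_1 \cup \omega_2 \cup e_T(N^\alpha))$ rather than citing the localization formula for $\pair{u}{v}^E$ directly, but these are the same statement read in opposite directions.
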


\begin{proof}
It is enough to check that $\chbfR$ preserves the pairings.
More precisely, we will check that
\[
\pair{\omega_1(-z)}{\omega_2(z)}^E = \sum_{\alpha \in F} \pair{\chbfR_\alpha(-z) \omega_1(-z)}{\chbfR_\alpha(z) \omega_2(z)}^B
\]
for $\omega_1, \omega_2 \in H^*_T(E)_{\loc} [z] [\![\hy]\!]$.
For $\eta_1=(\eta^\alpha_1)_{\alpha\in F},\eta_2=(\eta^\alpha_2)_{\alpha\in F}\in \bigoplus_{\alpha\in F}H^*(B)$, we have
\begin{align*}
\pair{\Xi^{-1} \bfGamma(-z)^{-1} \eta_1}{\Xi^{-1} \bfGamma(z)^{-1} \eta_2}^E
&=\sum_{\alpha \in F} \pair{ \alpha_* \left( \frac{\hGamma_\alpha(-z)^{-1} \eta^\alpha_1}{e_T(N^\alpha)} \right) }{ \alpha_* \left( \frac{\hGamma_\alpha(z)^{-1} \eta^\alpha_2}{e_T(N^\alpha)} \right) }^E						\\
&=\sum_{\alpha\in F} \int_E\alpha_*(\eta^\alpha_1\cup\eta^\alpha_2)								\\
&=\sum_{\alpha\in F}\pair{\eta^\alpha_1}{\eta^\alpha_2}^B.										
\end{align*}
Here we used $(\hGamma_\alpha(-z)\hGamma_\alpha(z))^{-1} = e_T(N^\alpha)$ and the projection formula:
\[
\alpha_* \omega_1 \cup \alpha_* \omega_2 
= \alpha_* ( \omega_1 \cup \alpha^* \alpha_* \omega_2 )
= \alpha_* ( \omega_1 \cup \omega_2 \cup e_T(N^\alpha) ).
\]
Hence, from Proposition \ref{prop:unitarity}, 
\begin{align*}
\pair{\omega_1(-z)}{\omega_2(z)}^E
&=\pair{\bfL_E(\hsigma,-z) \omega_1(-z)}{\bfL_E(\hsigma,z) \omega_1(z)}^E	\\
&=\sum_{\alpha \in F} \pair{\bfL_B(Q_\alpha, \chtau_\alpha, -z) \chbfR_\alpha(-z) \omega_1(-z)}{\bfL_B(Q_\alpha, \chtau_\alpha, z) \chbfR_\alpha(z) \omega_2(z)}^B	\\
&=\sum_{\alpha \in F} \pair{\chbfR_\alpha(-z) \omega_1(-z)}{\chbfR_\alpha(z) \omega_2(z)}^B.
\end{align*}
\end{proof}

Under the assumption that the big quantum cohomology $QH^*(B)$ is analytic, we will prove the analyticity of $\hsigma$ and $\hR$ in Section \ref{sect:convergence1}, and that of $\tau^\star$ and $\bfR^\star$ in Section \ref{sect:convergence2}.

\section{Convergence of $\hR$}
\label{sect:convergence1}

Henceforth we assume that the big quantum cohomology $QH^*(B)$ of the base $B$ converges. 
Under this assumption, we will prove that the map $\hsigma(q,Q,\lambda,t,\tau,\sigma)$ and the gauge transformation $\hR(\lambda,y,z)$, defined only formally in section \ref{subsect:formaldecomposition}, are in fact analytic.
The key ingredient is the \emph{gauge fixing theorem}, which says that a logarithmic flat connection with nilpotent residue can be transformed to a $z$-independent form by a unique gauge transformation with \emph{good} estimate if its connection matrix satisfies \emph{good} estimate.  
Our plan is as follows.
We first estimate the matrix $\bfM$ by explicit calculation, and then estimate the matrix $\bfA^I$.
Finally, we apply the gauge fixing theorem to $\bfA^I$ and prove the analyticity of $\hsigma$ and $\hR$.

\subsection{Gauge Fixing}

Denote by $\cO_{\C^N_\lambda}$ the analytic structure sheaf of the complex manifold $\C^N_\lambda$.
For any multi index $\bm = (m_1, \dots, m_a) \in \Z^a_{\geq 0}$, we define $| \bm | := \sum^a_{i=1} m_i$.

\begin{definition}[Compare {\cite[Section 4]{Iritani-convergence}}, {\cite[Definition 7.1]{Coates-Corti-Iritani-Tseng-hodge}}]
\label{def:rings1}

Let $U$ be an open subset of $\C^N_\lambda$.
Define $\cO^{y,z,z^{-1}}_\lambda(U)$ to be the subspace of the ring $\cO_{\C^N_\lambda}(U)(\!(z)\!)[\![y]\!]$ consisting of series $\sum_{\bm,n}a_{\bm,n}(\lambda)y^\bm z^n$ such that there exist a positive continuous function $C \colon U \to \R$ and a positive constant $C'$ satisfying
\begin{itemize} 
\item[(1)] 
$a_{\bm,n}(\lambda)=0$ for $n < - C' ( |\bm| + 1 )$ and,
\item[(2)] 
the following estimate:
\[
|a_{\bm,n}(\lambda)|\leq C(\lambda)^{|\bm|+|n|+1}
\begin{cases}
n!			&n\geq0	\\
1/(-n)! \quad	&n<0		
\end{cases}
\]
for any $\lambda \in U$.
\end{itemize} 
Similarly, define $\cO^{y,z}_\lambda(U)$ to be the subspace of the ring $\cO_{\C^N_\lambda}(U)[\![y,z]\!]$ consisting of series $\sum_{\bm,n}a_{\bm,n}(\lambda)y^\bm z^n$ such that there exists a continuous function $C \colon U \to \R$ satisfying the estimate
\[
|a_{\bm,n}(\lambda)|\leq C(\lambda)^{|\bm|+n+1}|\bm|^n
\]
for any $\lambda \in U$.
Here we define $|\bm|^n = 1$ if $|\bm| = n = 0$.
\end{definition}

\begin{remark}
(1)
Take $a\in\cO^{y,z,z^{-1}}_\lambda(U)$ and consider a constant $C'$ and a function $C \colon U \to \R$ which satisfies the condition for $a$ in Definition \ref{def:rings1}.
It is clear that we can assume that $C'\geq1$ and $C(\lambda)\geq1$ for $\lambda\in U$.
When $C(\lambda) \geq 1$, it follows that
\[
C(\lambda)^n \leq C(\lambda)^m
\]
if $n\leq m$.
Since we will use this kind of inequality many times, we often assume that positive constants or continuous functions are not less than one to avoid any confusion.

(2)
In order to avoid complicated notation, we use the following notation as the extension of the factorial; for $n \in \Z$, we define
\[
n! =
\begin{cases}
n!			&n\geq0,	\\
1/(-n)! \quad	&n<0.		
\end{cases}
\]
This is not a standard notation, but it is useful in some situations.
For instance, it follows that, for any $n,m \in \Z$,
\[
n!m! \leq 2^{|n| + |m|} (n+m)!.
\]
\end{remark}

\begin{lemma}
\label{lem:both_rings}
Let $U$ be an open subset of $\C^N_\lambda$.
Then the spaces $\cO^{y,z,z^{-1}}_\lambda(U)$ and $\cO^{y,z}_\lambda(U)$ are both rings. 
\end{lemma}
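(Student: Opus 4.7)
The plan is to verify closure under the two ring operations; $\C$-linearity and the presence of $1$ are clear from the definitions. Closure under addition is immediate: given estimates for $a$ and $b$ with continuous bounds $C_a, C_b$ (and, for the first ring, constants $C'_a, C'_b$), the choice $C(\lambda) := 2\max(C_a(\lambda), C_b(\lambda))$ and $C' := \max(C'_a, C'_b)$ works. The substance lies in analyzing the Cauchy product
\[
(ab)_{\bm,n} = \sum_{\substack{\bm'+\bm''=\bm\\ n'+n''=n}} a_{\bm',n'}\, b_{\bm'',n''}.
\]

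For $\cO^{y,z}_\lambda(U)$, I would first note that the number of decompositions $\bm'+\bm''=\bm$ is $\prod_i (m_i+1) \leq e^{|\bm|}$ (using $m+1\leq e^m$ for $m\geq 0$), while $n = n'+n''$ with $n', n'' \geq 0$ admits $n+1 \leq e^n$ decompositions. Since $|\bm'|, |\bm''| \leq |\bm|$, one has $|\bm'|^{n'}|\bm''|^{n''} \leq |\bm|^{n}$ (including the boundary case where $\bm'=0$ forces $n'=0$, since otherwise $a_{0,n'}=0$). Setting $D := \max(C_a, C_b) \geq 1$, one obtains
\[
|(ab)_{\bm,n}| \leq e^{|\bm|+n}\, D^{|\bm|+n+2}\, |\bm|^n,
\]
which is of the required form with $C := e^2 D^2$, continuous in $\lambda$.

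For $\cO^{y,z,z^{-1}}_\lambda(U)$, condition (1) propagates readily: any nonzero term has $n \geq -C'_a(|\bm'|+1) - C'_b(|\bm''|+1) \geq -(C'_a + C'_b)(|\bm|+2)$, so a suitable $C'$ works for $ab$. Condition (2) is the main work. I would rely on the extended-factorial inequality
\[
[n']!\cdot[n'']! \;\leq\; 2^{|n'|+|n''|}\,\cdot\,[n'+n'']!,
\]
where $[n]! := n!$ for $n \geq 0$ and $[n]! := 1/(-n)!$ for $n<0$, proved by splitting into the four sign cases and using $\binom{a+b}{a} \leq 2^{a+b}$. Next, condition (1) forces $n' \in [-C'(|\bm'|+1),\, n + C'(|\bm''|+1)]$, hence $|n'|+|n''| \leq 2|n| + 2C'(|\bm|+1)$, and the number of admissible pairs $(\bm',n')$ is at most $e^{|\bm|}\cdot(|n|+2C'(|\bm|+1)+1)$, itself absorbable into an exponential. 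Combining these ingredients with $D = \max(C_a, C_b)$, I get
\[
|(ab)_{\bm,n}| \leq \tilde C(\lambda)^{|\bm|+|n|+1}\, [n]!,
\]
for a continuous $\tilde C$ depending polynomially on $D$, $C'$, $C_a C_b$, establishing $ab \in \cO^{y,z,z^{-1}}_\lambda(U)$.

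The main obstacle is the fourth case of the extended-factorial inequality (both $n',n''<0$), where $[n']![n'']$ can exceed $[n'+n'']!$ by the full binomial factor $\binom{-n'-n''}{-n'}$. Without condition (1) this blowup would be unbounded; but condition (1) forces $|n'|+|n''|$ to grow at most linearly in $|\bm|$ and $|n|$, so the $2^{|n'|+|n''|}$ factor can be absorbed into an enlarged continuous bound $\tilde C(\lambda)^{|\bm|+|n|+1}$. Once this extended-factorial bookkeeping is in place, both ring closures follow by the same Cauchy-product mechanism.
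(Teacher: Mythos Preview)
Your proposal is correct and follows essentially the same approach as the paper: both arguments rely on the extended-factorial inequality $n'!\,n''! \leq 2^{|n'|+|n''|}(n'+n'')!$ (with the paper's convention $n! := 1/(-n)!$ for $n<0$), use condition~(1) to bound the number of admissible $(n',n'')$ and to control $|n'|+|n''|$ linearly in $|\bm|$ and $|n|$, and then absorb all exponential factors into an enlarged continuous bound. Your treatment is slightly more explicit about the boundary case $\bm'=0$ in the second ring and about the sign analysis in the factorial inequality, but the mechanism is identical.
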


\begin{proof}
Take two elements $a = \sum_{\bm,n}a_{\bm,n}(\lambda)y^\bm z^n$ and $b = \sum_{\bm,n}b_{\bm,n}(\lambda)y^\bm z^n$ in $\cO^{y,z,z^{-1}}_\lambda(U)$.
Let $C \colon U \to [1,+\infty)$ and $C'\geq1$ be a continuous function and a positive constant respectively that satisfy the condition (1) and (2) in Definition \ref{def:rings1} for $a$ and $b$ simultaneously. 
We have 
\[
ab = \sum_{\bm,n} c_{\bm,n}(\lambda) y^\bm z^n, \qquad c_{\bm,n} = \sum_{\bm_1, \bm_2; \bm_1 + \bm_2 = \bm} \sum_{n_1, n_2; n_1 + n_2 = n} a_{\bm_1,n_1}(\lambda) b_{\bm_2,n_2}(\lambda) 
\]
and $| c_{\bm,n}(\lambda) |$ can be estimated as follows:
\begin{align*}
| c_{\bm,n}(\lambda) | 
&\leq C(\lambda)^{| \bm | + | n | + 2} \sum_{\bm_1, \bm_2} \sum_{n_1, n_2} n_1 ! n_2 !	\\
&\leq C(\lambda)^{| \bm | + | n | + 2} \cdot 2^{| \bm |} ( n + C'  | \bm | + 2C' ) \cdot 2^{| n |} n!
\end{align*}
Hence $ab$ belongs to $\cO^{y,z,z^{-1}}_\lambda(U)$, which shows that $\cO^{y,z,z^{-1}}_\lambda(U)$ is a ring.

The proof that $\cO^{y,z}_\lambda(U)$ is a ring is almost the same as above.
\end{proof}

\begin{lemma}[{\cite[Lemma 4.5]{Iritani-convergence}}, {\cite[Lemma 7.6]{Coates-Corti-Iritani-Tseng-hodge}}]
\label{lem:invertible}
Let $U$ be an open subset of $\C^N_\lambda$, and $a(\lambda,y,z) = \sum_{\bm,n}a_{\bm,n}(\lambda)y^\bm z^n$ be an element of $\cO^{y,z}_\lambda(U)$.
If $a_{0,0}(\lambda) \in \cO^\times_{\C^N_\lambda} (U)$, then $a(\lambda,y,z)$ is invertible in $\cO^{y,z}_\lambda(U)$.
\end{lemma}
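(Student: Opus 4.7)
The plan is to construct the inverse as a Neumann series and estimate its coefficients by a direct combinatorial expansion. First I would reduce to the normalization $a_{0,0}(\lambda) \equiv 1$: since $a_{0,0} \in \cO^\times_{\C^N_\lambda}(U)$, its reciprocal is again holomorphic on $U$, and multiplying $a$ by the scalar function $1/a_{0,0}(\lambda)$ preserves membership in $\cO^{y,z}_\lambda(U)$ — it only rescales the continuous bound $C(\lambda)$. Writing the normalized element as $a = 1 - b$, the estimate $|b_{\bm,n}(\lambda)| \leq C(\lambda)^{|\bm|+n+1}|\bm|^n$ combined with $b_{0,0}=0$ and the convention $0^n = 0$ for $n \geq 1$ forces $b_{0,n}(\lambda) = 0$ for \emph{all} $n$. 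Hence $b$ is divisible by $y$ in the coefficient ring, the formal series $c := \sum_{k \geq 0} b^k$ makes sense in $\cO_{\C^N_\lambda}(U)[\![y,z]\!]$ and gives a formal inverse of $a$, and for each fixed $(\bm,n)$ only the terms with $k \leq |\bm|$ contribute.

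Next I would estimate each $(b^k)_{\bm,n}$ by writing it as a sum over ordered decompositions $\bm = \bm_1 + \cdots + \bm_k$ with $\bm_i \neq 0$ and $n = n_1 + \cdots + n_k$ with $n_i \geq 0$. The key combinatorial inequality is
\[
\prod_{i=1}^{k}|\bm_i|^{n_i} \leq |\bm|^n,
\]
valid because each $|\bm_i| \leq |\bm|$ and $\sum n_i = n$; this replaces the potentially unpleasant sum by $|\bm|^n$ times the number of decompositions. Standard bounds $\prod_{j=1}^N \binom{m_j+k-1}{k-1} \leq 2^{|\bm|+N(k-1)}$ and $\binom{n+k-1}{k-1} \leq 2^{n+k-1}$ for the numbers of compositions then give
\[
|(b^k)_{\bm,n}(\lambda)| \leq C(\lambda)^{|\bm|+n+k} \cdot 2^{|\bm|+n+(N+1)(k-1)} \cdot |\bm|^n.
\]
Summing the resulting geometric-type series in $k$ from $0$ to $|\bm|$ yields a bound of the form $|c_{\bm,n}(\lambda)| \leq M(\lambda)^{|\bm|+n+1}|\bm|^n$ for a continuous function $M \colon U \to [1,\infty)$ depending continuously on $C(\lambda)$, on $|1/a_{0,0}(\lambda)|$, and on the number of $y$-variables. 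This verifies $c \in \cO^{y,z}_\lambda(U)$.

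The main technical obstacle is the combinatorial bookkeeping needed to fit all the accumulated factors — the $k$ copies of $C(\lambda)$ coming from $k$ applications of the hypothesis on $b$, the multiplicative constants counting decompositions, and the sum over $k$ — into the \emph{single} exponent $|\bm|+n+1$ demanded by the definition of the ring. The inequality $\prod_i |\bm_i|^{n_i} \leq |\bm|^n$ is what makes this feasible; a naive majorant induction on $|\bm|+n$ using $c = 1 + bc$ accumulates an extra power of $M$ at every step and fails to close, whereas the direct Neumann expansion keeps the $k$-dependence under tight control.
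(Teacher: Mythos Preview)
Your proof is correct and follows essentially the same route as the paper's: reduce to $a_{0,0}\equiv 1$, expand the inverse as a Neumann series, use the key inequality $\prod_i |\bm_i|^{n_i}\le |\bm|^n$, and bound the number of ordered decompositions by an exponential in $|\bm|+n$. The paper is terser about the combinatorics (it simply asserts $\sum_{l=1}^{|\bm|}\sum'_{l,\bm,n}1\le C'^{|\bm|+n}$), but the argument is the same.
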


\begin{proof}

Let $a = \sum_{\bm,n} a_{\bm,n}(\lambda) y^\bm z^n$ be an element of $\cO^{y,z}_\lambda(U)$ with $a_{0,0}(\lambda) \in \cO^\times_{\C^N_\lambda} (U)$, and let $C(\lambda)$ be a continuous function on $U$ which is not less than one such that $|a_{\bm,n}(\lambda)|\leq C(\lambda)^{|\bm|+n+1}|\bm|^n$ for any $\lambda \in U$.
Since $a_{0,0}(\lambda)$ is invertible, we can take its inverse $a^{-1}$ in the ring $\cO_{\C^N_\lambda}(U)[\![y,z]\!]$.
We may assume $a_{0,0}(\lambda) = 1$.
Then we have
\[
a^{-1} = 1 + \sum_{\bm \neq 0,n} y^\bm z^n \sum^{| \bm |}_{l = 1} (-1)^l {\sum}^\prime_{l,\bm,n} \prod^l_{i=1} a_{\bm_i,n_i} (\lambda)
\]
where the symbol $\sum'_{l,\bm,n}$ means the sum over all $\bm_1, \dots, \bm_l$ and $n_1, \dots, n_l$ such that $\sum^l_{i=1} \bm_i = \bm$ and $\sum^l_{i=1} n_i = n$.
The coefficient of $y^\bm z^n$ can be estimated as
\begin{align*}
\left| \sum^{| \bm |}_{l = 1} (-1)^l {\sum}^\prime_{l,\bm,n} \prod^l_{i=1} a_{\bm_i,n_i} (\lambda) \right| 
&\leq \sum^{| \bm |}_{l = 1} C(\lambda)^{ | \bm | + n + l } {\sum}^\prime_{l,\bm,n} \prod^l_{i=1} | \bm_i |^{n_i}	\\
&\leq C(\lambda)^{2 | \bm | + n} | \bm |^n \sum^{| \bm |}_{l = 1} {\sum}^\prime_{l,\bm,n} 1.				  
\end{align*}
It is easy to see that $\sum^{| \bm |}_{l = 1} {\sum}^\prime_{l,\bm,n} 1 \leq C'^{ | \bm | + n}$ for some constant $C'\geq1$.
This implies that $a^{-1}$ belongs to $\cO^{y,z}_\lambda(U)$.

\end{proof}

We can now state the gauge fixing theorem.

\begin{theorem}[gauge fixing theorem, {\cite[Proposition 4.8]{Iritani-convergence}}, {\cite[Theorem 7.13]{Coates-Corti-Iritani-Tseng-hodge}}]
\label{thm:gaugefixing}
Let $U$ be a neighborhood of $0\in\C^N_\lambda$, and let $\nabla$ be a logarithmic flat connection of the form
\[
\nabla = d + z^{-1} \sum_a \bfA_a \frac{d y_a}{y_a}
\]
such that $\bfA_a (\lambda,y,z) \in \Mat_n ( \cO^{y,z}_{\lambda}(U) )$ is a square matrix of size $n$ with all entries in $\cO^{y,z}_{\lambda}(U)$.
Suppose that the residue matrices $\bfA_a (0,0,z)$ are nilpotent.
Then, after shrinking $U$ if necessary, there exists a unique gauge transformation $\bfR (\lambda,y,z) \in \Mat_n ( \cO^{y,z}_{\lambda}(U) )$ with entries in $\cO^{y,z}_{\lambda}(U)$ such that $\bfR (\lambda,0,z) = \Id$ and $\bfA^\prime_a := \bfR^{-1} z y_a \parfrac{\bfR}{y_a} + \bfR^{-1} \bfA_a \bfR$ is independent of $z$ for all $a$.
In particular, $\bfA^\prime_a$ is convergent and analytic on $U \times V$ for some open neighborhood $V$ of $y = 0$.
\end{theorem}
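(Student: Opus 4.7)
The plan is to build $\bfR$ inductively on $|\bm|$ from the gauge equation and then verify the analytic growth needed for membership in $\Mat_n(\cO^{y,z}_\lambda(U))$. Expanding $\bfR = \Id + \sum_{\bm \neq 0} \bfR_\bm(\lambda, z) y^\bm$ and $\bfA_a = \sum_\bm \bfA_{a,\bm}(\lambda, z) y^\bm$, and looking for $\bfA'_a = \sum_\bm \bfA'_{a,\bm}(\lambda) y^\bm$ with each $\bfA'_{a,\bm}$ independent of $z$, a comparison of $y^\bm$-coefficients in the gauge equation $z y_a \parfrac{\bfR}{y_a} + \bfA_a \bfR = \bfR \bfA'_a$ gives, for $\bm \neq 0$ and each index $a$,
\[
\bigl(z m_a + [\bfA_{a,0}(\lambda), \,\cdot\,]\bigr) \bfR_\bm = \bfA'_{a,\bm} - \bfA_{a,\bm} + P_{a,\bm},
\]
where $P_{a,\bm}$ depends polynomially on the previously constructed $\bfR_{\bm'}$, $\bfA_{a,\bm'}$ and $\bfA'_{a,\bm'}$ with $|\bm'| < |\bm|$. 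Note that Definition \ref{def:rings1} forces $\bfA_{a,0}(\lambda,z)$ to be $z$-independent automatically (since $|\bm|^n = 0$ for $\bm = 0$ and $n > 0$), so $\bfA_{a,0}(\lambda)$ makes sense as a matrix over $\cO_{\C^N_\lambda}(U)$.

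For the inductive step I would pick any index $a$ with $m_a > 0$, which exists since $\bm \neq 0$. Because $\bfA_{a,0}(0)$ is nilpotent, after shrinking $U$ the nilpotency index $N$ of $[\bfA_{a,0}(\lambda),\,\cdot\,]$ on $\Mat_n$ is uniformly bounded, and the finite geometric sum
\[
\bigl(z m_a + [\bfA_{a,0},\,\cdot\,]\bigr)^{-1} = \sum_{k=0}^{N-1} (-1)^k (z m_a)^{-k-1} [\bfA_{a,0},\,\cdot\,]^k
\]
realizes the inverse as a Laurent polynomial in $z$. I would then choose $\bfA'_{a,\bm}(\lambda)$ uniquely so that this inverse, applied to the right-hand side of the recursion, has no negative powers of $z$, and define $\bfR_\bm$ to be the resulting matrix polynomial in $z$. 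The flatness relation $z y_a \parfrac{\bfA_b}{y_a} - z y_b \parfrac{\bfA_a}{y_b} + [\bfA_a, \bfA_b] = 0$ propagates inductively and forces the same $\bfR_\bm$ to satisfy the analogous recursion for every other index $b$ with $\bfA'_{b,\bm}$ determined by the identical prescription. Uniqueness is then immediate from the normalization $\bfR(\lambda,0,z) = \Id$ together with the regularity requirement at $z = 0$.

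The main obstacle is the analytic estimate: one must verify by induction on $|\bm|$ that, writing $\bfR_\bm(\lambda, z) = \sum_n \bfR_{\bm,n}(\lambda) z^n$,
\[
\|\bfR_{\bm,n}(\lambda)\| \leq D(\lambda)^{|\bm| + n + 1} |\bm|^n \quad \text{and} \quad \|\bfA'_{a,\bm}(\lambda)\| \leq D(\lambda)^{|\bm| + 1}
\]
for some continuous positive $D \geq C$ on a possibly smaller neighborhood of $0$. The combinatorial content parallels Lemmas \ref{lem:both_rings} and \ref{lem:invertible}: the product terms making up $P_{a,\bm}$ contribute factorial-like sums which are absorbed by taking $D$ large enough, while inversion of $z m_a + [\bfA_{a,0},\,\cdot\,]$ shifts $z$-powers by at most $N$ but introduces only a bounded multiplicative factor, since $N$ and $\|\bfA_{a,0}(\lambda)\|$ are both bounded on (a shrinking of) $U$. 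This is essentially the scheme carried out in \cite[Proposition 4.8]{Iritani-convergence} and \cite[Theorem 7.13]{Coates-Corti-Iritani-Tseng-hodge}, which the present theorem cites.
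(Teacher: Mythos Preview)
The paper does not give its own proof of this theorem; it is quoted from \cite{Iritani-convergence} and \cite{Coates-Corti-Iritani-Tseng-hodge} and used as a black box, so there is nothing in the paper to compare your argument against directly. That said, your sketch contains a genuine gap.

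The sentence ``after shrinking $U$ the nilpotency index $N$ of $[\bfA_{a,0}(\lambda),\,\cdot\,]$ on $\Mat_n$ is uniformly bounded'' is not justified: nilpotency is a closed condition, not an open one. The hypothesis only asserts that $\bfA_{a,0}(0)$ is nilpotent, and an arbitrarily small perturbation such as
\[
\begin{pmatrix}0&1\\0&0\end{pmatrix}\ \longmapsto\ \begin{pmatrix}\lambda&1\\0&0\end{pmatrix}
\]
destroys nilpotency. Consequently your terminating geometric-series formula for $(zm_a + [\bfA_{a,0}(\lambda),\,\cdot\,])^{-1}$ is unavailable for $\lambda\neq 0$, and you cannot conclude that $\bfR_\bm(\lambda,z)$ is a polynomial in $z$ with holomorphic coefficients on $U$. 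This is precisely the point where the argument in the cited references does real work: one option is to split $\bfA_{a,0}(\lambda)=\bfA_{a,0}(0)+\bigl(\bfA_{a,0}(\lambda)-\bfA_{a,0}(0)\bigr)$, use nilpotency of the first summand to produce a Laurent-polynomial inverse in $z$, and treat the second summand perturbatively after shrinking $U$ so that its norm is small relative to constants already fixed. Either way, the interaction between the nilpotent part at $\lambda=0$ and the holomorphic remainder is where the estimate is actually earned; your sketch assumes it away. The rest of your outline (the recursion in $|\bm|$, the use of flatness to reconcile different choices of $a$, the uniqueness argument) is correct in spirit.
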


\begin{remark}
\label{rem:gaugefixing}
Since this theorem is stated and proved in a general setting, it may be applicable to other situations as well.
We will apply this to a connection over the $(x,\sigma)$-space instead of the $y$-space in Section 6.
\end{remark}

\subsection{Estimate for $\bfM$}
\label{subsect:estimateM}
In this subsection, we estimate the matrix $\bfM$.
Recall that $\bfM$ consists of  the matrices
\[
\bfM_{\alpha,i} = T_i \left( z \parfrac{}{t} \right) \Delta (\sigma) \hcI_\alpha ( qe^t, \lambda + z \partial_\Lambda, z ) \bfL_B ( Q_\alpha, \tau_\alpha, z ), 
\]
whose entries are elements of $\Exc_\alpha(\lambda + \Lambda,z) \cdot \Ral (\!(z)\!) [\![q,Q,\tts]\!]$.
Here the exceptional term $\Exc_\alpha(\lambda,z)$, defined in Remark \ref{rem:Brown}, is 
\[
\Exc_\alpha(\lambda,z) = \prod_{j \notin \alpha} \frac{1}{ \sqrt{\auj} } e^{ ( \auj - \auj \log \auj ) / z }.
\]
Define $\tbfM_{\alpha,i}$ to be the matrix $( \Exc_\alpha(\lambda + \Lambda,z) e^{\tau_\alpha / z} )^{-1} \cdot \bfM_{\alpha,i}$.
It is easy to see that
\[
\tbfM_{\alpha,i} \in \End(H^*(B)) \otimes \cO_{\C^N_\lambda} (\cD^c) (\!(z)\!) [\![y]\!]
\]
where $\cD^c := \C^N_\lambda \setminus \cD$ and $\cD \subset \C^N_\lambda$ is the union of subspaces defined in Section \ref{subsect:equivmirror}. 
We will show that:

\begin{proposition}
\label{prop:estimateM}
All entries of $\tbfM_{\alpha,i}$ belong to the ring $\cO^{y,z,z^{-1}}_\lambda(\cD^c)$.	
\end{proposition}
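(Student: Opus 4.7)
The strategy is to write $\tbfM_{\alpha,i}$ as the outer operator $\Delta(\sigma)\,T_i(z\partial/\partial t)$ applied to $\hcI_\alpha(qe^t,\lambda+z\partial_\Lambda,z)\,\bfL_B(Q_\alpha,\tau_\alpha,z)$ with the prefactor $\Exc_\alpha(\lambda+\Lambda,z)\,e^{\tau_\alpha/z}$ divided out, and to verify that every constituent lies in $\cO^{y,z,z^{-1}}_\lambda(\cD^c)$. Since that space is a ring (Lemma \ref{lem:both_rings}) and is preserved by the operators $z\partial/\partial t_i = z\,y_{i,0}\,\partial/\partial y_{i,0}$, by $\partial/\partial\tau_j$ (which equals either $y_{0,j}\,\partial/\partial y_{0,j}$ or $\partial/\partial y_{0,j}$), and by multiplication by $y_{i,j}=\sigma_{i,j}$ -- routine checks from Definition \ref{def:rings1}, with the combinatorial factor produced by a single differentiation absorbed into $C(\lambda)$ via $m\leq 2^m$ -- the outer operator is harmless once the interior has been estimated.

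The first step is to invoke Remark \ref{rem:Brown} to write
\[
\hcI_\alpha(q,\lambda,z)=\Exc_\alpha(\lambda,z)\,e^{F^\alpha(q,\lambda)/z}\sum_{n\geq 0}A^\alpha_n(q,\lambda)\,z^n,
\]
with $F^\alpha$ and $A^\alpha_n$ holomorphic on $\cU$, having Taylor coefficients in $\Ral$, and satisfying $F^\alpha(0,\lambda)=0$ and $A^\alpha_0(0,\lambda)=1$. Cauchy estimates on compact subsets of $\cD^c$ place $F^\alpha$ and each $A^\alpha_n$ in the appropriate subring. In parallel, the hypothesis that $QH^*(B)$ converges gives an analytic connection matrix $\bfA_B$ on a neighborhood of the origin; combined with the ODE $z\partial_{\tau_k}\bfL_B=\bfA_{B,k}\bfL_B$ and the divisor/string equations, this yields entries of $e^{-\tau/z}\bfL_B(\tau,z)$ in the corresponding class in the $x$- and $z^{-1}$-variables. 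Pulling back along $\tau_\alpha=\tau+\alpha^*t$ and $Q_\alpha^D=q^{P^\alpha(D)}Q^D$ introduces the classes $P^\alpha_i\in\Ral$ into the coefficients but stays inside $\cO^{y,z,z^{-1}}_\lambda(\cD^c)$.

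Next I would handle the operator substitution $\lambda\mapsto\lambda+z\partial_\Lambda$ inside $\hcI_\alpha$. Following the detailed splitting in Remark \ref{rem:Brown}, the exceptional factor $\Exc_\alpha(\lambda+z\partial_\Lambda,z)$ decomposes into a genuinely exceptional piece which, combined with $e^{\tau_\alpha/z}$ from $\bfL_B$, reproduces the prefactor $\Exc_\alpha(\lambda+\Lambda,z)\,e^{\tau_\alpha/z}$ -- after absorbing $Q^D\to(\auj)^{-\aUj(D)}Q^D$ into the Novikov variables, an operation that preserves the class because $\auj\in\Ral$ -- plus a regular remainder plainly in the ring. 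The remaining factor $e^{F^\alpha(qe^t,\lambda+z\partial_\Lambda)/z}\sum_n A^\alpha_n(qe^t,\lambda+z\partial_\Lambda)z^n$ then acts as a $\tau$-differential operator on $e^{-\tau_\alpha/z}\bfL_B$; since $z\partial_{\Lambda_j}$ on $\bfL_B$ produces an insertion of quantum multiplication by $\Lambda_j$, and since $F^\alpha|_{q=0}=0$ provides $(q,Q)$-adic smallness, convergence of $QH^*(B)$ closes the estimate.

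The principal obstacle I foresee is precisely this last step: verifying that the combinatorial growth from expanding $F^\alpha(qe^t,\lambda+z\partial_\Lambda)$ and $A^\alpha_n(qe^t,\lambda+z\partial_\Lambda)$ as operator power series in $z\partial_\Lambda$, weighted by $z^n$ and by iterated insertions $\Lambda_{j_1}\star_B\cdots\star_B\Lambda_{j_\ell}$ into $\bfL_B$, stays within the $n!$-budget on the $z$-positive side and the $1/(-n)!$-budget on the $z$-negative side of Definition \ref{def:rings1}. My plan is to control this by Taylor-expanding $F^\alpha$ and $A^\alpha_n$ in $\lambda$ about an arbitrary point of $\cD^c$, applying the substitution term by term, and bounding each multi-linear contribution by a Cauchy estimate on a compact subset, using the strictly positive $(q,Q,\tts)$-degree content of every contributing term to produce the damping that matches the allowed factorial growth.
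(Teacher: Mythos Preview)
Your overall architecture---reduce to the interior factor, use Lemma \ref{lem:both_rings} and the stability of the ring under $z\partial/\partial t_i$ and $\Delta(\sigma)$ (the paper records this as Lemma \ref{lem:preservering}), then estimate the interior---matches the paper exactly. The paper's interior is the matrix $\bfN^\alpha=(\Exc_\alpha(\lambda+\Lambda,z)e^{\tau/z})^{-1}\hcI_\alpha(qe^t,\lambda+z\partial_\Lambda,z)\bfL_B(Q,\tau,z)$, and the estimate for $\bfL_B$ is indeed the cited Lemma \ref{lem:estimateLB}.

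Where you diverge is in handling $\lambda\mapsto\lambda+z\partial_\Lambda$. You interpret $z\partial_{\Lambda_j}$ on $\bfL_B$ as producing quantum multiplication by $\Lambda_j$, and then worry about bounding iterated insertions $\Lambda_{j_1}\star_B\cdots\star_B\Lambda_{j_\ell}$. The paper bypasses this entirely: writing $\bfL_B=e^{\tau/z}\sum_{\bm,n}\bfL^B_{\bm,n}x^\bm z^{-n}$, one has the elementary identity
\[
z\partial_{\Lambda_j}\bigl(e^{\tau/z}x^\bm z^{-n}\bigr)=\bigl(\Lambda_j+z\,\Lambda_j(\bm')\bigr)\,e^{\tau/z}x^\bm z^{-n},
\]
where $\bm'\in\Z^r$ is the Novikov degree of $x^\bm$. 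Thus the operator substitution becomes, monomial by monomial, the scalar substitution $\lambda\to\lambda+\Lambda+z\Lambda(\bm')$ inside $F^\alpha$, $A^\alpha_n$, and the ratio $\Exc_\alpha(\lambda+z\Lambda(\bm'),z)/\Exc_\alpha(\lambda,z)$. One then Taylor-expands these holomorphic functions in $\lambda$, getting coefficients bounded by $C(\lambda)^{\cdots}|\bm'|^n$, and closes via $|\bm'|^n\leq e^{|\bm'|}n!$. No quantum product ever enters.

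So your ``principal obstacle'' dissolves once you observe this monomial action; your plan to Taylor-expand $F^\alpha,A^\alpha_n$ in $\lambda$ is exactly right, but applied to the scalar $\lambda+\Lambda+z\Lambda(\bm')$ rather than to an operator. Your route through $\star_B$-insertions could in principle be made to work using convergence of $QH^*(B)$, but it is strictly harder and unnecessary here.
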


First we decompose $\bfL_B$ as:
\[
\bfL_B = e^{\tau/z} \sum_{\bm,n\geq0} \bfL^B_{\bm,n} x^\bm z^{-n}.
\]
From the convergence assumption of $QH^*(B)$, we have an estimate for $\bfL^B_{\bm,n}$.

\begin{lemma}[{\cite[Lemma 4.1]{Iritani-convergence}}]
\label{lem:estimateLB}
There exist a constant $C_1>1$ such that
\[
\| \bfL^B_{\bm,n} \| \leq C^{|\bm|+n+1}_1 \frac{1}{n!}.
\]
\end{lemma}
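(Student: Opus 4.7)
This estimate is essentially \cite[Lemma 4.1]{Iritani-convergence}, and the proof amounts to invoking that result; here I sketch the strategy. The starting point is the flatness equation satisfied by the rescaled fundamental solution $\widetilde{\bfL}_B := e^{-\tau/z} \bfL_B = \sum_{\bm, n \geq 0} \bfL^B_{\bm, n} x^\bm z^{-n}$, which is derived from $\parfrac{\bfL_B}{\tau_j} = z^{-1} \bfL_B \circ (\phi_j \star_B)$ using commutativity of $H^*(B)$ and the identity $\parfrac{}{\tau_j} e^{\tau/z} = z^{-1} (\phi_j \cup) e^{\tau/z}$:
\[
z \parfrac{\widetilde{\bfL}_B}{\tau_j} = \widetilde{\bfL}_B \circ (\phi_j \star_B) - (\phi_j \cup) \circ \widetilde{\bfL}_B \qquad (0 \leq j \leq s).
\]

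Separating the classical product as $\phi_j \star_B = (\phi_j \cup) + \Phi_j$ with $\Phi_j = O(x)$, and comparing the coefficients of $x^\bm z^{-k}$ on both sides for a divisor index $j \in \{1, \dots, r\}$ (where $\partial/\partial \tau_j$ acts on $x^\bm$ as multiplication by $m_j$), one obtains the recursion
\[
m_j \bfL^B_{\bm, k+1} = [\bfL^B_{\bm, k}, \phi_j \cup] + \sum_{\bm_1 + \bm_2 = \bm,\, |\bm_2| \geq 1} \bfL^B_{\bm_1, k} \circ (\Phi_j)_{\bm_2}.
\]
The explicit $J$-function formula for $\bfL_B$ together with the string and divisor equations shows that $\bfL^B_{\bm, n}$ vanishes unless $(\bm, n) = 0$ or $m_j \geq 1$ for some $j \in \{1, \dots, r\}$, so for every nonzero $\bm$ one can select such a $j$ and use the recursion to solve for $\bfL^B_{\bm, k+1}$.

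The estimate is then established by induction on $n$, with base $\bfL^B_{0,0} = \Id$ and $\bfL^B_{\bm, 0} = 0$ for $\bm \neq 0$; the convergence assumption on $QH^*(B)$ provides a constant $C_0 \geq 1$ with $\|(\Phi_j)_{\bm_2}\| \leq C_0^{|\bm_2|+1}$, while $\|\phi_j \cup\|$ is trivially bounded on the finite-dimensional $H^*(B)$. The hard part is to choose the constant $C_1$ large enough that both the commutator term $[\bfL^B_{\bm, k}, \phi_j \cup]$ (which preserves $|\bm|$) and the convolution term (which only yields a factor $1/k!$ rather than $1/(k+1)!$) can be absorbed into the target Gevrey-1 bound $C_1^{|\bm|+n+1}/n!$ that reflects the irregular singularity of $\nabla^B$ at $z = 0$. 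This combinatorial estimate is precisely what is carried out in \cite[Lemma 4.1]{Iritani-convergence}, to which we refer for the details.
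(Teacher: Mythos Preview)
The paper does not give its own proof of this lemma; it simply states the result with a citation to \cite[Lemma 4.1]{Iritani-convergence}. Your sketch of the argument from that reference---deriving a recursion for the coefficients $\bfL^B_{\bm,n}$ from the flatness equation, using the convergence of $QH^*(B)$ to bound the connection matrix, and running an induction on $n$---is the correct strategy and matches the cited source, so there is nothing to compare beyond noting that you have supplied what the paper chose to omit.
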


Secondly, we estimate $\hcI_\alpha(q,\lambda,z)$.
In Section \ref{subsect:equivmirror}, $\hcI_\alpha(q,\lambda,z)$ is defined as the asymptotic expansion of the oscillatory integral $\cI_\alpha(q,\lambda,z)$ divided by $(-2 \pi z)^{(N-k)/2}$.
Write
\[
\hcI_\alpha(q,\lambda,z) = \Exc_\alpha(\lambda,z) e^{ F^\alpha (q,\lambda) / z } \sum_{n \geq 0} A^\alpha_n (q,\lambda) z^n.
\]
Note that $F^\alpha(q,\lambda)$ and $A^\alpha_n(q,\lambda)$ are holomorphic functions on the domain $\cU \subset \hcM$, which is introduced in Proposition \ref{prop:rhoalpha}.

\begin{lemma}[{\cite[Lemma 7.10]{Coates-Corti-Iritani-Tseng-hodge}}]
\label{lem:estimateSPA}
Let $f(\xi)$ and $g(\xi)$ be holomorphic functions on a domain $U \subset \C^m$, and $p \in U$ be a non-degenerate critical point of $f$.
Let
\[
f(p+\xi) = \sum_I a^f_I \xi^I, \quad g(p+\xi) = \sum_I a^g_I \xi^I
\]
be the Taylor expansions of $f$ and $g$, and
\[
\int e^{f(\xi)/z} g(\xi) d\xi_1 \cdots d\xi_n \sim (-2\pi z)^{n/2} e^{f(p)/z} \sum_{n\geq0} a_n z^n
\]
be the oscillatory integral and its formal expansion at $p$.
Then, if $C'$ is the positive constant satisfying $\max ( |a^f_I|, |a^g_I| ) \leq (C')^{|I|+1}$, the coefficients $a_n$ satisfy the following estimate
\[
|a_n| \leq C^{n+1} n!
\]
for some positive constant $C$ depending continuously on $C'$.
\end{lemma}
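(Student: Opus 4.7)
The plan is to reduce the lemma to an explicit algebraic formula for $a_n$ in terms of Wick contractions and then bound the resulting sum using the Cauchy hypothesis on the Taylor coefficients. After translating so that $p = 0$ and $f(0) = 0$, I would decompose
\[
f(\xi) = Q(\xi) + h(\xi), \qquad Q(\xi) = \tfrac{1}{2}\,\xi^{\top} H\,\xi, \qquad h(\xi) = \sum_{|I| \geq 3} a^f_I\, \xi^I,
\]
where $H$ is the non-degenerate Hessian. Rescaling $\xi = \sqrt{-z}\,\eta$ and expanding the weight $\exp\!\bigl(h(\sqrt{-z}\eta)/z\bigr) g(\sqrt{-z}\eta)$ as a formal series in $\sqrt{-z}$ converts the formal stationary phase expansion into a term-by-term integration against the Wick rule determined by $Q$; odd powers of $\sqrt{-z}$ vanish by parity. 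Concretely, $a_n$ becomes a finite sum over tuples $(k; I_1, \ldots, I_k; J)$ with $|I_i| \geq 3$ and $|I_1| + \cdots + |I_k| + |J| = 2(n+k)$, each tuple contributing $\tfrac{1}{k!}\,a^g_J \prod_i a^f_{I_i} \cdot M_{I_1 + \cdots + I_k + J}$, where $M_I = \langle \eta^I\rangle_Q$ is the Wick moment associated to $Q$.

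Next, I would bound each contribution. The hypothesis gives $\bigl|a^g_J \prod_i a^f_{I_i}\bigr| \leq (C')^{2n + 3k + 1}$, while Isserlis's theorem yields $\bigl|M_{I_1 + \cdots + I_k + J}\bigr| \leq \|H^{-1}\|^{n+k} (2(n+k) - 1)!! \leq C_1^{n+k}(n+k)!$ for a constant $C_1 = C_1(\|H^{-1}\|)$. Hence each tuple of size $n+k$ contributes at most $C_2^{n+k+1}\,(n+k)!/k!$ for a constant $C_2$ depending continuously on $C'$ and $\|H^{-1}\|$.

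Finally, I would count tuples and sum. The valence condition $|I_i| \geq 3$ forces $3k \leq 2(n+k) - |J| \leq 2(n+k)$, so $k \leq 2n$; and for each fixed $k$ the number of admissible multi-index decorations of prescribed total degree is bounded by $C_3^{n+k}$, since the number of multi-indices of degree $d$ in $m$ variables grows only polynomially in $d$. Combining these with the elementary identity $(n+k)!/k! \leq n! \cdot 2^{n+k}$ gives
\[
|a_n| \leq n! \sum_{k = 0}^{2n} C_4^{n+k} \leq C^{n+1}\, n!,
\]
for a constant $C$ depending continuously on $C'$ (and on $\|H^{-1}\|$ and $m$), which is the claimed bound.

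The main obstacle is this last combinatorial step: ensuring that the $n!$ in the final estimate comes entirely from the Wick pairing count $(2(n+k)-1)!!$ and not from an unbounded enumeration of diagrams. The valence condition from $h$ (which caps $k$ at $2n$) combined with the polynomial growth of multi-index counts in $m$ variables is exactly what is needed to make the geometric sum in $k$ converge to a bound exponential in $n$. A closely related diagrammatic analysis is carried out in Section 6.2 of \cite{Coates-Corti-Iritani-Tseng-hodge}.
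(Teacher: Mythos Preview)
The paper does not give its own proof of this lemma: it is quoted verbatim from \cite[Lemma 7.10]{Coates-Corti-Iritani-Tseng-hodge} and used as a black box. Your argument via the Wick/Feynman expansion of the stationary phase asymptotics is exactly the approach taken in \cite[Section 6.2]{Coates-Corti-Iritani-Tseng-hodge}, and your combinatorial bookkeeping (the valence bound $k\le 2n$, the estimate $\binom{d+m-1}{m-1}\le 2^{d+m-1}$ for multi-index counts, and $(n+k)!/k!\le n!\,2^{n+k}$) is correct and gives the required $C^{n+1}n!$ bound.

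One point worth making explicit: your final constant depends not only on $C'$ but also on $\|H^{-1}\|$, which you note parenthetically. This dependence is genuine and not removable from the hypothesis as stated, since the Cauchy bound $|a^f_I|\le (C')^{|I|+1}$ gives no lower bound on $|\det H|$. The statement in the paper is slightly informal on this point; in the application (Lemma~\ref{lem:estimateAn}) the Hessian varies continuously and non-degenerately with the parameters $(q,\lambda)$, so the extra dependence is absorbed into the continuous function $C_2(q,\lambda)$ there. Your proof is fine once this is understood.
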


\begin{lemma}
\label{lem:estimateAn}
There exists a continuous function $C_2(q,\lambda)$ on $\cU$ such that
\[
| A^\alpha_n (q,\lambda) | \leq C_2 (q,\lambda)^{n + 1} n! \qquad \forall (q,\lambda) \in \cU.
\]
\end{lemma}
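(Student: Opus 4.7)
The plan is to apply Lemma \ref{lem:estimateSPA} to the oscillating integral $\cI_\alpha(q,\lambda,z)$ expressed in the coordinate chart $\{\xi_j\}_{j\notin\alpha}$ of $Y$, and then pass from $a^\alpha_n$ to $A^\alpha_n$ by a continuous prefactor. In these coordinates, writing $W^\alpha$ as in Section \ref{subsect:equivmirror} we have
\[
\cI_\alpha(q,\lambda,z) = e^{\sum_i p^\alpha_i \log q_i / z}\int_{C^\alpha_{q,\lambda}} e^{W^\alpha(q,\lambda,\xi)/z}\, g(\xi)\, d\xi_{j_1}\cdots d\xi_{j_{N-k}},
\]
with $g(\xi) = \prod_{j\notin\alpha}\xi_j^{-1}$ and stationary phase point $\rho^\alpha(q,\lambda)$. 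Invoking Lemma \ref{lem:estimateSPA} at this non-degenerate critical point (non-degeneracy on $\cU^*$ was shown in Proposition \ref{prop:rhoalpha}) yields an asymptotic expansion with coefficients $a^\alpha_n(q,\lambda)$ satisfying $|a^\alpha_n(q,\lambda)|\leq C(q,\lambda)^{n+1} n!$, where $C(q,\lambda)$ is controlled by a bound $C'(q,\lambda)$ on the Taylor coefficients of $W^\alpha$ and $g$ expanded around $\rho^\alpha(q,\lambda)$.

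The key step is to choose these bounds $C'(q,\lambda)$ so that they depend continuously on $(q,\lambda)\in\cU$. For each point $(q_0,\lambda_0)\in\cU$, since $\rho^\alpha$ is analytic on a neighborhood by Proposition \ref{prop:rhoalpha} and since $W^\alpha$, $g$ extend holomorphically to an open subset of $\cU\times(\C^\times)^{N-k}_\xi$ containing $(q_0,\lambda_0,\rho^\alpha(q_0,\lambda_0))$, we may find a small polydisc $D$ around $(q_0,\lambda_0,\rho^\alpha(q_0,\lambda_0))$ and an open neighborhood $U_0$ of $(q_0,\lambda_0)$ such that for all $(q,\lambda)\in U_0$ the point $(q,\lambda,\rho^\alpha(q,\lambda))$ lies in $D$ with a fixed polydisc of radius $r$ around it contained in the domain of holomorphy of $W^\alpha$ and $g$. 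Cauchy's integral formula on this polydisc then bounds the Taylor coefficients of $W^\alpha$ and $g$ at $\rho^\alpha(q,\lambda)$ in terms of $\sup_D |W^\alpha|/r^{|I|}$ and $\sup_D |g|/r^{|I|}$, yielding a continuous local bound $C'(q,\lambda)$ on $U_0$; Lemma \ref{lem:estimateSPA} then gives a continuous local bound $C(q,\lambda)$ on $U_0$.

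Second, recall
\[
A^\alpha_n(q,\lambda) = \sqrt{\frac{\prod_{j\notin\alpha}\auj}{\det(W^\alpha_{ij}(\rho^\alpha))}}\cdot a^\alpha_n(q,\lambda).
\]
The prefactor is a continuous (in fact holomorphic) non-vanishing function of $(q,\lambda)\in\cU$: the numerator is non-zero precisely because $\lambda\notin\cD_\alpha\supseteq \cD$, and the denominator is non-zero by non-degeneracy of $\rho^\alpha(q,\lambda)$. Absorbing this local bound into $C(q,\lambda)$ gives the desired estimate $|A^\alpha_n(q,\lambda)|\leq C_2(q,\lambda)^{n+1}n!$ on $U_0$.

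Finally, to globalize the estimate to all of $\cU$, we define $C_2$ pointwise by taking, for each $(q,\lambda)\in\cU$, the local bound obtained above from a polydisc whose radius is a continuous function of $(q,\lambda)$ (for instance, half the distance from $(q,\lambda,\rho^\alpha(q,\lambda))$ to the boundary of the holomorphy domain). Continuous dependence of sup-norms on polydiscs with continuously varying centers and radii then makes $C_2(q,\lambda)$ continuous on all of $\cU$. The main obstacle is precisely this uniformity-versus-continuity issue across $\cU$: the Taylor bounds extracted from Cauchy estimates deteriorate as one approaches $\partial\cU$ (where either $\rho^\alpha$ develops singularities or the Hessian degenerates), so care is needed to ensure the choice of polydiscs varies continuously; but no uniform bound over $\cU$ is required in the statement, only continuity, which makes the argument go through.
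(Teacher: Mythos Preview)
Your proof is correct and takes essentially the same approach as the paper's one-line proof, which simply applies the $(q,\lambda)$-parametrized version of Lemma~\ref{lem:estimateSPA} to the oscillatory integral $\cI_\alpha(q,\lambda,z)$; you have just spelled out the continuity of the Taylor bound $C'(q,\lambda)$ and the passage from $a^\alpha_n$ to $A^\alpha_n$ in more detail. One minor slip: the prefactor $e^{\sum_i p^\alpha_i \log q_i / z}$ in your displayed formula for $\cI_\alpha$ should not appear, since $\cI_\alpha$ is already defined with $W^\alpha$ (not $W$) in the exponent, but this does not affect the argument.
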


\begin{proof}

Apply the $(q,\lambda)$-parametrized version of Lemma \ref{lem:estimateSPA} to the oscillatory integral $\cI_\alpha (q,\lambda,z)$.

\end{proof}

\begin{lemma}
\label{lem:preservering}
Let $U$ be an open subset of $\C^N_\lambda$.
The operators $z \parfrac{}{t_i}$ and $\Delta(\sigma)$ preserve the ring $\cO^{y,z,z^{-1}}_{\lambda}(U)$.
\end{lemma}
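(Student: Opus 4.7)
The first claim is direct. Since $t_i$ enters the $y$-coordinates \eqref{eq:Definition_y} only through $y_{i,0} = q_i e^{t_i}$ for $1 \le i \le k$, we have $z \partial/\partial t_i = z\, y_{i,0}\, \partial/\partial y_{i,0}$, which sends $f = \sum a_{\bm,n}(\lambda) y^\bm z^n$ to $\sum m_{i,0}\, a_{\bm,n}(\lambda)\, y^\bm z^{n+1}$. Using the elementary inequalities $|m_{i,0}| \le |\bm| \le 3^{|\bm|}$, $n! \le (n+1)!$, and $|n+1| \le |n|+1$, the estimates of Definition \ref{def:rings1} follow at once with the new constant $\tilde C(\lambda) = 3 C(\lambda)$ and the same $C'$ (the $z$-shift $+1$ only improves the vanishing in condition (1)).

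For $\Delta(\sigma)$, write $\Delta(\sigma) = \prod_{(i,j) \in \sfS} \exp(\sigma_{i,j} D_{i,j})$ where $D_{i,j} := (\partial/\partial\tau_j)\, T_i(z\partial/\partial t_1,\ldots,z\partial/\partial t_k)$. The operators $\sigma_{i,j} D_{i,j}$ for distinct $(i,j)$ commute pairwise (each $\sigma_{i,j}$ is a variable independent of $\tau$ and $t$, and the $\tau$- and $t$-partials mutually commute), so since $\sfS$ is finite it suffices to show that a single $\exp(\sigma_{i,j} D_{i,j})$ preserves the ring. A direct computation yields
\[
D_{i,j}(y^\bm z^n) = m_{0,j}\, T_i(m_{1,0}, \dots, m_{k,0})\, y^{\bm - \delta e_{0,j}} z^{n + d_i},
\]
where $d_i := \deg T_i$ and $\delta \in \{0,1\}$ according to whether $\tau_j$ is linear ($j=0$ or $j > r$) or logarithmic ($1 \le j \le r$) in $y_{0,j}$. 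Since $(i,j) \in \sfS$ forces $i \ge 1$ and $(i,j) \ne (\nu,0)$ for $\nu \le k$, iterating $D_{i,j}$ leaves the exponents $m_{0,j}, m_{1,0}, \ldots, m_{k,0}$ nearly untouched, and one obtains the bound $|D_{i,j}^l(y^\bm z^n)| \le |\bm|^{l(d_i+1)} \cdot y^{\bm - l\delta e_{0,j}} z^{n + l d_i}$.

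Expanding $\exp(\sigma_{i,j} D_{i,j}) f = \sum_{l \ge 0} (\sigma_{i,j}^l/l!)\, D_{i,j}^l f$, the coefficient of $y^{\bm'} z^{n'}$ is a \emph{finite} sum over $0 \le l \le m'_{i,j}$, each summand bounded by
\[
\frac{|\bm'|^{l(d_i+1)}}{l!} \cdot C(\lambda)^{|\bm'| + |n'| + l d_i + 1} (n' - l d_i)!.
\]
The key step is to absorb the $l$-dependence via three elementary inequalities: the exponential-factorial bound $k^L/L! \le e^k$ (applied with $k = |\bm'|$, $L = l(d_i+1)$) gives $|\bm'|^{l(d_i+1)} \le 3^{|\bm'|} (l(d_i+1))!$; the binomial estimate $\binom{l(d_i+1)}{l} \le 2^{l(d_i+1)}$ then yields $(l(d_i+1))!/l! \le 2^{l(d_i+1)} (l d_i)!$; and a case analysis in the signs of $n'$ and $n' - l d_i$ yields $(l d_i)!(n' - l d_i)! \le 2^{l d_i} n'!$. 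After these absorptions, each summand is bounded by $3^{|\bm'|} \cdot A(\lambda)^l \cdot C(\lambda)^{|\bm'| + |n'| + 1} n'!$ for $A(\lambda) := 2^{2d_i+1} C(\lambda)^{d_i}$, and summing the resulting (terminating) geometric series in $l \le |\bm'|$ produces the required bound $\tilde C(\lambda)^{|\bm'| + |n'| + 1} n'!$ with $\tilde C(\lambda) = O(C(\lambda)^{d_i+1})$. Condition (1) of Definition \ref{def:rings1} is preserved since the $z$-shift $l d_i \ge 0$ only strengthens the vanishing. The main obstacle is this factorial bookkeeping; once the inequality $k^L/L! \le e^k$ is invoked to trade $|\bm'|$-growth against factorial growth, everything else is routine.
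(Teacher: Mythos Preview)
Your proof is correct and follows essentially the same route as the paper's: both reduce to a single exponential factor $\exp(\sigma_{i,j}D_{i,j})$, expand the coefficient of $y^{\bm'}z^{n'}$ as a finite sum in $l$, and control the factorial growth via $k^L/L!\le e^k$ together with $(ld_i)!(n'-ld_i)!\lesssim 2^{ld_i}\,n'!$. The only cosmetic differences are that the paper bounds the range of $l$ using condition~(1) (i.e.\ $l\le d_i^{-1}(n'+C'(|\bm'|+1))$) rather than your $l\le m'_{i,j}$, and it handles the falling factorial via the binomial bound $\binom{\bm_{0,j}+l}{l}\le 2^{\bm_{0,j}+l}$ directly instead of first absorbing it into $|\bm'|^{l(d_i+1)}$; neither difference is substantive.
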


\begin{proof}

Take $a(\lambda,y,z) = \sum_{\bm,n} a_{\bm,n}(\lambda) y^\bm z^n \in \cO^{y,z,z^{-1}}_{\lambda}(U)$.
Let $C \colon U \to [1,+\infty)$ be a continuous function satisfying $|a_{\bm,n}(\lambda)| \leq C(\lambda)^{|\bm| + |n| + 1} n!$, and let $C'\geq1$ be a constant such that $a_{\bm,n}(\lambda)=0$ if $n<-C'(|\bm| + 1)$.
It is easy to see that 
\[
z \parfrac{}{t_i} a(\lambda,y,z) 
= \sum_{\bm,n} a_{\bm,n-1}(\lambda) \bm_{i,0} y^\bm z^n
\]
again belongs to $\cO^{y,z,z^{-1}}_{\lambda}(U)$.
We will show that $\exp ( \sigma_{i,j} \partial/\partial \tau_j T_i ( z \partial/\partial t ) ) \cdot a(\lambda,y,z) \in \cO^{y,z,z^{-1}}_{\lambda}(U)$.
This is sufficient since $\Delta(\sigma)$ is the product of exponentials of this form.
Let $d = \deg T_i$ be the degree of the monomial $T_i$.
We first assume that $j=0$ or $j > r$.
Then we have
\begin{align*}
&\exp \left( \sigma_{i,j} \parfrac{}{\tau_j} T_i \left( z \parfrac{}{t} \right) \right) \sum_{\bm,n} a_{\bm,n}(\lambda) y^\bm z^n		\\
=& \sum_{\bm,n} \left[ \sum_{l\geq0} \frac{1}{l!} \frac{(\bm_{0,j}+l)!}{\bm_{0,j}!} T_i (\bm_{\bullet,0})^l a_{ \bm + l \cdot \sfe_{0,j} - l \cdot \sfe_{i,j}, n-dl } (\lambda) \right] y^\bm z^n
\end{align*}
The coefficient of $y^\bm z^n$ can be estimated as follows:
\begin{align*}
&\left| \sum_{0 \leq l \leq d^{-1} (n + C' ( | \bm | + 1 ))} \frac{1}{l!} \frac{(\bm_{0,j}+l)!}{\bm_{0,j}!} T_i (\bm_{\bullet,0})^l a_{ \bm + l \cdot \sfe_{0,j} - l \cdot \sfe_{i,j}, n-dl } (\lambda) \right|		\\
\leq& 2^{\bm_{0,j} + d^{-1} (n + C' ( | \bm | + 1 ))} 2^{1 + n + C' ( | \bm | + 1 )} \cdot \max_{l \geq 0} | \bm |^{dl} | a_{ \bm + l \cdot \sfe_{0,j} - l \cdot \sfe_{i,j}, n-dl } (\lambda) |	\\
\leq& 2^{\bm_{0,j} + d^{-1} (n + C' ( | \bm | + 1 ))} 2^{1 + n + C' ( | \bm | + 1 )} \cdot C''(\lambda)^{ | \bm | + n + 1 } n!
\end{align*}
for some continuous function $C''(\lambda)\geq1$ on $U$.
Here the last inequality follows from the following estimate:
\begin{align*}
| \bm |^{dl} | a_{ \bm + l \cdot \sfe_{0,j} - l \cdot \sfe_{i,j}, n-dl } (\lambda) |
&\leq C(\lambda)^{ | \bm | + n + 1 } dl! (n-dl)! \\
&\leq C''(\lambda)^{ | \bm | + n + 1 } n!
\end{align*}
The case $1 \leq j \leq r$ can be shown in a similar way.

\end{proof}

\begin{proof}[Proof of Proposition \ref{prop:estimateM}]

We have
\begin{align*}
\tbfM_{\alpha,i} 
&= ( \Exc_\alpha(\lambda + \Lambda,z) e^{\tau_\alpha / z} )^{-1} T_i \left( z \parfrac{}{t} \right) \Delta (\sigma) \hcI_\alpha ( qe^t, \lambda + z \partial_\Lambda, z ) \bfL_B ( Q_\alpha, \tau_\alpha, z )	\\
&= T_i \left( z \parfrac{}{t} + P^\alpha \right) \Delta' (\sigma) ( \Exc_\alpha(\lambda + \Lambda,z) e^{\tau_\alpha / z} )^{-1} \hcI_\alpha ( qe^t, \lambda + z \partial_\Lambda, z ) \bfL_B ( Q_\alpha, \tau_\alpha, z )	\\
&= T_i \left( z \parfrac{}{t} + P^\alpha \right) \Delta' (\sigma) \left[ ( \Exc_\alpha(\lambda + \Lambda,z) e^{\tau / z} )^{-1} \hcI_\alpha ( qe^t, \lambda + z \partial_\Lambda, z ) \bfL_B ( Q, \tau, z ) \right]_{Q \to Q_\alpha, \tau \to \tau_\alpha}.
\end{align*}
Here $\Delta'(\sigma)$ is defined to be $\Delta(\sigma)$ with $z\partial/\partial t_i$ and $z\partial/\partial \tau_j$ replaced by $z\partial/\partial t_i + P^\alpha_i$ and $z\partial/\partial \tau_j + \phi_j$ respectively.
Due to Lemma \ref{lem:preservering}, it is sufficient to show that all entries of the matrix
\begin{align*}
\bfN^\alpha 
&= ( \Exc_\alpha(\lambda + \Lambda,z) e^{\tau / z} )^{-1} \hcI_\alpha ( qe^t, \lambda + z \partial_\Lambda, z ) \bfL_B ( Q, \tau, z ) \\
&= e^{-\tau / z} \frac{ \Exc_\alpha(\lambda + z \partial_\Lambda,z) }{ \Exc_\alpha(\lambda + \Lambda,z) } e^{F^\alpha(qe^t,\lambda + z \partial_\Lambda)/z} \biggl[ \sum_{a\geq 0} A^\alpha_a (qe^t,\lambda + z \partial_\Lambda) z^a \biggr] \bfL_B ( Q, \tau, z )
\end{align*}
belong to the ring $\cO^{y,z,z^{-1}}_{\lambda} (\cD^c)$.
The operator $z\partial_{\Lambda_j}$ acts on $e^{\tau/z}x^\bm z^{-n}$ as
\[
z \partial_{\Lambda_j} e^{\tau/z} x^\bm z^{-n} = \left( \Lambda_j + z \Lambda_j(\bm^\prime) \right) e^{\tau/z} x^\bm z^{-n}
\]
where $\bm^\prime \in \Z^r \cong H_2(B)$ is the degree of $x^\bm z^{-n}$ in $Q$.
Therefore, $\bfN^\alpha$ is computed as 
\begin{align*}
\bfN^\alpha
&= e^{-\tau / z} \frac{ \Exc_\alpha(\lambda + z \partial_\Lambda,z) }{ \Exc_\alpha(\lambda + \Lambda,z) } e^{F^\alpha(qe^t,\lambda + z \partial_\Lambda)/z} \biggl[ \sum_{a \geq 0} A^\alpha_a (qe^t,\lambda + z \partial_\Lambda) z^a \biggr] e^{\tau/z} \sum_{\bm,n\geq0} \bfL^B_{\bm,n} x^\bm z^{-n}	\\
&= \sum_{\bm,n\geq0} \frac{ \Exc_\alpha(\lambda + \Lambda + z \Lambda(\bm^\prime), z) }{ \Exc_\alpha(\lambda + \Lambda,z) } e^{F^\alpha(qe^t,\lambda + \Lambda + z \Lambda(\bm^\prime))/z} \biggl[ \sum_{a \geq 0} A^\alpha_a (qe^t,\lambda + \Lambda + z \Lambda(\bm^\prime)) z^a \biggr] \bfL^B_{\bm,n} x^\bm z^{-n}	\\
&= \biggl( \sum_{a,\bd_A,n_A} \sum_{\bd_F,n_F} \sum_{n_E} \sum_{\bm,n} A_{a,\bd_A,n_A} (\lambda,\bm^\prime) \cdot F_{\bd_F,n_F} (\lambda,\bm^\prime) \cdot E_{n_E} (\lambda,\bm^\prime) \cdot \bfL^B_{\bm,n} \\
& \phantom{AAAAAAAAAAAAAAAAAAAAAAAAAAAA} \cdot \left. (qe^t)^{\bd_A + \bd_F} x^\bm z^{ a + n_A + n_F + n_E - n} \biggr) \right|_{\lambda \to \lambda + \Lambda}
\end{align*}
where we set
\begin{align*}
A^\alpha_a (q,\lambda + z \Lambda(\bm^\prime)) 
&= \sum_{\bd,n\geq0} A_{a,\bd,n} (\lambda,\bm^\prime) q^\bd z^n, \\
e^{F^\alpha (q,\lambda + z \Lambda(\bm^\prime)) / z} 
&= \sum_{\bd,n\geq-|\bd|} F_{\bd,n} (\lambda,\bm^\prime) q^\bd z^n, \\
\frac{ \Exc_\alpha(\lambda + z \Lambda(\bm^\prime), z) }{ \Exc_\alpha(\lambda,z) }
&= \sum_{n\geq0} E_n (\lambda,\bm^\prime) z^n.
\end{align*}
We claim that the coefficients satisfy the following estimates:
\begin{align}
\label{eq:estimateAadn}
\left\| A_{a,\bd,n} (\lambda + \Lambda,\bm^\prime) \right\| 
&\leq C_3(\lambda)^{a + |\bd| + n + | \bm^\prime | + 1} (a+n)!, \\
\label{eq:estimateFdn}
\left\| F_{\bd,n} (\lambda + \Lambda,\bm^\prime) \right\| 
&\leq C_3(\lambda)^{|\bd| + | n | + | \bm^\prime | + 1} n!, \\
\label{eq:estimateEn}
\left\| E_n (\lambda + \Lambda,\bm^\prime) \right\| 
&\leq C_3(\lambda)^{n + | \bm^\prime | + 1} n!
\end{align}
for some continuous function $C_3(\lambda) \colon \cD^c \to [1,+\infty)$.
Note that it is sufficient to show the same estimates for $| A_{a,\bd,n} (\lambda,\bm^\prime) |, | F_{\bd,n} (\lambda,\bm^\prime) |, | E_n (\lambda,\bm^\prime) |$ instead of $\| A_{a,\bd,n} (\lambda + \Lambda,\bm^\prime) \|, \| F_{\bd,n} (\lambda + \Lambda,\bm^\prime) \|, \| E_n (\lambda + \Lambda,\bm^\prime) \|$ since $\Lambda_j \in H^2(B)$ is nilpotent.

We first confirm the estimate for $A_{a,\bd,n} (\lambda,\bm^\prime)$.
From Lemma \ref{lem:estimateAn} along with the Cauchy integral formula, there exists a real continuous function $C_4(\lambda)\geq1$ such that
\[
\left| \partial^\bd_q \partial^{I}_\lambda A^\alpha_a (0,\lambda) \right| \leq C_4(\lambda)^{a + |\bd| + |I| + 1} a! \bd! I!.
\]
Taking the Taylor expansion for $A^\alpha_a$, we have
\begin{align*}
A^\alpha_a (q,\lambda + z \Lambda(\bm^\prime))
&= \sum_{\bd \in \N^k,I,I' \in \N^N} \partial^\bd_q \partial^I_\lambda A^\alpha_a (0,\lambda) \cdot \frac{q^\bd}{\bd!} \cdot \frac{\Lambda(\bm^\prime)^I}{I!} z^{|I|} \\
&= \sum_{\bd,n\geq0} q^\bd z^n \sum_{I \in \N^N ; |I| = n} \partial^\bd_q \partial^{I}_\lambda A^\alpha_a (0,\lambda) \cdot \frac{\Lambda(\bm^\prime)^I}{\bd!I!}.
\end{align*}
Let $C_5\geq1$ be a constant such that $\max_j | \Lambda_j (D) | \leq C_5 |D|$ for any $D \in \Eff (B)$.
Then we have
\begin{align*}
\left| A_{a,\bd,n} (\lambda,\bm^\prime) \right| 
&\leq 2^{N+n} \max_{I \in \N^N ; |I| = n} \left( \left| \partial^\bd_q \partial^{I}_\lambda A^\alpha_a (0,\lambda) \right| \cdot \frac{\left| \Lambda(\bm^\prime) \right|^I}{\bd!I!} \right) \\
&\leq 2^{N+n} \cdot C_4(\lambda)^{a + |\bd| + n + 1} a! \cdot C^n_5 | \bm^\prime |^n 
\end{align*}
This estimate along with the inequality $| \bm^\prime |^n \leq e^{| \bm^\prime |} n! $ shows \eqref{eq:estimateAadn}.

Similarly, by choosing a larger $C_4(\lambda)$ if necessary, we have the following estimates for the coefficients of the expansion of $F^\alpha$:
\begin{equation} 
\label{eq:estimateF}
F^\alpha (q,\lambda + z \Lambda(\bm^\prime)) = \sum_{\bd\neq0,n\geq0} F'_{\bd,n} (\lambda,\bm^\prime) q^\bd z^n, \qquad \left| F'_{\bd,n} (\lambda,\bm^\prime) \right| \leq C_4(\lambda)^{|\bd| + n + 1} | \bm^\prime |^n.
\end{equation}
Here we used $F^\alpha(0,\lambda) = 0$.
We decompose $e^{F^\alpha/z}$ into three parts as
\begin{align*}
e^{ F^\alpha(q,\lambda + z \Lambda(\bm^\prime)) / z } 
=& \exp \biggl[ \sum_{\bd\neq0,n\geq0} F'_{\bd,n} (\lambda,\bm^\prime) q^\bd z^{n-1} \biggr] \\
=& \exp \biggl[ \sum_{\bd\neq0} F'_{\bd,0} (\lambda) q^\bd z^{-1} \biggr] \cdot \exp \biggl[ \sum_{\bd\neq0} F'_{\bd,1} (\lambda,\bm^\prime) q^\bd \biggr] \\
&\cdot \exp \biggl[ \sum_{\bd\neq0,n\geq1} F'_{\bd,n+1} (\lambda,\bm^\prime) q^\bd z^n \biggr]
\end{align*}
The third exponential is expanded as 
\begin{align*}
\exp \biggl[ \sum_{\bd\neq0,n\geq1} F'_{\bd,n+1} (\lambda,\bm^\prime) q^\bd z^n \biggr]
&= \sum_{m\geq0} \frac{1}{m!} \biggl[ \sum_{\bd\neq0,n\geq1} F'_{\bd,n+1} (\lambda,\bm^\prime) q^\bd z^n \biggr]^m \\
&= 1 + \sum_{\bd\neq0,n\geq1} q^\bd z^n \left[ \sum^n_{m=1} \frac{1}{m!} {\sum}^\prime_{\bd,n,m} \prod^{m}_{i=1} F'_{\bd_i,n_i+1} (\lambda,\bm^\prime) \right]
\end{align*}
where the symbol ${\sum}^\prime_{\bd,n,m}$ means the sum over all $\bd_1,\dots,\bd_m \in \N^k$ and $n_1,\dots,n_m \in \N$ such that $n_1 \geq 1,\dots,n_m \geq 1$, $\sum^m_{i=1} \bd_i = \bd$ and $\sum^m_{i=1} n_i = n$. 
We estimate the coefficient of $q^\bd z^n$ for $\bd\neq0$ and $n \geq 1$ by using \eqref{eq:estimateF} as
\begin{align*}
\left| \sum^n_{m=1} \frac{1}{m!} {\sum}^\prime_{\bd,n,m} \prod^{m}_{i=1} F'_{\bd_i,n_i+1} (\lambda,\bm^\prime) \right|
&\leq 2^{|\bd| + (k+1)n } \sum^n_{m=1} \frac{1}{m!} C_4(\lambda)^{| \bd | + n + 2m } | \bm^\prime |^{n + m} \\
&\leq 2^{|\bd| + (k+1)n } C_4(\lambda)^{| \bd | + 3n } | \bm^\prime |^n \sum^n_{m=1} \frac{| \bm^\prime |^m}{m!}	\\
&\leq 2^{|\bd| + (k+1)n } C_4(\lambda)^{| \bd | + 3n } | \bm^\prime |^n e^{| \bm^\prime |}
\end{align*}
Hence the third exponential satisfies the estimate of the form \eqref{eq:estimateFdn}.
In the same way, we can show the same estimates for the remaining two exponentials, which shows the estimate \eqref{eq:estimateFdn}.

Finally, we confirm \eqref{eq:estimateEn}. 
We decompose $\Exc_\alpha(\lambda + z \Lambda(\bm^\prime), z) / \Exc_\alpha(\lambda, z)$ into two parts:
\begin{align*}
\frac{ \Exc_\alpha(\lambda + z \Lambda(\bm^\prime), z) }{ \Exc_\alpha(\lambda,z) }
= \left[ \prod_{j \notin \alpha} \frac{ \auj^{- \aUj(\bm^\prime)} }{ \sqrt{1+\frac{z \cdot \aUj(\bm^\prime)}{\auj}} } \right] \cdot e^{ E' (\lambda,z\Lambda(\bm^\prime)) / z }
\end{align*}
where
\begin{align*}
E' (\lambda,z\Lambda(\bm^\prime)) = \sum_{j \notin \alpha} \left[ z \cdot \aUj(\bm^\prime) - \left( \auj + z \cdot \aUj(\bm^\prime) \right) \log \left( 1 + \frac{z \cdot \aUj(\bm^\prime)}{\auj} \right) \right].
\end{align*}
The former part satisfies the estimate of the form \eqref{eq:estimateEn}.
On the other hand, we have the following expansion of $E' (\lambda,z \Lambda(\bm^\prime))$ and estimate of its coefficients:
\begin{align*}
z^{-1} E' (\lambda,z\Lambda(\bm^\prime)) = \sum_{n \geq 1} E'_n (\lambda,\bm^\prime) z^n, \qquad | E'_n (\lambda,\bm^\prime) | \leq C_4 (\lambda)^{n + 1} | \bm^\prime |^n.
\end{align*}
As in the case of $\exp (\sum_{\bd\neq0,n\geq1} F'_{\bd,n+1} (\lambda,\bm^\prime) q^\bd z^n)$, we can estimate this exponential and show \eqref{eq:estimateEn}.

Finally, we estimate $\bfN^\alpha$.
Expand $\bfN^\alpha$ as $\bfN^\alpha = \sum_{\bd,\bm,n} \bfN^\alpha_{\bm,n} (\lambda) (qe^t)^\bd x^\bm z^n$.
Then we have
\begin{align*}
\bfN^\alpha_{\bm,n} = \sum_{a\geq0} \sum_{\bd_A,\bd_F ; \bd_A + \bd_F = \bd} \sum_{n_A\geq0} \sum_{n_F \geq - |\bd_F|} \sum_{n_E\geq0} A_{a,\bd_A,n_A} \cdot F_{\bd_F,n_F} \cdot E_{n_E} \cdot \bfL^B_{\bm,a + n_A + n_F + n_E - n}.
\end{align*}
Since $\bfL^B_{\bm,n} = 0$ for $n > 0$, the effective summation range is a finite set with cardinality less than $C^{| \bd | + n + 1}_6$ for some constant $C_6$. 
From the estimates \eqref{eq:estimateAadn} \eqref{eq:estimateFdn} \eqref{eq:estimateEn}, there exist real functions $C(\lambda), C'(\lambda)$ such that
\begin{align*}
\left\| \bfN^\alpha_{\bm,n} \right\| 
&\leq C(\lambda)^{| \bd | + | \bm | + n + 1} (a+n_A)! \cdot n_F! \cdot n_E! \cdot (n - a - n_A - n_F - n_E)! \\
&\leq C'(\lambda)^{| \bd | + | \bm | + n + 1} n!.
\end{align*}
This shows $\bfN^\alpha \in \cO^{y,z,z^{-1}}_{\lambda} (\cD^c)$.
\end{proof}

\subsection{Estimate for $\AI$}

In this subsection, we will estimate the connection matrix $\AI$ of the connection $\nablaI$. 
Recall that there are two ways to construct $\AI$, that is,
\[
\AI = z \LI^{-1} d \LI = z \bfM^{-1} d \bfM.
\] 
Let $\corr{\cdot,\cdot}^E$ and $\corr{\cdot,\cdot}^B$ be the equivariant Poincar\'{e} pairings on the corresponding spaces. 
We define the pairing $\pairr{\cdot}{\cdot}$ by
\[
\pairr{\omega_1}{\omega_2} := \pair{\LI(-z)\omega_1}{\LI(z)\omega_2}^E \qquad \omega_1,\omega_2 \in H^*_T(E).
\]
As we will see below, this pairing does not contain negative powers of $z$ due to the Lagrangian property of the cone.
Furthermore, this pairing satisfies certain estimates. 

\begin{proposition}
\label{prop:pairing}
{\rm (1)} For any $\omega_1,\omega_2 \in H^*_T(E)$,
\[
\pairr{\omega_1}{\omega_2} 
= \sum_{\alpha\in F} \pair{\bfM_\alpha(-z)\omega_1}{\bfM_\alpha(z)\omega_2}^B.
\]
{\rm (2)} The pairing $\pairr{\cdot}{\cdot}$ is non-degenerate and takes values in the ring $\cO^{y,z}_\lambda(\cV)$ for some open neighborhood $\cV$ of $0 \in \C^N_\lambda$.
\end{proposition}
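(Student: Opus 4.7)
The plan is to deduce Part (1) directly from the pairing identity established in the proof of Theorem~\ref{thm:formaldecomp}, and to prove Part (2) by combining the unitarity of $\bfL_E$ with the estimates on $\tbfM_\alpha$ from Proposition~\ref{prop:estimateM}, exploiting the pole cancellations inherent in equivariant localization. Specifically, for Part (1), I would use $\bfM = \bfGamma\Xi\LI$ together with the identity
\[
\pair{\Xi^{-1}\bfGamma(-z)^{-1}\eta_1}{\Xi^{-1}\bfGamma(z)^{-1}\eta_2}^E = \sum_{\alpha\in F}\pair{\eta^\alpha_1}{\eta^\alpha_2}^B
\]
for $\eta_i=(\eta^\alpha_i)_{\alpha\in F}$ obtained in the proof of Theorem~\ref{thm:formaldecomp}; substituting $\eta_i=\bfM(\pm z)\omega_i$ gives the claim immediately.

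For non-degeneracy in Part (2), the factorization $\LI=\bfL_E\hR$ shows $\LI|_{y=0}=\Id$ (using $\hR|_{(q,Q)=0}=\Id$ from Proposition~\ref{prop:BFforLI} together with the standard normalization $\bfL_E|_{(q,Q)=0,\,\hsigma=0}=\Id$); hence $\pairr{\omega_1}{\omega_2}|_{y=0}=\pair{\omega_1}{\omega_2}^E$ is the equivariant Poincar\'e pairing on $E$, which is non-degenerate, and non-degeneracy persists in a neighborhood of $y=0$ by continuity. For the analyticity claim, the plan rests on two complementary descriptions. First, Proposition~\ref{prop:unitarity} combined with $\LI=\bfL_E\hR$ yields
\[
\pairr{\omega_1}{\omega_2} = \pair{\hR(-z)\omega_1}{\hR(z)\omega_2}^E,
\]
so each $(\bm,n)$-coefficient of the pairing is polynomial in $\lambda$, hence entire. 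Second, Part (1) combined with the factorization $\bfM_\alpha=\Exc_\alpha(\lambda+\Lambda,z)e^{\tau_\alpha/z}\tbfM_\alpha$, the identity $\Exc_\alpha(\lambda+\Lambda,-z)\Exc_\alpha(\lambda+\Lambda,z)=1/e_T(N^\alpha)$, and the cancellation of the factors $e^{\pm\tau_\alpha/z}$ in the Poincar\'e pairing yield
\[
\pairr{\omega_1}{\omega_2} = \sum_{\alpha\in F}\frac{1}{e_T(N^\alpha)}\pair{\tbfM_\alpha(-z)\omega_1}{\tbfM_\alpha(z)\omega_2}^B,
\]
each summand having (before the pole factor) entries in $\cO^{y,z,z^{-1}}_\lambda(\cD^c)$ by Proposition~\ref{prop:estimateM} and Lemma~\ref{lem:both_rings}.

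The hard part, which I anticipate as the main obstacle, is to convert these two descriptions into a single estimate in $\cO^{y,z}_\lambda(\cV)$ on a neighborhood $\cV$ of $0\in\C^N_\lambda$: individual summands of the localization expansion have poles along $\cD\ni 0$, while the sum is everywhere regular. I would take $\cV$ to be a small polydisk about $0$, bound each $(\bm,n)$-coefficient on $\partial(2\cV)\setminus\cD$ using the localization sum (where the factors $1/e_T(N^\alpha)$ remain under control), and then propagate the bound across $\cD$ onto all of $\cV$ via the global polynomial character of the coefficient in $\lambda$ together with a maximum-modulus / Cauchy-inequality argument. The mismatch between the factorial growth $n!$ from $\cO^{y,z,z^{-1}}_\lambda$ and the required $|\bm|^n$-growth in $\cO^{y,z}_\lambda$ is to be bridged by the observation that, since the pairing is polynomial in $z$ with the $z$-degree of its $y^\bm$-coefficient bounded by $O(|\bm|)$ (as can be read off from the explicit shape of $\bigIE$), one has $n!\leq |\bm|^n$ up to a factor of the form $C^n$ which can be absorbed into the base $C(\lambda)$.
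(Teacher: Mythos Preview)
Your overall strategy is exactly the paper's: Part~(1) follows from $\bfM=\bfGamma\Xi\LI$ and the pairing identity proved in Theorem~\ref{thm:formaldecomp}; Part~(2) combines the two descriptions $\pairr{\omega_1}{\omega_2}=\pair{\hR(-z)\omega_1}{\hR(z)\omega_2}^E\in\C[\lambda,z][\![y]\!]$ and the $\tbfM_\alpha$-expression lying in $\cO^{y,z,z^{-1}}_\lambda(\cD^c)$. The non-degeneracy argument via the value at $y=0$ is also the same (minor point: $\LI|_{q=Q=0}=e^{(t+\tau+\sigma)/z}\cdot\Id$, not $\Id$, but the exponentials cancel in the pairing so your conclusion stands).

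Two places in your ``hard part'' deserve sharpening, and the paper's proof supplies the missing mechanisms. First, the bound $n=O(|\bm|)$ on the $z$-degree of the $y^\bm$-coefficient is obtained not by inspecting $\bigIE$ but by \emph{homogeneity}: since $\pairr{\omega_1}{\omega_2}$ is homogeneous of degree $\deg\omega_1+\deg\omega_2-\dim E$ and its coefficients $a_{\bm,n}(\lambda)$ are polynomials in $\lambda$ (hence of non-negative degree), one gets $n\le|C_1|+C_2|\bm|$ directly. This is what converts the $n!$ bound into $|\bm|^n$. Second, your extension step ``bound on $\partial(2\cV)\setminus\cD$, then maximum modulus'' has a gap: $\cD$ is a union of linear hyperplanes through the origin, so $\partial(2\cV)$ meets $\cD$, and the continuous function $C(\lambda)$ from the $\cO^{y,z,z^{-1}}_\lambda(\cD^c)$-estimate is not uniformly bounded on $\partial(2\cV)\setminus\cD$. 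The paper instead chooses a compact $K\subset\cD^c$ whose polynomially convex hull $\overline{K}^{\mathrm{p}}$ contains $0$ in its interior; since each $a_{\bm,n}$ is a polynomial, $\sup_{\cV}|a_{\bm,n}|\le\sup_{\overline{K}^{\mathrm{p}}}|a_{\bm,n}|=\sup_K|a_{\bm,n}|$, and on $K$ the constant $C(\lambda)$ is bounded. With these two fixes your argument coincides with the paper's.
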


\begin{proof}

We see in the proof of Theorem \ref{thm:formaldecomp} that, for $\eta_1=(\eta^\alpha_1)_{\alpha\in F},\eta_2=(\eta^\alpha_2)_{\alpha\in F}\in \bigoplus_{\alpha\in F}H^*(B)$,
\[
\pair{\Xi^{-1} \bfGamma(-z)^{-1} \eta_1}{\Xi^{-1} \bfGamma(z)^{-1} \eta_2}^E = \sum_{\alpha\in F}\pair{\eta^\alpha_1}{\eta^\alpha_2}^B.
\]
Hence,
\begin{align*}
\pairr{\omega_1}{\omega_2}
&= \pair{\Xi^{-1} \bfGamma(-z)^{-1} \bfM(-z) \omega_1}{\Xi^{-1} \bfGamma(z)^{-1} \bfM(z) \omega_2}^E		\\
&= \sum_{\alpha \in F} \pair{\bfM_\alpha(-z) \omega_1}{\bfM_\alpha(z) \omega_2}^B.
\end{align*}

In order to prove (2), we first show that $\pairr{\omega_1}{\omega_2}$ is contained in the rings $\C[\lambda,z] [\![y]\!]$ and $\cO^{y,z,z^{-1}}_\lambda(\cD^c)$.
From Proposition \ref{prop:unitarity} and Proposition \ref{prop:BFforLI}, we have
\begin{align*}
\pairr{\omega_1}{\omega_2}
&=\pair{\bfL_E (-z) \hR (-z) \omega_1}{\bfL_E (z) \hR (z) \omega_2}^E		\\
&=\pair{\hR (-z) \omega_1}{\hR (z) \omega_2}^E.
\end{align*}
Since $\hR (z) \in \End (H^*(E)) \otimes \C[\lambda,z] [\![y]\!]$, $\pairr{\omega_1}{\omega_2}$ belongs to $\C[\lambda,z][\![y]\!]$.
On the other hand, it follows from (1) that
\begin{align*}
\pairr{\omega_1}{\omega_2}
&= \sum_{\alpha \in F} \pair{\Exc_\alpha(\lambda + \Lambda,-z) e^{- \tau_\alpha / z} \cdot \tbfM_\alpha(-z) \omega_1}{\Exc_\alpha(\lambda + \Lambda,z) e^{\tau_\alpha / z} \cdot \tbfM_\alpha(z) \omega_2}^B \\
&= \sum_{\alpha \in F} \pair{\frac{1}{e_T(N^\alpha)} \cdot \tbfM_\alpha(-z) \omega_1}{\tbfM_\alpha(z) \omega_2}^B.
\end{align*}
This calculation along with Proposition \ref{prop:estimateM} shows $\pairr{\omega_1}{\omega_2} \in \cO^{y,z,z^{-1}}_\lambda(\cD^c)$.

We may assume $\omega_1,\omega_2 \in H^*_T(E)$ to be homogeneous elements. 
Let 
\[
\pairr{\omega_1}{\omega_2} = \sum_{\bm,n\geq0} a_{\bm,n} (\lambda)  y^\bm z^n.
\]
Since $\pairr{\omega_1}{\omega_2} \in \C[\lambda,z][\![y]\!] \cap \cO^{y,z,z^{-1}}_\lambda(\cD^c)$, the coefficients $a_{\bm,n} (\lambda)$ are polynomials and satisfy
\[
| a_{\bm,n} (\lambda) | \leq C(\lambda)^{ | \bm | + n + 1 } n! \qquad \text{for } \lambda \in \cD^c
\]
for some real positive continuous function $C(\lambda)\geq1$ on $\cD^c$.
Since the degree of $\pairr{\omega_1}{\omega_2}$ equals $\deg \omega_1 + \deg \omega_2 - \dim E =: C_1$, we have $\deg a_{\bm,n}(\lambda) + \deg y^\bm + n = C_1$ for any $\bm,n$ with $a_{\bm,n}(\lambda) \neq 0$.
It follows from $a_{\bm,n} \in \C[\lambda]$ that $\deg a_{\bm,n}(\lambda) \geq 0$ and hence $n \leq C_1 - \deg y^\bm \leq | C_1 | + C_2 | \bm |$ for some constant $C_2 > 0$.
Combining these inequalities, we obtain 
\[
| a_{\bm,n} (\lambda) | \leq C(\lambda)^{ | \bm | + n + 1 } ( | C_1 | + C_2 | \bm | )^n \leq C'(\lambda)^{ | \bm | + n + 1 } | \bm |^n \qquad \text{for} \quad \bm \neq 0,\lambda \in \cD^c
\]
for $C'(\lambda) = ( | C_1 | + C_2 ) \cdot C(\lambda)$.
We claim that $a_{0,n}(\lambda) = 0$ for $n > 0$.
This implies that the above estimate holds for any $\bm,n$ and that $\pairr{\omega_1}{\omega_2}$ is contained in $\cO^{y,z}_\lambda(\cD^c)$.
Since $\LI(z)|_{q=Q=0} = e^{( t + \tau + \sigma ) / z} \cdot \Id$, it follows that
\[
\pairr{\omega_1}{\omega_2} |_{q=Q=0} 
= \pair{e^{- ( t + \tau + \sigma) / z} \omega_1}{e^{( t + \tau + \sigma) / z} \omega_2}^E 
= \pair{\omega_1}{\omega_2}^E.
\]
Therefore $\pairr{\omega_1}{\omega_2} |_{y=0}$ is independent of $z$, that is, $a_{0,n} = 0$ for $n>0$.
Note that this equation along with Lemma \ref{lem:invertible} implies non-degeneracy of the pairing.

Finally, we show that $\pairr{\omega_1}{\omega_2} \in \cO^{y,z}_\lambda(\cV)$ for some open neighborhood $\cV$ of $\lambda = 0$.
Choose a compact set $K \subset \cD^c$ such that the polynomially convex hull $\overline{K}^{\text{p}}$ of $K$ in $\C^N_\lambda$ contains the origin in its interior, 
and take $\cV \subset \text{int} \overline{K}^{\text{p}}$ to be a neighborhood of the origin.
Then we have the following estimate
\[
| a_{\bm,n} (\lambda) | \leq C'(\lambda)^{ | \bm | + n + 1 } | \bm |^n \qquad \text{for} \quad \lambda \in \cV,
\]
which implies $\pairr{\omega_1}{\omega_2} \in \cO^{y,z}_\lambda(\cV)$.
\end{proof}

We estimate the connection matrix $\AI$ by using this pairing.
To simplify notation, let us introduce the following temporary notation; for $0 \leq i \leq l, 0 \leq j \leq s$,  
\[
\sigma_{i,j} =
\begin{cases}
t_i		&\quad 1 \leq i \leq k, j=0, \\
\tau_j		&\quad i=0, 0 \leq j \leq s, \\
\sigma_{i,j}	&\quad \text{otherwise}.
\end{cases}
\]
This notation will be used only in the rest of this section.

\begin{proposition}
\label{prop:estimateAI}
There exists an open neighborhood $\cV \subset \C^N_\lambda$ of the origin such that all entries of $\AI_a$ belong to the ring $\cO^{y,z}_\lambda(\cV)$.
\end{proposition}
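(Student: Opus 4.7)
Plan: I mirror the strategy of the proof of Proposition~\ref{prop:pairing}(2). The goal is to show that each entry of $\bfA^I_a$ lies simultaneously in $\C[\lambda,z][\![y]\!]$ and in $\cO^{y,z,z^{-1}}_\lambda(\cD^c)$, and then to upgrade this to $\cO^{y,z}_\lambda(\cV)$ by combining homogeneity with polynomial convexity. To extract $\bfA^I_a$ through the pairing, introduce the matrices
\[
G(y,z)_{bc}:=\pairr{e_b}{e_c},\qquad B_a(y,z)_{bc}:=\pairr{e_b}{\bfA^I_a(z)\,e_c},
\]
so that $B_a=G\,\bfA^I_a$ as matrices in the basis $\{e_{i,j}\}$. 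By Proposition~\ref{prop:pairing} the entries of $G$ lie in $\cO^{y,z}_\lambda(\cV)$ for a suitable neighborhood $\cV\subset\C^N_\lambda$ of the origin. The specialization $\det G|_{y=0,\,z=0,\,\lambda=0}$ is the determinant of the non-equivariant Poincar\'e pairing matrix, which is non-zero by Poincar\'e duality; applying Lemma~\ref{lem:invertible} to $\det G$ (after possibly shrinking $\cV$) together with Cramer's rule gives $G^{-1}\in\cO^{y,z}_\lambda(\cV)$. By Lemma~\ref{lem:both_rings} it therefore suffices to show $B_a\in\cO^{y,z}_\lambda(\cV)$, for then $\bfA^I_a=G^{-1}B_a$ follows.

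To estimate $B_a$, combine the identity $\bfM\,\bfA^I_a=z\,\partial_a\bfM$ (equivalent to $\bfA^I=z\bfM^{-1}d\bfM$) with Proposition~\ref{prop:pairing}(1) to write
\[
(B_a)_{bc}=\sum_{\alpha\in F}\pair{\bfM_\alpha(-z)\,e_b}{z\,\partial_a\bfM_\alpha(z)\,e_c}^B.
\]
Factoring $\bfM_\alpha=\Exc_\alpha(\lambda+\Lambda,z)\,e^{\tau_\alpha/z}\,\tbfM_\alpha$, the identity $\Exc_\alpha(-z)\Exc_\alpha(z)=1/e_T(N^\alpha)$ together with $e^{-\tau_\alpha/z}e^{\tau_\alpha/z}=1$ forces the exponential factors to cancel pairwise, and applying the Leibniz rule to $\partial_a\bfM_\alpha$ yields
\[
(B_a)_{bc}=\sum_{\alpha\in F}\frac{1}{e_T(N^\alpha)}\Big[\pair{\tbfM_\alpha(-z)\,e_b}{(\partial_a\tau_\alpha)\,\tbfM_\alpha(z)\,e_c}^B+z\pair{\tbfM_\alpha(-z)\,e_b}{\partial_a\tbfM_\alpha(z)\,e_c}^B\Big].
\]
Here $1/e_T(N^\alpha)$ is rational in $\lambda$ and regular on $\cD^c$, $\partial_a\tau_\alpha$ is a cohomology class independent of $(y,z)$, and $\tbfM_\alpha(\pm z)$ together with its $y$-derivatives lies in $\cO^{y,z,z^{-1}}_\lambda(\cD^c)$ by Proposition~\ref{prop:estimateM}; the ring property of $\cO^{y,z,z^{-1}}_\lambda(\cD^c)$ (Lemma~\ref{lem:both_rings}) then gives $(B_a)_{bc}\in\cO^{y,z,z^{-1}}_\lambda(\cD^c)$.

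On the polynomial side, $G\in\C[\lambda,z][\![y]\!]$ via the formula $G_{bc}=\pair{\hR(-z)\,e_b}{\hR(z)\,e_c}^E$ used in the proof of Proposition~\ref{prop:pairing}(2), and $\bfA^I_a\in\C[\lambda,z][\![y]\!]$ by Proposition~\ref{prop:AI}, so $B_a\in\C[\lambda,z][\![y]\!]$. Writing $(B_a)_{bc}=\sum_{m,n}b_{m,n}(\lambda)\,y^m z^n$, the $b_{m,n}$ are polynomials in $\lambda$ of non-negative degree, and the entry is homogeneous of a fixed total degree with respect to the grading on $\QDMT(E)$. As in the proof of Proposition~\ref{prop:pairing}(2), homogeneity forces a linear bound $n\leq C_1+C_2|m|$ on the support; together with $n!\leq n^n\leq(C_1+C_2|m|)^n$ this upgrades the Gevrey-type bound $|b_{m,n}(\lambda)|\leq C(\lambda)^{|m|+n+1}n!$ on $\cD^c$ to a bound of the form $|b_{m,n}(\lambda)|\leq C'(\lambda)^{|m|+n+1}|m|^n$ on $\cD^c$. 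Choosing a compact $K\subset\cD^c$ whose polynomially convex hull contains a neighborhood of $0\in\C^N_\lambda$ in its interior and shrinking $\cV$ into that interior, polynomial convexity propagates the bound to $\cV$, yielding $B_a\in\cO^{y,z}_\lambda(\cV)$ and hence $\bfA^I_a\in\cO^{y,z}_\lambda(\cV)$.

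The main technical obstacle is this last transition, from the factorial $n!$ growth available only on $\cD^c$ to the $|m|^n$ growth required by $\cO^{y,z}_\lambda(\cV)$. It depends crucially on the homogeneity of the entries of $B_a$ to confine the support to $n=O(|m|)$; without this degree constraint the factorial cannot be absorbed into $|m|^n$ up to geometric factors, and the analytic bound near $\lambda=0$ would be out of reach by this route.
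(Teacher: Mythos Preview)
Your proof is correct and follows essentially the same approach as the paper: express $\bfA^I_a$ via the pairing as $G^{-1}B_a$ with $(B_a)_{bc}=\pairr{e_b}{\bfA^I_a e_c}$, show $B_a\in\C[\lambda,z][\![y]\!]\cap\cO^{y,z,z^{-1}}_\lambda(\cD^c)$ using Proposition~\ref{prop:AI} and the $\bfM$-expression respectively, and then invoke the homogeneity/polynomial-convexity argument from Proposition~\ref{prop:pairing}(2) to conclude $B_a\in\cO^{y,z}_\lambda(\cV)$. The paper is terser (it simply writes ``In the same way as the proof of Proposition~\ref{prop:pairing}, we obtain the result'') while you spell out the $\tbfM_\alpha$-factorization and the invertibility of $G$; the only step left implicit in both versions is that $(B_a)_{bc}|_{y=0}$ is $z$-independent (needed for the $\bm=0$ case of the $\cO^{y,z}_\lambda$ estimate), which follows from $\bfA^I_a|_{y=0}=e_a\cup$ via the asymptotics $\hR|_{(q,Q)=0}=\Id$ and $\hsigma|_{(q,Q)=0}=\sigma$.
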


\begin{proof}
Let $\cV$ be the set introduced in the previous proposition.
Write
\[
\nablaI_\parfrac{}{\sigma_a} e_b = z^{-1} \sum_c (\AI_a)^c_b (\lambda,y,z) e_c
\]

Set $g_{ab} := \pairr{e_a}{e_b} \in \cO^{y,z}_\lambda(\cV)$.
Since the pairing $\pairr{\cdot}{\cdot}$ is perfect, the matrix $(g_{ab})$ is invertible in $\Mat((l+1)(s+1),\cO^{y,z}_\lambda(\cV))$.
We denote the entries of $(g_{ab})^{-1}$ by $g^{ab} \in \cO^{y,z}_\lambda(\cV)$. 
Then $(\AI_a)^c_b$ can be written as
\[
(\AI_a)^c_b = \pairr{e^c}{\AI_a e_b} = \sum_d g^{cd} \pairr{e_d}{\AI_a e_b}.	
\]
Therefore, it is sufficient to prove $\pairr{e_a}{\AI_b e_c} \in \cO^{y,z}_\lambda(\cV)$.
Since $\AI = z \LI^{-1} d \LI = z \bfM^{-1} d \bfM$, $\pairr{e_a}{\AI_b e_c}$ coincides with
\[
\pair{\LI(-z) e_a}{z \parfrac{\LI}{\sigma_b}(z)  e_c}^E = \sum_{\alpha \in F} \pair{\bfM_\alpha(-z) e_a}{z \parfrac{\bfM_{\alpha}}{\sigma_b}(z) e_c}^B.
\]
From Proposition \ref{prop:AI}, we can see that $\pairr{e_a}{\AI_b e_c}$ is contained in $\C[\lambda,z][\![y]\!]$.
On the other hand, from the right-hand side, we find that $\pairr{e_a}{\AI_b e_c}$ is in $\cO^{y,z,z^{-1}}_\lambda(\cD^c)$.
In the same way as the proof of Proposition \ref{prop:pairing}, we obtain the result.
\end{proof}

Expand the map $\hsigma$ and the matrix $\hR$ as follows.
\[
\hsigma = \sum_a \hsigma_a (\lambda,t,\tau,\sigma) e_a, \qquad \hR(\lambda,y,z) e_a = \sum_b \hR^b_a (\lambda,y,z) e_b.
\]
Proposition \ref{prop:BFforLI} says that $\hsigma_a (\lambda,t,\tau,\sigma) - \sigma_a \in \C[\lambda] [\![y]\!]$ and $\hR^b_a (\lambda,y,z) \in \C[\lambda,z] [\![y]\!]$.
In order to apply the gauge fixing theorem (Theorem \ref{thm:gaugefixing}) to the connection $\nablaI$, we rewrite $\nablaI$ as follows:
\begin{equation}
\label{eq:tAI}
\nabla^I = d + z^{-1} \sum_a \tbfA^I_a(\lambda,y) \frac{dy_a}{y_a}, \qquad
\tbfA^I_a :=
\begin{cases}
\bfA^I_a 	&\quad \text{if  } e_a \in H^2_T(E) \\
y_a \bfA^I_a	&\quad \text{if  } e_a \notin H^2_T(E).
\end{cases}
\end{equation}
We also introduce notation which is similar to \eqref{eq:Definition_y}:
\[
(\hy_{i,j} \mid 0 \leq i \leq l, 0 \leq j \leq s), \qquad \hy_{i,j} =
\begin{cases}
q_i e^{\hsigma_{i,0}}	&1\leq i\leq k,j=0,	\\
Q_j e^{\hsigma_{0,j}}	&i=0,1\leq j\leq r,	\\
\hsigma_{i,j}			&\text{otherwise}.	
\end{cases}
\]
It is easy to see that $\hy_a \in \C[\lambda] [\![y]\!]$.

\begin{theorem}
\label{thm:analyticity_hsigma}
{\rm (1)}
The coefficients $\hsigma_a - \sigma_a \in \C[\lambda] [\![y]\!]$ are convergent and analytic in a neighborhood of $\lambda = 0$ and $y=0$.
In particular, the map $(\lambda,y) \mapsto (\lambda,\hy)$ is biholomorphic near $(\lambda,y)=(0,0)$.

{\rm (2)}
The coefficients $\hR^b_a (\lambda,y,z)$ lie in the ring $\cO^{y,z}_\lambda (\cV)$ for some open neighborhood $\cV$ of $0 \in \C^N$.
In particular, they are formal power series in $z$ with coefficients in analytic functions of $(\lambda,y)$ defined in a uniform neighborhood of $\lambda = 0$ and $y=0$.
\end{theorem}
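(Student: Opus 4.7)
The plan is to apply the gauge fixing theorem (Theorem \ref{thm:gaugefixing}) directly to the logarithmic form \eqref{eq:tAI} of the connection $\nablaI$. The estimate on the connection matrix has already been prepared: by Proposition \ref{prop:estimateAI} the entries of $\AI_a$ lie in $\cO^{y,z}_\lambda(\cV)$, and since $\cO^{y,z}_\lambda(\cV)$ is a ring (Lemma \ref{lem:both_rings}) containing the monomial $y_a$, the entries of $\tbfA^I_a$ from \eqref{eq:tAI} also lie in $\cO^{y,z}_\lambda(\cV)$. The only hypothesis of Theorem \ref{thm:gaugefixing} remaining to check is nilpotency of the residues $\tbfA^I_a(0,0,z)$; after that, the theorem supplies the analytic gauge transformation and analytic connection needed for both parts of the statement.

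For the nilpotency, when $e_a \notin H^2_T(E)$ we have $\tbfA^I_a = y_a \AI_a$, which vanishes at $y=0$. When $e_a \in H^2_T(E)$, the residue can be read off from $\LI|_{y=0}$: all terms in $\bigI$ with $(d,D)\neq 0$ vanish at $y=0$, and the surviving classical expression shows that $\tbfA^I_a(0,0,z)$ acts on the basis $\{e_b\}$ as classical cup product by $e_a$ (plus $z$-dependent terms that do not alter nilpotency). Since $e_a$ has positive cohomological degree and $H^*_T(E)$ is a finite-dimensional module over $H^*_T(\pt)$, classical cup product by $e_a$ is nilpotent; hence so is $\tbfA^I_a(0,0,z)$.

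Having verified both hypotheses, Theorem \ref{thm:gaugefixing} yields, on some open neighborhood $U$ of $0 \in \C^N_\lambda$, a unique gauge transformation $\tbfR(\lambda,y,z)$ with entries in $\cO^{y,z}_\lambda(U)$, normalized by $\tbfR(\lambda,0,z) = \Id$, that makes the transformed connection matrix $z$-independent and analytic in $(\lambda,y)$. The crucial observation is now uniqueness: both $\tbfR$ and $\hR$ belong to $\End_{\C[\lambda]}(H^*_T(E))[z][\![y]\!]$, both equal $\Id$ at $y=0$, and both conjugate $\nablaI$ to a $z$-independent connection. The uniqueness clause of the Birkhoff factorization in Proposition \ref{prop:BFforLI} (applied inside the larger formal ring) therefore forces $\tbfR = \hR$, giving part (2). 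For part (1), the resulting $z$-independent connection matrix must coincide with the pullback $\hsigma^*\bfA_E$ of the quantum product on $E$, because the factorization $\LI = \bfL_E(\hsigma,z)\hR$ from Proposition \ref{prop:BFforLI} forces $\bfL_E(\hsigma,z)$ to be a fundamental solution with respect to the new coordinate system. Contracting this transformed connection against the unit class $e_{0,0}$ and using $e_b \star_E 1 = e_b$, one extracts the Jacobian of $\hsigma$ with respect to $y$ as entries of an analytic matrix; hence each $\hsigma_a - \sigma_a$ is analytic on $U \times V$ for a small polydisk $V$ around $y=0$. The biholomorphism $(\lambda,y) \mapsto (\lambda, \hy)$ then follows from the inverse function theorem, using the normalization $\hsigma|_{(q,Q)=0} = \sigma$ to see that the Jacobian at the origin is the identity.

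The main obstacle is the identification step: one must confirm that the $z$-independent connection obtained from Theorem \ref{thm:gaugefixing} is indeed the pullback of $\bfA_E$ under $\hsigma$, and then extract analyticity of the coordinate change $\hsigma$ from it. The nilpotency check and the ring-theoretic identification $\tbfR=\hR$ are essentially formal bookkeeping, whereas recovering the Jacobian of $\hsigma$ from the analytic connection data is the step that genuinely uses the structure of quantum cohomology.
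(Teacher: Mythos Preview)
Your proof is correct and follows the same strategy as the paper: verify the hypotheses of the gauge fixing theorem for $\nablaI$, invoke uniqueness to identify the resulting gauge transformation with $\hR$, and then recover analyticity of $\hsigma$ by applying the transformed ($z$-independent) connection to the unit class. One small difference worth noting: for the nilpotency of the residues when $e_a\in H^2_T(E)$, the paper does not read it off from $\LI|_{y=0}$ directly but instead evaluates the gauge identity \eqref{eq:AI} (coming from Proposition~\ref{prop:BFforLI}) at $(\lambda,y)=(0,0)$, using $\hR|_{y=0}=\Id$ and $y_a\partial_{y_a}\hR|_{y=0}=0$, which gives $\AI_a(0,0,z)=e_a\cup$ exactly---so there are in fact no extra $z$-dependent terms, and your parenthetical hedge is unnecessary.
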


\begin{proof}

We first verify that the connection $\nablaI$ satisfies the assumption required by Theorem \ref{thm:gaugefixing}.
Proposition \ref{prop:estimateAI} implies that $(\tbfA^I_a)^c_b \in \cO^{y,z}_\lambda (\cV)$ for some open neighborhood $\cV$ of $0 \in \C^N_\lambda$, hence we need only to confirm that $\tbfA^I_a(0,0,z)$ are nilpotent for any $a$.
From \eqref{eq:tAI}, we see that $\tbfA^I_a(0,0,z) = 0$ if $e_a \notin H^2_T(E)$.
Since $\hR$ intertwines the connection $\nabla^I$ with $\hsigma^* \nabla^E$, it follows that
\begin{equation}
\label{eq:AI}
\sum_b \parfrac{\hsigma^b}{\sigma^a} \cdot \bfA^E_b (\lambda,\hy) = \hR \cdot \AI_a(\lambda,y,z) \cdot \hR^{-1} - z \parfrac{\hR}{\sigma^a} \cdot \hR^{-1}.
\end{equation}
From Proposition \ref{prop:BFforLI}, we have $\parfrac{\hsigma^b}{\sigma^a}|_{y=0} = \delta_{a,b}$, $\hy_a|_{y=0} = 0$ and $\hR(0,0,z) = \Id$.
Since $\bfA^E$ is the quantum connection of $E$, $\bfA^E_a(0,0)$ is equal to $e_a \cup$, the non-equivariant ordinary cup product.
If $e_a \in H^2_T(E)$, then $\parfrac{}{\sigma^a} = y_a \parfrac{}{y_a}$ and hence $\parfrac{\hR}{\sigma^a}(0,0,z) = 0$.
Therefore, when $e_a \in H^2_T(E)$ and $\lambda = y = 0$, the equation \eqref{eq:AI} becomes $\bfA^I_a(0,0,z) = e_a \cup$, which implies that $\tbfA^I_a(0,0,z)$ is nilpotent since $e_a \in H^2(E)$.

Consequently, we can apply the gauge fixing theorem to $\nablaI$.
Then $(2)$ follows immediately from the uniqueness of a gauge transformation.
Sending $1 \in H^*_T(E)$ by \eqref{eq:AI}, we have 
\[
\parfrac{\hsigma}{\sigma^a} = ( \hR \cdot \bfA^I_a \cdot \hR^{-1} ) (1) + O(z).
\]
The left-hand side is independent of $z$, and hence equals $( \hR \cdot \bfA^I_a \cdot \hR^{-1} ) (1) |_{z=0}$.
On the other hand, since all entries of the matrices $\hR$, $\bfA^I_a$ and $\hR^{-1}$ belong to $\cO^{y,z}_\lambda (\cV)$,  it follows that $( \hR \cdot \bfA^I_a \cdot \hR^{-1} ) (1) \in \cO^{y,z}_\lambda (\cV)$.
Therefore, $\parfrac{\hsigma^b}{\sigma^a}$ is analytic in $(\lambda,y)$ for all $a$ and $b$, which shows $(1)$.

\end{proof}

\begin{corollary}
\label{cor:convergence_of_QH(E)}
The big equivariant quantum cohomology $QH^*_T(E)$ has convergent structure constants.
\end{corollary}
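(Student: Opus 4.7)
The plan is to read off the quantum product matrices $\bfA^E_b$ directly from the gauge equation \eqref{eq:AI} established in the proof of Theorem \ref{thm:analyticity_hsigma}:
\[
\sum_b \parfrac{\hsigma^b}{\sigma^a} \cdot \bfA^E_b (\lambda,\hy) = \hR \cdot \AI_a \cdot \hR^{-1} - z \parfrac{\hR}{\sigma^a} \cdot \hR^{-1}.
\]
The left-hand side is manifestly independent of $z$, so the right-hand side must be as well. The key observation is that a $z$-independent element of $\cO^{y,z}_\lambda(\cV)$ is automatically analytic in $(\lambda,y)$ in a uniform neighborhood of the origin, since for such an element the growth bound reduces to $|a_{\bm,0}(\lambda)| \leq C(\lambda)^{|\bm|+1}$.

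Next I would check that each entry of the right-hand side lies in $\cO^{y,z}_\lambda(\cV)$. The matrix $\hR$ does so by Theorem \ref{thm:analyticity_hsigma}(2); the matrix $\AI_a$ does so by Proposition \ref{prop:estimateAI}; and $\hR^{-1}$ lies in the same ring because $\det \hR \in \cO^{y,z}_\lambda(\cV)$ is invertible by Lemma \ref{lem:invertible} (since $\hR|_{y=z=0} = \Id$ forces $\det \hR|_{y=z=0} = 1$). Closure under products is Lemma \ref{lem:both_rings}; closure under multiplication by $z$ and under the vector fields $\parfrac{}{\sigma^a}$ (each equal to $\parfrac{}{y_a}$ or to $y_a \parfrac{}{y_a}$) is a routine extension of the bookkeeping carried out in Lemma \ref{lem:preservering}. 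This is the only calculational step, and the estimate goes through with a slightly enlarged constant $C(\lambda)$; it is not expected to pose real difficulty.

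Once analyticity of the right-hand side is established, the corollary follows almost immediately. By Theorem \ref{thm:analyticity_hsigma}(1), $\parfrac{\hsigma^b}{\sigma^a}\bigr|_{y=0} = \delta_{a,b}$, so the Jacobian matrix is invertible as an analytic matrix-valued function in a neighborhood of the origin. Inverting it expresses $\bfA^E_b(\lambda,\hy)$ as an analytic function of $(\lambda,y)$. The map $(\lambda,y)\mapsto(\lambda,\hy)$ being a biholomorphism near the origin (again Theorem \ref{thm:analyticity_hsigma}(1)), this yields analyticity of $\bfA^E_b$ as a function of $(\lambda,\hy)$. Since the entries of the matrices $\bfA^E_b = \phi_b \star_E (-)$ are precisely the structure constants of the equivariant quantum product, this proves their convergence on a neighborhood of the origin. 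The main (mild) obstacle is purely bookkeeping: confirming that $\cO^{y,z}_\lambda(\cV)$ is stable under the vector fields $\parfrac{}{\sigma^a}$ and under multiplication by $z$.
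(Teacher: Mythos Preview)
Your approach is correct and essentially the same as the paper's: use the gauge relation \eqref{eq:AI}, exploit $z$-independence of the left-hand side, and invert the Jacobian of $\hsigma$ to extract the analytic $\bfA^E_b$. The one ``mild obstacle'' you flag (stability of $\cO^{y,z}_\lambda(\cV)$ under $\parfrac{}{\sigma^a}$) is in fact avoidable: since the left-hand side is $z$-independent, one may simply set $z=0$ on the right, which kills the term $z\,\parfrac{\hR}{\sigma^a}\cdot\hR^{-1}$ outright and leaves only $(\hR\cdot\AI_a\cdot\hR^{-1})|_{z=0}$, whose analyticity follows directly from the already-established ring properties.
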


\begin{proof}
By the discussion in the proof of Theorem \ref{thm:analyticity_hsigma}, the equation \eqref{eq:AI} becomes
\[
\sum_b \parfrac{\hsigma^b}{\sigma^a} \cdot \bfA^E_b (\lambda,\hy) = (\hR \cdot \AI_a(\lambda,y,z) \cdot \hR^{-1})|_{z=0},
\]
and this is analytic in a neighborhood of $(\lambda,y)=(0,0)$.
Since the matrix $( \parfrac{\hsigma^b}{\sigma^a} )$ is nonsingular near $(\lambda,y)=(0,0)$, all entries of $\bfA^E_b (\lambda,\hy)$, and therefore entries of $\bfA^E_b (\lambda,y)$, are analytic in a neighborhood of $(\lambda,y)=(0,0)$.
\end{proof}

\section{Convergence of $\bfR^\star$}
\label{sect:convergence2}

In this section, we will show the convergence of $\tau^\star_\alpha$ and $\bfR^\star$.
Recall that $\bfR^\star$ is the block matrix \eqref{eq:R*}.
By the proof of Proposition \ref{prop:BFforM}, $\bfR^\star$ can be computed from $\tau^\star$ and $\bfR^\star_{\alpha,0}$, which appear in the Birkhoff factorization of the matrix $\bfM_{\alpha,0}$ \eqref{eq:bfMai}:
\[
\bfM_{\alpha,0}(q,Q,\lambda,\tts,z) = \bfL_B(Q_\alpha,\tau^\star_\alpha) \bfR^\star_{\alpha,0}(\lambda,y,z).
\]

\subsection{Definitions}

We write
\[
\pi \colon \hcM \times \C^k \to \hcM \qquad (q,\lambda,t) \mapsto (qe^t,\lambda),
\]
and set $\cO^\prime$ to be the sheaf on $\hcM$ whose value on an open set $U$ is $\pi^* (\cO_{\hcM} (U)) \subset \cO_{\hcM \times \C^k} ( \pi^{-1} U )$.
We remark that, though $\cO^\prime$ is isomorphic to $\cO_{\hcM}$ as sheaves, they are different in the sense that we view sections of $\cO^\prime$ as functions over $\hcM \times \C^k$.

\begin{definition}
\label{def:rings2}
Let $U$ be an open subset of $\hcM$. 
Define $\ringA (U)$ to be the subset of the ring $\cO^\prime (U) (\!(z)\!)[\![x,\sigma]\!]$ consisting of series $\sum_{I,\bm,n}a_{I,\bm,n}(qe^t,\lambda)\sigma^I x^\bm z^n$ such that there exist a positive continuous function $C \colon U \to \R$ and a positive constant $C'$ satisfying
\begin{itemize} 
\item[(1)] $a_{I,\bm,n} (q,\lambda) = 0$ for $n<-C'(| I | + | \bm | + 1)$, and
\item[(2)] the following estimate for coeffitients:
\[
|a_{I,\bm,n}(q,\lambda)|\leq C(q,\lambda)^{| I | + | \bm | + | n | + 1} (| I | + | \bm |)^n
\]
for any $(q,\lambda) \in U$.
\end{itemize}
Here we set $(| I | + | \bm |)^n = 1$ if $| I | = | \bm | = 0$ and $n \leq 0$.
Define $\ringB (U)$ to be the intersection of the rings $\ringA (U)$ and $\pi^*(\cO_{\hcM} (U)) [\![x,\sigma,z]\!]$.
\end{definition}

Note that the estimates imposed on elements of $\ringB (U)$ are nothing but the estimates required by the gauge fixing theorem.

\begin{remark}
We impose different types of estimates on series in $\cO^{y,z,z^{-1}}_\lambda(U)$ and those in $\ringA(V)$. 
\end{remark}

\begin{lemma}
\label{lem:appendix1}
$\ringA (U)$ and $\ringB (U)$ are rings.
\end{lemma}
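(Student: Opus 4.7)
My plan is to follow the template of Lemma \ref{lem:both_rings}, adapted to the new variables $(x,\sigma,z)$ and the enlarged coefficient sheaf $\cO'(U)$. Both spaces evidently contain $0$ and $1$, and closure under addition is immediate after replacing the two bounding pairs $(C(q,\lambda),C')$ by their pointwise maxima, so the real work lies in closure under multiplication.

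I would take $a,b\in\ringA(U)$ with a common pair $(C(q,\lambda),C')$ satisfying Definition \ref{def:rings2}(1),(2), write the product as $c=ab$, and expand the convolution
\[
c_{I,\bm,n}(q,\lambda) = \sum_{\substack{I_1+I_2=I,\ \bm_1+\bm_2=\bm \\ n_1+n_2=n}} a_{I_1,\bm_1,n_1}(q,\lambda)\, b_{I_2,\bm_2,n_2}(q,\lambda),
\]
and then verify the two conditions for $c$. Condition (1) is immediate: if a summand is nonzero then $n_i\geq -C'(M_i+1)$ with $M_i:=|I_i|+|\bm_i|$, so $n\geq -2C'(M+1)$ with $M=|I|+|\bm|$, and (1) holds with the constant $2C'$. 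For condition (2) I would (i) bound the number of tuples contributing to fixed $(I,\bm,n)$ by $K^{M+|n|+1}$ for a combinatorial constant $K$ (using (1) to confine the $(n_1,n_2)$-range), (ii) compare the weight product $M_1^{n_1}M_2^{n_2}$ with $M^n$ via case analysis on the signs of $n_1,n_2$, and (iii) bound $C(q,\lambda)^{M+|n_1|+|n_2|+2}$ by $C(q,\lambda)^{M+|n|+2}\cdot C(q,\lambda)^{2C'(M+1)}$ using (1) once more. Absorbing all resulting multiplicative factors into a larger continuous function $\widetilde C(q,\lambda)$ would then give the required estimate, proving $c\in\ringA(U)$.

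For $\ringB(U)$ the ring property is essentially free: by definition it equals the intersection $\ringA(U)\cap\pi^*\cO_{\hcM}(U)[\![x,\sigma,z]\!]$ inside $\cO'(U)(\!(z)\!)[\![x,\sigma]\!]$. The latter space is trivially a subring, and $\ringA(U)$ is a subring by the main argument, so their intersection is a subring as well.

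The hard part will be step (ii) of the estimate: comparing $M_1^{n_1}M_2^{n_2}$ with $M^n$ when $n_1$ and $n_2$ have mixed or negative signs is delicate, because the convention of Definition \ref{def:rings2} (with $(|I|+|\bm|)^n=1$ at the origin) makes naive power inequalities fail. I expect to handle this by a case split on the signs of $n_1$ and $n_2$, with the support condition (1) doing the main work of bounding the negative parts $n_i^-\leq C'(M_i+1)$ so that the error introduced by the comparison is at most exponential in $M+|n|$ and can be absorbed into the enlarged $\widetilde C(q,\lambda)$. This is the precise analogue of the factorial bound $n_1!\,n_2!\leq 2^{|n_1|+|n_2|}(n_1+n_2)!$ used in Lemma \ref{lem:both_rings}, with factorials replaced by the power-weights adapted to the present setting.
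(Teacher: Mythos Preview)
Your outline is correct and matches the paper's proof almost step for step: same reduction to a single product $ab$, same verification of condition~(1) with constant $2C'$, same counting bound on the number of nonzero summands, same absorption of the excess $|n_1|+|n_2|-|n|$ into the exponential via condition~(1), and the same one-line argument for $\ringB(U)$ as an intersection of subrings.

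The one genuine difference is in your step~(ii), the inequality
\[
M_1^{n_1}M_2^{n_2}\;\leq\;C_0^{\,M+|n|+1}\,M^{\,n}\qquad(M=M_1+M_2,\ n=n_1+n_2,\ n_i\geq -C'(M_i+1)).
\]
You propose a case split on the signs of $n_1,n_2$, using $n_i^-\leq C'(M_i+1)$ to bound the blow-up factor $(M/M_j)^{|n_j|}$ by something exponential in $M$; this does work, the key computation being that $x\mapsto (x+1)\log(M/x)$ is $O(M)$ on $[1,M/2]$. The paper instead isolates this inequality as a standalone lemma and proves it by a one-variable calculus argument: assuming $M_1\geq M_2=x$, it reduces to bounding
\[
f(x)=(n+C'x+C')\log(M-x)-(C'x+C')\log x
\]
on $[0,M/2]$, then checks the endpoints $x=0,1,M/2$ and the critical points of $f$ explicitly. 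The paper's route is more uniform (no sign cases) and gives the bound in one stroke, at the cost of a short calculus computation; your route is more elementary but requires tracking several subcases, including the boundary conventions when $M_i=0$. Either approach yields the same exponential-in-$(M+|n|)$ bound, so your proposal is sound.
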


This lemma can be shown by the same strategy as the proof of Lemma \ref{lem:both_rings}, but it requires more complicated calculation.
A proof is given in Appendix \ref{app:1}.

\subsection{Critical Branches of Equivariant Toric Mirror}

In this subsection, we study the relative critical scheme of the equivariant phase function $W \colon Y \to \C$ appearing in Section \ref{subsect:equivmirror}.
We will follow the notation used there.
We refer the reader to \cite[Section 5.4]{Iritani-convergence}, \cite[Section 3.2]{Iritani-integral} for non-equivariant case.
\[
\xymatrix{
Y \ar[r]^{W} \ar[d]_{\pr} & \C \\
\cM &
}
\]
We fix $\alpha \in F$ and take the coordinate chart of $Y$ associated to $\alpha$ described below Proposition \ref{prop:rhoalpha}.
By this coordinate chart, we identify $Y$ with $\cM \times T_\alpha$, and regard $W(q,\lambda,\xi)$ as a function on the latter space.
We set $\Crit_{Y / \cM}(W)$ to be the relative critical scheme of $W$ with respect to $\pr \colon Y \to \cM$:
\[
\Crit_{Y / \cM}(W) = \Spec \C[q^\pm, \lambda, \{\xi^\pm_j\}_{j \notin \alpha} ] \left/ \corr{ \xi_j \parfrac{W}{\xi_j} \mathrel{}\middle|\mathrel{} j \notin \alpha} \right. .
\]

We introduce the \emph{non-degeneracy condition at infinity} \cite[1.19]{Kouchnirenko} for $N-k$ Laurent polynomials $F_1, \dots, F_{N-k} \in \C[ \, \{ \xi^\pm_j \}_{j \notin \alpha} \, ]$.

\begin{definition}
Let $F_1, \dots, F_{N-k}$ be Laurent polynomials in $\{ \xi_j \}_{j \notin \alpha}$.
Set 
\begin{align*}
F_i &= \sum_{n \in \Hom(T_\alpha,\C^\times)} a_{i,n} \xi^n, \\
S_F &= \{ n \in \Hom(T_\alpha,\C^\times) \mid a_{i,n} \neq 0 \quad 1 \leq {}^\exists i \leq N-k \}, \\
\hS_F &= ( \text{the convex hull of } S_F ) \subset \Hom(T_\alpha,\C^\times) \otimes \R.
\end{align*}
The Laurent polynomials $F_1, \dots, F_{N-k}$ are \emph{non-degenerate at infinity} if, for every proper face $\Delta$ of $\hS_F$, the Laurent polynomials $(F_i)_\Delta := \sum_{n \in \Delta} a_{i,n} \xi^n$ do not vanish at the same time on $T_\alpha$.
\end{definition}

The following proposition is an equivariant version of \cite[Proposition 5.11]{Iritani-convergence}, \cite[Proposition 3.10(ii)]{Iritani-integral}.

\begin{proposition}
There exists an open dense subset $\cM^{\operatorname{ss}}$ of $\cM$ such that
\begin{itemize}
\item $\cM^{\operatorname{ss}} \cap \{ \lambda = 0 \}$ is open and dense in $\cM \cap \{ \lambda = 0 \}$. 
\item the morphism $\pr \colon \Crit_{Y / \cM}(W) |_{\cM^{\operatorname{ss}}} \to \cM^{\operatorname{ss}}$ is finite and \'{e}tale of degree $(N-k)! \Vol (\hS_X)$ where $\hS_X$ is the convex hull of the primitive integral generators of the rays of the fan $\Sigma_X$ corresponding to $X$.
\end{itemize}
\end{proposition}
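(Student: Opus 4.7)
The strategy is to deduce the count of critical points from Kouchnirenko's theorem, so the plan is to identify the relevant Newton polytope and then check that the non-degeneracy condition defines an open dense subset with the required property.

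In the coordinate chart $(\{\xi_j\}_{j\notin\alpha})$ of $Y$ associated with $\alpha$, the relative critical scheme $\Crit_{Y/\cM}(W)$ is cut out inside $T_\alpha$ by the $N-k$ Laurent polynomials
\[
F_j(q,\lambda,\xi) := \xi_j\parfrac{W^\alpha}{\xi_j} = \xi_j - \auj - \sum_{n\in\alpha}(\bc^{-1}_\alpha\bc)_{nj}\prod^k_{i=1}q_i^{(\bc^{-1}_\alpha)_{ni}}\prod_{m\notin\alpha}\xi_m^{-(\bc^{-1}_\alpha\bc)_{nm}}, \qquad j\notin\alpha.
\]
First I would identify the Newton polytope. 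The monomials appearing in $F_j$ are $1$, $\xi_j$, and $\xi^{-(\bc^{-1}_\alpha\bc)_{n\bullet}}$ for $n\in\alpha$ (note the positive powers of $q_i$ multiplying the last family allow extending over $q=0$ after adding a suitable constant, but here we work over a generic fiber). The $N$ exponent vectors $\{e_j\}_{j\notin\alpha}\cup\{-(\bc^{-1}_\alpha\bc)_{n\bullet}\}_{n\in\alpha}$ in the character lattice of $T_\alpha$ are precisely the images under $\bc^{-1}_\alpha\bc\colon\Z^N\to\Z^{N-k}$ of the standard basis of $\Z^N$, which (up to the change of lattice basis sending the cone $\bc_\alpha(\R^k_+)$ to the positive orthant) are the primitive generators of the rays of the fan $\Sigma_X$. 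Hence the convex hull of $\{0\}$ together with these vectors is $\hS_X$ (in this basis), and the common Newton polytope of the $F_j$ is a translate of $\hS_X$.

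Next I would apply Kouchnirenko's theorem \cite{Kouchnirenko}: if $(F_j)_{j\notin\alpha}$ is non-degenerate at infinity at a given parameter $(q,\lambda)\in\cM$, then the number of solutions in $T_\alpha$ (counted with multiplicity) equals $(N-k)!\Vol(\hS_X)$, and moreover each solution is a non-degenerate (Morse) critical point of $W_{q,\lambda}$ so that the Jacobian $(\partial F_j/\partial\xi_m)$ is invertible there. The non-degeneracy condition at infinity is cut out by the non-vanishing of finitely many resultants in the coefficients of the $F_j$, which depend algebraically on $(q,\lambda)$; hence the non-degenerate locus $\cM^{\operatorname{ss}}$ is a Zariski open subset of $\cM$. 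Over $\cM^{\operatorname{ss}}$, the morphism $\pr\colon\Crit_{Y/\cM}(W)|_{\cM^{\operatorname{ss}}}\to\cM^{\operatorname{ss}}$ is unramified by the Jacobian criterion, and the uniform fiber cardinality given by Kouchnirenko together with the properness of the fibers (again from non-degeneracy at infinity, which prevents critical points from escaping to the boundary of $T_\alpha$) yields finiteness.

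The main point that remains, and the place where the proof has actual content, is to show that $\cM^{\operatorname{ss}}$ is non-empty and that $\cM^{\operatorname{ss}}\cap\{\lambda=0\}$ is dense in $\{\lambda=0\}$. For the first claim it suffices to exhibit a single $(q_0,\lambda_0)\in\cM$ at which non-degeneracy holds; we can take $\lambda_0=0$ and $q_0$ a generic point so that the system specializes to the Jacobian system of the Landau-Ginzburg mirror of $X$ at $q_0$, for which non-degeneracy at infinity is the classical fact underlying the computation $\dim H^*(X)=(N-k)!\Vol(\hS_X)$ \cite{Iritani-convergence}. Since non-degeneracy is an open condition on $\cM$, the intersection $\cM^{\operatorname{ss}}\cap\{\lambda=0\}$ is then open in $\{\lambda=0\}$, and since it is non-empty and $\{\lambda=0\}\cong(\C^\times)^k$ is irreducible, it is dense. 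The hardest technical point is verifying non-degeneracy at infinity of the system $(F_j)$ at $\lambda=0$; once this is settled, density on both $\cM$ and $\{\lambda=0\}$ follows formally, and the finite étale assertion is immediate from Kouchnirenko plus the Jacobian criterion.
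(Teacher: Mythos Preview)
Your overall strategy is right, but there is a genuine gap. Kouchnirenko's theorem \cite[1.18(ii)]{Kouchnirenko} only gives the number of solutions \emph{counted with multiplicity}; it does not assert that each solution is simple. Your sentence ``moreover each solution is a non-degenerate (Morse) critical point of $W_{q,\lambda}$'' is therefore unjustified, and without it the Jacobian criterion does not apply and you cannot conclude that $\pr$ is unramified (hence \'etale) over your non-degeneracy locus. A constant fiber length is compatible with the map being finite flat but ramified.

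The paper closes this gap by a different mechanism: it shows directly that the total space $\Crit_{Y/\cM}(W)$ is smooth, via the explicit presentation
\[
\C[q^\pm,\lambda,\xi^\pm,\sfp_1,\dots,\sfp_k]\Big/\Big\langle q_i-\textstyle\prod_j\xi_j^{\bc_{ij}},\ \xi_j+\lambda_j-\sum_i\bc_{ij}\sfp_i\Big\rangle,
\]
which identifies $\Crit_{Y/\cM}(W)$ with a Zariski open subset of $\Spec\C[\lambda,\sfp]\cong\C^{N+k}$. Given smoothness of the source, generic smoothness (Sard--Bertini) produces an open dense $\cM^{\operatorname{ss}}\subset\cM'$ over which $\pr$ is smooth, hence \'etale (being quasi-finite there by Kouchnirenko). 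The paper also repeats this argument after restricting to $\lambda=0$ to get density of $\cM^{\operatorname{ss}}\cap\{\lambda=0\}$; your irreducibility argument for this last point would work, but only after the \'etaleness is actually established. One further simplification you missed: since $0$ lies in the interior of $\hS_X$, the face truncations $(F_j)_\Delta$ for proper faces $\Delta$ are independent of $\lambda$ (the $\lambda$-terms sit only at the exponent $0$), so the non-degeneracy-at-infinity locus is automatically of the form $U_q\times\C^N_\lambda$, which makes the $\{\lambda=0\}$ statement immediate once $U_q$ is shown to be nonempty.
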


\begin{proof}

Set $F_j = \xi_j \cdot \partial W_{q,\lambda} / \partial \xi_j$ for $j \notin \alpha$.
We view $F_j$ as a family of Laurent polynomials in $\{ \xi_j \}_{j \notin \alpha}$ parametrized by $(q,\lambda) \in \cM$.
From the definition, the polyhedron $\hS_F$ coincides with $\hS_X$ and the origin $0$ of $\Hom(T_\alpha,\C^\times) \otimes \R$ belongs to the interior of $\hS_F$.
For every proper face $\Delta$ of $\hS_F$, $(F_j)_\Delta$ are independent of $\lambda$ and $(F_j)_\Delta = \xi_j \cdot \partial (W_{q,0})_\Delta / \partial \xi_j$.
Since $(W_{q,0})_\Delta$ is a Laurent polynomial in $\{ \xi_j \}_{j \notin \alpha}$, the same argument in \cite[6.3]{Kouchnirenko} implies that $\{ F_j \}_{j \notin \alpha}$ are non-degenerate at infinity for generic $q$ and every $\lambda$.
We write $\cM^\prime$ for the open dense subspace consisting of all $(q,\lambda) \in \cM$ at which $\{ F_j \}_{j \notin \alpha}$ are non-degenerate at infinity.
Due to Kouchnirenko's theorem \cite[1.18(ii)]{Kouchnirenko}, the fiber of $\Crit_{Y / \cM}(W) |_{\cM^\prime} \to \cM^\prime$ consists of $(N-k)! \Vol (\hS_X)$ points (counted with multiplicity).

It is easy to see that the coordinate ring of $\Crit_{Y / \cM}(W)$ is isomorphic to
\[
\C[ q^\pm, \lambda, \xi^\pm, \sfp_1,\dots,\sfp_k ] \left/ \corr{q_i - \prod_j \xi^{\bc_{ij}}_j, \xi_j + \lambda_j - \sum_i \bc_{ij} \sfp_i } \right. . 
\]
The isomorphism is given by $\sfp_i \mapsto q_i \cdot \partial W / \partial q_i = \sum_{n \in \alpha} (\bc^{-1}_\alpha)_{ni} ( \xi_n + \lambda_n )$.
Hence $\Crit_{Y / \cM}(W)$ is isomorphic to some (Zariski) open subset of $\Spec \C [ \lambda, \sfp ]$.
In particular, the scheme $\Crit_{Y / \cM}(W)$ is smooth.
By Sard-Beritini's theorem (generic smoothness theorem in algebraic geometry), there exists a dense subset $\cM^{\operatorname{ss}}$ of $\cM^\prime$ consisting of all regular values of $\pr \colon \Crit_{Y / \cM}(W) |_{\cM^\prime} \to \cM^\prime$.
Furthermore, the inverse function theorem implies that $\cM^{\operatorname{ss}}$ is open in $\cM^\prime$.
By construction, $\cM^{\operatorname{ss}}$ satisfies the second property.
To see the first property, we only have to show that $\cM^{\operatorname{ss}} \cap \{ \lambda = 0 \}$ is not empty.
This is true since $\Crit_{Y / \cM}(W) |_{ \{ \lambda = 0 \} }$ is also smooth and, by repeating the same argument, we can take a regular value for $\Crit_{Y / \cM}(W) |_{ \cM^\prime \cap \{ \lambda = 0 \} } \to \cM^\prime \cap \{ \lambda = 0 \}$, which is automatically a regular value for the original pr.

\end{proof}

By this proposition, we can extend the critical branch $\rho^\alpha \colon \cU \to \C^N$ and $\hcI_\alpha(q,\lambda,z)$ along a suitable path.
Take an embedded path $\gamma \colon [0,1] \to \cM^{\operatorname{ss}} \cup \cU$ satisfying $\gamma (0) \in \cU \cap \{ q = 0 \}$, $\gamma(t) \in \cM^{\operatorname{ss}}$ for $t \in (0,1]$ and $\gamma (1) \in \cM^{\operatorname{ss}} \cap \{ \lambda = 0 \}$, and choose an open and simply connected neighborhood $\cW$ of the image of $\gamma$ in $\cM^{\operatorname{ss}} \cup \cU$.
Then $\rho^\alpha$ can be extended to a critical branch over $\cW$.
We denote this critical branch also by $\rho^\alpha$.

Decompose $\hcI_\alpha(q,\lambda,z)$ as
\[
\hcI_\alpha(q,\lambda,z) 
= e^{W^\alpha (q,\lambda,\rho^\alpha) / z} \cdot \frac{ \sum_{n\geq0} a^\alpha_n (q,\lambda) z^n }{ \sqrt{ \det ( W_{ij} (\rho^\alpha) ) } } 
= e^{G^\alpha (q,\lambda) / z} \cdot \sum_{n\geq0} H^\alpha_n (q,\lambda) z^n
\]
where we set
\[
G^\alpha(q,\lambda) = W^\alpha (q,\lambda,\rho^\alpha(q,\lambda)), \quad H^\alpha_n(q,\lambda) = \frac{a^\alpha_n (q,\lambda)}{ \sqrt{ \det ( W_{ij} (\rho^\alpha) ) } }.
\]
Since $\rho^\alpha(q,\lambda)$ is an analytic function over $\cW$ and a non-degenerate critical point of $W_{q,\lambda}$, the analytic functions $G^\alpha$ and $H^\alpha_n$ are successfully extended to $\cW$.

\begin{remark}
This representation of $\hcI_\alpha(q,\lambda,z)$ is slightly different from that appearing in Remark \ref{rem:Brown}.
The functions $G^\alpha(q,\lambda)$ and $H^\alpha_n(q,\lambda)$ are multi-valued on $\cU$ whereas $F^\alpha(q,\lambda)$ and $A^\alpha_n(q,\lambda)$ are single-valued on $\cU$; also, $G^\alpha(q,\lambda)$ and $H^\alpha_n(q,\lambda)$ can be analytically continued along the path $\gamma$ to points with $\lambda = 0$ whereas $F^\alpha(q,\lambda)$ and $A^\alpha_n(q,\lambda)$ cannot be. 
\end{remark}

\subsection{Convergence of $\bfR^{\star\star}$}
\label{subsect:R**}

Recall that the matrix $\bfM_{\alpha,0}$ was obtained from $\bfL_B$ by applying the differential operator $\Delta(\sigma) \hcI_\alpha(qe^t,\lambda+z\partial_\Lambda)$.
In this subsection, we will consider the matrix $\bfL^\star$ obtained from $\bfL_B$ by applying $\Delta(\sigma) e^{G^\alpha(qe^t,\lambda+z\partial_\Lambda)/z}$, which is a factor of $\Delta(\sigma) \hcI_\alpha(qe^t,\lambda+z\partial_\Lambda)$, and introduce the associated gauge transformation $\bfR^{\star\star}$.
We will discuss the convergence of $\bfR^{\star\star}$.
This gives an intermediate step towards proving the convergence of $\bfR^\star$.

We fix an open set $\cW$ explained at the end of the previous subsection.
Define the matrix $\bfL^\star$ and the connection $\nabla^\star$ associated to $\bfL^\star$ as follows.
\begin{align*}
\bfL^\star &= \Delta(\sigma) e^{G^\alpha (qe^t,\lambda + z \partial_\Lambda) / z} \bfL_B (Q_\alpha,\tau_\alpha,z), \\
\nabla^\star &= d + z^{-1} \bfA^\star, \\
\bfA^\star &= \sum^s_{j=0} \bfA^\star_{(0,j)} d\tau_j + \sum_{(i,j) \in \sfS} \bfA^\star_{(i,j)} d\sigma_{i,j}, \\
\bfA^\star_{(0,j)} &= z (\bfL^\star)^{-1} \parfrac{\bfL^\star}{\tau_j} \qquad 0 \leq j \leq s, \\
\bfA^\star_{(i,j)} &= z (\bfL^\star)^{-1} \parfrac{\bfL^\star}{\sigma_{i,j}} \qquad (i,j) \in \sfS.
\end{align*}
We view the matrix $\tbfL^\star = \exp ( -(\tau_\alpha + G^\alpha(qe^t,\lambda)) / z ) \bfL^\star$ as an element of the ring
\[
\End ( H^*(B) ) \otimes \cO^\prime (\cW) (\!(z)\!) [\![x,\sigma]\!].
\]
Note that $\bfL^\star$ is related to $\bfM_{\alpha,0}$ as follows:
\begin{equation}
\label{eq:M=HL}
\bfM_{\alpha,0} = \Delta(\sigma) \biggl( \sum_{n\geq0} H^\alpha_n(qe^t,\lambda + z \partial_\Lambda) z^n \biggr) \Delta(-\sigma) \cdot \bfL^\star.
\end{equation}
We will study the matrix $\bfL^\star$ in the rest of this subsection, and the operator $\Delta(\sigma) ( \sum_{n\geq0} H^\alpha_n(qe^t,\lambda + z \partial_\Lambda) z^n ) \Delta(-\sigma)$ in the next subsection.
The following two propositions are analogues of Propositions \ref{prop:BFformodifiedJB} and \ref{prop:AI}.

\begin{proposition}
\label{prop:BFforL*}
There exists a unique Birkhoff factorization of the form
\[
\bfL^\star = \bfL_B(Q_\alpha, \tau^{\star\star}_\alpha, z) \bfR^{\star\star}_\alpha (\lambda, y, z)
\]
where 
\begin{gather*}
\tau^{\star\star}_\alpha = \tau^{\star\star}_\alpha (q,Q,\lambda,\tts), \qquad \tau^{\star\star}_\alpha - \tau_\alpha \in H^*(B) \otimes \cO^\prime (\cW) [\![x,\sigma]\!], \\
\bfR^{\star\star}_\alpha (\lambda, y, z) \in \End (H^*(B)) \otimes \cO^\prime (\cW) [\![x,\sigma,z]\!], \qquad \bfR^{\star\star}_\alpha|_{x=\sigma=0} = \Id.
\end{gather*}
Furthermore, $\tau^{\star\star}_\alpha$ satisfies that $(\tau^{\star\star}_\alpha - \tau_\alpha)|_{Q = \sigma = 0} = G^\alpha(qe^t,\lambda + \Lambda)$.
\end{proposition}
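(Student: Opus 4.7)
The argument will parallel the proofs of Propositions \ref{prop:BFforLI} and \ref{prop:BFformodifiedJB}. The plan is to first show that the columns of $\bfL^\star(q,Q,\lambda,\tts,-z)$ span the tangent space to $\cL_B$ at some point, then invoke the over-ruled property (Lemma \ref{lem:cL}) together with Birkhoff factorization to extract $\tau^{\star\star}_\alpha$ and $\bfR^{\star\star}_\alpha$, and finally verify the specialization at $Q=\sigma=0$ by direct computation.

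For the first step, I would start from the fact that the columns of $\bfL_B(Q_\alpha,\tau_\alpha,-z)$ span the tangent space of $\cL_B$ at $J_B(Q_\alpha,\tau_\alpha,-z)$, and then apply Lemma \ref{lem:CG} iteratively: once to the operator $e^{G^\alpha(qe^t,\lambda+z\partial_\Lambda)/z}$ (viewing $G^\alpha$ as a power series in $q$ with coefficients in $\cO^\prime(\cW)$, and $\partial_\Lambda$ as a $\partial_\tau$-type operator), and once to $\Delta(\sigma)$, exactly as in Subsection \ref{subsect:bigI} for the big $I$-function. Performing the Birkhoff factorization of $\bfL^\star$ in the $(x,\sigma)$-adic topology and applying Lemma \ref{lem:cL} then produces the desired decomposition $\bfL^\star = \bfL_B(Q_\alpha,\tau^{\star\star}_\alpha,z)\cdot \bfR^{\star\star}_\alpha(\lambda,y,z)$, with uniqueness coming from the normalization $\bfR^{\star\star}_\alpha|_{x=\sigma=0}=\Id$.

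To verify the specialization, I would compute directly: since $\bfL_B(0,\tau_\alpha,z)=e^{\tau_\alpha/z}\cdot\Id$ and $(z\partial_{\Lambda_j})$ acts on $e^{\tau_\alpha/z}$ by multiplication by $\Lambda_j$ (because $\partial_{\Lambda_j}$ annihilates the $t$-contribution $\sum P^\alpha_i t_i$ to $\tau_\alpha$), one obtains
\[
\bfL^\star|_{Q=\sigma=0} \;=\; e^{G^\alpha(qe^t,\lambda+\Lambda)/z}\, e^{\tau_\alpha/z} \,\cdot\, \Id \;=\; e^{(\tau_\alpha + G^\alpha(qe^t,\lambda+\Lambda))/z}\cdot\Id.
\]
Since the right-hand side is a pure exponential in $z^{-1}$ times the identity, uniqueness of the Birkhoff decomposition forces $\tau^{\star\star}_\alpha|_{Q=\sigma=0}=\tau_\alpha+G^\alpha(qe^t,\lambda+\Lambda)$ and $\bfR^{\star\star}_\alpha|_{Q=\sigma=0}=\Id$, which yields the claimed identity.

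The main obstacle will be in confirming that the coefficients of $\tau^{\star\star}_\alpha$ and $\bfR^{\star\star}_\alpha$ genuinely lie in $\cO^\prime(\cW)$ rather than merely in a smaller ring such as $\cO^\prime(\cU)$ (as in Proposition \ref{prop:BFformodifiedJB}). This is possible precisely because we have factored off the divergent exceptional part $\Exc_\alpha$ of $\hcI_\alpha$ (the source of singularities along $\cD$) and kept only the analytic factor $e^{G^\alpha/z}$; since $G^\alpha = W^\alpha(q,\lambda,\rho^\alpha(q,\lambda))$ admits single-valued analytic continuation along $\gamma$ to all of $\cW$ by the étale extension of the critical branch discussed in the preceding subsection, the Birkhoff-factorization procedure preserves $\cO^\prime(\cW)$-regularity coefficient-by-coefficient. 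Making this preservation precise — in particular, checking that no spurious denominators arise under the iterative solving that produces $\tau^{\star\star}_\alpha$ and $\bfR^{\star\star}_\alpha$ — is the one technical point that needs care.
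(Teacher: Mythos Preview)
Your proposal is correct and follows essentially the same route as the paper: the paper simply states that the argument is almost the same as in Propositions~\ref{prop:BFformodifiedJB} and~\ref{prop:AI}, and then performs the direct computation of $\bfL^\star|_{Q=\sigma=0}$ exactly as you do, obtaining $e^{(\tau_\alpha+G^\alpha(qe^t,\lambda+\Lambda))/z}\cdot\Id$ and reading off $\tau^{\star\star}_\alpha|_{Q=\sigma=0}$ from that. Your concern about $\cO^\prime(\cW)$-regularity is valid and is precisely the point of working with $G^\alpha$ rather than the full $\hcI_\alpha$; the paper leaves this implicit, but your explanation that the Birkhoff procedure preserves $\cO^\prime(\cW)$-coefficients because $G^\alpha$ extends analytically along $\gamma$ is the correct reason.
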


\begin{proposition}
\label{prop:A*}
All entries of the connection matrix $\bfA^\star_{(i,j)}$ belong to the ring $\cO^\prime (\cW) [\![x,\sigma,z]\!]$.
\end{proposition}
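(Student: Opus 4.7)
The plan is to mimic the proof of Proposition \ref{prop:AI}, using the Birkhoff factorization supplied by Proposition \ref{prop:BFforL*} to cancel the negative powers of $z$ in $\bfA^\star$ that would otherwise arise from the factor $e^{G^\alpha/z}$ in $\bfL^\star$. Concretely, writing $\bfL^\star = \bfL_B(Q_\alpha,\tau^{\star\star}_\alpha,z)\,\bfR^{\star\star}_\alpha(\lambda,y,z)$ and applying $z(\bfL^\star)^{-1}d(\cdot)$ yields
\[
\bfA^\star \;=\; (\bfR^{\star\star}_\alpha)^{-1}\cdot\bigl(z\,\bfL_B^{-1}\,d\bfL_B\bigr)\big|_{(Q_\alpha,\tau^{\star\star}_\alpha)}\cdot\bfR^{\star\star}_\alpha \;+\; z\,(\bfR^{\star\star}_\alpha)^{-1}\,d\bfR^{\star\star}_\alpha.
\]

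Next, I would identify the middle factor $(z\bfL_B^{-1}d\bfL_B)|_{(Q_\alpha,\tau^{\star\star}_\alpha)}$ as the pullback of the quantum connection matrix $\bfA_B=\sum_j(\phi_j\star_B)\,d\tau_j$ of the base along the change of variables $(Q,\tau)\mapsto(Q_\alpha,\tau^{\star\star}_\alpha)$. The entries of $\bfA_B$ are $z$-independent, and by the standing convergence hypothesis on $QH^*(B)$ they are analytic functions of $(Q,\tau)$. Substituting the formal change of variables $\tau^{\star\star}_\alpha\in H^*(B)\otimes\cO^\prime(\cW)[\![x,\sigma]\!]$ produced by Proposition \ref{prop:BFforL*}, and absorbing the $q^{P^\alpha(D)}$-factors from $Q_\alpha$ (which are holomorphic functions on $\cW$, hence sections of $\cO^\prime$), shows that this middle factor has entries in $\cO^\prime(\cW)[\![x,\sigma]\!]\subset\cO^\prime(\cW)[\![x,\sigma,z]\!]$.

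Finally, Proposition \ref{prop:BFforL*} tells us $\bfR^{\star\star}_\alpha\in\End(H^*(B))\otimes\cO^\prime(\cW)[\![x,\sigma,z]\!]$ with $\bfR^{\star\star}_\alpha|_{x=\sigma=0}=\Id$, so formal inversion in $(x,\sigma)$ places $(\bfR^{\star\star}_\alpha)^{-1}$ in the same ring (essentially Lemma \ref{lem:invertible}, adapted from $\cO^{y,z}_\lambda(U)$ to $\cO^{x,\sigma,z}_{qe^t,\lambda}(\cW)$). Invoking Lemma \ref{lem:appendix1}, which asserts that $\ringB(\cW)$ is a ring, both conjugation and the extra $z\,d\bfR^{\star\star}_\alpha$ stay inside $\End(H^*(B))\otimes\cO^\prime(\cW)[\![x,\sigma,z]\!]$; the explicit factor of $z$ in the second summand is exactly what is needed to keep it in non-negative powers of $z$. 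Combining the two terms gives the desired conclusion.

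I do not expect a genuine obstacle here: the argument is purely algebraic once Proposition \ref{prop:BFforL*} is in hand, and the only mildly delicate point is bookkeeping the factors $q^{P^\alpha(D)}$ hidden inside $Q_\alpha$, which must be interpreted as (multi-valued) holomorphic functions on the simply connected domain $\cW\subset\cM^{\operatorname{ss}}\cup\cU$ rather than as polynomials, so that they land in $\cO^\prime(\cW)$ and not merely in a formal completion.
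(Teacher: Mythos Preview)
Your approach is correct and is precisely the argument the paper has in mind: the proof of Propositions \ref{prop:BFforL*} and \ref{prop:A*} is stated to be ``almost the same as in Propositions \ref{prop:BFformodifiedJB} and \ref{prop:AI},'' i.e., one uses the Birkhoff factorization $\bfL^\star=\bfL_B(Q_\alpha,\tau^{\star\star}_\alpha,z)\bfR^{\star\star}_\alpha$ to write $\bfA^\star=(\bfR^{\star\star}_\alpha)^{-1}\bigl((\tau^{\star\star}_\alpha)^*\bfA_B\bigr)\bfR^{\star\star}_\alpha+z(\bfR^{\star\star}_\alpha)^{-1}d\bfR^{\star\star}_\alpha$ and reads off membership in $\cO'(\cW)[\![x,\sigma,z]\!]$. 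Two minor remarks: the appeal to Lemma \ref{lem:appendix1} is unnecessary here since Proposition \ref{prop:A*} only concerns the plain formal ring $\cO'(\cW)[\![x,\sigma,z]\!]$ (the estimate ring $\ringB$ enters later), and the exponents $P^\alpha(D)$ are in fact non-negative integers (by smoothness of $X$ and Assumption \ref{assump:ampleP}), so $q^{P^\alpha(D)}$ is a genuine monomial and no multi-valuedness arises.
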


\begin{proof}[Proof of Propositions \ref{prop:BFforL*} and \ref{prop:A*}]

The argument is almost the same as in Propositions \ref{prop:BFformodifiedJB} and \ref{prop:AI}.
Here we only check the computation of $(\tau^{\star\star}_\alpha - \tau_\alpha)|_{Q = \sigma = 0}$.
The details are left to the reader.

When $Q = \sigma = 0$, the matrix $\tbfL^\star$ is equal to
\begin{align*}
& e^{-\tau_\alpha/z} \cdot e^{-G^\alpha(qe^t,\lambda)/z} \cdot \Delta(\sigma)|_{\sigma=0} \cdot e^{G^\alpha(qe^t,\lambda + z \partial_\Lambda)/z} \cdot \bfL_B(Q_\alpha,\tau_\alpha,z)|_{Q = 0} \\
=& e^{-\tau_\alpha/z} \cdot e^{-G^\alpha(qe^t,\lambda)/z} \cdot \Id \cdot \, e^{G^\alpha(qe^t,\lambda + z \partial_\Lambda)/z} \cdot e^{\tau_\alpha/z} \\
=& e^{-G^\alpha(qe^t,\lambda)/z} \cdot e^{G^\alpha(qe^t,\lambda + \Lambda)/z}
\end{align*}
and hence the Birkhoff factorization of $\bfL^\star$ is of the form $e^{\tau_\alpha + G^\alpha(qe^t,\lambda + \Lambda)/z} \cdot \Id$.
This shows that $(\tau^{\star\star}_\alpha - \tau_\alpha)|_{Q = \sigma = 0}$ equals $G^\alpha(qe^t,\lambda + \Lambda)$.

\end{proof}

\begin{remark}
We will later see that $\tau^{\star\star}_\alpha$ coincides with $\tau^\star_\alpha$.
\end{remark}

In order to estimate the matrix $\tbfL^\star$, we need the following lemma.

\begin{lemma}
\label{lem:Delta}
The operator $\Delta''(\sigma) := \Delta(\sigma)|_{z\parfrac{}{t} \to z\parfrac{}{t} + \parfrac{G^\alpha}{t} + P^\alpha, z\parfrac{}{\tau} \to z\parfrac{}{\tau} + \phi}$ preserves the ring $H^*_T(B) \otimes \ringA (\cW)$.
\end{lemma}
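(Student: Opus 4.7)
The plan is to factor
\[
\Delta''(\sigma) = \prod_{(i,j)\in\sfS}\exp(L_{i,j}),\qquad L_{i,j} := \sigma_{i,j}\,\frac{\partial}{\partial \tau_j}\,T_i\!\left(z\tfrac{\partial}{\partial t}+\tfrac{\partial G^\alpha}{\partial t}+P^\alpha\right),
\]
and to show that each factor $\exp(L_{i,j})$ preserves $R := H^*_T(B)\otimes\ringA(\cW)$; since $\sfS$ is finite and $R$ is closed under composition of $R$-preserving endomorphisms, this yields the conclusion for $\Delta''(\sigma)$ itself. The overall strategy closely mirrors the proof of Lemma \ref{lem:preservering}, but now with the more elaborate ring $\ringA(\cW)$.

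First I would verify that each atomic building block of $L_{i,j}$ preserves $R$ with a controlled effect on the $\ringA$-estimate: (a) multiplication by $P^\alpha_i$ or $\phi_j\in H^*_T(B)$ is trivial; (b) multiplication by the analytic function $\partial G^\alpha/\partial t_i\in\cO^\prime(\cW)$ works because $\cO^\prime(\cW)\subset \ringA(\cW)$ and $R$ is a ring by Lemma \ref{lem:appendix1}; (c) the derivation $\partial/\partial\tau_j$, applied to $\sum a_{I,\bm,n}(qe^t,\lambda)\sigma^I x^\bm z^n$, acts as $x_j\partial/\partial x_j$ for $1\le j\le r$ and as $\partial/\partial x_j$ otherwise, in either case producing a factor bounded by $|\bm|+1$ that fits the allowed growth $(|I|+|\bm|)^n$; (d) the derivation $z\partial/\partial t_i$ differentiates only the coefficients $a_{I,\bm,n}$ in the $qe^t$-variable and, after shrinking $\cW$ slightly, is a bounded operator on $\cO^\prime$ by Cauchy's estimate, while the extra $z$ merely shifts the $z$-exponent by one. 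In the conjugated form, the replacement $z\partial/\partial\tau\to z\partial/\partial\tau+\phi$ corresponds to extra multiplication by $\phi_j/z$, which lowers $n$ by one; this is absorbed by slightly enlarging the constant $C'$ in the support bound $n\ge -C'(|I|+|\bm|+1)$.

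Next I would combine these atoms to see that one application of $L_{i,j}$ sends a monomial $a_{I,\bm,n}(qe^t,\lambda)\sigma^I x^\bm z^n$ to a finite sum of monomials whose $\sigma$-multi-index has increased by $\sfe_{i,j}$, with new coefficients satisfying an $\ringA$-type bound whose constants are enlarged by a fixed multiplicative factor depending only on $d_i:=\deg T_i$ and on $\cW$. Iterating $m$ times and dividing by $m!$, the resulting combinatorial growth, roughly $(|I|+m+|\bm|)^{n+md_i}/m!$, is summable because $1/m!$ beats polynomial growth in $m$, so $\exp(L_{i,j})$ is well defined as a map $R\to R$ and respects the $\ringA$-estimate. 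The main obstacle will be the bookkeeping of the factor $(|I|+|\bm|)^n$ under repeated compositions of differentiation, multiplication by $\phi_j/z$, and multiplication by analytic functions: each operation shifts exponents by bounded amounts, and these shifts must be absorbed uniformly into the allowed growth $C(q,\lambda)^{|I|+|\bm|+|n|+1}(|I|+|\bm|)^n$ with a suitable enlargement of the constants $C,C'$. Once this accounting is organized (using a one-time shrinkage of $\cW$ to control Cauchy estimates for $z\partial/\partial t_i$ acting on the coefficients $a_{I,\bm,n}$ and on $\partial G^\alpha/\partial t_i$), the remainder of the argument is a direct analogue of the one in Lemma \ref{lem:preservering}.
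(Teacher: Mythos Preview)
Your overall plan---factoring $\Delta''(\sigma)$ into a finite product of exponentials $\exp(L_{i,j})$ and checking that each factor preserves $R$---is exactly what the paper does. However, your synthesis step contains a genuine gap.

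First, a small slip: your displayed $L_{i,j}$ omits the replacement $\partial/\partial\tau_j \to \partial/\partial\tau_j + \phi_j/z$; the correct operator is $\sigma_{i,j}\,z^{-1}\,(z\partial_{t}+\partial_t G^\alpha+P^\alpha)^{J}\,(z\partial_{\tau_j}+\phi_j)$, as in the paper. You mention the $\phi_j/z$ later, but it should be built in from the start.

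The real issue is your claim that after iterating $L_{i,j}$ $m$ times ``the resulting combinatorial growth, roughly $(|I|+m+|\bm|)^{n+md_i}/m!$, is summable because $1/m!$ beats polynomial growth in $m$.'' This is not how the estimate closes. The $1/m!$ from the exponential does \emph{not} beat the growth coming from iterated $t$-derivatives: a single fixed shrinkage of $\cW$ gives, via Cauchy, $\bigl|\partial_t^{K} a\bigr| \le K!\,C^{|K|}\sup|a|$, and here $|K|$ can be as large as $m\,d_i$. Since $(m d_i)!/m!$ grows super-exponentially in $m$ for $d_i\ge 2$, the $1/m!$ alone is far too weak. (In fact, in the paper's bookkeeping the $1/k!$ is absorbed into binomial-type coefficients like $\binom{kJ}{K}$ and $\binom{\bm_j+k'}{k'}$, which are merely exponential; it plays no ``beating'' role at all.)

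The mechanism the paper uses instead is this: for a fixed \emph{output} index $(I,\bm,n)$, the number $k$ of iterations satisfies $k\le I_{i,j}\le |I|$, hence $|K|\le k\,|J|\le |I|\,d_i$. This yields the crucial bound $K!\le |J|^{|I||J|}\,|I|^{|K|}$, which is exponential in $|I|$ times $|I|^{|K|}$. The factor $|I|^{|K|}$ then exactly compensates the weight $(|I|+|\bm|)^{n-|K|}$ that arises from the reduced $z$-exponent of the input term, producing the required $(|I|+|\bm|)^{n}$ growth for the output. Without exploiting the relation $|K|\le |I|\,d_i$ in this way, the estimate does not close; your sketch does not identify this compensation, and the heuristic you give would fail for $d_i\ge 2$. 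Once you replace the ``$1/m!$ beats polynomial growth'' argument by this compensation (and track the nilpotent $\phi_j^{k''}$ with $k''\le\dim_\C B$ as the paper does), the rest of your plan goes through and coincides with the paper's proof.
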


\begin{proof}

Let $a=\sum_{I,\bm,n} a_{I,\bm,n}(qe^t,\lambda) \sigma^I x^\bm z^n \in \ringA (\cW)$.
By definition, there exist a positive function $C \colon \cW \to [1,+\infty)$ and a positive constant $C'\geq1$ such that
\begin{itemize}
\item $a_{I,\bm,n}=0$ if $n < -C'(|I| + |\bm| + 1)$, and
\item $| a_{I,\bm,n} | < C(q,\lambda)^{|I| + |\bm| + |n| + 1} (|I| + |\bm|)^n$.
\end{itemize}
We fix $(i,j) \in \sfS$ and let $J$ to be the muli-degree of the monomial $T_i$, which is introduced in Section \ref{subsect:bigI}.
We prove that the series
\[
\ta := \exp \left( \sigma_{i,j} z^{-1} \left( z\parfrac{}{t} + \parfrac{G^\alpha}{t} + P^\alpha \right)^J \cdot \left( z\parfrac{}{\tau_j} + \phi_j \right) \right) \cdot a 
\]
belongs to the ring $H^*_T(B) \otimes \ringA (\cW)$.
Note that it follows from this fact that $\Delta''(\sigma) \cdot a$ belongs to $\ringA (\cW)$ since the operator $\Delta''(\sigma)$ is a product of such exponentials.
We will show it only for the case of $j=0$ or $j>r$, which is a condition equivalent to $\phi_j \notin H^2(B)$.
Note that it can be shown for the case of $1\leq j\leq r$ in a similar way.

Write $\ta = \sum \ta_{I,\bm,n}(qe^t,\lambda) \sigma^I x^\bm z^n$ and compute the coefficients $\ta_{I,\bm,n}$.
We first expand $\ta$$:$
\[
\ta = \sum^\infty_{k=0} \sum_{I,\bm,n} \frac{1}{k!} \left( z\parfrac{}{t} + \parfrac{G^\alpha}{t} + P^\alpha \right)^{k \cdot J} \left( z\parfrac{}{\tau_j} + \phi_j \right)^k a_{I,\bm,n}(qe^t,\lambda) \sigma^I \sigma^k_{i,j} x^\bm z^{n-k}.
\]
We remark that $z\parfrac{}{t}$ and $\parfrac{G^\alpha}{t}$ are non-commutative, and we can see that, for any holomorphic function $f(t)$,
\[
\left\| \left( \parfrac{}{t} + \parfrac{G^\alpha}{t} + P^\alpha \right)^J f(t) \right\| \leq \sum_{K,K';K+K'=J} \left\| \left( \parfrac{}{t} \right)^K \left[ \left( \parfrac{G^\alpha}{t} + P^\alpha \right)^{K'} f(t) \right] \right\|.
\]
By a similar observation, it follows that $\| \ta_{I,\bm,n}(qe^t,\lambda) \|$ is less than
\begin{align*}
&\sum_\sfK \frac{1}{k!} \cdot \frac{(k \cdot J)!}{K!K'!} \cdot \frac{k!}{k'!k''!} \cdot \frac{(\bm_j+k')!}{\bm_j!} \left\| \phi_j^{k''} \left(\parfrac{}{t}\right)^K \left[ \left( \parfrac{G^\alpha}{t} + P^\alpha \right)^{K'} a_{I - k \cdot e_{i,j}, \bm + k'e_j, n + k'' - |K|} \right] \right\| \\
&= \sum_\sfK \frac{1}{k''!} \cdot \frac{(k \cdot J)!}{K!K'!} \cdot \frac{(\bm_j+k')!}{\bm_j!k'!} \left\| \phi_j^{k''} \left(\parfrac{}{t}\right)^K \left[ \left( \parfrac{G^\alpha}{t} + P^\alpha \right)^{K'} a_{I - k \cdot e_{i,j}, \bm + k'e_j, n + k'' - |K|} \right] \right\|
\end{align*}
where the set $\sfK$ consists of all tuples $(k, k', k'', K, K')$ of non-negative integers such that $k\leq|I|$, $k''\leq\dim_\C B$, $k' + k'' = k$ and $K + K' = k \cdot J$.
It is easy to see that $\| \ta_{I,\bm,n}(qe^t,\lambda) \| = 0$ if $n < -C'(|I| + |\bm| + 1)$. 
Since $|\sfK|$ is finite and is upper-bounded by $(\dim_\C B + 1) \cdot 2^{|I| |J|}$, we have
\begin{multline*}
\| \ta_{I,\bm,n}(qe^t,\lambda) \| \leq (\dim_\C B + 1) \cdot 2^{|I| |J|} \cdot 2^{|I| |J|} 2^{|\bm| + |I|} \\
 \cdot \max_\sfK \left\{ \left\| \phi_j^{k''} \left(\parfrac{}{t}\right)^K \left[ \left( \parfrac{G^\alpha}{t} + P^\alpha \right)^{K'} a_{I - k \cdot e_{i,j}, \bm + k'e_j, n + k'' - |K|} \right] \right\| \right\}
\end{multline*}
We can see from the Cauchy integral formula and the estimate for $| a_{I,\bm,n} |$ that
\begin{multline*}
\left\| \phi_j^{k''} \left(\parfrac{}{t}\right)^K \left[ \left( \parfrac{G^\alpha}{t} + P^\alpha \right)^{K'} a_{I - k \cdot e_{i,j}, \bm + k'e_j, n + k'' - |K|} \right] \right\| \leq  C_1(qe^t,\lambda)^{ |I| + |\bm| + |n| + 1 }	\\
\cdot K! ( |I| + |\bm| - k'' )^{n + k'' - |K|}.
\end{multline*}
for some continuous function $C_1 \colon \cW \to [1,+\infty)$. 
It follows that $K! \leq |J|^{|I||J|} |I|^{|K|}$ from $|K| \leq |I||J|$, and 
\begin{align*}
( |I| + |\bm| - k'' )^{k''} &\leq (|I| + |\bm|)^{\dim_\C B}, \\
( |I| + |\bm| - k'' )^{n - |K|} &\leq  (\dim_\C B + 1)^{|n| + |K|} (|I| + |\bm|)^{n - |K|}
\end{align*}
from $k''\leq\dim_\C B$.
Using these inequalities, we can see that
\begin{align*}
&\max_\sfK \left\{ K! ( |I| + |\bm| - k'' )^{n + k'' - |K|} \right\} \\
\leq& |J|^{|I||J|} (|I| + |\bm|)^{\dim_\C B} (\dim_\C B + 1)^{|n| + |I||J|} \max_\sfK \left\{ |I|^{|K|} (|I| + |\bm|)^{n-|K|} \right\} \\
\leq& C_2^{|I| + |\bm| + |n| + 1} (|I| + |\bm|)^n
\end{align*}
for sufficiently large positive constant $C_2\geq1$.
Combining these estimate, we have
\begin{align*}
&\| \ta_{I,\bm,n}(qe^t,\lambda) \| \\
\leq& (\dim_\C B + 1) \cdot 2^{|I| |J|} 2^{|I| |J|} 2^{|\bm| + |I|} C_1(qe^t,\lambda)^{ |I| + |\bm| + |n| + 1 } \max_\sfK \left\{ K! ( |I| + |\bm| - k'' )^{n + k'' - |K|} \right\} \\
\leq& C_3(qe^t,\lambda)^{|I| + |\bm| + |n| + 1} (|I| + |\bm|)^n
\end{align*}
for some continuous function $C_3 \colon \cW \to [1,\infty)$.
These observations show that $\ta$ belongs to the ring $H^*_T(B) \otimes \ringA(\cW)$.

\end{proof}

\begin{proposition}
\label{prop:estimateL*}
All entries of the matrix $\tbfL^\star$ belong to the ring $\ringA (\cW)$.
\end{proposition}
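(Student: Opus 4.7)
The strategy is to reduce the estimate to Lemma \ref{lem:Delta} by rewriting $\tbfL^\star$ in a form $\Delta''(\sigma) \bfK$ where $\bfK$ can be estimated directly, in close analogy with the proof of Proposition \ref{prop:estimateM}.

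First I would commute the differential operators. Expanding $\bfL_B(Q_\alpha,\tau_\alpha,z) = e^{\tau_\alpha/z} \hat{\bfL}_B$ with $\hat{\bfL}_B = \sum_{\bm,n\geq 0} \bfL^B_{\bm,n}(x_\alpha)^\bm z^{-n}$, and using the elementary identity $(z\partial_{\Lambda_j}) e^{\tau_\alpha/z} = e^{\tau_\alpha/z}(\Lambda_j + z\partial_{\Lambda_j})$, one has
\[
e^{G^\alpha(qe^t,\lambda+z\partial_\Lambda)/z}\bfL_B(Q_\alpha,\tau_\alpha,z) = e^{\tau_\alpha/z}\, e^{G^\alpha(qe^t,\lambda+\Lambda+z\partial_\Lambda)/z}\,\hat{\bfL}_B.
\]
Similarly, the standard conjugation formulas $e^{-A/z}(z\partial_{a})e^{A/z} = z\partial_{a} + \partial_{a} A$ allow one to move $\Delta(\sigma)$ through $e^{\tau_\alpha/z}$ and through the scalar factor $e^{G^\alpha(qe^t,\lambda)/z}$; this produces precisely the operator $\Delta''(\sigma)$ of Lemma \ref{lem:Delta}. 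Splitting off the commuting scalar factor $e^{G^\alpha(qe^t,\lambda)/z}$ from $e^{G^\alpha(qe^t,\lambda+\Lambda+z\partial_\Lambda)/z}$ and reassembling, one arrives at a decomposition
\[
\tbfL^\star = \Delta''(\sigma)\cdot \bfK,
\]
where $\bfK$ is obtained by acting on $\hat{\bfL}_B$ by the exponential of a differential operator whose non-trivial part lies in $(\Lambda + z\partial_\Lambda) \cdot (\cdots)$ and by the scalar factor $e^{(G^\alpha(qe^t,\lambda+\Lambda)-G^\alpha(qe^t,\lambda))/z}$, which is nilpotent on $H^*(B)$.

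Next I would estimate $\bfK$. When the operator $(z\partial_\Lambda)^I$ acts on $(x_\alpha)^\bm z^{-n}$, each $z\partial_{\Lambda_j}$ becomes multiplication by $z\cdot\Lambda_j(\bm')$, where $\bm'\in\Eff(B)$ is the $Q$-degree of $(x_\alpha)^\bm$. This reduces the evaluation of $\bfK$ to exactly the same kind of bookkeeping as in the proof of Proposition \ref{prop:estimateM}: we expand the relevant exponentials, use Lemma \ref{lem:estimateLB} to bound $\|\bfL^B_{\bm,n}\|$, use the Cauchy integral formula together with the analyticity of $G^\alpha$ on $\cW$ to bound its Taylor coefficients in $(qe^t,\lambda)$, and use $|\bm'|^{n}\le e^{|\bm'|}n!$ together with the nilpotence of $\Lambda$ on $H^*(B)$ to absorb the $\Lambda$-factors. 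Combining these bounds one verifies that the entries of $\bfK$ satisfy the inequalities required by Definition \ref{def:rings2}, i.e.\ lie in $H^*_T(B)\otimes\ringA(\cW)$.

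Finally, Lemma \ref{lem:Delta} says that $\Delta''(\sigma)$ preserves $H^*_T(B)\otimes\ringA(\cW)$, so $\tbfL^\star = \Delta''(\sigma)\bfK$ has entries in $\ringA(\cW)$, as claimed. The main obstacle will be Step 2: carefully tracking the combinatorial factors coming from the multiple $(z\partial_\Lambda)$-derivatives and the $\Lambda$-shifts, and verifying that they combine into the $(|I|+|\bm|)^n$-type growth required for membership in $\ringA(\cW)$ (rather than the weaker factorial growth of $\cO^{y,z,z^{-1}}_\lambda$). The passage from the $\lambda$-only setting of Section \ref{sect:convergence1} to the present $(q,\lambda)$-setting of $\cW$ is what forces us to pass through the intermediate operator $\Delta''(\sigma)$ and its compatibility with $\ringA$.
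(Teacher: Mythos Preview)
Your proposal is correct and follows essentially the same route as the paper's proof: the paper also commutes the exponential factors and $\Delta(\sigma)$ to write $\tbfL^\star = \Delta''(\sigma)\bigl(e^{(G^\alpha(qe^t,\lambda+\Lambda)-G^\alpha(qe^t,\lambda))/z}\,\bfL'\bigr)$ with $\bfL'$ playing the role of your $\bfK$, estimates $\bfL'$ directly via Lemma~\ref{lem:estimateLB} and a Taylor-expansion bound on $G^\alpha$ (this is where the $(|I|+|\bm|)^n$-type growth is verified), and then invokes Lemma~\ref{lem:Delta}. One small caution: the inequality you actually need for the negative-$z$-power contribution of $\bfL^B_{\bm,n}$ is $1/(-n_2)!\le e^{|\bm|}|\bm|^{n_2}$ for $n_2\le 0$, not $|\bm'|^n\le e^{|\bm'|}n!$; and do not forget the factor $q^{P^\alpha(\bm')}$ coming from the change $Q\to Q_\alpha$, which must also be absorbed into the constant $C(q,\lambda)$.
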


\begin{proof}

Let $\bfL_B(Q,\tau,z) = e^{\tau/z} \sum_{\bm,n} \bfL^B_{\bm,n} x^\bm z^{-n}$.
We first estimate the matrix
\[
\bfL^\prime = e^{- ( \tau_\alpha + G^\alpha(qe^t,\lambda + \Lambda) ) / z} e^{G^\alpha(qe^t,\lambda + z\partial_\Lambda) / z} \bfL_B(Q_\alpha,\tau_\alpha,z).
\]
Set $\bfL^\prime = \sum_{\bm,n} \bfL^\prime_{\bm,n} (qe^t,\lambda) x^\bm z^n$.
We have
\[
\bfL^\prime |_{t=0} = \sum_{\bm,n} e^{( G^\alpha(q,\lambda + \Lambda + z \Lambda(\bm^\prime) - G^\alpha(q,\lambda + \Lambda) ) / z} \cdot \bfL^B_{\bm,n} \cdot q^{P^\alpha(\bm^\prime)}  x^\bm z^{-n}
\]
where $\bm^\prime \in \Z^r \cong H_2(B)$ is the degree of $x^\bm z^{-n}$ in $Q$.
We will estimate each term of this formula.
From Lemma \ref{lem:estimateLB}, it follows that $\| \bfL^B_{\bm,n} \| \leq C^{| \bm | + n+ 1}_1 / n!$.
The term $| q_i^{P_i^\alpha(\bm^\prime)} |$ satisfies the estimate $| q_i^{P_i^\alpha(\bm^\prime)} | \leq (\max\{ | q_i |, 1 \})^{C_2 | \bm |}$ for a sufficiently large constant $C_2\geq1$ satisfying $P^\alpha(\bm^\prime) \leq C_2 | \bm |$ for all $\bm$.
For the exponential $e^{( G^\alpha(q,\lambda + z \Lambda(\bm^\prime) - G^\alpha(q,\lambda) ) / z}$, we consider the expansion of the following form
\[
e^{( G^\alpha(q,\lambda + z \Lambda(\bm^\prime)) - G^\alpha(q,\lambda) ) / z} = \sum_{n \geq 0} G^\alpha_n (q,\lambda,\bm^\prime) z^n.
\]
We claim that the coefficients satisfy the estimate
\[
\| G^\alpha_n (q,\lambda + \Lambda,\bm^\prime) \| \leq C_3(q,\lambda)^{n + | \bm^\prime | + 1} | \bm^\prime |^n
\]
for some continuous function $C_3 \colon \cW \to [1,+\infty)$.
This follows from the calculation similar to the calculation for the estimate of $F^\alpha_{\bd,n}(\lambda,\bm^\prime)$ appearing in the proof of Proposition \ref{prop:estimateM}.
The details are left to the reader.
Combining these estimates, we obtain the estimate of $\bfL^\prime_{\bm,n}$:
\begin{align*}
\| \bfL^\prime_{\bm,n} (qe^t,\lambda) \| 
&= \left\| \sum_{n_1, n_2; n_1+n_2=n} G^\alpha_{n_1}(qe^t,\lambda,\bm') \cdot \bfL^B_{\bm,-n_2} \cdot (qe^t)^{P^\alpha(\bm')} \right\|		\\
&\leq C_4(qe^t,\lambda)^{|\bm| + |n| + 1} \sum_{\substack{n_1, n_2; n_1+n_2=n, n_1\geq0, \\ -C'(|\bm|+1)\leq n_2\leq0}} |\bm|^{n_1} \cdot n_2!	\\
&\leq C_4(qe^t,\lambda)^{|\bm| + | n | + 1} \cdot \left( C'(|\bm|+1) + 1 \right) e^{|\bm|} |\bm|^n									\\
&\leq C_5(qe^t,\lambda)^{|\bm| + |n| + 1} |\bm|^n.
\end{align*}
where $C_4 \colon \cW \to [1,+\infty)$ and $C_5 \colon \cW \to [1,+\infty)$ are sufficiently large continuous functions. 
Here we use the following inequality; $n_2! \leq e^{|\bm|} |\bm|^{n_2}$ if $n_2\leq0$.

Now we consider the matrix $\tbfL^\star$, which is related to $\bfL^\prime$ as follows.
\begin{align*}
\tbfL^\star &= e^{- ( \tau_\alpha + G^\alpha(qe^t,\lambda) ) / z} \Delta(\sigma) e^{( \tau_\alpha + G^\alpha(qe^t,\lambda) ) / z} \cdot e^{( G^\alpha(qe^t,\lambda + \Lambda) - G^\alpha(qe^t,\lambda) ) / z} \bfL^\prime \\
&= \Delta(\sigma)|_{z\parfrac{}{t} \to z\parfrac{}{t} + \parfrac{G^\alpha}{t} + P^\alpha, z\parfrac{}{\tau} \to z\parfrac{}{\tau} + \phi} \left( e^{( G^\alpha(qe^t,\lambda + \Lambda) - G^\alpha(qe^t,\lambda) ) / z} \bfL^\prime \right).
\end{align*}
The exponential $ \exp ( ( G^\alpha(qe^t,\lambda + \Lambda) - G^\alpha(qe^t,\lambda) ) / z )$ is expanded to a polynomial in $z^{-1}$ with coefficients in $\cO_\cM (\cW)$ since $\Lambda_j \in H^2(B)$ is nilpotent, and hence belongs to the ring $\cO^{x,\sigma,z,z^{-1}}_\cM (\cW)$.
From Lemma \ref{lem:Delta}, it follows that the entries of $\tbfL^\star$ belong to $\ringA(\cW)$.

\end{proof}

We define the pairing $\pairr{\cdot}{\cdot}^\star$ by
\[
\pairr{\phi_i}{\phi_j}^\star := \pair{\bfL^\star(-z) \phi_i}{\bfL^\star(z) \phi_j}^B \qquad \phi_i,\phi_j \in H^*(B)
\]

\begin{proposition}
The pairing $\pairr{\cdot}{\cdot}^\star$ is non-degenerate and takes values in the ring $\ringB (\cW)$.
\end{proposition}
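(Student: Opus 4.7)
The plan is to follow the pattern of the proof of Proposition \ref{prop:pairing}: show separately that $\pairr{\phi_i}{\phi_j}^\star$ lies in $\cO^\prime(\cW)[\![x,\sigma,z]\!]$ (no negative powers of $z$) and in $\ringA(\cW)$ (the required estimates), and then intersect to conclude that the pairing takes values in $\ringB(\cW) = \ringA(\cW) \cap \cO^\prime(\cW)[\![x,\sigma,z]\!]$. Non-degeneracy will then follow from evaluating at $x = \sigma = 0$ combined with the obvious analogue of Lemma \ref{lem:invertible} for $\ringB(\cW)$, which goes through by the same proof.

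For the first inclusion, the idea is to apply the Birkhoff factorization of Proposition \ref{prop:BFforL*}, namely $\bfL^\star = \bfL_B(Q_\alpha, \tau^{\star\star}_\alpha, z)\, \bfR^{\star\star}_\alpha$, together with the unitarity of $\bfL_B$ given by Proposition \ref{prop:unitarity}. This yields
\[
\pairr{\phi_i}{\phi_j}^\star = \pair{\bfR^{\star\star}_\alpha(-z)\phi_i}{\bfR^{\star\star}_\alpha(z)\phi_j}^B,
\]
and since the entries of $\bfR^{\star\star}_\alpha$ lie in $\cO^\prime(\cW)[\![x,\sigma,z]\!]$, so does the pairing.

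For the second inclusion, the plan is to factor the scalar/cohomology exponential out of $\bfL^\star$ by writing $\bfL^\star = e^{(\tau_\alpha + G^\alpha(qe^t,\lambda))/z}\, \tbfL^\star$. Because the cup product on $B$ is commutative, the factors $e^{-(\tau_\alpha + G^\alpha)/z}$ and $e^{(\tau_\alpha + G^\alpha)/z}$ cancel inside the Poincar\'e pairing regardless of the cohomological degree of $\tau_\alpha$, giving
\[
\pairr{\phi_i}{\phi_j}^\star = \pair{\tbfL^\star(-z)\phi_i}{\tbfL^\star(z)\phi_j}^B.
\]
By Proposition \ref{prop:estimateL*} the entries of $\tbfL^\star$ belong to $\ringA(\cW)$; since $\ringA(\cW)$ is a ring (Lemma \ref{lem:appendix1}) and Poincar\'e integration on $B$ is $\ringA(\cW)$-linear, this forces the pairing to lie in $\ringA(\cW)$. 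Combining both inclusions gives $\pairr{\phi_i}{\phi_j}^\star \in \ringB(\cW)$.

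For non-degeneracy, Proposition \ref{prop:BFforL*} yields $\bfR^{\star\star}_\alpha|_{x = \sigma = 0} = \Id$, so the first paragraph's expression specializes to $\pairr{\phi_i}{\phi_j}^\star|_{x = \sigma = 0} = \pair{\phi_i}{\phi_j}^B$, which is the Poincar\'e pairing on $B$ and hence non-degenerate. Applying the $\ringB$-version of Lemma \ref{lem:invertible} to the determinant of the pairing matrix then produces invertibility. The only delicate point in the whole argument is the cancellation of exponentials: this is legitimate at the level of formal power series because the contributions of $e^{\pm(\tau_\alpha + G^\alpha)/z}$ enter the Poincar\'e pairing as cohomology class multiplications which commute and combine to $1$ under $\int_B$.
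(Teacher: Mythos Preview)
Your proposal is correct and follows essentially the same approach as the paper. The paper's proof is terser---it writes only ``Due to the previous proposition and the Lagrangian property of the cone, $\pairr{\phi_i}{\phi_j}^\star$ belongs to $\ringA (\cW) \cap \cO^\prime (\cW) [\![x,\sigma,z]\!] = \ringB (\cW)$''---but the two ingredients it invokes are exactly the ones you spell out: the exponential cancellation $\pairr{\phi_i}{\phi_j}^\star = \pair{\tbfL^\star(-z)\phi_i}{\tbfL^\star(z)\phi_j}^B$ together with Proposition~\ref{prop:estimateL*} for the $\ringA$-membership, and the Birkhoff factorization plus unitarity of $\bfL_B$ (which is what ``Lagrangian property of the cone'' abbreviates, as in the proof of Proposition~\ref{prop:pairing}) for the absence of negative $z$-powers. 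For non-degeneracy the paper evaluates at $Q=\sigma=0$ using $\bfL^\star|_{Q=\sigma=0}=e^{(\tau_\alpha+G^\alpha)/z}\cdot\Id$ directly, while you use $\bfR^{\star\star}_\alpha|_{x=\sigma=0}=\Id$; both yield $\pair{\phi_i}{\phi_j}^B$ as the leading term, and the implicit appeal to the $\ringB$-analogue of Lemma~\ref{lem:invertible} is the same.
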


\begin{proof}
Due to the previous proposition and the Lagrangian property of the cone, $\pairr{\phi_i}{\phi_j}^\star$ belongs to the ring $\ringA (\cW) \cap \cO^\prime (\cW) [\![x,\sigma,z]\!] = \ringB (\cW)$.
Since $\bfL^\star |_{Q = \sigma = 0}$ is equal to $e^{( \tau_\alpha + G^\alpha(qe^t,\lambda + \Lambda) ) / z} \cdot \Id$, it follows that
\[
\pairr{\phi_i}{\phi_j}^\star |_{Q = \sigma = 0} 
= \pair{e^{-( \tau_\alpha + G^\alpha(qe^t,\lambda + \Lambda) ) / z} \phi_i}{e^{( \tau_\alpha + G^\alpha(qe^t,\lambda + \Lambda) ) / z} \phi_j}^B
= \pair{\phi_i}{\phi_j}^B.
\]
Hence the pairing $\pairr{\cdot}{\cdot}^\star$ is non-degenerate.
\end{proof}

\begin{proposition}
All entries of $\bfA^\star_a$ belong to the ring $\ringB (\cW)$.
\end{proposition}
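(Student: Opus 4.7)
The argument will closely parallel that of Proposition \ref{prop:estimateAI}, using the pairing $\pairr{\cdot}{\cdot}^\star$ just shown to take values in $\ringB(\cW)$.

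First, I would set $g^\star_{ij} := \pairr{\phi_i}{\phi_j}^\star \in \ringB(\cW)$ and argue that the matrix $(g^\star_{ij})$ is invertible in $\mathrm{Mat}(\dim H^*(B), \ringB(\cW))$. Since $g^\star_{ij}|_{x=\sigma=0} = \pair{\phi_i}{\phi_j}^B$ is the Poincaré pairing of $B$, the constant term of $\det(g^\star_{ij})$ is invertible in $\cO^\prime(\cW)$. The invertibility of such an element in $\ringB(\cW)$ follows from an argument entirely analogous to Lemma \ref{lem:invertible} (where now we work with coefficients in $\cO^\prime(\cW)$ instead of $\cO_{\C^N_\lambda}(U)$, and with the extra variable $\sigma$ in place of $y$); this is routine and does not present a new difficulty. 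Denote the inverse entries by $(g^\star)^{ij} \in \ringB(\cW)$.

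Next, I would express the connection matrix entries via the pairing. Writing $\nabla^\star_{\partial_a} \phi_b = z^{-1} \sum_c (\bfA^\star_a)^c_b \phi_c$ with respect to a homogeneous basis $\{\phi_b\}$ of $H^*(B)$, the standard identity gives
\[
(\bfA^\star_a)^c_b = \sum_d (g^\star)^{cd} \, \pair{\bfL^\star(-z)\phi_d}{z\, \partial_a \bfL^\star(z)\phi_b}^B.
\]
Thus it suffices to show that each quantity $P_{d,b,a} := \pair{\bfL^\star(-z)\phi_d}{z\, \partial_a \bfL^\star(z)\phi_b}^B$ belongs to $\ringB(\cW) = \ringA(\cW) \cap \cO^\prime(\cW)[\![x,\sigma,z]\!]$; then the conclusion follows from the ring property of $\ringB(\cW)$ (Lemma \ref{lem:appendix1}).

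The membership of $P_{d,b,a}$ in $\cO^\prime(\cW)[\![x,\sigma,z]\!]$ is precisely the content of Proposition \ref{prop:A*}: indeed, the exponential factor $\exp((\tau_\alpha + G^\alpha(qe^t,\lambda))/z)$ cancels between $\bfL^\star(-z)$ and $\bfL^\star(z)$ inside the pairing, and what remains is $\pair{\tbfL^\star(-z)\phi_d}{z\,\partial_a(\cdots)\tbfL^\star(z)\phi_b}^B$, which equals (up to linear combination) a matrix coefficient of $\bfA^\star_a$. On the other hand, the membership in $\ringA(\cW)$ follows from Proposition \ref{prop:estimateL*} together with the ring property of $\ringA(\cW)$: the entries of $\tbfL^\star$ and $\partial_a \tbfL^\star$ lie in $\ringA(\cW)$ (the latter because $\ringA(\cW)$ is clearly stable under $\partial/\partial\tau_j$ and $\partial/\partial\sigma_{i,j}$, and the derivatives $\partial/\partial t_i$ and the exponential prefactor combine to yield elements of $\ringA(\cW)$ after the cancellation above), and the Poincaré pairing is $H^*_T(\mathrm{pt})$-bilinear.

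The main subtlety, and the only place where care is needed, is the compatibility of the two estimates: the $\cO^\prime(\cW)[\![x,\sigma,z]\!]$-condition forces non-negative powers of $z$, while the $\ringA(\cW)$-estimate controls the growth of coefficients by $(|I|+|\bm|)^n n!$-type bounds. Exactly as in the proof of Proposition \ref{prop:pairing}(2), homogeneity of $P_{d,b,a}$ with respect to the grading on $\QDMT(B)$ allows us to upgrade the $\ringA$-estimate to a $\ringB$-estimate, because on the intersection $\cO^\prime(\cW)[\![x,\sigma,z]\!] \cap \ringA(\cW)$ the polynomial-in-$z$ degree is bounded by a linear function of $|I|+|\bm|$, converting $n!$-bounds into $(|I|+|\bm|)^n$-bounds. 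This homogeneity argument is the main (but standard) step.
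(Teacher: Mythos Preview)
Your approach is essentially identical to the paper's: write $(\bfA^\star_a)^c_b = \sum_d (g^\star)^{cd}\,\pairr{\phi_d}{\bfA^\star_a\phi_b}^\star$, use Proposition~\ref{prop:A*} for membership in $\cO^\prime(\cW)[\![x,\sigma,z]\!]$ and Proposition~\ref{prop:estimateL*} for membership in $\ringA(\cW)$, then conclude.

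One correction: your final paragraph about homogeneity is unnecessary and reflects a misreading of Definition~\ref{def:rings2}. The ring $\ringB(\cW)$ is \emph{defined} as the intersection $\ringA(\cW)\cap\cO^\prime(\cW)[\![x,\sigma,z]\!]$, and both rings impose the \emph{same} growth estimate $(|I|+|\bm|)^n$; the only difference is whether negative powers of $z$ are allowed. There is no ``$n!$-to-$(|I|+|\bm|)^n$ conversion'' needed here. You are presumably thinking of the earlier passage from $\cO^{y,z,z^{-1}}_\lambda$ to $\cO^{y,z}_\lambda$ in Proposition~\ref{prop:pairing}(2), where the two rings genuinely carry different estimates and homogeneity was required to bridge them; that step has no analogue in the present situation.
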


\begin{proof}
 
By setting $g_{ij} := \pairr{\phi_i}{\phi_j}^\star \in \ringB (\cW)$ and $g^{ij} \in \ringB (\cW)$ be the entries of $(g_{ij})^{-1}$, we can write $(\bfA^\star_a)^j_i$ as
\[
(\bfA^\star_a)^j_i = \pairr{\phi^j}{\bfA^\star_a \phi_i}^\star = \sum_k g^{jk} \pairr{\phi_k}{\bfA^\star_a \phi_i}^\star
\]
for $a \in \sfS \cup \{ (0,j) \mid 0 \leq j \leq s \}$.
Since $(\bfA^\star_a)^j_i \in \cO^\prime (\cW) [\![x,\sigma,z]\!]$ from Proposition \ref{prop:A*}, it suffices to show that $\pairr{\phi_i}{\bfA^\star_a \phi_j}^\star \in \ringA (\cW)$.
From the definition of $\bfA^\star$, we have
\[
\pairr{\phi_i}{\bfA^\star_a \phi_j}^\star = \pair{\bfL^\star(-z) \phi_i}{z \parfrac{\bfL^\star}{\sigma_a} (z) \phi_j}^B
\]
for $a \in \sfS$.
This equation and Proposition \ref{prop:estimateL*} imply that $\pairr{\phi_i}{\bfA^\star_a \phi_j}^\star \in \ringA (\cW)$.
The case $a \notin \sfS$ is exactly the same.
\end{proof}

Expand the map $\tau^{\star\star}_\alpha$ and the matrix $\bfR^{\star\star}_\alpha$ from Proposition \ref{prop:BFforL*} as follows.
\begin{equation}
\label{eq:expand**}
\tau^{\star\star}_\alpha = \sum_j \tau^{\star\star}_{\alpha,j} \phi_j, \qquad \bfR^{\star\star}_\alpha \phi_i = \sum_j (\bfR^{\star\star}_\alpha)^j_i (qe^t,\lambda,x,\sigma,z) \phi_j.
\end{equation}
In order to apply Theorem \ref{thm:gaugefixing} to the connection $\nabla^\star$, we rewrite $\nabla^\star$ as follows:
\begin{equation}
\label{eq:tA*}
\nabla^\star = d + z^{-1} \sum_{a \in \sfS^*} \tbfA^\star_a(\lambda,y,z) \frac{dy_a}{y_a}, \qquad
\tbfA^\star_a :=
\begin{cases}
\bfA^\star_a 	&\quad \text{if  } a = (0,j) \text{  with  } 1 \leq j \leq r, \\
y_a \bfA^\star_a	&\quad \text{otherwise},
\end{cases}
\end{equation}
where we set $\sfS^* = \sfS \cup \{ (0,j) \mid 0 \leq j \leq s \}$.
We also introduce the following notation:
\[
(x^{\star\star}_{\alpha,j} \mid 0 \leq j \leq s), \qquad x^{\star\star}_j =
\begin{cases}
(Q_\alpha)_j e^{\tau^{\star\star}_{\alpha,j}}	&1 \leq j \leq r,	\\
\tau^{\star\star}_{\alpha,j}			&\text{otherwise}.	
\end{cases}
\]
It is easy to see that $x^{\star\star}_j \in \cO^\prime (\cW) [\![x,\sigma]\!]$.
After shrinking the neighborhood $\cW$ of the path $\gamma$ if necessary, we have the following convergence result similar to Theorem \ref{thm:analyticity_hsigma}.

\begin{theorem}
\label{thm:analyticity_tau**}
{\rm (1)}
The coefficients $\tau^{\star\star}_{\alpha,j} - \tau_{\alpha,j} \in \cO^\prime (\cW) [\![x,\sigma]\!]$ are convergent and analytic on $\cW \times \cV$ where $\cV$ is a sufficiently small neighborhood of $(x,\sigma) = (0,0)$.

{\rm (2)}
The coefficients $(\bfR^{\star\star}_\alpha)^j_i$ lie in the ring $\ringB(\cW)$.
In particular, they are formal power series in $z$ with coefficients in analytic functions of $(qe^t,\lambda,x,\sigma)$ defined on $\cW \times \cV$ where $\cV$ is a sufficiently small neighborhood of $(x,\sigma) = (0,0)$.

\end{theorem}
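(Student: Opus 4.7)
The proof follows the pattern of Theorem~\ref{thm:analyticity_hsigma}. I apply the gauge fixing theorem (Theorem~\ref{thm:gaugefixing}; compare Remark~\ref{rem:gaugefixing}) to the logarithmic connection $\nabla^\star$ written in the form \eqref{eq:tA*}, now with the ring $\ringB(\cW)$ in place of $\cO^{y,z}_\lambda(U)$ and with formal variables $(x,\sigma)$ in place of $y$. The previous proposition ensures that the entries of $\tbfA^\star_a$ belong to $\ringB(\cW)$, so the remaining hypothesis to verify is nilpotency of the residue matrix $\tbfA^\star_a(qe^t,\lambda,0,0,z)$ at a base point of $\cW$.

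To check nilpotency, I use that by Proposition~\ref{prop:BFforL*} the matrix $\bfR^{\star\star}_\alpha$ intertwines $\nabla^\star$ with the pullback $(\tau^{\star\star}_\alpha)^*\nabla^B$:
\[
\sum_b \parfrac{\tau^{\star\star}_{\alpha,b}}{\sigma_a}\, \bfA^B_b(Q_\alpha,\tau^{\star\star}_\alpha) = \bfR^{\star\star}_\alpha \cdot \bfA^\star_a \cdot (\bfR^{\star\star}_\alpha)^{-1} - z\parfrac{\bfR^{\star\star}_\alpha}{\sigma_a} \cdot (\bfR^{\star\star}_\alpha)^{-1}.
\]
For $a=(0,j)$ with $1\le j\le r$, the derivative $\parfrac{}{\sigma_a}=\parfrac{}{\tau_j}$ acts as $x_j\parfrac{}{x_j}$ on the modified Novikov-divisor variable $x_j=Q_je^{\tau_j}$. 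Using $\bfR^{\star\star}_\alpha|_{x=\sigma=0}=\Id$ from Proposition~\ref{prop:BFforL*} and the fact that $\tau^{\star\star}_\alpha-\tau_\alpha\in\cO^\prime(\cW)[\![x,\sigma]\!]$, the same chain-rule observation yields $\parfrac{\bfR^{\star\star}_\alpha}{\sigma_a}\big|_{x=\sigma=0}=0$ and $\parfrac{\tau^{\star\star}_\alpha}{\sigma_a}\big|_{x=\sigma=0}=\phi_j$. Combined with $Q_\alpha=0$ at $x=0$ and the fact that $\bfA^B_j|_{Q=0}$ coincides with classical multiplication by $\phi_j$, the identity reduces at the base point to $\bfA^\star_a(qe^t,\lambda,0,0,z)=\phi_j\cup_B(-)$, which is nilpotent since $\phi_j\in H^2(B)$. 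For the remaining $a\in\sfS^*$, one has $\tbfA^\star_a=y_a\bfA^\star_a$, which vanishes at $y_a=0$ and is trivially nilpotent.

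After shrinking $\cW$ and the $(x,\sigma)$-neighborhood $\cV$ if necessary, the gauge fixing theorem then produces a unique gauge transformation with entries in $\ringB(\cW)$ whose transformed connection matrix is independent of $z$. Uniqueness of the Birkhoff factorization forces this transformation to coincide with $\bfR^{\star\star}_\alpha$, proving statement~(2). For statement~(1), applying the intertwining identity to $\phi_0=1$ and taking the constant term in $z$ yields
\[
\parfrac{\tau^{\star\star}_\alpha}{\sigma_a} = \bigl(\bfR^{\star\star}_\alpha \cdot \bfA^\star_a \cdot (\bfR^{\star\star}_\alpha)^{-1}\bigr)(1)\big|_{z=0},
\]
which lies in $\ringB(\cW)\cap\cO^\prime(\cW)[\![x,\sigma]\!]$ and is therefore analytic on $\cW\times\cV$; integrating in $\sigma$ then gives the analyticity of $\tau^{\star\star}_\alpha-\tau_\alpha$.

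The main obstacle is adapting the gauge fixing theorem to the present setting, where the parameter space is the simply-connected neighborhood $\cW$ of the path $\gamma$ rather than a neighborhood of the origin in $\C^N_\lambda$, and the coefficient ring is $\ringB(\cW)$. The estimates defining $\ringB(\cW)$ are controlled uniformly over $\cW$ by a continuous function, which is precisely what the inductive construction of the gauge in Theorem~\ref{thm:gaugefixing} requires. Nilpotency is checked at a base point in $\cW\cap\{q=0\}$ (for instance $\gamma(0)$), and uniqueness allows the locally constructed gauge to be analytically continued along $\gamma$ to all of $\cW$ while preserving the uniform estimate encoded by $\ringB(\cW)$.
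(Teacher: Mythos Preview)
Your argument is essentially the paper's: verify nilpotency of the residues of $\nabla^\star$ via the intertwining relation with $(\tau^{\star\star}_\alpha)^*\nabla^B$, apply the gauge fixing theorem, and recover (1) by evaluating the intertwiner on $1$ at $z=0$. One clarification worth noting: your residue computation $\bfA^\star_{(0,j)}|_{(x,\sigma)=0}=\phi_j\cup$ in fact holds at \emph{every} $(qe^t,\lambda)\in\cW$ (not only at $\gamma(0)$), and the paper exploits this by applying the gauge fixing theorem locally around each $p\in\cW$ and patching by uniqueness, which yields membership in $\ringB(\cW)$ directly rather than via the analytic-continuation step you outline at the end.
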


\begin{proof}

We first verify that the connection $\nabla^\star$ has nilpotent residues at $x = \sigma = 0$, and then apply the gauge fixing theorem (Theorem \ref{thm:gaugefixing}); see also Remark \ref{rem:gaugefixing}.
From the definition, when $x=\sigma=0$, $\tbfA^\star_a = 0$ for $a \in \sfS$ and $a = (0,j)$ with $j = 0$ or $r +1 \leq j \leq s$.
Since $\bfR^{\star\star}_\alpha$ intertwines the connection $\nabla^\star$ with $(\tau^{\star\star}_\alpha)^* \nabla^B$, we have
\begin{equation}
\label{eq:tbfA*}
\sum_{0 \leq j \leq s} \parfrac{\tau^{\star\star}_{\alpha,j}}{\tau_i} \cdot \bfA^B_j (x^{\star\star}_\alpha) = \bfR^{\star\star}_\alpha \cdot \bfA^\star_{(0,i)}(qe^t,\lambda,x,\sigma,z) \cdot (\bfR^{\star\star}_\alpha)^{-1} - z \parfrac{\bfR^{\star\star}_\alpha}{\tau_i} \cdot (\bfR^{\star\star}_\alpha)^{-1}
\end{equation} 
for any $1 \leq i \leq r$.
From Proposition \ref{prop:BFforL*}, it follows that $\bfR^{\star\star}_\alpha|_{x = \sigma = 0} = \Id$ and
\begin{equation}
\label{eq:Q=sigma=0}
\left.\parfrac{\tau^{\star\star}_{\alpha,j}}{\tau_i}\right|_{Q = \sigma = 0} = \delta_{i,j}, \qquad x^{\star\star}_{\alpha,i}|_{Q = \sigma = 0} = 0, \qquad \bfA^B_j (x^{\star\star})|_{Q = \sigma = 0} = \phi_j \cup
\end{equation}
for $1 \leq i \leq r$ and $0 \leq j \leq s$.
Combining these along with \eqref{eq:tbfA*}, we obtain $\bfA^\star_{(0,i)} = \phi_i \cup$ for $1 \leq i \leq r$ when $x = \sigma = 0$, which is nilpotent.

We can now apply Theorem \ref{thm:gaugefixing} to the connection $\nabla^\star$.
As a result, we can say that, for any $p \in \cW$, there exists an open neighborhood $U_p$ of $p$ such that $(\bfR^{\star\star}_\alpha)^j_i$ belongs to $\ringB (U_p)$.
This implies that $(\bfR^{\star\star}_\alpha)^j_i \in \ringB (\cW)$.
Sending $1 \in H^*(B)$ by \eqref{eq:tbfA*}, we have
\[
\parfrac{\tau^{\star\star}_\alpha}{\tau_i} = [ \bfR^{\star\star}_\alpha \cdot \bfA^\star_{(0,i)} \cdot (\bfR^{\star\star}_\alpha)^{-1} ] (1) |_{z = 0}.
\]
Since all entries of the matrices $\bfR^{\star\star}_\alpha, \bfA^\star_{(0,i)}$ and  $(\bfR^{\star\star}_\alpha)^{-1}$ belong to  the ring $\ringB (\cW)$, the right-hand side also does.
Therefore, $\parfrac{\tau^{\star\star}_{\alpha,j}}{\tau_i}$ belongs to the ring $\ringB (\cW) \cap \cO^\prime (\cW) [\![x,\sigma]\!]$.
This shows $(1)$.

\end{proof}

\subsection{Convergence of $\bfR^\star$}

In this subsection, we study the operator $\Delta(\sigma) ( \sum_{n\geq0} H^\alpha_n(qe^t,\lambda + z \partial_\Lambda) z^n ) \Delta(-\sigma)$ and compute how this operator affects $\tau^{\star\star}_\alpha$ and $\bfR^{\star\star}_\alpha$.
As a result, we will derive the convergence of $\tau^\star_\alpha$ and $\bfR^\star_\alpha$.

We set
\begin{align*}
\sfs &= \sum_{(i,j) \in \sfS} \sigma_{i,j} \cdot T_i \left(z \parfrac{}{t} \right) \cdot \parfrac{}{\tau_j}, \\
 \cH^\alpha \left( qe^t,\lambda,\sigma,z,z \parfrac{}{t},z \parfrac{}{\tau} \right) &= e^\sfs \cdot \biggl[\sum_{n \geq 0} H^\alpha_n(qe^t,\lambda + z\partial_\Lambda) z^n \biggr] \cdot e^{-\sfs}.
\end{align*}
The operator $\cH^\alpha$ is interpreted as an element of the non-commutative ring $\cO^\prime (\cW) \corr{z \parfrac{}{t}} (\!(z)\!) [\![\sigma,z \parfrac{}{\tau}]\!]$ (later we see that $\cH^\alpha$ in fact belongs to $\cO^\prime (\cW) \corr{z \parfrac{}{t}} [\![ \sigma, z, z \parfrac{}{\tau} ]\!]$).
We will see that the conjugation by $e^\sfs$ is equivalent to \emph{shifting each $t_i$ by $[\sfs, t_i]$}.
Note that $[\sfs, t_i]$ may contain differential operators, and does not commute with $t$ in general.
We first explain the meaning of shifts by non-commutative elements.

Let $f(x_1, \dots, x_n)$ be a holomorphic function on $U \subset \C^n$, and $\sfa_1, \dots, \sfa_n$ be elements of $\C [ z \parfrac{}{x} ]$.
We assume the following two conditions:
\begin{align}
\label{eq:assumption_for_shifting1}
\text{The constant term of $\sfa_i$ as a polynomial in $z \parfrac{}{x}$ is $0$.} 
\end{align}
\begin{equation}
\label{eq:assumption_for_shifting2}
[ \sfa_i, \sfa_j ] = 0, \quad [ \sfa_i, x_j ] = [ \sfa_j, x_i] \qquad \text{for any  } i,j
\end{equation}
Note that the latter condition guarantees that $[ x_i + \sfa_i, x_j + \sfa_j ] = 0$. 
We define an infinite-order differential operator $f(x + \sfa)$ as follows.
Let $p \in U$ and take the Taylor expansion of $f(x)$ at $p \in U$:
\[
f(x) = \sum_{J \in \N^n} \frac{1}{J!} \parfrac{ {}^J f }{x^J} (p) (x-p)^J.  
\]
We then replace each $(x_i - p_i)$ in this formula by $(x_i - p_i + \sfa_i)$ and rewrite it in \emph{normal ordering}; the normal ordering is the ordering such that differential operators always appear to the right of functions:
\[
\sum_{J \in \N^n} \frac{1}{J!} \parfrac{ {}^J f }{x^J} (p) (x-p+\sfa)^J = \sum_{m \geq 0} \sum_{I \in \N^n} \left[ \sum_{J \in \N^n} \frac{1}{J!} \parfrac{ {}^J f }{x^J} (p) \sum_{K \in \N^n} c^J_{m,I,K} \cdot (x - p)^K \right] z^m \left( z \parfrac{}{x} \right)^I
\]
where $c^J_{m,I,K}$ is given by the coefficient of $(x-p+\sfa)^J$:
\begin{align}
\label{eq:polynomial_shift}
(x-p+\sfa)^J = \sum_{m \geq 0} \sum_{I \in \N^n} \sum_{K \in \N^n} c^J_{m,I,K} \cdot (x - p)^K z^m \left( z \parfrac{}{x} \right)^I
\end{align}
Note that $c^J_{m,I,K}$ do not depend on $p \in U$.
Since $( x_i + \sfa_i )$'s are commutative, the left-hand side of the formula is unambiguous regardless of the order of the product $(x-p)^I$.
Finally, we define $f(x + \sfa)$ by substituting $p = x$ in the above formula:
\[
f(x + \sfa) := \sum_{m \geq 0} \sum_{I \in \N^n} \left[ \sum_{J \in \N^n} \frac{c^J_{m,I,0}}{J!} \parfrac{ {}^J f }{x^J} (x)  \right] z^m \left( z \parfrac{}{x} \right)^I. 
\]
From the condition \eqref{eq:assumption_for_shifting1}, for fixed $m$ and $I$, $c^J_{m,I,0}$ equals $0$ when $| J |$ is sufficiently large. 
Hence the coefficient of $z^m (z \partial/\partial x)^I$ is a finite sum and becomes a holomorphic function on $U$. 

\begin{remark}
\label{rem:shifting}
(1)
The conditions \eqref{eq:assumption_for_shifting1} \eqref{eq:assumption_for_shifting2} are written in the case of shifting with respect to $x$.
The shifting we will consider is, however, for variables that are different from $x$.
Thus we need to replace the variables in \eqref{eq:assumption_for_shifting1} \eqref{eq:assumption_for_shifting2} appropriately when we refer to these conditions. 

(2)
More generally, we can define shifting by $\sfa_i \in \cO_{\C^n} (U) \corr{ z \parfrac{}{x} }$ without the assumption \eqref{eq:assumption_for_shifting1} in a similar way.
In this case, we need to shrink the domain $U$.
We let $\sfa_{0,i} (x) \in \cO_{\C^n} (U)$ be the constant term of $\sfa_i$ with respect to $z$.
If we take an open set $V \subset U$ so that $\phi (V) \subset U$ where the map $\phi$ is given by
\[
\phi \colon U \to \C^n \qquad x \mapsto ( x_1 + \sfa_{0,1}(x), \dots x_n + \sfa_{0,n}(x) ),
\]
then we can define the differential operator $f(x+\sfa)$ over $V$.
We can find such $V$ whenever there exists $p \in U$ such that $(\sfa_{0,1} (p),\dots,\sfa_{0,n} (p)) = 0$.

(3)
In the following, we will deal with shifting by more general elements (in the sense that $\sfa_i$ contains other variables commuting with $z \parfrac{}{x}$), but we do not repeat the definition since it should be clear from the context.

\end{remark}

We now return to examine $\cH^\alpha$.
Let $\sfa_i := [ \sfs, t_i ] \in \C [ \sigma, z \parfrac{}{t}, z \parfrac{}{\tau} ]$.
Since $\sfa_i$'s satisfy the condition \eqref{eq:assumption_for_shifting1} \eqref{eq:assumption_for_shifting2} (for the variables $t$), we can define $f(qe^{t + \sfa},\lambda)$ for $f(qe^t,\lambda) \in \cO^\prime (\cW)$ in a similar way.
Here we work in the $\sigma$-adic topology, and regard $f(qe^{t + \sfa},\lambda)$ as an element of the ring $\cO^\prime (\cW) \corr{ z \parfrac{}{t} } [\![ \sigma, z, z \parfrac{}{\tau} ]\!]$.
In this setting, we can state the equivalence between the conjugation by $e^\sfs$ and the shifting $t_i$'s by $\sfa_i$'s as follows:

\begin{proposition}
\label{prop:conjugation}
Let $f(qe^t,\lambda) \in \cO^\prime (\cW)$.
Then the equality
\[
e^\sfs \cdot f(qe^t,\lambda) \cdot e^{-\sfs} = f(qe^{t + \sfa},\lambda)
\]
holds in the space $\cO^\prime (\cW) \corr{z \parfrac{}{t}} [\![ \sigma, z, z \parfrac{}{\tau} ]\!]$.
\end{proposition}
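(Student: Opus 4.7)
The plan is to treat conjugation by $e^\sfs$ as a continuous algebra automorphism of the ambient non-commutative ring $\cO^\prime(\cW)\corr{z\parfrac{}{t}}[\![\sigma,z,z\parfrac{}{\tau}]\!]$ and to exploit the Baker-Campbell-Hausdorff-style identity
\[
e^\sfs X e^{-\sfs} = \sum_{n\geq 0}\frac{1}{n!}\operatorname{ad}_\sfs^n(X).
\]
Since the automorphism $\operatorname{Ad}(e^\sfs) := e^\sfs(\cdot)e^{-\sfs}$ is multiplicative and continuous in the $\sigma$-adic topology (in which $\sfs$ is topologically nilpotent), it suffices to compute its action on the generators $q_i$, $\lambda_j$, $t_i$ and then extend to $f(qe^t,\lambda)$ via Taylor expansion.

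The key commutator observation is that all summands $\sigma_{i,j} T_i(z\parfrac{}{t})\parfrac{}{\tau_j}$ of $\sfs$ lie in the commutative subalgebra generated by the $\sigma$'s, the $z\parfrac{}{t}$'s, and the $\parfrac{}{\tau}$'s (these three families commute pairwise across summands). Each $\sfa_i = [\sfs, t_i] = \sum_{i',j'}\sigma_{i',j'}[T_{i'}(z\parfrac{}{t}),t_i]\parfrac{}{\tau_{j'}}$ has the same shape, since $[T_{i'}(z\parfrac{}{t}),t_i]$ is itself a polynomial in $z\parfrac{}{t}$ (as $[z\parfrac{}{t_a},t_b]=z\delta_{ab}$). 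Hence $[\sfs,\sfa_i]=0$, and the BCH series terminates after one commutator:
\[
e^\sfs t_i e^{-\sfs} = t_i + \sfa_i.
\]
Trivially $[\sfs,q_i]=[\sfs,\lambda_j]=0$, so $e^\sfs q_i e^{-\sfs}=q_i$ and $e^\sfs \lambda_j e^{-\sfs}=\lambda_j$. Multiplicativity upgrades the $t_i$ identity to $e^\sfs t_i^n e^{-\sfs}=(t_i+\sfa_i)^n$, whence
\[
e^\sfs e^{t_i} e^{-\sfs} = \sum_{n\geq 0}\frac{(t_i+\sfa_i)^n}{n!} = e^{t_i+\sfa_i},
\]
the right-hand side being well-defined in the $\sigma$-adic topology because $\sfa_i$ has vanishing $\sigma$-constant term and the $t_i+\sfa_i$ commute among themselves.

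For a general $f(x,\lambda)\in\cO^\prime(\cW)$, I will fix an expansion point $(q_0,\lambda_0)\in\cW$ and write $f(x,\mu)=\sum_{J,K} a_{J,K}(x-q_0)^J(\mu-\lambda_0)^K$ on a polydisc around it. Specializing $x=qe^t$ and applying $\operatorname{Ad}(e^\sfs)$ factor-by-factor yields
\[
e^\sfs f(qe^t,\lambda) e^{-\sfs} = \sum_{J,K} a_{J,K}\prod_i \bigl(q_i e^{t_i+\sfa_i}-q_{0,i}\bigr)^{J_i}(\lambda-\lambda_0)^K,
\]
and one checks that this coincides with $f(qe^{t+\sfa},\lambda)$ as defined by the normal-ordering recipe preceding \eqref{eq:polynomial_shift} (extended as in Remark~\ref{rem:shifting}(3)): both procedures amount to substituting $t_i\mapsto t_i+\sfa_i$ in the Taylor expansion of $f$ and reorganizing in normal form, the final substitution $p=x$ in the shifting recipe corresponding to collecting terms with $K=0$. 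The main obstacle is bookkeeping in the $\sigma$-adic topology — one must verify termwise convergence of the BCH expansion on infinite Taylor tails and check independence of the chosen expansion point $(q_0,\lambda_0)$, both of which reduce to establishing the identity on polynomial $f$ and passing to the limit.
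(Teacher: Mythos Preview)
Your proof is correct and follows essentially the same approach as the paper: both reduce to showing $e^\sfs t_i e^{-\sfs}=t_i+\sfa_i$ via the observation that $[\sfs,\sfa_i]=0$, then extend by multiplicativity to $t_i^n$ and by Taylor expansion to general $f$. The paper computes $\sfs^n t_i = t_i\sfs^n + n\sfs^{n-1}\sfa_i$ directly rather than invoking the BCH series, and simply declares the passage from the lemma on $t_i^n$ to the proposition ``immediate'' without your explicit discussion of Taylor expansion, normal ordering, and independence of the base point --- but the substance is identical.
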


This is an immediate consequence of the following lemma.

\begin{lemma}
\[
e^\sfs \cdot t^n_i \cdot e^{-\sfs} = ( t_i + \sfa_i )^n
\]
\end{lemma}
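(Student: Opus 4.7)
The plan is to reduce the lemma to the case $n=1$ and then invoke the Hadamard formula. Since conjugation by $e^{\sfs}$ is a ring automorphism of the ambient non-commutative ring (working in the $\sigma$-adic topology so that $e^{\pm\sfs}$ is well-defined), we have $e^{\sfs} \cdot t_i^n \cdot e^{-\sfs} = (e^{\sfs} \cdot t_i \cdot e^{-\sfs})^n$, so it suffices to prove $e^{\sfs} \cdot t_i \cdot e^{-\sfs} = t_i + \sfa_i$.

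For the $n=1$ case, the Hadamard formula gives
\[
e^{\sfs} \cdot t_i \cdot e^{-\sfs} = \sum_{k \geq 0} \frac{1}{k!} (\operatorname{ad} \sfs)^k (t_i) = t_i + \sfa_i + \tfrac{1}{2}[\sfs, \sfa_i] + \cdots,
\]
and this series converges in the $\sigma$-adic topology because each application of $\operatorname{ad}\sfs$ introduces an extra factor of $\sigma$. Thus it suffices to show $(\operatorname{ad}\sfs)^k(t_i) = 0$ for $k \geq 2$, and by induction it is enough to establish $[\sfs, \sfa_i] = 0$.

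To prove $[\sfs, \sfa_i] = 0$, I will compute $\sfa_i$ explicitly. Using $[z\partial/\partial t_l, t_i] = z\delta_{l,i}$ together with the Leibniz rule for commutators (which applies because $z\delta_{l,i}$ is central), one finds
\[
\sfa_i = [\sfs, t_i] = \sum_{(j,k) \in \sfS} \sigma_{j,k} \cdot z \cdot \frac{\partial T_j}{\partial \xi_i}\!\left(z \frac{\partial}{\partial t}\right) \cdot \frac{\partial}{\partial \tau_k},
\]
where $\xi$ denotes dummy variables corresponding to $z\partial/\partial t$. Both $\sfs$ and $\sfa_i$ therefore lie in the subalgebra generated as a polynomial algebra by $\sigma$, $z$, $z\partial/\partial t$ and $\partial/\partial\tau$. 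The only nontrivial commutators in the ambient ring are $[z\partial/\partial t_l, t_m] = z\delta_{l,m}$ and $[\partial/\partial\tau_l, \tau_m] = \delta_{l,m}$; since neither $t$ nor $\tau$ appears in this subalgebra, it is commutative. Hence $[\sfs, \sfa_i] = 0$, which closes the argument.

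The only point requiring any care is recognizing the commuting subalgebra that contains both $\sfs$ and $\sfa_i$; once this is observed, the vanishing of the higher iterated commutators is automatic and no analytic input is needed beyond $\sigma$-adic convergence of the exponential series.
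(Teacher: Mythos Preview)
Your proof is correct and follows essentially the same approach as the paper: both reduce to $n=1$ via multiplicativity of conjugation, and both rely on the key observation that $[\sfs,\sfa_i]=0$. The paper states this commutation without justification and then computes $\sfs^n t_i = t_i\sfs^n + n\sfs^{n-1}\sfa_i$ directly, whereas you invoke the Hadamard formula and give an explicit reason (the commutative subalgebra generated by $\sigma,z,z\partial/\partial t,\partial/\partial\tau$) for the vanishing of higher commutators; these are equivalent packagings of the same computation.
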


\begin{proof}

It suffices to prove the equality when $n=1$.
Since $\sfs$ commutes with $\sfa_i$, we have
\[
\sfs^n t_i = t_i \sfs^n + n \sfs^{n-1} \sfa_i,
\] 
and hence
\[
e^\sfs \cdot t_i = \sum_{n \geq 0} \frac{1}{n!} \sfs^n t_i = t_i \cdot e^\sfs + \sum_{n \geq 1} \frac{1}{(n-1)!} \sfs^{n-1} \sfa_i = ( t_i + \sfa_i ) \cdot e^\sfs.
\]
\end{proof}
 
Applying Proposition \ref{prop:conjugation} to the operator $\cH^\alpha$, we obtain
\begin{equation}
\label{eq:cHa}
\cH^\alpha \left( qe^t,\lambda,\sigma,z,z \parfrac{}{t},z \parfrac{}{\tau} \right) = \sum_{n \geq 0} H^\alpha_n (qe^{t + \sfa}, \lambda + z \partial_\Lambda) z^n.
\end{equation}
In particular, $\cH^\alpha$ belongs to $\cO^\prime (\cW) \corr{z \parfrac{}{t}} [\![ \sigma, z, z \parfrac{}{\tau} ]\!]$.

We now consider the effect of $\cH^\alpha$ on $\tau^{\star\star}_\alpha$ and $\bfR^{\star\star}_\alpha$.
Recall that $\tau^\star_\alpha$ and $\bfR^\star_\alpha$ appeared in the Birkhoff factorization of $\bfM_{\alpha,0}$ (Proposition \ref{prop:BFformodifiedJB}), which is related to $\bfL^\star$ as \eqref{eq:M=HL}, that is,
\[
\bfM_{\alpha,0} = \cH^\alpha \left( qe^t,\lambda,\sigma,z,z \parfrac{}{t},z \parfrac{}{\tau} \right) \cdot \bfL^\star.
\]
By using the Birkhoff factorization of $\bfL^\star$ (Proposition \ref{prop:BFforL*}) and the property of the fundamental solution \eqref{eq:fund_sol}, the right-hand side can be written as
\[
\cH^\alpha \left( qe^t,\lambda,\sigma,z,z \parfrac{}{t},z \parfrac{}{\tau} \right) \cdot \bfL_B (Q_\alpha,\tau^{\star\star}_\alpha,z) \bfR^{\star\star}_\alpha = \bfL_B (Q_\alpha,\tau^{\star\star}_\alpha,z) \cdot \biggl[ \sum_{n \geq 0} H^\alpha_n ( qe^{t + \tsfa}, \lambda + \tsfb ) z^n \biggr] \bfR^{\star\star}_\alpha
\]
where $\tsfa_i = \sfa_i |_{z \partial \mapsto ((\tau^{\star\star}_\alpha)^* \nabla^B)_{z \partial}}$ and $\tsfb_j = ((\tau^{\star\star}_\alpha)^* \nabla^B)_{z \partial_{\Lambda_j}}$.
Here we use the equation \eqref{eq:cHa} and shifting explained in Remark \ref{rem:shifting} (2).
Since $\cH^\alpha$ belongs to $\cO^\prime (\cW) \corr{ z \parfrac{}{t} } [\![ \sigma,z,z \parfrac{}{\tau} ]\!]$ and in particular contains no negative powers of $z$, this also gives the Birkhoff factorization of $\bfM_{\alpha,0}$.
From the uniqueness of the factorization, $\tau^\star_\alpha$ and $\bfR^\star_\alpha$ can be written as
\begin{equation}
\label{eq:R*=HR**}
\tau^\star_\alpha = \tau^{\star\star}_\alpha, \qquad \bfR^\star_\alpha = \biggl[ \sum_{n \geq 0} H^\alpha_n (qe^{t + \tsfa}, \lambda + \tsfb ) z^n \biggr] \cdot \bfR^{\star\star}_\alpha.
\end{equation}
We expand $\tau^\star_\alpha$ and $\bfR^\star_\alpha$ in the same way as we did for $\tau^{\star\star}_\alpha$ and $\bfR^{\star\star}_\alpha$ (see \eqref{eq:expand**}):
\[
\tau^\star_\alpha = \sum_j \tau^\star_{\alpha,j} \phi_j, \qquad \bfR^\star_\alpha \phi_i = \sum_j (\bfR^\star_\alpha)^j_i(qe^t,\lambda,x,\sigma,z) \phi_j. 
\]
Then we can state the convergence result for $\tau^\star_\alpha$ and $\bfR^\star_\alpha$.

\begin{theorem}
\label{thm:analyticity_star}
{\rm (1)}
The coefficients $\tau^\star_{\alpha,j} - \tau_{\alpha,j} \in \cO^\prime (\cW) [\![x,\sigma]\!]$ are convergent and analytic on $\cW \times \cV$ where $\cV$ is a sufficiently small neighborhood of $(x,\sigma) = (0,0)$.

{\rm (2)}
The coefficients $(\bfR^\star_\alpha)^j_i$ lie in the ring $\cO^\prime (\cW \times \cV) [\![ z ]\!]$.
\end{theorem}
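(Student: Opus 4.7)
The first statement is immediate: the identity $\tau^\star_\alpha = \tau^{\star\star}_\alpha$ in \eqref{eq:R*=HR**}, together with Theorem \ref{thm:analyticity_tau**}(1), gives at once that $\tau^\star_{\alpha,j} - \tau_{\alpha,j}$ is analytic on $\cW \times \cV$. So my plan focuses on the second statement, and the starting point is the other identity from \eqref{eq:R*=HR**}:
\[
\bfR^\star_\alpha = \biggl[ \sum_{n \geq 0} H^\alpha_n(qe^{t+\tsfa}, \lambda + \tsfb)\, z^n \biggr] \cdot \bfR^{\star\star}_\alpha.
\]
The goal is to show that acting on $\bfR^{\star\star}_\alpha$ by the bracketed infinite-order differential operator produces a matrix whose entries lie in $\cO^\prime(\cW \times \cV')[\![z]\!]$ for a possibly smaller neighborhood $\cV'$ of the origin.

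First I would collect the analytic inputs supplied by our earlier work. By Theorem \ref{thm:analyticity_tau**}(2) the entries of $\bfR^{\star\star}_\alpha$ lie in $\ringB(\cW)$, hence are analytic on $\cW \times \cV$ as formal power series in $z$; by Theorem \ref{thm:analyticity_tau**}(1) together with our standing convergence assumption on $QH^*(B)$, the pullback connection $(\tau^{\star\star}_\alpha)^*\nabla^B$ has analytic coefficients on $\cW \times \cV$, so the shift operators $\tsfa_i$ and $\tsfb_j$ are differential operators with analytic coefficients on $\cW \times \cV$. Since $\sfa_i = [\sfs, t_i]$ and $\sfs$ is linear in $\sigma$, each $\tsfa_i$ has $\sigma$-order at least one, so the substitution $qe^t \rightsquigarrow qe^{t+\tsfa}$ is well-defined $\sigma$-adically. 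A parametric application of Lemma \ref{lem:estimateSPA} (exactly as in Lemma \ref{lem:estimateAn}) yields that each $H^\alpha_n$ is analytic on $\cW$ and satisfies a bound $|H^\alpha_n(q,\lambda)| \leq C^{n+1} n!$ on compact subsets of $\cW$; Cauchy's integral formula then provides analogous estimates for its partial derivatives on slightly shrunk compact sets.

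The main step is a $z$-degree accounting in the spirit of Propositions \ref{prop:estimateM} and \ref{prop:estimateL*}. Every derivative $z\partial/\partial t$ or $z\partial/\partial\tau$ arising inside $\tsfa$ or $\tsfb$ carries one explicit power of $z$; thus, when the operator is expanded as a Taylor series in the shifts and applied to an entry of $\bfR^{\star\star}_\alpha$, the coefficient of any fixed $z^N$ in the output receives contributions only from those $H^\alpha_n$ with $n \leq N$ and from Taylor expansions of bounded total derivative-order. The non-derivative part of $\tsfb_j$ comes from the pulled-back multiplication operator $\Lambda_j \star_B$, which is analytic in $(q,Q,\lambda,\hsigma)$, so its contribution to the Taylor expansion of $H^\alpha_n$ around $\lambda$ is absorbed by Cauchy bounds after shrinking to $\cV'$. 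The factorial $n!$ in the bound on $H^\alpha_n$ is then balanced against the matching $1/k!$'s in the Taylor expansions and Cauchy bounds on the analytic entries of $\bfR^{\star\star}_\alpha$ and of $(\tau^{\star\star}_\alpha)^*\nabla^B$, producing for each $N$ a convergent expression for the $z^N$-coefficient analytic on $\cW \times \cV'$.

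The main obstacle will be precisely this bookkeeping. One is composing an infinite-order differential operator whose own coefficients encode the pulled-back quantum connection of $B$, so the factorial growth $n!$ from the stationary-phase expansion has to be traded carefully against factorial Cauchy bounds coming from the analyticity established in Theorem \ref{thm:analyticity_tau**}. Morally the estimate is parallel to the one carried out for $\bfN^\alpha$ in Proposition \ref{prop:estimateM}, but the target space is weaker---merely $\cO^\prime(\cW \times \cV')[\![z]\!]$ rather than $\ringB$---so the balancing must be arranged to be compatible with $(x,\sigma)$-analyticity at each fixed $z$-order rather than uniform control in all variables simultaneously.
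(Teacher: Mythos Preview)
Your approach is correct and follows the same route as the paper: part (1) via $\tau^\star_\alpha = \tau^{\star\star}_\alpha$ and Theorem \ref{thm:analyticity_tau**}(1), part (2) via the identity \eqref{eq:R*=HR**} and a $z$-degree count. The paper's argument is shorter: it simply observes that the operator $\sum_{n\ge 0} H^\alpha_n(qe^{t+\tsfa},\lambda+\tsfb)\,z^n$, written in normal ordering, lies in $\End(H^*(B))\otimes\cO'(\cW\times\cV)[\![z,\,z\parfrac{}{t},\,z\parfrac{}{\tau}]\!]$, and then the conclusion is immediate since applying any monomial $z^a(z\parfrac{}{t})^b(z\parfrac{}{\tau})^c$ to an element of $\cO'(\cW\times\cV)[\![z]\!]$ raises the $z$-power by $a+b+c$, so only finitely many such monomials contribute at each output order.

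One comment on your outline: your concern about ``balancing the factorial $n!$ of $|H^\alpha_n|$ against matching $1/k!$'s'' is misplaced. At each fixed output $z$-power $N$, only $H^\alpha_n$ with $n\le N$ and only boundedly many differentiations of $\bfR^{\star\star}_\alpha$ contribute, so the $n!$ growth never enters an infinite sum and no delicate cancellation is needed---this is why the target space is merely $\cO'(\cW\times\cV)[\![z]\!]$ with no growth condition in $z$. The only genuinely infinite sums are the Taylor expansions of the individual $H^\alpha_n$ in the non-derivative ($z^0$) parts of $\tsfa$ and $\tsfb$, which you correctly absorb by shrinking $\cV$.
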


\begin{proof}

Since $\tau^\star_\alpha = \tau^{\star\star}_\alpha$, $(1)$ follows from Theorem \ref{thm:analyticity_tau**} $(1)$.
By Theorem \ref{thm:analyticity_tau**} $(2)$, $\bfR^{\star\star}_\alpha$ belongs to $\End (H^*(B)) \otimes \cO^\prime (\cW \times \cV) [\![ z ]\!]$.
On the other hand, the operator $\sum_{n \geq 0} H^\alpha_n ( qe^{t + \tsfa}, \lambda + \tsfb ) z^n$ belongs to the space $\End (H^*(B)) \otimes \cO^\prime (\cW \times \cV)  [\![ z, z \parfrac{}{t}, z \parfrac{}{\tau} ]\!]$ when written in normal ordering.
These observations imply that $\bfR^\star_\alpha \in \End (H^*(B)) \otimes \cO^\prime (\cW \times \cV) [\![ z ]\!]$.
\end{proof}

\subsection{Analytic Decompositions}

Combining Theorem \ref{thm:analyticity_hsigma} and Theorem \ref{thm:analyticity_star}, we will prove the main result.
Let $\hU \subset \hcM = \Spec \C[ \lambda, q ]$ and $\hV \subset \Spec \C [ Q, \{\hsigma_a\}_{a\in\sfS} ]$ be analytic open neighborhoods of the origins such that the equivariant quantum product $\star_E$ converges over $\hU \times \hV$; this is possible thanks to Theorem \ref{thm:analyticity_hsigma}.
Recall that the mirror map $(\lambda,y)\mapsto(\lambda,\hy)$ in Theorem \ref{thm:analyticity_hsigma} is biholomorphic.
By setting $t=0$ and $\tau=0$, the mirror map can be also regarded as the isomorphism from an open neighborhood $W_0$ of $0 \in \Spec\C[\lambda,q,Q,\sigma]$ to $\hU \times \hV$, which we will write as $\Phi$.
Let $\pi_1$ be the projection map $\hU \times \hV \to \hU$, and $\hU'$ be an image by the map $\pi_1\circ\Phi$ of $( W_0 \cap ((Q,\sigma)=0) ) \cap (\cM^{\operatorname{ss}} \cup \cU)$.
Then, from Theorem \ref{thm:analyticity_star}, there exists an open neighborhood $W$ of $\hU' \times \{0\}$ in $\hU \times \hV$ such that the map $\tau^\star_\alpha$ and the matrix $\bfR^\star$ can be extended analytically over $W$. 
Denote by $\widetilde{W}$ the universal cover of $W$.

\begin{theorem}
\label{thm:decompQDM}
The isomorphism in Theorem \ref{thm:formaldecomp} can be extended analytically over $\widetilde{W}$$:$ 
\[
\chbfR \colon \oanQDMT(E) \cong \bigoplus_{\alpha \in F} \chtau_\alpha^* \oanQDMT(B)
\]
Furthermore, we can take the non-equivariant limit of this isomorphism $:$
\[
\chbfR |_{\lambda = 0} \colon \oanQDM (E) \cong \bigoplus_{\alpha \in F} \chtau_\alpha^* \oanQDM (B)
\]
\end{theorem}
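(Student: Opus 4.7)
The proof should assemble the two convergence results (Theorem 5.7 for $\hsigma$, $\hR$ and Theorem 6.10 for $\tau^\star_\alpha$, $\bfR^\star$) via the composition formulas $\chtau_\alpha=\tau^\star_\alpha\circ\hsigma^{-1}$ and $\chbfR=\bfR^\star\circ\hR^{-1}$ from \eqref{eq:chR}, and then spread the resulting isomorphism by analytic continuation along the path $\gamma$ chosen in Section 6.2. The plan is as follows.

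First, I will work on a small neighborhood of the origin in the original coordinates $(\lambda,q,Q,\sigma)$. Theorem 5.7 gives analytic functions $\hsigma(\lambda,y)$ and $\hR(\lambda,y,z)$, together with invertibility of the mirror map $\Phi\colon W_0\to\hU\times\hV$. Since $0\in\cU$ and $\tau^\star_\alpha,\bfR^\star$ are analytic on $\cU\times\cV$ by Theorem 6.10 (the base case, before extending along $\gamma$), the composition $\bfR^\star\circ\hR^{-1}$ and the coordinate change $\tau^\star_\alpha\circ\hsigma^{-1}$ make sense analytically on a neighborhood of $\{0\}\times\{0\}$ in $\hU\times\hV$, after pulling back by $\Phi^{-1}$. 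By the uniqueness of Birkhoff factorization, these analytic composites coincide with the formal $\chtau_\alpha,\chbfR$ from Theorem 4.5, so the formal isomorphism there is realized analytically near the origin.

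Next I will spread this isomorphism over $\widetilde{W}$. By construction, $\cW$ is an open, simply connected neighborhood of the image of $\gamma$ in $\cM^{\operatorname{ss}}\cup\cU$, and Theorem 6.10 gives $\tau^\star_\alpha,\bfR^\star$ as single-valued analytic objects on $\cW\times\cV$. On the other hand, $\hsigma,\hR^{\pm 1}$ are analytic (hence single-valued) on $\hU\times\hV$ by Theorem 5.7. The composites $\chtau_\alpha$ and $\chbfR$ therefore extend to the domain $W\subset\hU\times\hV$ that corresponds to $(\cW\times\cV)$ under the mirror map; since traversing loops in $\hU$ around the locus $\hU\setminus\hU'$ can produce monodromy on the $\rho^\alpha$-branch (and hence on $\tau^\star_\alpha$, $\bfR^\star$), the isomorphism is genuinely multi-valued on $W$ but single-valued on the universal cover $\widetilde{W}$. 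Compatibility with the connection, pairing, and grading is propagated from the formal identity of Theorem 4.5 by analytic continuation of identities that already hold on a non-empty open subset.

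Finally, for the non-equivariant limit I will use the key property of $\cM^{\operatorname{ss}}$ proved in Section 6.2: $\cM^{\operatorname{ss}}\cap\{\lambda=0\}$ is open and dense in $\{\lambda=0\}$, and the path $\gamma$ was chosen to terminate at a point of $\cM^{\operatorname{ss}}\cap\{\lambda=0\}$. Hence $\hU'$ meets $\{\lambda=0\}$ in an open set, $W$ does not entirely contain $\{\lambda=0\}$ in its complement, and one can specialize $\chbfR$ at $\lambda=0$ after analytic continuation. The main obstacle I foresee is bookkeeping the domains: one must check that the domain where $\hR^{-1}$ takes values lies in the domain of $\bfR^\star$ after composition with $\Phi^{-1}$, and that the monodromy encountered along $\gamma$ is captured entirely by the multi-valuedness of $\rho^\alpha$ (so that passing to $\widetilde{W}$ genuinely trivializes everything). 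Once this matching is set up carefully, the rest of the argument is a routine patching of the formal identity with the two analytic estimates already in hand.
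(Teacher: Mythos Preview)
Your proposal is correct and matches the paper's approach: the paper gives no separate proof environment for this theorem, treating it as an immediate consequence of the setup paragraph that precedes it, which is precisely the assembly of Theorems~\ref{thm:analyticity_hsigma} and~\ref{thm:analyticity_star} via the composition formulas~\eqref{eq:chR} that you describe. One small correction to your bookkeeping: the origin $(\lambda,q)=(0,0)$ does \emph{not} lie in $\cU$ (since $\cU$ is a neighborhood of $\{0\}\times(\C^N_\lambda\setminus\cD)$ and $0\in\cD$), so the matching of the formal and analytic decompositions should be carried out near $\gamma(0)\in\cU\cap\{q=0\}$, i.e.\ at $q=0$ with $\lambda\notin\cD$, where the $\Ral$-coefficients of Theorem~\ref{thm:formaldecomp} are genuine analytic functions; this is exactly the domain issue you already flagged.
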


\begin{corollary}
\label{cor:decompQH}
Let $J_{\chtau}$ be the Jacobian matrix of $\chtau$.

{\rm (1)}
For $(\lambda,q,Q,\hsigma) \in \widetilde{W}$, the map $J_{\chtau}$ gives the ring isomorphism
\[
J_{\chtau} \colon QH^*_T(E)_{\lambda,q,Q,\hsigma} \cong \bigoplus_{\alpha \in F} QH^*_T(B)_{\lambda,Q_\alpha,\chtau_\alpha}.
\]

{\rm (2)}
For $(0,q,Q,\hsigma) \in \widetilde{W} \cap ( \olambda=0 )$, the map $J_{\chtau}$ gives the ring isomorphism
\[
J_{\chtau} |_{\lambda = 0} \colon QH^*(E)_{q,Q,\hsigma} \cong \bigoplus_{\alpha \in F} QH^*(B)_{Q_\alpha,\chtau_\alpha}.
\]

{\rm (3)}
The map $\chtau$ gives an local isomorphism between $QH^*(E)$ and $\bigoplus_{\alpha \in F} \chtau^*_\alpha QH^*(B)$ as $F$-manifolds with Euler vector fields.
\end{corollary}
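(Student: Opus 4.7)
The plan is to extract ring-level statements from the $D$-module isomorphism of Theorem \ref{thm:decompQDM} by passing to its leading coefficient in $z$, using the fact that the $z^{-1}$-part of the quantum connection encodes the quantum product.

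First I set $\chbfR_0 := \chbfR|_{z=0}$ and apply the intertwining $\chbfR \circ \nabla^E = \bigl( \bigoplus_\alpha \chtau_\alpha^* \nabla^B \bigr) \circ \chbfR$. Contracting with $\partial/\partial \hsigma_a$ and reducing modulo $z$ produces the identity
\[
\chbfR_0 \circ (e_a \star_E \cdot) \;=\; \bigl( J_\chtau(e_a) \star_B^{\oplus} \cdot \bigr) \circ \chbfR_0,
\]
where $J_\chtau(e_a) = \sum_{\alpha,j}(\partial \chtau_{\alpha,j}/\partial \hsigma_a)\phi_j \in \bigoplus_\alpha H^*_T(B)$ and $\star_B^{\oplus}$ denotes the componentwise quantum product. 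Setting $\omega := \chbfR_0(1_E)$ and iterating the above gives $\chbfR_0(v) = J_\chtau(v) \star_B^{\oplus} \omega$ for every $v$; thus $\chbfR_0$ factors as the composition of $J_\chtau$ with multiplication by $\omega$. Since $\chbfR_0$ is an isomorphism of vector bundles by Theorem \ref{thm:decompQDM}, both factors are isomorphisms; in particular $\omega$ is a unit in $\bigoplus_\alpha QH^*_T(B)$, and a short manipulation using $\chbfR_0(a \star_E b) = J_\chtau(a \star_E b) \star_B^{\oplus} \omega = J_\chtau(a) \star_B^{\oplus} J_\chtau(b) \star_B^{\oplus} \omega$ then cancels $\omega$ to yield the multiplicativity $J_\chtau(a \star_E b) = J_\chtau(a) \star_B^{\oplus} J_\chtau(b)$. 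The unit axioms $e_{0,0} \star_E = \Id$ and $\phi_0 \star_B = \Id$ give $J_\chtau(1_E) = (1_B)_\alpha$. This proves (1).

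Part (2) follows from the identical argument applied to $\chbfR|_{\lambda = 0}$, whose existence and analyticity on $\widetilde W \cap \{\lambda = 0\}$ is guaranteed by the second assertion of Theorem \ref{thm:decompQDM}; the key point is that $\chbfR|_{\lambda = 0}$ remains an isomorphism of vector bundles on this locus, so both $J_\chtau|_{\lambda=0}$ and $\omega|_{\lambda=0}$ are invertible there (even though the classical equivariant localization isomorphism degenerates at $\lambda=0$, the quantum corrections prevent degeneration of $\chbfR$ on the subspace of $\widetilde W$ where $\lambda = 0$). Finally, (3) reduces to (2) together with the homogeneity statement $\chtau_* \hcE^E = \chcE^B$ proved in Section \ref{subsect:formaldecomposition}, which persists analytically. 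Setting all Novikov variables equal to $1$ makes $\hcE^E$ and $\chcE^B$ specialize to the Euler vector fields of $QH^*(E)$ and $\bigoplus_\alpha QH^*(B)$, so (2) supplies the multiplicative part of the $F$-manifold isomorphism while the pushforward equality supplies the Euler-vector-field compatibility. The main work has already been carried out in Theorem \ref{thm:decompQDM}; the present corollary is essentially a dictionary translating the analytic $D$-module decomposition into its ring-theoretic shadow, and the only subtlety is to ensure that $\omega$ is invertible at each stage, which is automatic from the fact that $\chbfR_0$ is itself an isomorphism.
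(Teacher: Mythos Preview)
Your proposal is correct and is precisely the standard derivation the paper intends; the paper states Corollary~\ref{cor:decompQH} without proof, treating it as an immediate consequence of Theorem~\ref{thm:decompQDM}. Your argument---extracting the $z^0$-part of the intertwining relation to obtain $\chbfR_0 \circ (e_a\star_E) = (J_\chtau(e_a)\star_B^{\oplus}) \circ \chbfR_0$, factoring $\chbfR_0$ as $J_\chtau$ followed by multiplication by the unit $\omega=\chbfR_0(1_E)$, and invoking the homogeneity of $\chtau$ for the Euler-field compatibility---fills in exactly the details omitted by the paper.
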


\begin{remark}
Even if the fiber $X$ is not compact, our argument still works in some situation except for the argument that takes the non-equivariant limit.
We assume that the total space $E$ is semiprojective and the base space $B$ is compact.
We can define the equivariant Gromov-Witten invariants for $E$ via the virtual localization theorem.
Since $E$ is semiprojective, the evaluation maps $\ev_i$ for the moduli space $\overline{\cM}_{0,n}(E,\cD)$ become proper and hence we can consider the non-equivariant limit of $\QDMT(E)$ and $QH^*_T(E)$ \cite{Coates-Corti-Iritani-Tseng-hodge}.
Brown's mirror theorem holds in this setup, with the same proof.
The argument up to Section \ref{sect:convergence1} holds true and in particular the structure constants of $QH^*_T(E)$ are convergent.
The discussion in Section \ref{sect:convergence2} is true for non-compact $E$ if we restrict all open sets appearing there to the set $(\lambda \neq 0)$. 
As a result, we can obtain the decompositions of $\oanQDMT(E)$ and $QH^*_T(E)$ similar to Theorem \ref{thm:decompQDM} and Corollary \ref{cor:decompQH}.
These decompositions do not admit non-equivariant limits in general. 
\end{remark}

\subsection{Examples}

Finally, we give three examples of the analytic decomposition of the small quantum cohomology algebra $QH^*_T(E)$.
These examples may be helpful to understand our main theorem and why the decomposition for $QH^*_T(E)$ becomes multi-valued.

\begin{example}
We first consider the case that the base space $B$ is just a point.
In this case, the total space $E = X$ is a toric variety and the torus action on $E$ is a usual one.
Quantum cohomology of toric varieties has already been studied extensively by many reserchers. 
We study it from the perspective of the decomposition.

Let $X (=E)$ be a projective space $\PP^{N-1}$.
Then the small quantum cohomology can be described as
\[
QH^*_T(E) \cong \C[ p, \lambda, q ] \left/ \biggl( \prod^N_{j=1} (p - \lambda_j) - q \biggr) \right. 
\]
where $p$ corresponds to a standard generator of $H^2(\PP^{N-1})$.
If the point $(\lambda,q)$ does not belong to the discriminant locus of the equation $f(x)=\prod^N_{j=1} (x - \lambda_j) - q = 0$, the ring $QH^*_T(E)_{\lambda,q}$ is isomorphic to $\C^N$, that is, the direct sum of $QH^*(\pt)$.
This decomposition corresponds to the semisimplicity of the quantum cohomology. 
It is known that an arbitrary smooth projective toric manifold has generically semisimple quantum cohomology \cite[Corollary 5.12]{Iritani-convergence}.
\end{example}

\begin{example}
Let $B = \PP^n$, $X = \PP^{N-1}$ and $V$ be a vector bundle $\cO_{\PP^n}^{\oplus N -1} \oplus \cO_{\PP^n} (-1)$ over $B$.
By definition, the total space $E$ is a projectivized bundle of $V$, which is a $\PP^{N-1}$-bundle over $\PP^n$. 
(Note that $E$ is isomorphic to a blow up of $\PP^{n+N-1}$ along $\PP^{N-2}$.)
We remark that $E$ is itself a toric manifold, and the torus $T=(\C^\times)^N$ acts on $E$ fiberwise.

The $T$-equivariant cohomology of $E$ can be described as follows:
\[
H^*_T(E) \cong \C[ \phi, P, \lambda ] \left/ \biggl( \phi^{n+1}, \prod^N_{j=1} U_j \biggr) \right.
\]
where $\phi \in H^2(B)$ be a standard generator, $P \in H^2_T(E)$ be the class defined in Section 3 and $U_1, \dots, U_N \in H^2_T(E)$ be the toric divisors:
\begin{equation*}
U_j =
\begin{cases}
P - \lambda_j		& 1\leq j\leq N-1,	\\
P - \phi - \lambda_N	& j = N.
\end{cases}
\end{equation*}
Since Brown's $I$-function for $E$ is equal to the $J$-function, we can use $I_E(t,\tau,z)$ to compute quantum differential equations and the quantum cohomology explicitly.
We have
\begin{align*}
QH^*_T(B) &\cong \C[ \phi, \lambda, Q ] \left/ (\phi^{n+1} - Q) \right. ,	\\
QH^*_T(E) &\cong \C[ \phi, P, \lambda, q, Q ] \left/ \biggl( \phi^{n+1} - Q \cdot U_N, q - \prod^N_{j=1} U_j \biggr) \right. .
\end{align*}

We focus on the ring $QH^*_T(E)_{\lambda, q, Q}$.
We first consider the case that $Q=0$.
For generic $(\lambda,q)$, the equation $\prod^N_{j=1} U_j = q$ for $P$ has $N$ solutions over the ring $H^*(B)$ whose constant terms are different from each other. 
Then from the Chinese remainder theorem, we have
\[
QH^*_T(E)_{\lambda,q,0} \cong \bigoplus^N_{j=1} H^*(\PP^n).
\]
We now proceed to the case where $0 < |Q| \ll 1$.
The above decomposition can be extended to a decomposition over the formal power series ring $\C[\![Q]\!]$, and we have a map $\chtau$ satisfying
\[
QH^*_T(E)_{\lambda,q,Q} \cong \bigoplus^N_{j=1} QH^*(\PP^n)_{Q_j,\chtau_j}.
\]
A computation of the map $\chtau$ for $(n,N)=(1,2)$ will be given in the next example.
From Theorem \ref{thm:analyticity_star}, we know that the formal power series $\chtau$ is convergent near $Q=0$.
The figure below illustrates how $\Spec(QH^*_T(E)_{\lambda,q,Q})$ is deformed in $\Spec\C[\phi,P]$ for the case of $(n,N)=(2,5)$.

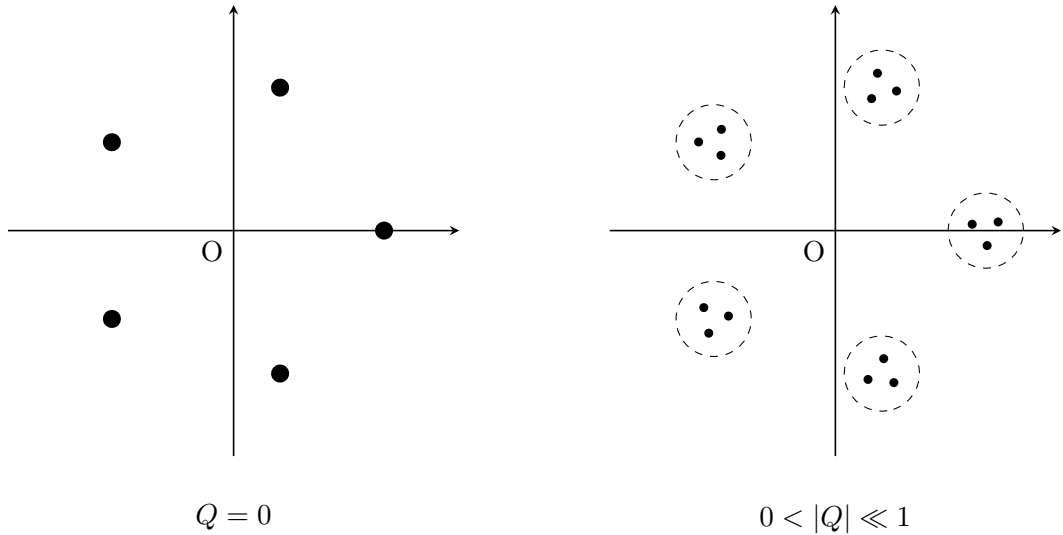
\begin{figure}[h]
\centering
\begin{tikzpicture} 
\coordinate[label=below left:O] (O) at (0,0); 
\coordinate (XS) at (-3,0); 
\coordinate (XL) at (3,0); 
\coordinate (YS) at (0,-3); 
\coordinate (YL) at (0,3); 

\draw[semithick,->,>=stealth] (XS)--(XL); 
\draw[semithick,->,>=stealth] (YS)--(YL); 

\foreach \n in {0,72,144,216,288}{
	\fill[black,rotate=\n] (2,0) circle (0.12);
};

\draw (0,-3.5) node[below] {$Q=0$};

\begin{scope}[shift={(8,0)}]
\coordinate[label=below left:O] (O) at (0,0); 
\coordinate (XS) at (-3,0); 
\coordinate (XL) at (3,0); 
\coordinate (YS) at (0,-3); 
\coordinate (YL) at (0,3); 

\draw[semithick,->,>=stealth] (XS)--(XL); 
\draw[semithick,->,>=stealth] (YS)--(YL); 

\foreach \n in {0,72,144,216,288}{
	\begin{scope}[rotate=\n]
		\draw[dashed] (2,0) circle (0.5); 
		\foreach \m in {0,120,240}{
			\fill[black] (2,0)+(\m+35:0.2) circle (0.06); 
		}
	\end{scope}
}

\draw (0,-3.5) node[below] {$0<|Q|\ll 1$};

\end{scope}

\end{tikzpicture}

\caption{The deformation of the decomposition of $\Spec (QH^*(\PP(\cO_{\PP^2}^{\oplus 4}\oplus\cO_{\PP^2}(-1)))_{\lambda,q,Q})$.}

\end{figure}

\end{example}

\begin{example}
Let $E$ be the toric bundle considered in the previous example with $(n,N)$ set to $(1,2)$.
(This is the Hirzebruch surface $F_1=\PP(\cO_{\PP^1}\oplus\cO_{\PP^1}(-1))$.)
We want to describe explicitly the map $\chtau$ and the decomposition of $QH^*_T(E)$.
For simplicity, we will perform all calculations over $K = \C[Q]/(Q^2)$ in this example.
Let $D$ be a discriminant locus of $(x-\lambda_1)(x-\lambda_2)-q=0$, that is,
\[
D = \{(\lambda,q)\in\cM \mid (\lambda_1-\lambda_2)^2 + 4q = 0  \}.
\]
We label the fixed points of $\PP^1$, $[1,0]$ and $[0,1]$, as $\alpha$ and $\beta$ respectively.
Let $s_\alpha(\lambda,qe^t)$ and $s_\beta(\lambda,qe^t)$ be solutions of $(x-\lambda_1)(x-\lambda_2)-qe^t=0$ for $(\lambda,qe^t)\in\cM\setminus D$.
We remark that these are locally single-valued functions away from $D$.
By direct calculation, we can see that the map $\chtau$ has the following form$:$
\begin{align*}
\chtau_\alpha =& \left(\tau_0 + \lambda_1 t + 2s_\alpha - \lambda_1 - \lambda_2 - (\lambda_1 - \lambda_2) \log(s_\alpha-\lambda_2)\right) + \left(\tau_1 + \log(s_\alpha-\lambda_2)\right)\phi \\
&- \frac{s_\alpha-\lambda_2}{2(2s_\alpha - \lambda_1 - \lambda_2)}Qe^{\tau_1} + \frac{(s_\alpha-\lambda_2)(6s_\alpha-5\lambda_1-\lambda_2)}{12(2s_\alpha - \lambda_1 - \lambda_2)^3}Qe^{\tau_1}\phi, \\
\chtau_\beta =& \left(\tau_0 + \lambda_2 t + 2s_\beta - \lambda_1 - \lambda_2 - (\lambda_2 - \lambda_1) \log(s_\beta-\lambda_1)\right) + \left(\tau_1 + t - \log(s_\beta-\lambda_1)\right)\phi \\
&- \frac{s_\beta-\lambda_2}{2(2s_\beta - \lambda_1 - \lambda_2)}Qe^{\tau_1} + \frac{(s_\beta-\lambda_2)(6s_\beta-5\lambda_1-\lambda_2)}{12(2s_\beta - \lambda_1 - \lambda_2)^3}Qe^{\tau_1}\phi.
\end{align*}
To simplify notation, we set $t=\tau_1=0$ from now on.
Note that we can recover the original $J_{\chtau}$ from $J_{\chtau}|_{t=\tau_1=0}$ since $t$ and $\tau_1$ always appear in the map $J_{\chtau}$ in the form of $qe^t$ and $Qe^{\tau_1}$.
As written in the previous example, small quantum cohomology $QH^*_T(F_1)$ and $QH^*_T(\PP^1)$ can be described as
\begin{align*}
QH^*_T(F_1)_{\lambda,q,Q} &\cong K[ \phi, P ] \left/ \Bigl( \phi^2 - Q(P-\phi-\lambda_2), (P-\lambda_1)(P-\phi-\lambda_2) - q \Bigr) \right. , 	\\
QH^*_T(\PP^1)_{\lambda,Q_\alpha,\chtau_\alpha} &\cong K[ \phi ] \left/ (\phi^2 - Q(s_\alpha-\lambda_2)) \right. ,\qquad Q_\alpha=Q, \\
QH^*_T(\PP^1)_{\lambda,Q_\beta,\chtau_\beta} &\cong K[ \phi ] \left/ (\phi^2 - Q(s_\beta-\lambda_2)) \right. ,\qquad Q_\beta=qQ.
\end{align*}
Via these identifications, the decomposition
\[
J_{\chtau} \colon QH^*_T(F_1)_{\lambda,q,Q} \to QH^*_T(\PP^1)_{\lambda,Q_\alpha,\chtau_\alpha}\oplus QH^*_T(\PP^1)_{\lambda,Q_\beta,\chtau_\beta}
\]
can be represented as follows$:$
\begin{align*}
J_{\chtau_\gamma}(\phi) = \parfrac{\chtau_\gamma}{\tau_1} =& \phi - \frac{s_\gamma-\lambda_2}{2(2s_\gamma - \lambda_1 - \lambda_2)}Q + \frac{(s_\gamma-\lambda_2)(6s_\gamma-5\lambda_1-\lambda_2)}{12(2s_\gamma - \lambda_1 - \lambda_2)^3}Q\phi	\\
J_{\chtau_\gamma}(P) = \parfrac{\chtau_\gamma}{t} =& s_\gamma + \frac{s_\gamma-\lambda_1}{2s_\gamma - \lambda_1 - \lambda_2}\phi + \frac{q(\lambda_1-\lambda_2)}{2(2s_\gamma - \lambda_1 - \lambda_2)^3}Q \\
&- \frac{q(12q + (\lambda_1-\lambda_2)(4s_\gamma-5\lambda_1+\lambda_2))}{12(2s_\gamma - \lambda_1 - \lambda_2)^5}Q\phi
\end{align*}
for $\gamma \in \{ \alpha, \beta \}$.
Note that we can define $J_{\chtau}$ when $q=0$, though we may not be able to define $\chtau$ at $q=0$.
It is easy to check that the decomposition $J_{\chtau}$ coincides with the localization isomorphism for $H^*_T(F_1)$ when we set the Novikov variables $q=Q=0$.

Finally, we describe the non-equivariant limit of the decomposition $J_{\chtau}$.
In the non-equivariant case, $s_+(q):=s_\alpha(0,q)$ and $s_-(q):=s_\beta(0,q)$ become the solutions of $x^2-q=0$, and we write $s_\pm(q)=\pm q^{1/2}$.
By setting $\lambda$ to be zero in the above discussion, we obtain the following description for non-equivariant quantum cohomology$:$
\begin{align*}
QH^*(F_1)_{q,Q} &\cong K[ \phi, P ] \left/ \Bigl( \phi^2 - Q(P-\phi), P(P-\phi) - q \Bigr) \right. , 	\\
QH^*(\PP^1)_{Q_\pm,\chtau_\pm} &\cong K[ \phi ] \left/ (\phi^2 \mp q^{\frac{1}{2}} Q) \right. .
\end{align*}
We also obtain the decomposition for the non-equivariant quantum cohomology$:$
\[
J_{\chtau} \colon QH^*(F_1)_{q,Q} \to QH^*(\PP^1)_{Q_+,\chtau_+}\oplus QH^*(\PP^1)_{Q_-,\chtau_-}
\]
\[
J_{\chtau_\pm}(\phi) = \phi - \frac{1}{4}Q \pm \frac{1}{16}q^{-\frac{1}{2}}Q\phi, \quad J_{\chtau_\pm}(P) = \pm q^{\frac{1}{2}} + \frac{1}{2}\phi \mp\frac{1}{32}Q\phi. 
\]
Again, we see from Theorem \ref{thm:analyticity_star} that the map $\chtau$ and the decomposition $J_{\chtau}$ are convergent with respect $Q$.

\end{example}

\appendix
\section{Proof of Lemma \ref{lem:appendix1}}
\label{app:1}

In this appendix, we will prove that $\ringA (U)$ and $\ringB (U)$, which are defined in Definition \ref{def:rings2}, are rings.
Here $U$ is an open subset of $\hcM = \C^k_q \times \C^N_\lambda$.
It is enough to confirm that the former is a ring since $\ringB (U)$ consists of elements of $\ringA (U)$ which are non-negative power series for $z$.

Let $a = \sum_{I,\bm,n} a_{I,\bm,n} \sigma^I x^\bm z^n$ and $b = \sum_{I,\bm,n} b_{I,\bm,n} \sigma^I x^\bm z^n$ belong to $\ringA (U)$.
It is easy to see that $a + b \in \ringA (U)$ and thus we only need to check that $a \cdot b$ also belongs to $\ringA (U)$.
Since $a,b \in \ringA (U)$, there exist a positive continuous function $C \colon U \to [1,+\infty)$ and a positive constant $C'\geq1$ satisfying
\begin{itemize} 
\item $a_{I,\bm,n} (q,\lambda) = b_{I,\bm,n} (q,\lambda) = 0$ for $n<-C'(| I | + | \bm | + 1)$, and
\item the following estimate for coeffitients:
\[
\max \{ |a_{I,\bm,n}(q,\lambda)|, |b_{I,\bm,n}(q,\lambda)| \} \leq C(q,\lambda)^{| I | + | \bm | + | n | + 1} (| I | + | \bm |)^n
\]
for any $(q,\lambda) \in U$.
\end{itemize}
If we write $a \cdot b = \sum_{I,\bm,n} (a \cdot b)_{I,\bm,n}(q,\lambda) \sigma^I x^\bm z^n$, we have
\[
(a \cdot b)_{I,\bm,n}(q,\lambda) = \sum_{I_1,I _2; I_1+I_2=I} \sum_{\bm_1, \bm_2; \bm_1+\bm_2=\bm} \sum_{n_1, n_2; n_1+n_2=n} a_{I_1,\bm_1,n_1} (q,\lambda) \cdot b_{I_2,\bm_2,n_2} (q,\lambda).
\] 

\begin{lemma}
\label{lem:C'}
The coefficients $(a \cdot b)_{I,\bm,n}$ equal zero if $n < - 2C' (|I| + |\bm| + 1)$.
\end{lemma}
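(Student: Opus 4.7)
The plan is to argue directly from the vanishing hypotheses on $a_{I,\bm,n}$ and $b_{I,\bm,n}$ given by membership in $\ringA(U)$. Recall that the product coefficient decomposes as
\[
(a \cdot b)_{I,\bm,n}(q,\lambda) = \sum_{\substack{I_1+I_2=I \\ \bm_1+\bm_2=\bm \\ n_1+n_2=n}} a_{I_1,\bm_1,n_1}(q,\lambda) \cdot b_{I_2,\bm_2,n_2}(q,\lambda),
\]
so it suffices to show that every summand vanishes when $n < -2C'(|I|+|\bm|+1)$.

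For a given nonzero summand, both factors must be nonzero, so by the defining property of $\ringA(U)$ we have
\[
n_1 \geq -C'(|I_1|+|\bm_1|+1), \qquad n_2 \geq -C'(|I_2|+|\bm_2|+1).
\]
Adding these two inequalities and using $I_1+I_2=I$, $\bm_1+\bm_2=\bm$, I will obtain
\[
n = n_1 + n_2 \geq -C'(|I|+|\bm|+2).
\]
The final step is the elementary comparison $C'(|I|+|\bm|+2) \leq 2C'(|I|+|\bm|+1)$, which is valid for all $|I|,|\bm| \geq 0$. Thus if $n < -2C'(|I|+|\bm|+1)$, then the strict inequality $n < -C'(|I|+|\bm|+2)$ holds as well, contradicting the existence of a nonzero summand. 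This shows $(a\cdot b)_{I,\bm,n} = 0$ in that range.

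There is no real obstacle here; the lemma is a bookkeeping statement and the constant $2C'$ is chosen precisely so that the two individual vanishing bounds combine into a single uniform one. The slightly more delicate estimate on the size of $(a\cdot b)_{I,\bm,n}$ (which is what is actually needed to conclude $a\cdot b \in \ringA(U)$) will be handled separately afterwards, using the factorial-type inequalities together with a bound on the number of summands, in the spirit of Lemma \ref{lem:both_rings}.
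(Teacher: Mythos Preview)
Your proof is correct and essentially identical to the paper's: both show that a nonzero summand forces $n \geq -C'(|I|+|\bm|+2)$, then use the elementary inequality $C'(|I|+|\bm|+2) \leq 2C'(|I|+|\bm|+1)$ to conclude. The only difference is phrasing (the paper states it directly, you frame it as a contrapositive), which is immaterial.
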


\begin{proof}

If $n$ is less than $- C' (|I| + |\bm| + 2)$, it follows that $n_1 < - C' (|I_1| + |\bm_1| + 1)$ or $n_2 < - C' (|I_2| + |\bm_2| + 1)$ and hence $(a \cdot b)_{I,\bm,n} = 0$.
In particular, $(a \cdot b)_{I,\bm,n} = 0$ if $n < - 2C' (|I| + |\bm| + 1)$.

\end{proof}

Now we estimate the coefficients $| (a \cdot b)_{I,\bm,n} |$.
We have
\[
| (a \cdot b)_{I,\bm,n} | \leq \sum_{I_1,I _2; I_1+I_2=I} \sum_{\bm_1, \bm_2; \bm_1+\bm_2=\bm} \sum_{n_1, n_2; n_1+n_2=n} | a_{I_1,\bm_1,n_1} (q,\lambda) \cdot b_{I_2,\bm_2,n_2} (q,\lambda) |.
\]
Note that $| a_{I_1,\bm_1,n_1} (q,\lambda) \cdot b_{I_2,\bm_2,n_2} (q,\lambda) | = 0$ if $n_1 < - C' (|I_1| + |\bm_1| + 1)$ or $n_2 < - C' (|I_2| + |\bm_2| + 1)$.
The number of non-zero terms of the right-hand side of this inequality is not greater than $\max\{ 2^{|I|+|\bm|} ( n + 2 C' (|I| + |\bm| + 1 ) ), 0 \}$.
Therefore, if we write
\[
M(q,\lambda) = \max\{ | a_{I_1,\bm_1,n_1} (q,\lambda) \cdot b_{I_2,\bm_2,n_2} (q,\lambda) | \mid I_1+I_2=I, \bm_1+\bm_2=\bm, n_1+n_2=n \},
\]
then we have
\begin{align*}
| (a \cdot b)_{I,\bm,n}(q,\lambda) | 
&\leq \max\{ 2^{|I|+|\bm|} ( n + 2 C' (|I| + |\bm| + 1 ) ), 0 \} \cdot M(q,\lambda) \\
&\leq C_1^{|I|+|\bm|+|n|+1} \cdot M(q,\lambda)
\end{align*}
for some positive constant $C_1\geq1$.

Using the properties of $|a_{I,\bm,n}(q,\lambda)|$ and $|b_{I,\bm,n}(q,\lambda)|$, we have
\begin{align*}
| a_{I_1,\bm_1,n_1} (q,\lambda) \cdot b_{I_2,\bm_2,n_2} (q,\lambda) | 
&\leq C(q,\lambda)^{|I|+|\bm|+|n_1|+|n_2|+2} (|I_1|+|\bm_1|)^{n_1} (|I_2|+|\bm_2|)^{n_2} \\
&\leq C_2(q,\lambda)^{|I|+|\bm|+|n|+1} (|I_1|+|\bm_1|)^{n_1} (|I_2|+|\bm_2|)^{n_2}
\end{align*}
for some positive function $C_2(q,\lambda)$.
To obtain an estimate for $M(q,\lambda)$, we calculate the maximum of $(|I_1|+|\bm_1|)^{n_1} (|I_2|+|\bm_2|)^{n_2}$.

\begin{lemma}
Let $m$ be a non-negative integer and $n$ be an integer.
Define a subset $A_{m,n} \subset \Z^4$ consisting of quadruples $(m_1,m_2,n_1,n_2)$ which satisfy the following condition$:$
\begin{itemize}
\item $m_1 \geq 0$, $m_2 \geq 0$,
\item $m_1 + m_2 = m$, $n_1 + n_2 = n$,
\item $n_1 \geq -C'( m_1 + 1 )$, $n_2 \geq -C'( m_2 + 1 )$.
\end{itemize}
Then, there exists a constant $C_0\geq1$ such that
\[
\max_{A_{m,n}} (m_1^{n_1} m_2^{n_2}) \leq C_0^{m+|n|+1} m^n.
\]
\end{lemma}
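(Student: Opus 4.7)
\smallskip

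\noindent\textbf{Plan.} The inequality is purely combinatorial, and the natural strategy is to divide both sides by $m^n$ and study the ratio $m_1^{n_1} m_2^{n_2}/m^n = (m_1/m)^{n_1}(m_2/m)^{n_2}$. Writing $\alpha_i := m_i/m \in [0,1]$ with $\alpha_1 + \alpha_2 = 1$, the claim becomes $\alpha_1^{n_1}\alpha_2^{n_2} \leq C_0^{m+|n|+1}$, and passing to logarithms we must control $n_1 \log(1/\alpha_1) + n_2 \log(1/\alpha_2)$ where the quantities $\log(1/\alpha_i)$ are non-negative. The terms with $n_i \geq 0$ contribute non-positively to this log-sum, so it suffices to bound the contribution from the indices with $n_i < 0$.

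\smallskip

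\noindent First I would treat the generic case $m_1, m_2 \geq 1$. Then $\alpha_i \geq 1/m$ and for each $i$ with $n_i < 0$ I would use the constraint $|n_i| \leq C'(m_i+1) = C'(m\alpha_i+1)$ to estimate
\[
|n_i|\log(1/\alpha_i) \leq C'\bigl(m\alpha_i \log(1/\alpha_i) + \log(1/\alpha_i)\bigr).
\]
The elementary inequality $-x\log x \leq 1/e$ for $x \in (0,1]$ bounds the first term by $C'm/e$, while $\log(1/\alpha_i) \leq \log m \leq m$ bounds the second by $C'm$. Summing over $i = 1,2$ gives a total log-bound of $2\kappa m$ with $\kappa := C'(1+1/e)$, so choosing $C_0 := e^{2\kappa}$ yields $\alpha_1^{n_1}\alpha_2^{n_2} \leq e^{2\kappa m} = C_0^m \leq C_0^{m+|n|+1}$, which is the desired estimate.

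\smallskip

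\noindent It remains to dispose of the edge cases, which I expect to be the main (if minor) obstacle since they involve the conventions $0^0 = 1$, $0^k = 0$ for $k>0$, and $0^k = 1$ for $k<0$ implicit in the surrounding estimates. When $m = 0$ one has $m_1 = m_2 = 0$ and a direct inspection shows the maximum equals $0$ or $1$ depending on the sign of $n$, matching the right-hand side trivially. When $m \geq 1$ but $m_1 = 0$ (so $m_2 = m$), the product reduces to $0^{n_1}m^{n_2}$; the only non-trivial contribution is $n_1 \in [-C', 0]$, giving a maximum of $m^{n+C'}$. Since $m^{C'} \leq e^{C'\log m} \leq e^{C'm} \leq C_0^m$ for $m \geq 1$ whenever $C_0 \geq e^{C'}$, and $2\kappa > C'$, the same constant $C_0 = e^{2\kappa}$ handles this case as well; the case $m_2 = 0$ is symmetric. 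This completes the argument.
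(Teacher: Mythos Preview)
Your argument is correct and substantially cleaner than the paper's. (There is a cosmetic sign slip in your plan: the log of $\alpha_1^{n_1}\alpha_2^{n_2}$ is $-\sum_i n_i\log(1/\alpha_i)$, not $+\sum_i n_i\log(1/\alpha_i)$; but your subsequent estimate bounds $|n_i|\log(1/\alpha_i)$ for $n_i<0$, which is exactly the right quantity, so the mathematics is fine.)

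The paper takes a markedly different route. It fixes $m_1\ge m_2$, observes that for fixed $n$ the product $m_1^{n_1}m_2^{n_2}$ is monotone in $n_1$, and so pushes $n_1$ to its maximal value $n+C'(m_2+1)$; this reduces the problem to a one-variable function $f(x)=(n+C'x+C')\log(m-x)-(C'x+C')\log x$ on $[0,m/2]$, which is then bounded by separately estimating $f(0)$, $f(1)$, $f(m/2)$, and the values at interior critical points (using the equation $f'(\alpha)=0$ to simplify $f(\alpha)$). This is a direct calculus argument but involves several case distinctions and an explicit critical-point computation.

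Your approach bypasses all of this by treating the two factors symmetrically and invoking the single elementary inequality $-x\log x\le 1/e$ on $(0,1]$. The entropy-type bound immediately controls the dangerous factor $m_i^{|n_i|}$ via $|n_i|\log(m/m_i)\le C'(m_i+1)\log(m/m_i)\le C'(m/e+\log m)$, with no optimisation needed. This is shorter, avoids the critical-point analysis entirely, and makes transparent why the exponent on $C_0$ is linear in $m$; the paper's proof obtains the same conclusion but the mechanism is less visible.
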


\begin{proof}

By symmetry, we can assume $m_1 \geq m_2$, or equivalently $m_1 \geq m/2 \geq m_2$.
In this case, the larger $n_1$ is, the larger $m_1^{n_1} m_2^{n_2}$ is.
From the assumption, $n_1$ is no more than $n + C'(m_2 + 1)$ and we have
\[
m_1^{n_1} m_2^{n_2} \leq \frac{ m_1^{n + C'(m_2 + 1)} }{ m_2^{C'(m_2 + 1)} }.
\] 

If $m=0$, then $(m_1,m_2)=(0,0)$ and $m_1^{n_1} m_2^{n_2} \leq 1$.
In the case that $m=1$, we have $(m_1,m_2)=(1,0)$ and $m_1^{n_1} m_2^{n_2} \leq 1$.
Hereafter, we assume that $m\geq 2$.
We define the real function $f(x)$ to be
\[
f(x) = (n + C'x + C') \log(m-x) - (C'x + C') \log x, \qquad 0< x\leq \frac{m}{2}
\]
and set $f(0) := (n + C') \log m$.
This function satisfies that
\[
\frac{ m_1^{n + C'(m_2 + 1)} }{ m_2^{C'(m_2 + 1)} } = e^{f(m_2)},
\]
thus we only have to estimate $M_f := \max\{ f(x) \mid x\in\Z, 0\leq x\leq m/2 \}$.

We have
\[
M_f \leq \max \left( \left\{ f(0), f(1), f\left(\frac{m}{2}\right) \right\} \cup \left\{ f(\alpha) \;\middle|\; f'(\alpha)=0, 1\leq\alpha<\frac{m}{2} \right\} \right).
\]
Since $f(0) = (n + C') \log m$, $f(1) = (n + 2C') \log(m-1)$ and $f(m/2) = n \log (m/2)$, we have
\begin{align*}
e^{f(0)} &= m^{n + C'} \leq C_3^m m^n, \\
e^{f(1)} &= (m-1)^{n + 2C'} \leq C_3^{m + |n|} m^n, \\
e^{f\left(\frac{m}{2}\right)} &= \left( \frac{m}{2} \right)^n \leq 2^{|n|} m^n
\end{align*}
for some positive constant $C_3\geq1$.

We proceed to estimate the critical values of the function $f(x)$.
Let $\alpha$ be a critical point of $f$.
Since
\[
f'(x) = C'\log\frac{m-x}{x} - \frac{1}{x(m-x)} [ (n+C'm)x + C'm ],
\]
we have
\[
C'\log\frac{m-\alpha}{\alpha} = \frac{1}{\alpha(m-\alpha)} [ (n+C'm)\alpha + C'm ].
\]
Using this equation, the critical value can be written as follows$:$
\begin{align*}
f(\alpha)
&= n \log(m-\alpha) + (\alpha + 1) \cdot C'\log\frac{m-\alpha}{\alpha} \\
&= n \log(m-\alpha) + \frac{\alpha + 1}{\alpha} \cdot \frac{(n+C'm)\alpha + C'm}{m-\alpha}
\end{align*}
Since $1\leq\alpha\leq m/2$, we have
\begin{align*}
n\log(m-\alpha) &\leq |n|\log2 + n\log m,								\\
\frac{\alpha + 1}{\alpha} &\leq 2,									\\
\frac{(n+C'm)\alpha + C'm}{m-\alpha} &\leq \frac{ (|n| + C'm)\cdot\frac{m}{2} + C'm }{\frac{m}{2}}	\\
&= |n| + C'm + 2C'.
\end{align*}
Combining these inequalities, we have
\[
f(\alpha) \leq |n|\log2 + 2(|n| + C'm + 2C') + n\log m,
\]
and therefore we obtain the following estimate$:$
\[
e^{f(\alpha)} \leq 2^{|n|} e^{2(|n| + C'm + 2C')} m^n.
\]

Finally, we estimate $\max_{A_{m,n}} (m_1^{n_1} m_2^{n_2})$ as follows$:$
\begin{align*}
\max_{A_{m,n}} (m_1^{n_1} m_2^{n_2})
&\leq e^{M_f}															\\
&\leq \max \left\{ e^{f(0)}, e^{f(1)}, e^{f\left(\frac{m}{2}\right)}, 2^{|n|} e^{2(|n| + C'm + 2C')} m^n \right\}	\\
&\leq C_0^{m+|n|+1} m^n
\end{align*}
for some positive constant $C_0\geq1$.

\end{proof}

By this proposition, there exists a positive constant $C_4\geq1$ such that
\[
(|I_1|+|\bm_1|)^{n_1} (|I_2|+|\bm_2|)^{n_2} \leq C_4^{|I|+|\bm|+|n|+1} (|I|+|\bm|)^n.
\]
Using this inequality, we obtain the following estimates$:$
\begin{align*}
M(q,\lambda) &\leq (C_4 \cdot C_2(q,\lambda))^{|I|+|\bm|+|n|+1} (|I|+|\bm|)^n,					\\
| (a \cdot b)_{I,\bm,n}(q,\lambda) | &\leq (C_1 C_4 \cdot C_2(q,\lambda))^{|I|+|\bm|+|n|+1} (|I|+|\bm|)^n.
\end{align*}
The latter inequality and Lemma \ref{lem:C'} indicate that $a \cdot b \in \ringA (U)$.

\begin{bibdiv}
\begin{biblist}
\bib{Atiyah-Bott}{article}{
   author={Atiyah, M. F.},
   author={Bott, R.},
   title={The moment map and equivariant cohomology},
   journal={Topology},
   volume={23},
   date={1984},
   number={1},
   pages={1--28},
   issn={0040-9383},
   review={\MR{721448}},
   doi={10.1016/0040-9383(84)90021-1},
}
\bib{Audin}{book}{
   author={Audin, Mich\`ele},
   title={Torus actions on symplectic manifolds},
   series={Progress in Mathematics},
   volume={93},
   edition={Second revised edition},
   publisher={Birkh\"{a}user Verlag, Basel},
   date={2004},
   pages={viii+325},
   isbn={3-7643-2176-8},
   review={\MR{2091310}},
   doi={10.1007/978-3-0348-7960-6},
}
\bib{Barannikov}{article}{ 
	author={Barannikov, Serguei},
	title={Semi-infinite Hodge structures and mirror symmetry for projective spaces}, 
	eprint={https://arxiv.org/abs/math/0010157}
}
\bib{Berline-Vergne}{article}{
   author={Berline, Nicole},
   author={Vergne, Mich\`ele},
   title={Classes caract\'{e}ristiques \'{e}quivariantes. Formule de localisation en
   cohomologie \'{e}quivariante},
   journal={C. R. Acad. Sci. Paris S\'{e}r. I Math.},
   volume={295},
   date={1982},
   number={9},
   pages={539--541},
   issn={0249-6291},
   review={\MR{685019}},
}
\bib{Brown}{article}{
	author={Brown, Jeff},
	title={Gromov-Witten invariants of toric fibrations},
	journal={Int. Math. Res. Not. IMRN},
	date={2014},
	number={19},
	pages={5437--5482},
	issn={1073-7928},
	review={\MR{3267376}},
	doi={10.1093/imrn/rnt030},
}
\bib{Ciocan-Kim}{article}{
   author={Ciocan-Fontanine, Ionu\c{t}},
   author={Kim, Bumsig},
   title={Big $I$-functions},
   conference={
      title={Development of moduli theory---Kyoto 2013},
   },
   book={
      series={Adv. Stud. Pure Math.},
      volume={69},
      publisher={Math. Soc. Japan, [Tokyo]},
   },
   date={2016},
   pages={323--347},
   review={\MR{3586512}},
   doi={10.2969/aspm/06910323},
}
\bib{Coates-Corti-Iritani-Tseng-computing}{article}{
   author={Coates, Tom},
   author={Corti, Alessio},
   author={Iritani, Hiroshi},
   author={Tseng, Hsian-Hua},
   title={Computing genus-zero twisted Gromov-Witten invariants},
   journal={Duke Math. J.},
   volume={147},
   date={2009},
   number={3},
   pages={377--438},
   issn={0012-7094},
   review={\MR{2510741}},
   doi={10.1215/00127094-2009-015},
}
\bib{Coates-Corti-Iritani-Tseng-hodge}{article}{
   author={Coates, Tom},
   author={Corti, Alessio},
   author={Iritani, Hiroshi},
   author={Tseng, Hsian-Hua},
   title={Hodge-theoretic mirror symmetry for toric stacks},
   journal={J. Differential Geom.},
   volume={114},
   date={2020},
   number={1},
   pages={41--115},
   issn={0022-040X},
   review={\MR{4047552}},
   doi={10.4310/jdg/1577502022},
}
\bib{Coates-Givental}{article}{ 
   author={Coates, Tom},
   author={Givental, Alexander},
   title={Quantum Riemann-Roch, Lefschetz and Serre},
   journal={Ann. of Math. (2)},
   volume={165},
   date={2007},
   number={1},
   pages={15--53},
   issn={0003-486X},
   review={\MR{2276766}},
   doi={10.4007/annals.2007.165.15},
}
\bib{Coates-Givental-Tseng}{article}{ 
	author={Coates, Tom},
	author={Givental, Alexander},
	author={Tseng, Hsian-Hua},
	title={Virasoro constraints for toric bundles}, 
	eprint={https://arxiv.org/abs/1508.06282}
}
\bib{Givental-homological}{article}{
   author={Givental, Alexander B.},
   title={Homological geometry and mirror symmetry},
   conference={
      title={Proceedings of the International Congress of Mathematicians,
      Vol. 1, 2},
      address={Z\"{u}rich},
      date={1994},
   },
   book={
      publisher={Birkh\"{a}user, Basel},
   },
   date={1995},
   pages={472--480},
   review={\MR{1403947}},
}
\bib{Givental-equivariant}{article}{
   author={Givental, Alexander B.},
   title={Equivariant Gromov-Witten invariants},
   journal={Internat. Math. Res. Notices},
   date={1996},
   number={13},
   pages={613--663},
   issn={1073-7928},
   review={\MR{1408320}},
   doi={10.1155/S1073792896000414},
}
\bib{Givental-mirror}{article}{
   author={Givental, Alexander B.},
   title={A mirror theorem for toric complete intersections},
   conference={
      title={Topological field theory, primitive forms and related topics},
      address={Kyoto},
      date={1996},
   },
   book={
      series={Progr. Math.},
      volume={160},
      publisher={Birkh\"{a}user Boston, Boston, MA},
   },
   date={1998},
   pages={141--175},
   review={\MR{1653024}},
}
\bib{Givental-elliptic}{article}{
   author={Givental, Alexander B.},
   title={Elliptic Gromov-Witten invariants and the generalized mirror
   conjecture},
   conference={
      title={Integrable systems and algebraic geometry},
      address={Kobe/Kyoto},
      date={1997},
   },
   book={
      publisher={World Sci. Publ., River Edge, NJ},
   },
   date={1998},
   pages={107--155},
   review={\MR{1672116}},
}
\bib{Givental-quantization}{article}{
   author={Givental, Alexander B.},
   title={Gromov-Witten invariants and quantization of quadratic
   Hamiltonians},
   language={English, with English and Russian summaries},
   note={Dedicated to the memory of I. G. Petrovskii on the occasion of his
   100th anniversary},
   journal={Mosc. Math. J.},
   volume={1},
   date={2001},
   number={4},
   pages={551--568, 645},
   issn={1609-3321},
   review={\MR{1901075}},
   doi={10.17323/1609-4514-2001-1-4-551-568},
}
\bib{Givental-symplectic}{article}{
   author={Givental, Alexander B.},
   title={Symplectic geometry of Frobenius structures},
   conference={
      title={Frobenius manifolds},
   },
   book={
      series={Aspects Math., E36},
      publisher={Friedr. Vieweg, Wiesbaden},
   },
   date={2004},
   pages={91--112},
   review={\MR{2115767}},
}
\bib{Goresky}{article}{
   author={Goresky, Mark},
   author={Kottwitz, Robert},
   author={MacPherson, Robert},
   title={Equivariant cohomology, Koszul duality, and the localization
   theorem},
   journal={Invent. Math.},
   volume={131},
   date={1998},
   number={1},
   pages={25--83},
   issn={0020-9910},
   review={\MR{1489894}},
   doi={10.1007/s002220050197},
}
\bib{Iritani-convergence}{article}{
   author={Iritani, Hiroshi},
   title={Convergence of quantum cohomology by quantum Lefschetz},
   journal={J. Reine Angew. Math.},
   volume={610},
   date={2007},
   pages={29--69},
   issn={0075-4102},
   review={\MR{2359850}},
   doi={10.1515/CRELLE.2007.067},
}
\bib{Iritani-quantum}{article}{
   author={Iritani, Hiroshi},
   title={Quantum $D$-modules and generalized mirror transformations},
   journal={Topology},
   volume={47},
   date={2008},
   number={4},
   pages={225--276},
   issn={0040-9383},
   review={\MR{2416770}},
   doi={10.1016/j.top.2007.07.001},
}
\bib{Iritani-integral}{article}{
   author={Iritani, Hiroshi},
   title={An integral structure in quantum cohomology and mirror symmetry
   for toric orbifolds},
   journal={Adv. Math.},
   volume={222},
   date={2009},
   number={3},
   pages={1016--1079},
   issn={0001-8708},
   review={\MR{2553377}},
   doi={10.1016/j.aim.2009.05.016},
}
\bib{Kouchnirenko}{article}{
   author={Kouchnirenko, A. G.},
   title={Poly\`edres de Newton et nombres de Milnor},
   language={French},
   journal={Invent. Math.},
   volume={32},
   date={1976},
   number={1},
   pages={1--31},
   issn={0020-9910},
   review={\MR{419433}},
   doi={10.1007/BF01389769},
}
\bib{Lee-Lin-Wang-invariance2}{article}{
   author={Lee, Yuan-Pin},
   author={Lin, Hui-Wen},
   author={Wang, Chin-Lung},
   title={Invariance of quantum rings under ordinary flops II: A quantum
   Leray-Hirsch theorem},
   journal={Algebr. Geom.},
   volume={3},
   date={2016},
   number={5},
   pages={615--653},
   issn={2313-1691},
   review={\MR{3568339}},
   doi={10.14231/AG-2016-027},
}
\bib{Ruan}{article}{
   author={Ruan, Yongbin},
   title={The cohomology ring of crepant resolutions of orbifolds},
   conference={
      title={Gromov-Witten theory of spin curves and orbifolds},
   },
   book={
      series={Contemp. Math.},
      volume={403},
      publisher={Amer. Math. Soc., Providence, RI},
   },
   date={2006},
   pages={117--126},
   review={\MR{2234886}},
   doi={10.1090/conm/403/07597},
}
\bib{Wasow}{book}{
   author={Wasow, Wolfgang},
   title={Asymptotic expansions for ordinary differential equations},
   note={Reprint of the 1976 edition},
   publisher={Dover Publications, Inc., New York},
   date={1987},
   pages={x+374},
   isbn={0-486-65456-7},
   review={\MR{919406}},
}
\end{biblist}
\end{bibdiv}

\end{document}